\numberwithin{equation}{section}
\def\s{$\mbox{\c{s}}$}
\def\opn#1#2{\def#1{\operatorname{#2}}} 
\opn\Ker{Ker} \opn\Coker{Coker}  \opn\Hom{Hom} \opn\Im{Im}
\opn\End{End} \opn\Aut{Aut} \opn\defect{def} \opn\ord{ord}
\opn\id{id} \opn\dim{dim} \opn\det{det} \opn\tr{tr} \opn\grad{grad} \opn\lcm{lcm}
\opn\min{min} \opn\max{max} 
\opn\Span{Span}   \opn\rang{rang}  \opn\id{id} \opn\Ass{Ass} \opn\Min{Min}
\opn\GL{GL} \opn\SL{SL} \opn\mod{mod} \opn\diag{diag}
\opn\min{min} \opn\sgn{sgn} \opn\ini{in_<}  \opn\Mon{Mon} \opn\LC{LC_<} \opn\Hom{Hom} \opn\Ext{Ext} \opn\gini{gin_{<_{rev}}} \opn\gin{gin_{<}}
\opn\LT{LT_<}
\opn\s{supp} \opn\Tor{Tor} \opn\link{link} \opn\depth{depth} \opn\pd{pd} \opn\reg{reg} 
\newcommand{\supp}{\textup{supp}}
\newcommand{\der}{\textup{d}}
\date{}
\title{Mass concentration in a spatially inhomogeneous coagulation model with fast sedimentation}
\author[a,1]{Iulia Cristian}
\author[b,2]{Juan J. L. Vel\'{a}zquez}
\affil[a]{Laboratoire Jacques-Louis Lions, Sorbonne University, 4 Place Jussieu, 75005 Paris, France}
\affil[b]{Institute for Applied Mathematics, University of Bonn, Endenicher Allee 60, 53115 Bonn, Germany}
\affil[1]{\href{mailto:cristian@iam.uni-bonn.de}{iulia.cristian@sorbonne-universite.fr}}
\affil[3]{\href{mailto:velazquez@iam.uni-bonn.de}{velazquez@iam.uni-bonn.de}}
\begin{document}
\newtheorem{teo}{Theorem}[section]
\newtheorem{ex}[teo]{Example}
\newtheorem{prop}[teo]{Proposition}
\newtheorem{obss}[teo]{Observations}
\newtheorem{cor}[teo]{Corollary}
\newtheorem{lem}[teo]{Lemma}
\newtheorem{prob}[teo]{Problem}
\newtheorem{conj}[teo]{Conjecture}
\newtheorem{exs}[teo]{Examples}
\newtheorem{ans}[teo]{Ansatz}
\newtheorem{alg}[teo]{\bf Algorithm}

\theoremstyle{definition}
\newtheorem{defi}[teo]{Definition}

\theoremstyle{remark}
\newtheorem{rmk}[teo]{Remark}
\newtheorem{ass}[teo]{Assumption}
\maketitle

\vspace{-1cm}

\begin{abstract}
We study a spatially inhomogeneous coagulation model that contains a transport term in the spatial variable. The transport term models the vertical motion of particles due to gravity, thereby incorporating their fall into the dynamics. Local existence of mass-conserving solutions for a class of coagulation rates for which in the spatially homogeneous case instantaneous gelation (i.e., instantaneous loss of mass) occurs has been proved in \cite{cristianinhom}. In order to obtain some insight into how to prove global existence of solutions, we allow a fast sedimentation speed. For very fast sedimentation speed, we rigorously prove that solutions converge to a Dirac measure in the space variable. We also formally obtain in the limit a one-dimensional coagulation equation with diagonal kernel, i.e., only particles of the same size interact. This provides a physical intuition on how coagulation models with a diagonal kernel emerge. 
\end{abstract}
\textbf{Keywords:} inhomogeneous coagulation equations, mass-conserving solutions, sum-type kernels, diagonal kernel, rain formation
\tableofcontents
\section{Introduction}
\subsection{Background}
It is known that gelation (loss of mass) occurs for the standard one-dimensional coagulation model,
\begin{align*}
    \partial_{\tau}f(v,\tau)=\frac{1}{2}\int_{(0,v)}K(v-w,w)f(v-w,\tau)f(w,\tau)\der w -\int_{(0,\infty)}K(v,w)f(v,\tau)f(w,\tau)\der w,
\end{align*}
when the coagulation kernel behaves like a power law of homogeneity $\gamma>1$, see \cite{gelationpaper,papergelationlaurencot, esclaumischperth, gelfournier2025}. In addition, for sum kernels of the form
\begin{align}\label{sum kernel}
    K(v,w)=v^{\gamma}+w^{\gamma}, \quad \gamma>1,
\end{align}
gelation happens instantaneously and one can prove that solutions belonging to $L^{1}$ do not exist, see \cite{ballcarr,bookcoagulation,Carr1992,Dongen}. Nonetheless, there are some results concerning the existence of solutions for sum-type kernels as in \eqref{sum kernel} in the spatially inhomogeneous case,
\begin{align}\label{rain model}
    \partial_{\tau}f(x,v,\tau)+v^{\alpha}\partial_{x}f(x,v,\tau)=&\frac{1}{2}\int_{(0,v)}K(v-w,w)f(x,v-w,\tau)f(x,w,\tau)\der w \nonumber\\
    &-\int_{(0,\infty)}K(v,w)f(x,v,\tau)f(x,w,\tau)\der w\\
    =:&\mathbb{K}_{1}[f](x,v,\tau)-\mathbb{K}_{2}[f](x,v,\tau) =:\mathbb{K}[f](x,v,\tau), \quad x\in\mathbb{R}.\nonumber
\end{align}
More precisely, when $\gamma<\alpha+1, \alpha\geq 0$, existence of solutions (which may not preserve mass) for a discrete version of \eqref{rain model} with a coagulation kernel as in \eqref{sum kernel} was proved in \cite{galkin}, while existence of mass-conserving solutions for short times for the model \eqref{rain model} when $\gamma<\alpha+1, \alpha\in(0,1)$ was proved in \cite{cristianinhom}. It is also worth mentioning that the relevance of the model in \eqref{rain model} with coagulation kernels of homogeneity $\gamma>1$ stems from the fact that it was first introduced in order to describe the formation of rain or the behavior of air bubbles in water, see \cite{physicspaper, precipitation, raininitiation, tanaka}. In this case, the coagulation kernel has the form
\begin{align}\label{rain kernel intro}
    K(v,w)=|v^{\alpha}-w^{\alpha}|(v^{\frac{1}{3}}+w^{\frac{1}{3}})^{2}.
\end{align}

The main idea in \cite{cristianinhom} in order to prove existence of mass-conserving solutions for the model \eqref{rain model} is that the coagulation operator $\mathbb{K}[f]$ gives a negligible contribution for sufficiently small times. It is thus natural to ask what is the time range for which mass-conserving solutions exist if we work with a model of the form
\begin{align}\label{original equation}
    \partial_{\tau}f_{\epsilon}(x,v,\tau)+v^{\alpha}\partial_{x}f_{\epsilon}(x,v,\tau)=\epsilon \mathbb{K}[f_{\epsilon}](x,v,\tau),
\end{align}
for some sufficiently small $\epsilon\in(0,1)$ and when $K$ is as in \eqref{sum kernel} or \eqref{rain kernel intro}. We expect that this modification of our model will offer clues on how to prove global in time existence of mass-conserving solutions. The main goal of this paper is thus to prove that mass-conserving solutions of \eqref{original equation} exist for times of order $\mathcal{O}(e^{\frac{1}{\epsilon}})$. While kernels of homogeneity $\gamma\leq 1$ are well-studied in the mathematical literature, see for example \cite{escobedomischler, escobedomischler2, stewart}, this is the first result of existence of mass-conserving solutions involving kernels of homogeneity $\gamma>1$, with the exceptions being the product kernel \cite{menonpego,tranvan} and our previous result in \cite{cristianinhom}.

Notice also that since we can rescale the time $\tau\rightarrow \epsilon \tau$, \eqref{original equation} is equivalent to the following form.
\begin{align}\label{one over epsilon first time}
\partial_{\tau}f_{\epsilon}(x,v,\tau)+\frac{1}{\epsilon}v^{\alpha}\partial_{x}f_{\epsilon}(x,v,\tau)= \mathbb{K}[f_{\epsilon}](x,v,\tau).
\end{align}

\subsection{Connection with coagulation models with diagonal kernel}
The first question is whether the proof in \cite{cristianinhom} can be reproduced in order to extend the existence result for the model \eqref{original equation}. It was argued in \cite[Section 6.4]{phdthesis} that  the steps to tackle
local in time existence cannot be reproduced in order to prove global existence of solutions (or even existence for times of order one) for the model \eqref{original equation}. As such, a new strategy needs to be adopted.

Different forms of solutions for the model \eqref{original equation} have then been considered in \cite[Section 6.5]{phdthesis} (such as polynomial decaying solutions, Dirac measure solutions, and Dirac measure solutions with sufficiently fast decay in the $v$ variable) and it has been noticed at a formal level that the solutions can be expected to portray a self-similar behavior of the form $f_{\epsilon}(x,v,\tau)\approx \frac{1}{\tau}S_{\epsilon}\big(\frac{x}{\tau},v\big)$ as $\tau\rightarrow \infty$.

Motivated by the computations in \cite{phdthesis}, we make the following rescaling for solutions of \eqref{rain model}.
\begin{align}
f(x,v,\tau)=\frac{1}{\tau+1}H(y,v,t), \quad y:=\frac{x}{\tau+1}, \quad t:= \ln(\tau+1),
\end{align}
or, in other words, we have that
\begin{align}
H(y,v,t)=e^{t}f(x,v,\tau), \quad x:=e^{t}y, \quad \tau:=e^{t}-1.
\end{align}

Then $H$ satisfies
\begin{align}\label{v minus y rescaling}
\partial_{t}H(y,v,t)+\partial_{y}[(v^{\alpha}-y)H(y,v,t)]=\mathbb{K}[H](y,v,t).
\end{align}

Since the rain kernel \eqref{rain kernel intro} vanishes on the diagonal $\{v=w\}$, it is straightforward to check that a steady state of the model \eqref{v minus y rescaling} is given by $G(v)\delta(y-v^{\alpha})$.

\begin{ans}\label{ansatz} We thus make the following ansatz in the case when the coagulation kernel in \eqref{v minus y rescaling} is $K_{\epsilon}(v,w)=\epsilon(v^{\gamma}+w^{\gamma})$, for some $\epsilon<1$ small. For large times, solutions of the model \eqref{v minus y rescaling} can be expected to concentrate around $|y-v^{\alpha}|\ll 1$. Notice however that  $G(v)\delta(y-v^{\alpha})$ are not steady states of \eqref{v minus y rescaling} in this case. Nonetheless, our assumptions are motivated by the fact that $K_{\epsilon}$ and the rain kernel \eqref{rain kernel intro}  both behave like sum-type kernels with a small contribution  on the diagonal $\{v=v'\}$.
\end{ans}
 We thus formally assume $H$ is of the form 
\begin{align}\label{y v alpha limit form}
    H(y,v,t)=G(v,t)\delta(y-v^{\alpha}).
\end{align}
 We integrate the coagulation operator $\mathbb{K}$ in the $y$ variable. For the loss term $\mathbb{K}_{2}$ in \eqref{rain model}, we have
\begin{align*}\int_{\mathbb{R}} \mathbb{K}_{2}[H](y,v,t)\der y &=\int_{\mathbb{R}}\int_{(0,\infty)}K(v,w)G(v,t)\delta(y-v^{\alpha})G(w,t)\delta(y-w^{\alpha})\der w \der y\\
    &=   \int_{(0,\infty)}K(v,w)G(v,t)G(w,t)\delta(w^{\alpha}-v^{\alpha})\der w
    \\
    &=   \frac{v^{1-\alpha}}{\alpha}\int_{(0,\infty)}(v^{\gamma}+w^{\gamma})G(v,t)G(w,t)\delta(w-v)\der w
       \\
    &=\frac{2}{\alpha}v^{\gamma+1-\alpha}(G(v,t))^{2}.
\end{align*}
For the gain term $\mathbb{K}_{1}$ in \eqref{rain model}, it holds that
\begin{align}\label{the gain term dirac}
 \int_{\mathbb{R}}\mathbb{K}_{1}[H](y,v,t) \der y  &=\frac{1}{2}\int_{\mathbb{R}}\int_{(0,v)}K(v-w,w)G(v-w,t)\delta(y-(v-w)^{\alpha})G(w,t)\delta(y-w^{\alpha})\der w \der y\nonumber\\
    &=   \frac{1}{2}\int_{(0,v)}K(v-w,w)G(v-w,t)G(w,t)\delta((v-w)^{\alpha}-w^{\alpha})\der w 
   \nonumber \\
    &=   \frac{1}{4\alpha}\bigg(\frac{v}{2}\bigg)^{1-\alpha}\int_{(0,v)}((v-w)^{\gamma}+w^{\gamma})G(v-w,t)G(w,t)\delta(w-\frac{v}{2})\der w
    \nonumber   \\
    &=   \frac{1}{2\alpha}\bigg(\frac{v}{2}\bigg)^{\gamma+1-\alpha}\bigg(G\big(\frac{v}{2},t\big)\bigg)^{2}.
\end{align}
 Thus, $G$ satisfies the equation
\begin{align}\label{first diagonal}
    \partial_{t}G(v,t)=    \frac{2}{\alpha}\bigg[ \frac{1}{4}\bigg(\frac{v}{2}\bigg)^{\gamma+1-\alpha}\bigg(G(\frac{v}{2},t)\bigg)^{2}-v^{\gamma+1-\alpha}(G(v,t))^{2}\bigg].
\end{align}
\begin{rmk}\label{remark homogeneity decreases}
Thus, $G$ satisfies a coagulation equation in which the coagulation kernel is a diagonal kernel of the form $v^{\gamma+1-\alpha}\delta(v-w)$ multiplied by some constant. Notice that this means the kernel has homogeneity $\gamma-\alpha$ and thus the existence of solutions for this type of equation is well-known and has been proved in \cite{diagonal, leyvraz,diagonal2} when $\gamma-\alpha<1$.  
\end{rmk}
\subsection{Self-consistent argument for behavior of solutions as $t\rightarrow\infty$}
Equation \eqref{first diagonal} suggests (at least formally) that the function $G(v,t):=\int H(y,v,t)\der y$, with $H$ as in \eqref{v minus y rescaling}, satisfies a one-dimensional 
 coagulation model with a diagonal kernel of homogeneity $\gamma-\alpha<1$. We denote by  $[\cdot]$ the dimensional properties of a quantity. We analyze the scaling properties of mass-conserving solutions to \eqref{first diagonal}. Mass-conservation $\int  v G(v,t)\der v \equiv constant$ implies that
 \begin{align}\label{mass cons sol}
     [G][v]^{2}=1.
 \end{align}
 From \eqref{first diagonal}, we further deduce that
\begin{align}\label{scaling properties}
\frac{[G]}{[t]}=[v]^{\gamma-\alpha+1}[G]^{2}.
\end{align}
From \eqref{mass cons sol} and \eqref{scaling properties}, we have that 
 \begin{align*}
     [G]=\frac{1}{[v]^{\gamma-\alpha+1}[t]}\textup{ and } [v]=[t]^{\frac{1}{1-(\gamma-\alpha)}}.
 \end{align*} With Ansatz \ref{ansatz}, we have that $y$ should rescale as $v^{\alpha}$ and thus
 \begin{align*}
     [y]=[v]^{\alpha}=[t]^{\frac{\alpha}{1-(\gamma-\alpha)}}.
 \end{align*} Since we also want mass-conservation $\int \int v H(y,v,\tau)\der v \der y \equiv constant$, then the long-time behavior of solutions $H$ in \eqref{v minus y rescaling} is described by $g$, with $g$ having the following form.
\begin{align*}
    H(y,v,t)=\frac{1}{\tau^{\frac{2+\alpha}{1-(\gamma-\alpha)}}}g(Y,V, s), \textup{ with } Y:=\frac{y}{t^{\frac{\alpha}{1-(\gamma-\alpha)}}}, \quad V:=\frac{v}{t^{\frac{1}{1-(\gamma-\alpha)}}}, \quad  s=\ln t.
\end{align*}
Denoting by $\beta:=\frac{1}{1-(\gamma-\alpha)}$, we obtain that $g$ satisfies
\begin{align*}
   \frac{1}{\tau^{\beta(2+\alpha)+1}}& \big[\partial_{s}g(Y,V,s)-\beta(2+\alpha)g-\beta V\partial_{V}g-\alpha\beta Y\partial_{Y}g\big]+\tau^{-\beta(2+\alpha)}\partial_{Y}[(V^{\alpha}-Y)g]= \frac{\tau^{\beta(\gamma+1)}}{\tau^{2\beta(2+\alpha)}}\mathbb{K}[g]
\end{align*}
or alternatively since $\beta(\gamma-\alpha-1)+1=0$, it holds that
\begin{align*}
  \partial_{s}g(Y,V,s)-\beta(2+\alpha)g- \beta V\partial_{V}g-\alpha\beta Y\partial_{Y}g + e^{s}\partial_{Y}[(V^{\alpha}-Y)g]=\mathbb{K}[g](Y,V,s),
\end{align*}
where we remember that we are interested in the behavior for large times.

Notice that the transport term $e^{s}\partial_{Y}[(V^{\alpha}-Y)g]$ can be expected to give a large contribution as $s\rightarrow \infty$. This justifies the study of the equation
\begin{align*}
    \partial_{t}H_{\epsilon}(x,v,t)+\frac{1}{\epsilon}\partial_{x}[(v^{\alpha}-x)H_{\epsilon}(x,v,t)]=\mathbb{K}[H_{\epsilon}](x,v,t)
\end{align*} as a first natural step in order to study the long-time behavior of solutions to \eqref{rain model}.

\subsection{Connection with previous results for spatially inhomogeneous coagulation models}

Under these conditions, the natural approach in order to extend the existence result of mass-conserving solutions is thus to analyze the following rescaled version of the model \eqref{original equation}. Let $f_{\epsilon}$ be a solution of \eqref{original equation}. Take $H_{\epsilon}$ to be such that
\begin{align}\label{rain model to our model}
f_{\epsilon}(x,v,\tau)=\frac{1}{\tau+1}H_{\epsilon}(y,v,t), \quad y:=\frac{x}{\tau+1}, \quad t:=\epsilon \ln(\tau+1),
\end{align}
or, in other words, we have that
\begin{align}\label{our model to rain model}
H_{\epsilon}(y,v,t)=e^{\frac{t}{\epsilon}}f_{\epsilon}(x,v,\tau), \quad x:=e^{\frac{t}{\epsilon}}y, \quad \tau:=e^{\frac{t}{\epsilon}}-1.
\end{align}
Then $H_{\epsilon}$ satisfies
\begin{align}\label{diagonal equation}
\partial_{t}H_{\epsilon}(y,v,t)+\frac{1}{\epsilon}\partial_{y}[(v^{\alpha}-y)H_{\epsilon}(y,v,t)]=\mathbb{K}[H_{\epsilon}](y,v,t).
\end{align}

We analyze equation \eqref{diagonal equation} in more detail. We first remember the existence result for mass-conserving solutions in \cite{cristianinhom}. 
\begin{teo}[Local existence of mass-conserving solutions for small times]\label{main theorem} Let $K$ be as in \eqref{sum kernel} or \eqref{rain kernel intro} with $\alpha\in(0,1), \gamma\in[0,1+\alpha)$. Let $T>0$ be sufficiently small. Let $f_{\textup{in}}\in \textup{C}^{1}(\mathbb{R}\times\mathbb{R}_{> 0})$ such that
\begin{align*}
    f_{\textup{in}}(x,v)\leq \frac{A}{1+|x|^{m}+v^{p}},
\end{align*}
for some $A>0$, $x\in\mathbb{R}$, $v\in(0,\infty)$, and where $m=\alpha p$, $m$ even. Then there exists a mass-conserving solution $f$ for the equation (\ref{original equation}).
\end{teo}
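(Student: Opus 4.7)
The plan is to solve \eqref{original equation} via the method of characteristics combined with a contraction mapping argument in a suitable weighted $L^\infty$-space. Since the transport coefficient $v^\alpha$ depends only on $v$, the characteristics of the linear part are the straight lines $x(\tau)=x_0+v^\alpha\tau$, so that \eqref{original equation} is equivalent to the Duhamel fixed-point problem
\begin{align*}
f(x,v,\tau)=f_{\textup{in}}(x-v^\alpha\tau, v)+\epsilon\int_0^\tau \mathbb{K}[f]\bigl(x-v^\alpha(\tau-s), v, s\bigr)\,\der s.
\end{align*}
I would work inside the closed ball $B_{2A}\subset X_T$, where $X_T=C([0,T]; L^\infty(\mathbb{R}\times\mathbb{R}_{>0}))$ is equipped with the weighted norm $\|f\|_{X_T}=\sup_{\tau,x,v}(1+|x|^m+v^p)|f(x,v,\tau)|$. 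The compatibility condition $m=\alpha p$ with $m$ even plays two roles. First, since $\alpha m=\alpha^2 p<p$, the elementary inequality $|x-v^\alpha\tau|^m\leq C_m(|x|^m+v^{\alpha m}\tau^m)\leq C_m(|x|^m+(1+v^p)\tau^m)$ (together with its reverse) shows that the free-streaming term $f_{\textup{in}}(x-v^\alpha\tau,v)$ stays in $B_{A(1+C_m\tau^m)}\subset B_{2A}$ for $T$ small; second, the same balance reappears in the bound for the coagulation operator.

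The technical core is a bilinear estimate of the form $\|\mathbb{K}[f]\|_{X_T}\leq C\|f\|_{X_T}^2$. For the loss term, using $f\leq \|f\|_{X_T}/(1+|x|^m+v^p)$ one writes
\begin{align*}
\mathbb{K}_2[f](x,v,\tau)\leq \frac{\|f\|_{X_T}}{1+|x|^m+v^p}\int_0^\infty \frac{\|f\|_{X_T}\,K(v,w)}{1+|x|^m+w^p}\,\der w,
\end{align*}
and the $w$-integral is of order $1+v^\gamma$ as soon as $p$ is strictly bigger than $\gamma+1$. The factor $v^\gamma$ is then absorbed by $1+v^p$ at the price of a small loss $v^{\gamma-p}$ in the decay. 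The gain term $\mathbb{K}_1[f]$ is treated analogously, splitting the range $(0,v)$ at $w=v/2$ so that $(v-w)^\gamma+w^\gamma\leq 2v^\gamma$. The hypothesis $\gamma<1+\alpha$ is exactly what allows one to choose $p$ (and the corresponding even integer $m=\alpha p$) large enough for all the integrals to converge and the remaining decay powers to balance. For the rain kernel \eqref{rain kernel intro}, whose effective homogeneity $\alpha+\tfrac{2}{3}$ lies strictly below $1+\alpha$ for every $\alpha>0$, the same scheme applies with only cosmetic changes.

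Combining the two previous steps, the Duhamel map becomes a contraction on $B_{2A}$ once $\epsilon T$ is small enough, and the Banach fixed-point theorem yields a unique mild solution $f\in X_T$. Mass conservation $\partial_\tau\int_{\mathbb{R}}\int_0^\infty v f\,\der v\,\der x=0$ then follows from integration by parts in $x$ for the transport term (using $m$ even and $p$ large so that the boundary terms vanish) together with the classical symmetrization $(v,w)\mapsto(v-w,w)$ in the coagulation integral, the decay provided by the weight being enough to justify Fubini and the standard truncation argument. The main obstacle is precisely the coagulation estimate: for sum-type kernels with $\gamma>1$ the loss integral grows like $v^\gamma$, which is the same mechanism driving instantaneous gelation in the spatially homogeneous setting, and the only way to tame it with a polynomial weight is to exploit the smallness of $\epsilon T$. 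This is why the resulting existence is local in time, and why reaching the times of order $\mathcal{O}(e^{1/\epsilon})$ that are the real aim of the present paper requires the different strategy developed in the subsequent sections.
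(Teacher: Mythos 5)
Your Duhamel/contraction scheme does not close, because the bilinear estimate $\|\mathbb{K}[f]\|_{X_T}\leq C\|f\|_{X_T}^2$ that the Banach fixed-point argument requires fails in the tensor-product weighted space you propose. Plugging $f\leq\|f\|_{X_T}(1+|x|^m+v^p)^{-1}$ into the loss term gives
\begin{align*}
\mathbb{K}_2[f](x,v,\tau)\leq\frac{\|f\|_{X_T}^2}{1+|x|^m+v^p}\int_0^\infty\frac{v^\gamma+w^\gamma}{1+|x|^m+w^p}\,\der w\leq \frac{C\|f\|_{X_T}^2\,v^\gamma\,(1+|x|^m)^{\frac{1}{p}-1}}{1+|x|^m+v^p},
\end{align*}
so that $(1+|x|^m+v^p)\,\mathbb{K}_2[f]$ grows like $v^\gamma(1+|x|^m)^{\frac{1}{p}-1}$, which is unbounded in $v$ for every $\gamma>0$; the gain term gives the same obstruction. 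The fallback that ``the factor $v^\gamma$ is then absorbed by $1+v^p$ at the price of a small loss $v^{\gamma-p}$'' is false: the inequality $v^\gamma(1+|x|^m+v^p)^{-1}\leq C(1+|x|^m+v^{p-\gamma})^{-1}$ would require $v^\gamma|x|^m\lesssim|x|^m$, which clearly fails for $|x|,v\to\infty$. Even granting the loss-of-decay heuristic, iterating it destroys the $v$-weight after finitely many steps, and the map never returns to the ball $B_{2A}$. Smallness of $\epsilon T$ cannot rescue this, since no factor of $\epsilon T$ makes $\sup_v v^\gamma$ finite; the obstruction is structural, not perturbative.

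What this misses is precisely the coupling that the proof in \cite{cristianinhom} exploits, and which the present paper summarizes in Subsection~2.3 and generalizes in Section~4. Rather than estimating $\mathbb{K}[f]$ as a bounded bilinear map, that argument uses the monotonicity $K(v-w,w)\leq K(v,w)$ to dominate the coagulation operator by a transport term of the form $v^\gamma\,\partial_v f\cdot M_1(x,\tau)$, where $M_1(x,\tau)=\int w f(x,w,\tau)\,\der w$ is the first $v$-moment. The key point is that $M_1(x,\tau)$ decays polynomially in $x$ (inherited from the initial datum and preserved by transport), and it is this $x$-decay, not the $v$-weight, that tames the $v^\gamma$ growth: particles of large volume live at large $x$, where the solution has already decayed. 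The existence is then established through a supersolution/comparison argument along the coupled characteristics of the resulting transport problem. A weight of the form $(1+|x|^m+v^p)^{-1}$ that treats $x$ and $v$ independently cannot encode this coupling, which is why the naive fixed-point approach breaks down for $\gamma\geq 1$.
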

In our case, this translates to

\begin{table}[H]
    \centering
    \begin{tabular}{|c||c|}
    \hline
     $f_{\epsilon}$ satisfying (\ref{original equation})& $H_{\epsilon}$ satisfying (\ref{diagonal equation})\\
        \hline
        \hline
         existence for $\tau\in[0,T]$, with $T$ small &  existence for $t\in[0,\epsilon\ln(T+1)]$, with $T$ small\\
         \hline
    \end{tabular}
    \caption{Connection between the two equations}
    \label{tab:my_label}
\end{table}

\begin{rmk}\label{remark absence of uniform time}
Notice that the existence from the previous paper translates for our equation (\ref{diagonal equation}) to a time of existence of order $\epsilon$. This means that as $\epsilon\rightarrow 0$ (so in the limit) the time of existence goes to zero. Thus, we prove in this paper existence of mass-conserving solutions for equation (\ref{diagonal equation})  for a time of order one which is independent of $\epsilon$. This will imply existence of mass-conserving solutions for times of order $\mathcal{O}(e^{\frac{1}{\epsilon}})$ to the equation \eqref{original equation}.
\end{rmk}

There is a simple heuristic argument that explains why mass-conserving solutions to equation \eqref{diagonal equation} should exist. Assuming that the coagulation operator gives a negligible contribution, we look at the following PDE (for some suitable test function $\varphi$).
\begin{align*}
\partial_{t}\int H_{\epsilon}(y,v,t)\varphi(y,v,t)\der y \der v-\frac{1}{\epsilon}\int (v^{\alpha}-y)H_{\epsilon}(y,v,t)\partial_{y}\varphi(y,v,t)\der y \der v&=0,\\
H_{\epsilon}(y,v,0)&=H_{\textup{in}}(y,v),
\end{align*}
for which we know the limiting behavior (in some weak sense)
\begin{align}\label{behavior is dirac}
\lim_{\epsilon\rightarrow 0}H_{\epsilon}(y,v,t)&=H_{\textup{in}}(v^{\alpha},v).
\end{align}
We can thus assume formally that solutions to \eqref{diagonal equation} converge to a Dirac measure. More precisely, let us denote the limit of $H_{\epsilon}$ (in whichever sense and if it exists) by $H$ and that $H(y,v,t)=G(v,t)\delta(y-v^{\alpha})$. By the computations above, we obtain that $G$ can be expected to satisfy a coagulation equation in which the coagulation kernel is a  kernel of homogeneity $\gamma-\alpha<1$ for which existence is well-known.

It is also worthwhile to mention that despite the fact that one-dimensional coagulation equations with a diagonal collision kernel have already been the subject of study in the mathematical literature \cite{diagonal, diagonal2, leyvraz,bonacini1,bonacinipeak1,bonacinipeak2}, a motivation of how this type of kernels could appear has not yet been made. In addition, we remember that at the moment there are only few coagulation kernels for which the long-time behavior of solutions is known. These kernels are the constant kernel, the product kernel $K(v,w)=vw$, the sum kernel $K(v,w)=v+w$, the diagonal kernel, and perturbations of the constant kernel of the form $K(v,w)=2+\epsilon W(v,w)$, with $W(v,w)$ continuous, symmetric, bounded, and homogeneous of degree zero. The long-time behavior for general classes of kernels for coagulation equations remains an open problem. We refer to \cite{menonpego} for the first three kernels, to \cite{leyvraz, diagonal, diagonal2} for the diagonal kernel, and to \cite{thromcanizo, throm1} for perturbations of the constant kernel. We thus obtain a first physical intuition on how coagulation models with a diagonal kernel could emerge: as the limiting behavior of  coagulation models describing the sedimentation of particles \eqref{original equation} in the case of very fast sedimentation speed.

We conjecture that this limiting behavior can be proved rigorously. In the present paper, we rigorously prove that the sequence $\{H_{\epsilon}\}$ in \eqref{diagonal equation} converges to $H$ as in \eqref{y v alpha limit form} as $\epsilon\rightarrow 0$. The fact that $G$ in \eqref{y v alpha limit form} satisfies \eqref{first diagonal} will be the study of a future work. Nonetheless, as noticed in Remark \ref{remark absence of uniform time}, there is no result in the present literature proving that there is a uniform time (in $\epsilon$) of existence of mass-conserving solutions for \eqref{diagonal equation}. The first step for proving that $H_{\epsilon}$ converges to $H$ as in \eqref{y v alpha limit form}, with $G$ satisfying \eqref{first diagonal}, is thus to prove existence of mass-conserving solutions of \eqref{diagonal equation} for times of order one. This result is also included in the current work.

\subsection{Connection with coagulation models with fast fusion}\label{fast fusion subsection}
It is also worth to analyze the similarities of the model \eqref{original equation} with coagulation models with fusion.
\begin{align}
\partial_{t}P_{\epsilon}(a, v, t) +&\frac{1}{\epsilon}
\partial_{a}[(c_{0}v^{\frac{2}{3}} - a)P_{\epsilon}(a, v, t)] \nonumber\\
= & \int_{(0,a)} \int_{(0,v)}
K(a-a' , v-v', a' , v')P_{\epsilon}(a-a',v-v', t)P_{\epsilon}(a', v',t)\der v'
\der a' \nonumber\\
&- \int_{(0,\infty)} \int_{(0,\infty)}
K(a , v, a' , v')P_{\epsilon}(a,v, t)P_{\epsilon}(a', v',t)\der v'
\der a'.  \label{fusion equation}
\end{align}
The additional variable $a$ represents in this case the total surface area of the cluster and the transport term $\partial_{a}[(c_{0}v^{\frac{2}{3}} - a)P_{\epsilon}(a, v, t)]$  indicates an evolution of the particles towards a spherical shape. The term $c_{0}v^{\frac{2}{3}} - a$  decreases the surface area of the clusters as long as it is larger than that of a sphere $c_{0}v^{\frac{2}{3}}$. Physically, the transport term describes the merging of two colliding particles up to when they become a sphere and it is hence termed \textit{fusion}.

It was proved in \cite{cristian2023fast} that for the model \eqref{fusion equation} we have  that the sequence $\{P_{\epsilon}\}_{\epsilon\in(0,1)}$ converges to $\overline{P} = P(v, t)\delta(a -c_{0}v^{\frac{2}{3}} )$ as $\epsilon\rightarrow 0$,
with $P$ satisfying the weak formulation of the standard one-dimensional coagulation equation
\begin{align}\label{limit fusion}
\int_{(0,\infty)}& P(v, t)\varphi (v)\der v - \int_{(0,\infty)} P(v, 0)\varphi (v)\der v\\
&=
\int_{0}^{t} \int_{(0,\infty)} \int_{(0,\infty)}
K(c_{0}v^{\frac{2}{3}} , v, c_{0}v'^{\frac{2}{3}} , v')P(v, s)P(v',s)[\varphi(v + v') - \varphi(v) - \varphi(v')]\der v'
\der v \der s, \nonumber
\end{align}
for all appropriately chosen test functions $\varphi$.
A clear similarity can be noticed between the model in \eqref{diagonal equation} and the equation \eqref{fusion equation}. However, the spatial inhomogeneity of the model \eqref{diagonal equation} justifies the presence of the diagonal kernel in the limit \eqref{first diagonal}, as opposed to the model in \eqref{limit fusion}. More precisely,
assuming that the limit of $H_{\epsilon}$, with $H_{\epsilon}$ as in \eqref{diagonal equation}, behaves indeed like a Dirac measure in the space variable, this implies that as we pass to the limit as $\epsilon\rightarrow 0$ in \eqref{diagonal equation}, only particles having the same size can interact due to the space inhomogeneity of the model.

Moreover, the same methods as in \cite{cristian2023fast} cannot be
used to prove the result of this paper. One issue stems from the fact that our results in relation to the model \eqref{fusion equation} are valid for Radon measure solutions. While it is possible to define Radon measure solutions for the model \eqref{fusion equation}, we need $G$ to be continuous in \eqref{first diagonal} in order to rigorously pass to the limit as $\epsilon\rightarrow 0$ in \eqref{diagonal equation}. Nonetheless, comparing the two results offers a clear overview into the understanding of general existence theory for coagulation models. If we work with a coagulation kernel of form $K(v,v')=v^{\gamma}+v'^{\gamma}$, $\gamma>1$, in \eqref{fusion equation}, we obtain in the limit a coagulation kernel of the same homogeneity. This implies that, since solutions do not exist for the model \eqref{limit fusion} when $K(v,v')=v^{\gamma}+v'^{\gamma}$, in \eqref{fusion equation}, we cannot expect solutions of \eqref{fusion equation} to exist in that case (unless we choose a stronger fusion speed). On the other hand, when we choose a coagulation kernel of the form $K(v,v')=v^{\gamma}+v'^{\gamma}$, $1<\gamma<1+\alpha$, in \eqref{diagonal equation}, we obtain in the limit \eqref{first diagonal} a coagulation kernel of homogeneity $\gamma-\alpha<1$ as noticed in Remark \ref{remark homogeneity decreases}. We can thus expect solutions to exist and to behave like a product between a Dirac measure and a function in the volume (or a mollified version of this).

This clearly establishes the following point. The contribution given by a transport term does not suffice to assure existence of mass-conserving solutions of coagulation models (in the case where existence of solutions is not known in the standard case). Rather, both the presence of a transport term and a spatially inhomogeneous model are needed.

\section{Local in time existence}\label{section two}
\subsection{Setting and main results}
\begin{ass}[Assumptions on the initial data]\label{assumption initial condition}
    Let $\epsilon\in(0,1)$, $\alpha\in(0,1)$. Assume that $H_{\epsilon,\textup{in}}\in C^{1}(\mathbb{R}\times(0,\infty))$ and that there exists some constant $A>0$ such that
\begin{align}\label{initial condition decay}
   (1+v^{b}) H_{\epsilon,\textup{in}}(y,v)\leq \frac{A}{1+|y-v^{\alpha}|^{m}},
\end{align}
for all $y\in\mathbb{R}$, $v\in(0,\infty)$, and for sufficiently large $m,b>2$.
\end{ass}
\begin{ass}[Assumptions on the coagulation kernel]\label{assumption kernel} We work with a 
coagulation kernel
\begin{align}\label{upper bound coagulation kernel}
   0\leq  K(v,v')= K(v',v)\leq K_{0}(v^{\gamma}+v'^{\gamma}), \quad 1<\gamma<1+\alpha,
\end{align}
such that
\begin{align}\label{standard condition}
    K(v-v',v')\leq K(v,v'), \textup{ for all } 0\leq v'\leq \frac{v}{2}.
\end{align}
\end{ass}
\begin{rmk}
Condition (\ref{standard condition}) is a rather standard assumption in the study of coagulation equations, see for example \cite{diffusion}, and most of the kernels used in applications satisfy this condition. Notice that Assumption \ref{assumption kernel} implies that our results hold for sum-type kernels of homogeneity $\gamma>1$ which are relevant since there are no solutions to the standard coagulation model for these type of kernels. Moreover, Assumption \ref{assumption kernel} is also valid for sum-type kernels which vanish on the diagonal, such as \eqref{rain kernel intro}, which are used in the physical literature to describe droplet formation and the sedimentation of particles. We refer to \cite{physicspaper} for more details on how these kernels are derived.
\end{rmk}
\begin{defi}[Mild solutions]\label{mild solution definition}
    Let $\epsilon\in(0,1)$, $\alpha\in(0,1)$, $\gamma\in[0,1+\alpha)$, and $m\in\mathbb{N}$, sufficiently large. Let $T>0$ and $K$ satisfy Assumption \ref{assumption kernel}. We say that a non-negative function $H_{\epsilon}\in\textup{C}([0,T];L^{
    \infty}(\mathbb{R}\times (0,\infty)))$ 
is a mild solution of equation (\ref{diagonal equation}) if
\begin{align}\label{mild solution equation}
&H_{\epsilon}(y,v,t)-S_{\epsilon}(t)[H_{\epsilon}(y,v,0) ] D[H_{\epsilon}](y,v,0,t)=\\
&\frac{1}{2}\int_{0}^{t}\int_{(0,v)}D[H_{\epsilon}](y,v,s,t)
S_{\epsilon}(t-s)\bigg[K(v-v',v')H_{\epsilon}(y,v-v',s) H_{\epsilon}(y,v',s)\bigg]\der s,\nonumber 
\end{align}
for all $t\in[0,T]$, $v\in(0,\infty)$  and $y \in\mathbb{R}$, where
\begin{align}\label{definition s mild}
  D[H_{\epsilon}](y,v,s,t):=  \textup{e}^{-\int_{s}^{t}a[H_{\epsilon}](e^{\frac{t-\tau}{\epsilon}}(y-v^{\alpha})+v^{\alpha},v,\tau)\der\tau},
\end{align}
with
\begin{align}\label{general a f def}
a[H_{\epsilon}](y,v,t):=\int_{(0,\infty)}K(v,v')H_{\epsilon}(y,v',t)\der v',
\end{align}
and
\begin{align}
    S_{\epsilon}(s)H_{\epsilon}(y,v):=e^{\frac{s}{\epsilon}}H_{\epsilon}(e^{\frac{s}{\epsilon}}(y-v^{\alpha})+v^{\alpha},v).\label{definition semigroup}
\end{align}
\end{defi}

\begin{defi}\label{mass cons defi sol}
  Let $\epsilon\in(0,1)$. We call $H_{\epsilon}$ a mass-conserving solution of equation  (\ref{diagonal equation}) if $H_{\epsilon}$ is as in Definition \ref{mild solution definition} and satisfies in addition
   \begin{align*}
              \int_{\mathbb{R}}\int_{(0,\infty)}vH_{\epsilon}(y,v,t)\der v\der y= \int_{\mathbb{R}}\int_{(0,\infty)}vH_{\epsilon}(y,v,0)\der v\der y   \textup{ for all } t\in[0,T].
   \end{align*}
\end{defi}
\noindent\textbf{Some notation.} For two quantities $x,y$, we use the notation
$x \approx y$ to say that there exists a constant $M > 1$ such that $\frac{1}{M}y
\leq x \leq M y$.  Additionally, we denote
$x \gg y$ or $y \ll x$ to mean that $M y\leq x$, for a sufficiently large constant $M>1$.

In Subsection \ref{subsection region above}, the dependence on the parameters plays an important role. We thus introduce the following notation which is valid throughout Subsection \ref{subsection region above}. We write $C$ for a numerical constant that can change from line to line but which does not depend on the parameters $\alpha,\gamma,b,m$, where $\alpha,\gamma,b,m$ are as in Assumption \ref{assumption initial condition} and Assumption \ref{assumption kernel}. In addition, we write $C(a_{1},\cdots,a_{n})$ for a constant that can change from line to line which depends on the parameters $a_{1},\cdots,a_{n}$.

In Subsection \ref{proof of existence heuristics} and in Subsection \ref{approx formal subsection}, we offer some formal arguments to justify the proofs in this paper. For this purpose, for two quantities $x$ and $y$, we introduce the notation $x \sim y$  to indicate that $x$ can be approximated at a formal level by $y$. This notation will be used in Subsection \ref{proof of existence heuristics} and in Subsection \ref{approx formal subsection}.

Let $A\geq 1$. Let $j\in\{1,2\}$. We denote by 
\begin{align}
    M_{j} &:=M_{j}(\alpha,\gamma,b,m), \textup{ a sufficiently large but fixed constant which depends on } \alpha,\gamma,b,m;\label{defm2}\\
    C_{0} &:=C_{0}(A), \textup{ a constant  such that } C_{0}^{m-1}:=M_{1}(\alpha,\gamma,b,m)A\label{defc0}.
\end{align}
We remark that $M_{2}$ is chosen as in \eqref{def m2} and that $M_{1}$ depends on $M_{2}$ through \eqref{choice of m1}. We then define the functions $\psi, T_{1},T_{2},$ and $T_{3}$ as below.
\begin{align}
        \psi(z)&=\frac{1}{1+|z|^{m}};\label{psi definition}\\
  T_{1}(y,v,t)&:=  \frac{2A}{1+v^{b}}e^{\frac{t}{\epsilon}}\psi(e^{\frac{t}{\epsilon}}(y-v^{\alpha}))     \chi_{1}(y,v,t);\label{def t1}\\
   T_{2}(y,v,t)&:=\frac{2M_{1}A^{3} \epsilon}{(1+v^{b})|y-v^{\alpha}|} \chi_{2}(y,v,t);\label{def t2}\\
     T_{3}(y,v,t)&:=  \frac{M_{2}A}{1+v^{b}}e^{\frac{t}{\epsilon}}\psi(e^{\frac{t}{\epsilon}}(y-v^{\alpha}))  \chi_{3}(y,v,t);\label{def t3}\\
     \chi_{1}(y,v,t)&:=\mathbbm{1}_{\{y\geq v^{\alpha}-C_{0}^{-1}\epsilon^{-\frac{1}{m-1}}  e^{-\frac{t}{\epsilon}}\}};\label{chi1}\\
   \chi_{2}(y,v,t)&:= \mathbbm{1}_{\{(\frac{v}{3})^{\alpha}\leq y\leq v^{\alpha}-C_{0}^{-1}\epsilon^{-\frac{1}{m-1}}  e^{-\frac{t}{\epsilon}}\}} ;\label{chi2}\\
     \chi_{3}(y,v,t)&:=\mathbbm{1}_{\{y\leq (\frac{v}{3})^{\alpha}\}}.\label{chi3}
\end{align}
Moreover, we define
 \begin{align}\label{def overline d}
 \overline{d}:=2\bigg\lceil \frac{1}{2}\bigg(\frac{\alpha}{\alpha+1-\gamma}-1\bigg)\bigg\rceil,
 \end{align} where  $\lceil\cdot \rceil$ denotes the ceiling function. We then define  $\overline{b}(\gamma,\alpha)$ to be 
 \begin{align}\label{definition overline b}
     \overline{b}(\gamma,\alpha)=\alpha \overline{d}+2.
 \end{align} 
\begin{rmk}
Notice that
\begin{align*}
 T_{2}(y,v,t):=\frac{2C_{0}^{m-1}A^{2} \epsilon}{(1+v^{b})|y-v^{\alpha}|} \chi_{2}(y,v,t).
\end{align*}
The reason for introducing $C_{0}(A)$ is since the cut-offs $\chi_{1}, \chi_{2}$ in \eqref{chi1}-\eqref{chi2} (and regions included in the respective sets) will play an important role in the estimates. We refer to \eqref{c0 initial mention} and \eqref{most important c0 dependence}. Thus, we introduce $C_{0}$ in order to simplify the notation.
\end{rmk}
The main result of this paper is as follows.
\begin{teo}\label{proposition existence via fixed point}
   Let $H_{\epsilon,\textup{in}}$ be as in Assumption \ref{assumption initial condition} and $K$ as in Assumption \ref{assumption kernel}. Let $b\geq \max\{\overline{b}(\gamma,\alpha), $ $2\gamma+1\}$ and $m>\max\{\frac{2(\gamma+1)}{\alpha},\frac{b}{\alpha}+1\}$, where  $\overline{b}(\gamma,\alpha)$ is as in \eqref{definition overline b}. There exist two constants $M_{1}(\alpha,\gamma,b,m), M_{2}(\alpha,\gamma,b,m),$ depending only on $\alpha,\gamma,b,m,$ such that for any $A\geq 1$, there exist $\overline{\epsilon}\in(0,1)$, sufficiently small, and  $T>0$, sufficiently small, which is independent of $\epsilon\in(0,1),$ such that  for all $\epsilon\in(0,\overline{\epsilon}]$ there exists a mass-conserving mild solution $H_{\epsilon}\in\textup{C}([0,T];L^{\infty}(\mathbb{R}\times(0,\infty)))$ to (\ref{diagonal equation}) as in Definition \ref{mass cons defi sol}. Moreover, we have that $H_{\epsilon}$ satisfies the following estimates for $C_{0}$ as in \eqref{defc0}.
   
   If $e^{\frac{t}{\epsilon}}v^{\alpha}\big[1-\frac{1}{3^{\alpha}}\big]\geq  C_{0}^{-1}\epsilon^{-\frac{1}{m-1}} $, then
    \begin{align}\label{existence space fixed point}
   0\leq &(1+v^{b}) H_{\epsilon}(y,v,t)\leq  T_{1}(y,v,t)+T_{2}(y,v,t)+T_{3}(y,v,t).
    \end{align}
Otherwise, if  $e^{\frac{t}{\epsilon}}v^{\alpha}\big[1-\frac{1}{3^{\alpha}}\big]\leq  C_{0}^{-1}\epsilon^{-\frac{1}{m-1}} $, then
\begin{align}
   0\leq (1+v^{b}) H_{\epsilon}(y,v,t)\leq 2Ae^{\frac{t}{\epsilon}}\psi(e^{\frac{t}{\epsilon}}(y-v^{\alpha})).
    \end{align}
\end{teo}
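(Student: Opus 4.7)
The natural approach is a Banach fixed-point argument on the mild formulation \eqref{mild solution equation}. I would define the operator
\begin{align*}
\mathcal{F}[H](y,v,t) := S_{\epsilon}(t)[H_{\epsilon,\textup{in}}(y,v)]\,D[H](y,v,0,t) + \frac{1}{2}\int_{0}^{t}\int_{(0,v)}D[H](y,v,s,t)\,S_{\epsilon}(t-s)\bigl[K(v-v',v')H(\cdot,v-v',s)H(\cdot,v',s)\bigr]\der v'\der s,
\end{align*}
acting on the closed subset $\mathcal{X}_{T}\subset \textup{C}([0,T];L^{\infty}(\mathbb{R}\times(0,\infty)))$ of non-negative $H$ satisfying the pointwise bounds stated in the theorem (with the dichotomy controlled by whether the support region of $\chi_{2}$ in \eqref{chi2} is empty). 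The plan is to show $\mathcal{F}(\mathcal{X}_{T})\subset\mathcal{X}_{T}$ for some $T>0$ independent of $\epsilon$, and then to establish contraction on a slightly smaller ball.

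The guiding structural observation is that the semigroup in \eqref{definition semigroup} acts by pulling back along characteristics which contract toward the curve $\{y=v^{\alpha}\}$ at rate $e^{-s/\epsilon}$, while the prefactor $e^{s/\epsilon}$ concentrates mass onto it. Applied to the initial datum, and using $D[H]\leq 1$ (which follows from non-negativity of $a[H]$), the linear part of $\mathcal{F}[H]$ produces exactly the profile $T_{1}$ in \eqref{def t1} with coefficient $A$ instead of $2A$. The heart of the proof is thus to show that the Duhamel coagulation contribution is absorbed by $T_{2}+T_{3}$ plus the remaining half of $T_{1}$ in the near-diagonal region.

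I would analyze the Duhamel term separately on the supports of $\chi_{1},\chi_{2},\chi_{3}$. In the near-diagonal region $\chi_{1}$, a factor $T$ is extracted from the outer time integral and the inner $\der v'$ integral is bounded using Assumption \ref{assumption kernel} together with the polynomial tail in $v$ of the candidate bounds. In the intermediate region $\chi_{2}$ the essential calculation is
\begin{align*}
\int_{0}^{t} e^{(t-s)/\epsilon}\psi\bigl(e^{(t-s)/\epsilon}(y-v^{\alpha})\bigr)\der s = \frac{\epsilon}{|y-v^{\alpha}|}\int_{|y-v^{\alpha}|}^{e^{t/\epsilon}|y-v^{\alpha}|}\frac{\der r}{1+r^{m}} \lesssim \frac{\epsilon}{|y-v^{\alpha}|},
\end{align*}
which explains the $\epsilon/|y-v^{\alpha}|$ scaling of $T_{2}$ in \eqref{def t2}: the quadratic coagulation interaction of two $T_{1}$-type profiles, transported through $S_{\epsilon}(t-s)$ and integrated in time, produces exactly this singularity, and the cut-off threshold $C_{0}^{-1}\epsilon^{-1/(m-1)}e^{-t/\epsilon}$ in \eqref{chi2} is tuned so that the resulting prefactor matches $2M_{1}A^{3}\epsilon$. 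In the far region $\chi_{3}$, the characteristics starting at $y\leq(v/3)^{\alpha}$ remain away from the diagonal throughout $[0,t]$, so one estimates the integrand by the product of $T_{3}$-type profiles and closes the loop via \eqref{standard condition} to bound the gain by the loss.

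The main obstacle will be the self-consistent choice of $M_{1}$, $M_{2}$, $C_{0}$ guaranteeing that, on each of the three regions, the nine cross-products $T_{i}(y,v-v',s)T_{j}(y,v',s)$ with $i,j\in\{1,2,3\}$ integrate to at most half of $T_{1}+T_{2}+T_{3}$. The restrictions $b\geq\max\{\overline{b}(\gamma,\alpha),2\gamma+1\}$ and $m>\max\{2(\gamma+1)/\alpha,b/\alpha+1\}$ are precisely what is needed to ensure that the $v'$-integrals of $v'^{\gamma-b}$ and $(v-v')^{\gamma-b}$ converge with sufficient margin and that the polynomial decay in $|y-v^{\alpha}|$ survives the cross-region bookkeeping. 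Once invariance of $\mathcal{X}_{T}$ is established, contraction follows by applying the same estimates to $\mathcal{F}[H_{1}]-\mathcal{F}[H_{2}]$ with an extra factor of $T$. Mass conservation is finally obtained by testing the mild equation against $v\,\mathbbm{1}_{\{v\leq R\}}$, using the $1/v^{b}$ decay inherited from $T_{1}+T_{2}+T_{3}$ to let $R\to\infty$, and symmetrizing $(v',v-v')\leftrightarrow(v-v',v')$ to cancel the coagulation contribution.
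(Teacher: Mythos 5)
Your proposal is in the right spirit in the near-diagonal region (the computation giving the $\epsilon/|y-v^{\alpha}|$ singularity of $T_{2}$ is indeed Proposition \ref{prop semigroup decay}, and the dichotomy of nine cross-products $T_{i}T_{j}$ is exactly what the paper estimates), but it contains two gaps which the paper specifically designs around, and as stated would not close.

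The first is the contraction step. You assert that after invariance is established, ``contraction follows by applying the same estimates to $\mathcal{F}[H_{1}]-\mathcal{F}[H_{2}]$ with an extra factor of $T$.'' This is false for a $T$ independent of $\epsilon$, and the reason is structural: the functions in $\mathcal{X}_{T}$ carry an $e^{t/\epsilon}$ amplitude, so the natural Lipschitz estimate for $\mathcal{F}[H_{1}]-\mathcal{F}[H_{2}]$ produces a contraction constant of order $Te^{1/\epsilon}$, not $T$. This is exactly the obstruction the paper names in Section \ref{section five} and \eqref{first bound rn}: the naive iterate difference satisfies $|R_{n}(t)|\leq (Cte^{1/\epsilon})^{n}$, which only converges for $t\approx e^{-1/\epsilon}$. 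To upgrade this to a time of order one, the paper introduces the time-translation and Duhamel re-setting argument in Steps 3 and 4 of Section \ref{section five} (equations \eqref{tilde rn}, \eqref{equation pn}), which exploits that the translated residual inherits an initial value of size $2^{-n}$ and re-runs the short-time estimate iteratively. Without that mechanism a fixed-point argument over $[0,T]$ with $T$ independent of $\epsilon$ cannot close, so your ``slightly smaller ball'' step is a genuine hole rather than a routine variant.

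The second gap is in the far region $y\leq(\tfrac{v}{3})^{\alpha}$. You propose to ``estimate the integrand by the product of $T_{3}$-type profiles and close the loop via \eqref{standard condition} to bound the gain by the loss.'' But the gain term shifts the support of the product toward $\{y=(\tfrac{v}{2})^{\alpha}\}$, and for $y/v^{\alpha}\ll 1$ the convolution of two $T_{3}$-profiles destroys the $(1+v^{b})^{-1}$ decay that mass conservation requires --- this is precisely the loss-of-decay issue described in Subsection \ref{proof of existence heuristics}. The paper does not attempt a direct estimate there; instead it constructs a supersolution $G_{\epsilon}$ satisfying the transport equation \eqref{equation supersolution} with an additional drift in $v$, and the entire content of Section \ref{section four} (moment estimates in Proposition \ref{moment estimates prop}, the ODE analysis of the characteristics in Propositions \ref{proposition about inequalities}--\ref{bounds derivative prop}, and the comparison in Proposition \ref{main statement region below}) is devoted to showing that $G_{\epsilon}$ dominates the iterate while retaining the $v$-decay. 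Your one-line treatment of that region glosses over what is arguably the hardest part of the proof. Finally, note that the paper also needs a compact-support truncation (Proposition \ref{prop compact support functions}) to justify a uniform continuity radius $\tilde{\delta}$ in the bootstrap, an ingredient absent from your plan.
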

The bounds in \eqref{existence space fixed point} can be explained as follows. Since our functions $H_{\epsilon}$, $\epsilon\in(0,1)$, approach a Dirac measure supported on the curve $\{y=v^{\alpha}\}$ as $\epsilon\rightarrow 0$, we expect that most of the mass of the solution is contained in the region $|y-v^{\alpha}|\ll 1$. Outside this region, we expect the function to have small values and to approach $0$ as $\epsilon\rightarrow 0$. In other words, since $H_{\epsilon}$ can be thought of as a mollifier approaching the region $\{y=v^{\alpha}\}$, most of the mass of $H_{\epsilon}$ will be contained in  the region where $e^{\frac{t}{\epsilon}}|y-v^{\alpha}|\ll 1$. In other words, our choice of space is a rescaled version of the function
\begin{align*}
\frac{1}{1+|y-v^{\alpha}|^{m}}\mathbbm{1}_{\{|y-v^{\alpha}|\leq 1\}}+\frac{\epsilon}{|y-v^{\alpha}|}\mathbbm{1}_{\{|y-v^{\alpha}|\geq 1\}}.
\end{align*}

\begin{rmk}\label{remark time change depending on the mass}
If we denote by
    \begin{align*}
        T_{\textup{min}}:=\sup_{t\geq 0}\bigg\{\int_{\mathbb{R}}\int_{(0,\infty)}vH_{\epsilon}(y,v,t)\der v \der y=\int_{\mathbb{R}}\int_{(0,\infty)}vH_{\epsilon}(y,v,0)\der v \der y \bigg\},
    \end{align*}
    then $T_{\textup{min}}>0$ and $T_{\textup{min}}$ is independent of the choice of $\epsilon\in(0,\overline{\epsilon}),$ where $\overline{\epsilon}$ is as in Theorem \ref{proposition existence via fixed point}. In addition, the choice of $T_{\textup{min}}$ and $\overline{\epsilon}$ depends on the total mass of particles at initial time through the choice of $A$. For the exact choices of $T_{\textup{min}}$  and $\overline{\epsilon}$, we refer to the Proof of Proposition \ref{main ingredient}.
\end{rmk}
\begin{rmk}\label{we need function to be even minimum recovered}
 Notice that $\overline{d}$ in \eqref{def overline d} is even and $\gamma\leq \frac{\overline{d}\alpha}{\overline{d}+1}+1$. This will be needed in Proposition \ref{der v g negative} and in Proposition \ref{main statement region below}.
\end{rmk}
\begin{rmk}
As already mentioned in Remark \ref{remark absence of uniform time}, Theorem \ref{proposition existence via fixed point}  yields existence of mass-conserving solutions for times of order $\mathcal{O}(e^{\frac{1}{\epsilon}})$ to the equation \eqref{original equation}. We refer to Appendix \ref{appendix rescaled version of the results} for more details. 
\end{rmk}

We summarize below the results in \cite{cristianinhom} and Theorem \ref{proposition existence via fixed point}. We notice that the order of the time of existence can be improved if the upper bound of $H_{\epsilon}$ contains instead the error term $T_{2}$ as defined in \eqref{def t2}. While $T_{2}(y,v,t)$ is larger than $\frac{Ae^{\frac{t}{\epsilon}}\chi_{2}(y,v,t)}{(1+v^{b})(1+e^{\frac{tm}{\epsilon}}|y-v^{\alpha}|^{m})}$, it still vanishes as $\epsilon\rightarrow 0$ when $y$ and $v$ are supported away from the line $\{y=v^{\alpha}\}$.
\begin{table}[H]
    \centering
    \begin{tabular}{|c||c|}
    \hline
  Results in \cite{cristianinhom}& Theorem \ref{proposition existence via fixed point}\\
        \hline
        \hline
       existence for $t\in[0,\epsilon\ln(\tilde{T}+1)]$, with $\tilde{T}$ small & existence for $t\in[0,T]$, with $T$ small \\
         \hline
        $H_{\epsilon,\textup{in}}(y,v)\leq \frac{A}{1+|y|^{m}+v^{p}}$ &     $H_{\epsilon,\textup{in}}(y,v)\leq \frac{A}{(1+v^{b})(1+|y-v^{\alpha}|^{m})}$\\
        \hline
              $H_{\epsilon}(y,v,t)\leq \frac{C Ae^{\frac{t}{\epsilon}}}{1+|e^{\frac{t}{\epsilon}}(y-v^{\alpha})+v^{\alpha}|^{m}+v^{p}},$ for some $C>0$ &     $H_{\epsilon}(y,v,t)\leq \frac{C Ae^{\frac{t}{\epsilon}}[\chi_{1}(y,v,t)+\chi_{3}(y,v,t)]}{(1+v^{b})(1+e^{\frac{tm}{\epsilon}}|y-v^{\alpha}|^{m})}+T_{2}$\\ \hline
    \end{tabular}
    \caption{Results in \cite{cristianinhom} in comparison to Theorem \ref{proposition existence via fixed point}}
    \label{tab:my_label two}
\end{table}

\begin{rmk} Notice that if in \eqref{initial condition decay} we have that
 $(1+v^{b}) H_{\epsilon,\textup{in}}(y,v)\leq \frac{B}{1+|y-v^{\alpha}|^{m}}$, with $B<1$, then $(1+v^{b}) H_{\epsilon,\textup{in}}(y,v)\leq \frac{1}{1+|y-v^{\alpha}|^{m}}$ and we can apply Theorem \ref{proposition existence via fixed point} with $A=1$ in order to obtain existence of mass-conserving solutions in this case. 
 \end{rmk}
 
Actually, we can argue in a similar manner as in the case $A\geq 1$ in order to obtain that if $(1+v^{b}) H_{\epsilon,\textup{in}}(y,v)\leq \frac{A}{1+|y-v^{\alpha}|^{m}}$, with $A<1$, then the following holds. For $C_{0}^{m-1} =M_{1}(\alpha,\gamma,b,m)$, if $e^{\frac{t}{\epsilon}}v^{\alpha}\big[1-\frac{1}{3^{\alpha}}\big]\geq  C_{0}^{-1}\epsilon^{-\frac{1}{m-1}} $, then
    \begin{align}\label{estimate for a smaller than one}
 (1+v^{b}) H_{\epsilon}(y,v,t)\leq  T_{1}(y,v,t)+T_{3}(y,v,t)+\frac{2C_{0}^{m-1} \epsilon}{(1+v^{b})|y-v^{\alpha}|} \chi_{2}(y,v,t),
    \end{align}
where $T_{1}, T_{3}$ are as in \eqref{def t1} and \eqref{def t3}. Otherwise, if  $e^{\frac{t}{\epsilon}}v^{\alpha}\big[1-\frac{1}{3^{\alpha}}\big]\leq  C_{0}^{-1}\epsilon^{-\frac{1}{m-1}} $, then
\begin{align}
   0\leq (1+v^{b}) H_{\epsilon}(y,v,t)\leq 2Ae^{\frac{t}{\epsilon}}\psi(e^{\frac{t}{\epsilon}}(y-v^{\alpha})).
    \end{align}

Notice that this estimate provides accurate information about the distribution of mass of the solution $H_{\epsilon}(y,v,t)$. The term in \eqref{estimate for a smaller than one} which plays a role similar to $T_{2}$ does not contain any dependence on $A$. Therefore, if $A$ is very small, it yields a contribution that does not vary depending on $A$. 

Nevertheless, as explained above,  $T_{2}$ is to be understood as an error term since it vanishes as $\epsilon\rightarrow 0$ when $y$ and $v$ are supported away from the line $\{y=v^{\alpha}\}$. Thus, despite the fact that $T_{2}$ loses the dependence on $A$ when $A<1$, it has a small contribution. In other words, we can consider at a formal level that we stay close to the mass at initial time also in the case $A<1$.

Nonetheless, as stated in Remark \ref{remark time change depending on the mass}, the time of existence varies depending on $A$ when $A\geq 1$.  Because $T_{2}$ does not depend on $A$ when $A<1$, the minimal time of existence will not vary depending on $A$ when $A<1$. More precisely, for $T_{\textup{min}}$ as in Remark \ref{remark time change depending on the mass}, we have that $T_{\textup{min}}$ when $A<1$ is equal to $T_{\textup{min}}$ when $A=1$.

\vspace{0.2cm}

As conjectured in the Introduction, we obtain that the solutions $H_{\epsilon}$ found in Theorem \ref{proposition existence via fixed point} converge to a Dirac measure.
\begin{teo}\label{teo convergence}
  Let $H_{\epsilon,\textup{in}}$ be as in Assumption \ref{assumption initial condition} and $K$ as in Assumption \ref{assumption kernel}. Let $T>0$ and $H_{\epsilon}\in\textup{C}([0,T];L^{\infty}(\mathbb{R}\times(0,\infty)))$ be as in Theorem \ref{proposition existence via fixed point}. Let $\sigma>0$ and assume in addition that $b>\gamma+3$. There exists a subsequence (which we do not relabel) and $H\in\textup{C}([\sigma,T];  \mathscr{M}_{+}(\mathbb{R}\times(0,\infty)))$, where we denoted by $  \mathscr{M}_{+}(\mathbb{R}\times(0,\infty))$ the space of non-negative Radon measures, such that
\begin{align*}
\int_{\mathbb{R}}\int_{(0,\infty)}H_{\epsilon}(y,v,t)\varphi(y,v)\der v\der y\rightarrow \int_{\mathbb{R}}\int_{(0,\infty)}H(y,v,t)\varphi(y,v)\der v\der y,
\end{align*}
as $\epsilon\rightarrow 0$, for every $t\in[\sigma,T]$, and every $\varphi\in\textup{C}_{0}(\mathbb{R}\times(0,\infty))$. Moreover, for every $t\in[\sigma,T]$, $H$ has the form 
\begin{align}\label{form of limit}
H(y,v,t)=\overline{H}(v,t)\delta(y-v^{\alpha})
\end{align}
in the sense of measures. More precisely, there exists $\overline{H}\in\textup{C}([\sigma,T];\mathscr{M}_{+}((0,\infty)))$ such that
\begin{align*}
\int_{\mathbb{R}}\int_{(0,\infty)}H(y,v,t)\varphi(y,v)\der v \der y=\int_{\mathbb{R}}\int_{(0,\infty)}\overline{H}(v,t)\delta(y-v^{\alpha})\varphi(y,v)\der v\der y,
\end{align*}
for every $t\in[\sigma,T]$ and $\varphi\in\textup{C}_{0}(\mathbb{R}\times(0,\infty))$.
\end{teo}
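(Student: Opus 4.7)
The plan is to extract a subsequential limit of $H_{\epsilon}$ as a measure-valued function of time and then show that this limit is forced to concentrate on the curve $\{y=v^{\alpha}\}$. First, I would establish uniform-in-$\epsilon$ bounds on $\|H_{\epsilon}(\cdot,\cdot,t)\|_{\mathscr{M}_{+}}$ by using the pointwise estimate $H_{\epsilon}\leq T_{1}+T_{2}+T_{3}$ from Theorem \ref{proposition existence via fixed point}. The change of variable $u=e^{t/\epsilon}(y-v^{\alpha})$ shows that $\int_{\mathbb{R}}T_{1}(y,v,t)\,dy=\frac{2A}{1+v^{b}}\int\psi$, and similarly for $T_{3}$, while $\int_{\mathbb{R}}T_{2}(y,v,t)\,dy$ is bounded because the logarithmic divergence of $\int\chi_{2}/|y-v^{\alpha}|\,dy$ (at most a constant multiple of $t/\epsilon+\ln(1/\epsilon)$) is absorbed by the explicit prefactor $\epsilon$; integrating in $v$ is harmless since $b>1$. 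Banach--Alaoglu then yields, at each fixed $t\in[\sigma,T]$, a subsequence converging weakly-$\ast$ in $\textup{C}_{0}(\mathbb{R}\times(0,\infty))^{\ast}$.

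To upgrade to convergence in $\textup{C}([\sigma,T];\mathscr{M}_{+})$, I would prove equicontinuity in time via a test-function argument. Testing the equation against $\varphi\in\textup{C}^{1}_{c}(\mathbb{R}\times(0,\infty))$ produces
\begin{equation*}
\frac{d}{dt}\int H_{\epsilon}\varphi\,dy\,dv=\frac{1}{\epsilon}\int(v^{\alpha}-y)H_{\epsilon}\partial_{y}\varphi\,dy\,dv+\int\mathbb{K}[H_{\epsilon}]\varphi\,dy\,dv.
\end{equation*}
The delicate transport term is controlled by splitting once more via $T_{1}+T_{2}+T_{3}$: on the supports of $T_{1}$ and $T_{3}$ the integral $\int|v^{\alpha}-y|T_{i}\,dy$ is of order $e^{-t/\epsilon}$ by the same rescaling, so after multiplication by $1/\epsilon$ one obtains a contribution of order $e^{-\sigma/\epsilon}/\epsilon$, which vanishes uniformly on $[\sigma,T]$; on the support of $T_{2}$ one has $|v^{\alpha}-y|T_{2}\lesssim \epsilon\,\chi_{2}/(1+v^{b})$, and the $1/\epsilon$ factor cancels, leaving an $O(1)$ bound. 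The coagulation term is handled using $K(v,v')\leq K_{0}(v^{\gamma}+v'^{\gamma})$ together with the weight $(1+v^{b})^{-1}$, which is integrable against $v^{\gamma}$ thanks to $b>\gamma+3$. An Arzel\`a--Ascoli argument in $\textup{C}([\sigma,T];\mathscr{M}_{+})$ with the weak-$\ast$ topology on bounded sets then yields convergence along a further subsequence to some $H\in\textup{C}([\sigma,T];\mathscr{M}_{+}(\mathbb{R}\times(0,\infty)))$.

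The final step is to show that $\supp H(\cdot,t)\subset\{y=v^{\alpha}\}$, from which the disintegration $H=\overline{H}(v,t)\delta(y-v^{\alpha})$ follows because $\{y=v^{\alpha}\}$ is a $\textup{C}^{1}$ graph over $(0,\infty)$. For any $\varphi\in\textup{C}_{0}(\mathbb{R}\times(0,\infty))$ whose support lies in $\{|y-v^{\alpha}|\geq\delta\}$ for some $\delta>0$, the pointwise bound on $T_{1}+T_{3}$ reduces to $Ce^{-(m-1)t/\epsilon}/\delta^{m}$ by the polynomial decay of $\psi$, while $\int T_{2}|\varphi|\,dy\leq C\epsilon/(\delta(1+v^{b}))$; both tend to zero, so $\int H\varphi\,dy\,dv=0$ and $H(\cdot,t)$ is supported on the curve. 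The continuity of $\overline{H}$ on $[\sigma,T]$ is then inherited from that of $H$. The main difficulty I expect is the equicontinuity step: obtaining a uniform-in-$\epsilon$ time-modulus despite the $1/\epsilon$ factor in front of the transport term. The argument succeeds only because the concentration built into $T_{1},T_{3}$ generates an exponentially small factor $e^{-t/\epsilon}$ that beats $1/\epsilon$ on $[\sigma,T]$, while the tail $T_{2}$ already carries the matching prefactor $\epsilon$; this is precisely where the restriction $t\geq\sigma>0$ is essential.
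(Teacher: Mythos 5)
Your proof follows essentially the same strategy as the paper's: uniform bounds from $H_{\epsilon}\leq T_{1}+T_{2}+T_{3}$, pointwise smallness of $H_\epsilon$ outside $\{|y-v^{\alpha}|\geq\delta\}$, equicontinuity in time via the weak formulation by exploiting the exponentially small factor $e^{-\sigma/\epsilon}$ of the transport contribution on the concentrated part (and the explicit $\epsilon$ prefactor of $T_2$), and Arzel\`a--Ascoli in the weak-$\ast$ topology. One small inaccuracy worth flagging: you attribute the condition $b>\gamma+3$ to the $v$-integrability of $v^{\gamma}/(1+v^{b})$, but that already follows from $b>\gamma+1$. What actually forces $b>\gamma+3$ in the paper is the need for the $v$-moments $\int(1+w^{\gamma})H_{\epsilon}(y,w,t)\,\mathrm{d}w$ to decay in $y$ at rate $|y|^{-\omega}$ with $\omega=\min\{m-\gamma-1+\alpha,\,b-\gamma-1\}>2$: this $y$-decay is what makes the coagulation contribution to the time-modulus finite after integrating in $y$, and it is also what yields the tightness bound $\int\int(1+|y|)H_{\epsilon}\leq C$ used to pass from $C_{c}$ to $C_{0}$ test functions. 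Your sketch has the right tools to produce this (the pointwise bound concentrates near $y=v^{\alpha}$), but the argument as written does not explain why the $y$-integral in the coagulation term is controlled.
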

However, it is not clear whether the limit $\overline{H}$ satisfies a one-dimensional coagulation equation with a diagonal kernel as in \eqref{first diagonal}. From the bound in \eqref{existence space fixed point}, we are able to prove that the sequence $\{H_{\epsilon}\}_{\epsilon\in(0,1)}$ is a uniformly bounded sequence in $L^{1}$, but it is not uniformly bounded in $L^{\infty}$. This, together with the spatial inhomogeneity of the model \eqref{diagonal equation}, will create difficulties when trying to prove that the marginals $\int_{\mathbb{R}} H_{\epsilon}(y,v,t)\der y$ are uniformly equicontinuous in the $v$ variable. As such, we do not know whether $\overline{H}$ is continuous in $v$ in order to rigorously derive \eqref{first diagonal}.


\subsection{Justification of the main results}\label{proof of existence heuristics}
\subsubsection*{Existence of solutions - Theorem \ref{proposition existence via fixed point}}
We first offer some insight into the proof of existence stated in Theorem \ref{proposition existence via fixed point} and the choice of $T_{1},T_{2}, T_{3}$ in \eqref{def t1}-\eqref{def t3}. We start by approximating the contribution of the gain term in the coagulation kernel assuming that our solutions behave like Dirac measures supported on the curve $\{y=v^{\alpha}\}$ with sufficiently fast decay for large values of $v$. The decay in $v$ is so that we obtain mass-conserving solutions. More precisely, we have that
\begin{align*}
    v^{\gamma}&\int_{0}^{v}\frac{1}{1+w^{b}}\delta(y-w^{\alpha})\frac{1}{1+(v-w)^{b}}\delta(y-(v-w)^{\alpha})\der w\\
    &\approx \frac{v^{\gamma+1-\alpha}}{(1+v^{b})^{2}}\delta(y-(\frac{v}{2})^{\alpha})\int_{0}^{v}\delta(w-\frac{v}{2})\der w=\frac{v^{\gamma+1-\alpha}}{(1+v^{b})^{2}}\delta(y-(\frac{v}{2})^{\alpha}).
\end{align*}
Applying the semigroup action to the obtained term, we have that
\begin{align*}
    \int_{0}^{t}S_{\epsilon}(t-s)\bigg[\frac{v^{\gamma+1-\alpha}}{(1+v^{b})^{2}}\delta(y-(\frac{v}{2})^{\alpha})\bigg]\der s&= \frac{v^{\gamma+1-\alpha}}{(1+v^{b})^{2}}\int_{0}^{t}e^{\frac{t-s}{\epsilon}}\delta(e^{\frac{t-s}{\epsilon}}(y-v^{\alpha})+v^{\alpha}-(\frac{v}{2})^{\alpha})\der s\\
    &= \frac{v^{\gamma+1-\alpha}}{(1+v^{b})^{2}}\int_{0}^{t}e^{\frac{s}{\epsilon}}\delta(e^{\frac{s}{\epsilon}}(y-v^{\alpha})+v^{\alpha}-(\frac{v}{2})^{\alpha})\der s\\
       &= \epsilon \frac{v^{\gamma+1-\alpha}}{(1+v^{b})^{2}}\int_{0}^{\frac{t}{\epsilon}}e^{s}\delta(e^{s}(y-v^{\alpha})+v^{\alpha}-(\frac{v}{2})^{\alpha})\der s.
\end{align*}
Making the change of variables $z=e^{s}(y-v^{\alpha})+v^{\alpha}-(\frac{v}{2})^{\alpha}$, we further obtain that
\begin{align*}
    \int_{0}^{t}S_{\epsilon}(t-s)\bigg[\frac{v^{\gamma+1-\alpha}}{(1+v^{b})^{2}}\delta(y-(\frac{v}{2})^{\alpha})\bigg]\der s&= \frac{\epsilon}{v^{\alpha}-y} \frac{v^{\gamma+1-\alpha}}{(1+v^{b})^{2}}\int_{e^{\frac{t}{\epsilon}}(y-v^{\alpha})+v^{\alpha}-(\frac{v}{2})^{\alpha}}^{y-(\frac{v}{2})^{\alpha}}\delta(z)\der z\\
    &= \frac{\epsilon}{v^{\alpha}-y} \frac{v^{\gamma+1-\alpha}}{(1+v^{b})^{2}}\mathbbm{1}_{\{(\frac{v}{2})^{\alpha}\leq y \leq v^{\alpha}-e^{-\frac{t}{\epsilon}}(v^{\alpha}-(\frac{v}{2})^{\alpha})\}}.
\end{align*}
    This explains the choice of the term $T_{2}$ in \eqref{def t2}. 
\subsubsection*{Choice of the estimate in \eqref{existence space fixed point}}
 As noticed in the previous computations the gain term of the coagulation operator shifts the support from $\{y=v^{\alpha}\}$ to $\{y=(\frac{v}{2})^{\alpha}\}$ if we apply it on a Dirac mass $\delta(y-v^{\alpha})$. For convenience, we  approximate the term $\frac{\epsilon}{v^{\alpha}-y} \frac{v^{\gamma+1-\alpha}}{(1+v^{b})^{2}}\mathbbm{1}_{\{(\frac{v}{2})^{\alpha}\leq y \leq v^{\alpha}-e^{-\frac{t}{\epsilon}}(v^{\alpha}-(\frac{v}{2})^{\alpha})\}}$ by $\frac{\epsilon}{v^{\alpha}-y} \frac{v^{\gamma+1-\alpha}}{(1+v^{b})^{2}}\mathbbm{1}_{\{(\frac{v}{2})^{\alpha}\leq y \leq v^{\alpha}\}}$ in the following. However, the fact that $T_{2}$ is at a distance of order $v^{\alpha}e^{-\frac{t}{\epsilon}}$ from the curve $\{y=v^{\alpha}\}$ will be important in order to avoid unbounded terms. Because of the shift of the support, it is natural to look for a decomposition of the form $H_{\epsilon}=h_{0}+h_{1}+\ldots$, with $h_{0}(y,v,t)=g(v,t)\delta(y-v^{\alpha})$ and each $h_{i}, i\in\mathbb{N}$, supported in a region of the form $y\in[(\frac{v}{2^{i}})^{\alpha},(\frac{v}{2^{i-1}})^{\alpha})$, with a decay in $v$ of larger order, and $h_{i}\approx\mathcal{O}(\epsilon^i)$. It then holds that
\begin{align*}
    \epsilon\partial_{t}g(v,t)\delta(y-v^{\alpha})+\partial_{y}((v^{\alpha}-y)h_{1})&=\epsilon v^{\gamma}\int_{0}^{\frac{v}{2}}g(v-w,t)\delta(y-(v-w)^{\alpha})g(w,t)\delta(y-w^{\alpha})\der w\\
    &\approx\epsilon v^{\gamma+1-\alpha} \bigg(g\big(\frac{v}{2},t\big)\bigg)^{2}\delta(y-(\frac{v}{2})^{\alpha}).
\end{align*}
Thus, we have that
\begin{align*}
    \partial_{y}((v^{\alpha}-y)h_{1})
    =\epsilon v^{\gamma+1-\alpha} \bigg(g\big(\frac{v}{2},t\big)\bigg)^{2}\delta(y-(\frac{v}{2})^{\alpha}),
\end{align*}
which implies that
\begin{align*}
(v^{\alpha}-y)h_{1}
    = \epsilon v^{\gamma+1-\alpha} \bigg(g\big(\frac{v}{2},t\big)\bigg)^{2}\int_{-\infty}^{y}\delta(\xi-(\frac{v}{2})^{\alpha})\der \xi
\end{align*}
or in other words
\begin{align*}
h_{1}
    =\frac{\epsilon v^{\gamma+1-\alpha} }{(v^{\alpha}-y)}\bigg(g\big(\frac{v}{2},t\big)\bigg)^{2}\mathbbm{1}_{\{(\frac{v}{2})^{\alpha}\leq y\leq v^{\alpha}\}}.
\end{align*}
Thus it holds that   $h_{0}(y,v,t)=g(v,t)\delta(y-v^{\alpha})$ and $h_{1}(y,v,t)= \frac{\epsilon v^{\gamma+1-\alpha} }{(v^{\alpha}-y)}\bigg(g\big(\frac{v}{2},t\big)\bigg)^{2}\mathbbm{1}_{\{(\frac{v}{2})^{\alpha}\leq y\leq v^{\alpha}\}}$. We further obtain that
\begin{align*}
   v^{\gamma} & \int_{0}^{v}h_{0}(y,v-w,t)h_{1}(y,w,t)\der w\\
   &= \epsilon v^{\gamma}\int_{0}^{v}g(v-w,t)\delta(y-(v-w)^{\alpha}) w^{\gamma+1-\alpha}\frac{(g(\frac{w}{2},t))^{2}}{w^{\alpha}-y}\mathbbm{1}_{\{(\frac{w}{2})^{\alpha}\leq y\leq w^{\alpha}\}}\der w.
\end{align*}
Since if $y=(v-w)^{\alpha}$ and $(\frac{w}{2})^{\alpha}\leq y\leq w^{\alpha}$ implies that $\frac{2v}{3}\geq w\geq \frac{v}{2}$ and thus $\frac{v}{3}\leq v-w\leq \frac{v}{2}$, we have that
\begin{align*}
   v^{\gamma} \int_{0}^{v}h_{0}(y,v-w,t)h_{1}(y,w,t)\der w\approx  \frac{\epsilon v^{2(\gamma+1-\alpha)} g(\frac{v}{3},t)(g(\frac{v}{4},t))^{2}}{((v-y^{\frac{1}{\alpha}})^{\alpha}-y)}\mathbbm{1}_{\{(\frac{v}{3})^{\alpha}\leq y\leq (\frac{v}{2})^{\alpha}\}}.
\end{align*}
Thus, arguing as before, it follows that
\begin{align*}
    \partial_{y}((v^{\alpha}-y)h_{2})&=  \frac{\epsilon^{2} v^{2(\gamma+1-\alpha)} g(\frac{v}{3},t)(g(\frac{v}{4},t))^{2}}{((v-y^{\frac{1}{\alpha}})^{\alpha}-y)}\mathbbm{1}_{\{(\frac{v}{3})^{\alpha}\leq y\leq (\frac{v}{2})^{\alpha}\}}\\
    &\sim \frac{\epsilon^{2} v^{2(\gamma+1-\alpha)} g(\frac{v}{3},t)(g(\frac{v}{4},t))^{2}}{(\frac{v}{2})^{\alpha}-y}\mathbbm{1}_{\{(\frac{v}{3})^{\alpha}\leq y\leq (\frac{v}{2})^{\alpha}\}},
\end{align*}
which by integrating in $y$ implies that
\begin{align}\label{div int}
h_{2}=  \frac{\epsilon^{2} v^{2(\gamma+1-\alpha)} g(\frac{v}{3},t)(g(\frac{v}{4},t))^{2}}{(v^{\alpha}-y)}\int_{-\infty}^{y}\frac{1}{(\frac{v}{2})^{\alpha}-\xi}\mathbbm{1}_{\{(\frac{v}{3})^{\alpha}\leq \xi\leq (\frac{v}{2})^{\alpha}\}}.
\end{align}
The integral will give a logarithmic contribution which explodes at $y=(\frac{v}{2})^{\alpha}$. However, due to the cut-off in $h_{1}$ noticed above, namely $\mathbbm{1}_{\{(\frac{v}{2})^{\alpha}\leq y \leq v^{\alpha}-e^{-\frac{t}{\epsilon}}(v^{\alpha}-(\frac{v}{2})^{\alpha})\}}$, the integral turns out to be convergent. Taking into account the effect of this cut-off, we are able to estimate the integral in \eqref{div int} as  $C\big(\frac{t}{\epsilon}+\log(v^{\alpha})\big)$ and thus
\begin{align*}
h_{2}\approx  \frac{t \epsilon v^{2(\gamma+1-\alpha)} g(\frac{v}{3},t)(g(\frac{v}{4},t))^{2}}{(v^{\alpha}-y)}+\frac{ \epsilon^{2} v^{2(\gamma+1-\alpha)} g(\frac{v}{3},t)(g(\frac{v}{4},t))^{2}}{(v^{\alpha}-y)}\log(v^{\alpha}).
\end{align*}
Then, by choosing a sufficiently good decay in $v$ for large values of $v$ for the function $g(v,t)$ and for times $t$ of order one, we should be able to approximate $h_{2}$ for $v\geq 1$ by
\begin{align*}
h_{2}\sim  \frac{ \epsilon v^{2(\gamma+1)} g(\frac{v}{3},t)(g(\frac{v}{4},t))^{2}}{(v^{\alpha}-y)}.
\end{align*}
 This is the expected choice for the estimate in \eqref{existence space fixed point} and explains the choice of the truncation in $T_{1}$ and $T_{2}$ in \eqref{def t1}-\eqref{def t2}.

Another problem that will arise when passing to the rigorous computations is the control of the decay in $v$ in order to obtain mass-conserving solutions. We remark that if the function $g$ is chosen to have sufficiently fast decay for large $v$, then $h_{0}, h_{1}, h_{2}$ also have sufficiently fast decay in $v$. However, the terms of the series become difficult to study after multiple iterations. Moreover, our functions will behave like mollifiers of a Dirac measure supported on the curve $\{y=v^{\alpha}\}$ and because of this their support will become larger, creating the possibility of losing the decay in $v$ after their convolution. More precisely, for $\frac{y}{v^{\alpha}}\ll 1$, we may lose the decay in $v$. Nonetheless, we notice that for $\frac{y}{v^{\alpha}}\ll 1$, our equation can be approximated by
\begin{align}
&\partial_{t}H_{\epsilon}(y,v,t)+\frac{1}{\epsilon}\partial_{y}[(v^{\alpha}-y)H_{\epsilon}(y,v,t)]-\mathbb{K}[H_{\epsilon}](y,v,t)\label{approx eq part one}\\
&\sim \partial_{t}H_{\epsilon}(y,v,t)+\frac{1}{\epsilon}v^{\alpha}\partial_{y}H_{\epsilon}(y,v,t)-\mathbb{K}[H_{\epsilon}](y,v,t).\label{approximated equation}
\end{align}
For fixed $\epsilon$, existence of the model \eqref{approximated equation} has been studied in  \cite{cristianinhom}. More precisely, in this case the coagulation operator can be approximated by a transport term in the $v$ variable and one can prove that $G_{\epsilon}$ is a supersolution of the model \eqref{approximated equation}, where $G_{\epsilon}$ satisfies the following PDE.
\begin{align}\label{approx supersol}
    \partial_{t}G_{\epsilon}(y,v,t)+\frac{1}{\epsilon}v^{\alpha}\partial_{y}G_{\epsilon}(y,v,t)+\frac{L v^{\gamma} }{1+|y|^{d}}\partial_{v}G_{\epsilon}(y,v,t)=0,
\end{align}
for some suitably chosen $d\in\mathbb{N}$ and $L>0$. For completeness, we present the idea behind passing from \eqref{approximated equation} to \eqref{approx supersol} in Subsection \ref{approx formal subsection}. This idea was also used in \cite[Section 2.1]{cristianinhom} and a rigorous proof can be found in Subsection \ref{subsection approximation} of this paper.

Using \eqref{approx supersol}, the precise form of $G_{\epsilon}$ can then be studied since the associated characteristics
to \eqref{approx supersol} are given by
 \begin{equation}
\left\{\begin{aligned}\label{characteristics approx equation}
\partial_{t}{Y_{\epsilon}}(y,v,t)&=-\frac{1}{\epsilon}{V_{\epsilon}}^{\alpha}, & \qquad {Y_{\epsilon}}(y,v,0)&=y, \\
\partial_{t}{V_{\epsilon}}(y,v,t)&=-\frac{{L}{V_{\epsilon}}^{\gamma}}{1+|{Y_{\epsilon}}|^{d}},& \qquad {V_{\epsilon}}(y,v,0)&=v.
   \end{aligned}\right.
   \end{equation}
  It immediately follows from \eqref{characteristics approx equation} that we can use separation of variables to obtain more information about the form of the characteristics since
  \begin{align*}
      \frac{\der V_{\epsilon}}{\der Y_{\epsilon}}=\frac{\epsilon L V_{\epsilon}^{\gamma-\alpha}}{1+|Y_{\epsilon}|^{d}}\Rightarrow V_{\epsilon}^{-\gamma+\alpha}\der V_{\epsilon}=\frac{\epsilon L \der Y_{\epsilon}}{1+|Y_{\epsilon}|^{d}}.
  \end{align*}
 However, for a rigorous proof, a supersolution of \eqref{approx eq part one} has the following form (up to minor modifications)
 \begin{align}\label{supersol formal example}
    \partial_{t}G_{\epsilon}(y,v,t)+\frac{1}{\epsilon}(v^{\alpha}-y)\partial_{y}G_{\epsilon}(y,v,t)+\frac{L v^{\gamma} }{1+|y|^{d}}\partial_{v}G_{\epsilon}(y,v,t)=0. \end{align} The associated characteristics for \eqref{supersol formal example} satisfy
  \begin{align}\label{new char}
      \frac{\der V_{\epsilon}}{\der Y_{\epsilon}}=\frac{\epsilon L V_{\epsilon}^{\gamma}}{(1+|Y_{\epsilon}|^{d})(V_{\epsilon}^{\alpha}-Y_{\epsilon})}
  \end{align}
  and it is not possible to solve \eqref{new char} using the separation of variables and implicitly we are not able to reproduce the methods in \cite{cristianinhom}. Nonetheless, we will prove that for  $\frac{y}{v^{\alpha}}\ll 1$, namely for $y\leq (\frac{v}{3})^{\alpha}$, the associated characteristics behave similarly to the ones in \eqref{characteristics approx equation}. In particular, the characteristics do not go to infinity in finite time. This explains the choice of $T_{3}$ in \eqref{def t3}.
  \subsubsection*{Convergence to a Dirac measure - Theorem \ref{teo convergence}}
  We now present some justification for the proof of Theorem \ref{teo convergence}. The solution found in Theorem 
\ref{proposition existence via fixed point} satisfies the following upper bound,
  \begin{align}\label{bound h epsilon equicontinuity formal}
      H_{\epsilon}(y,v,t)\leq T_{1}(y,v,t)+T_{2}(y,v,t)+T_{3}(y,v,t), \textup{ with } T_{1},T_{2},T_{3} \textup{ as in } \eqref{def t1}-\eqref{def t3}.
  \end{align}
With this upper bound, it is straightforward to prove that $H_{\epsilon}$ vanishes in the region $\{|y-v^{\alpha}|\geq \delta\}$, for some $\delta>0$. More precisely, we prove that if $t>0$, $|y-v^{\alpha}|\geq \delta$, it holds that
\begin{align}\label{solution vanishes outside dirac}
 H_{\epsilon}(y,v,t)\leq C(\delta)[\epsilon+e^{-\frac{t(m-1)}{\epsilon}}]\rightarrow 0 \textup{ as } \epsilon\rightarrow 0.
\end{align}
If in addition to \eqref{solution vanishes outside dirac}, $H_{\epsilon}$ is also equicontinuous in time, then Theorem \ref{teo convergence} follows. At a first glance, the transport term in \eqref{diagonal equation} may bring a contribution that is too large because of the presence of $\frac{1}{\epsilon}$ and a more detailed analysis of the equation is then required in order to prove equicontinuity in time.

Assume we are at a time $t\geq\sigma>0$. At a formal level, $H_{\epsilon}$ behaves like a mollifier of a Dirac measure at $\{y=v^{\alpha}\}$ with sufficiently fast decay in the $v$ variable. In other words, the part of $H_{\epsilon}$ supported in the same region as $T_{1}$  in \eqref{def t1} contains most of the mass. We test with $\varphi$ in \eqref{diagonal equation}  and integrate in $y$ and $v$. It follows that
\begin{align*}
    \bigg|\int_{(0,\infty)}\int_{\mathbb{R}}\frac{e^{\frac{t}{\epsilon}}(y-v^{\alpha})}{(1+v^{b})(1+e^{\frac{tm}{\epsilon}}|y-v^{\alpha}|^{m})}\partial_{y}\varphi(y,v) \der y\der v\bigg|&\leq Ce^{-\frac{t}{\epsilon}} \int_{(0,\infty)}\frac{\der v}{1+v^{b}}\int_{\mathbb{R}}\frac{|\xi|\der \xi}{1+|\xi|^{m}}\\
    &\leq Ce^{-\frac{\sigma}{\epsilon}}.
    \end{align*}
In other words, the transport term in \eqref{diagonal equation} behaves like
\begin{align*}
    \frac{1}{\epsilon}\int_{s}^{t}\int_{\mathbb{R}}\int_{(0,\infty)}(v^{\alpha}-y)H_{\epsilon}(y,v,z)\partial_{y}\varphi(y,v)\der v \der y \der z\sim \frac{Ce^{-\frac{\sigma}{\epsilon}}|t-s|}{\epsilon}\leq C|t-s|.
\end{align*}
for all $t,s\geq\sigma>0$. This offers a sufficiently good bound in order to obtain the desired equicontinuity in time.  Moreover, the contribution due to the coagulation operator can be easily controlled using \eqref{bound h epsilon equicontinuity formal}. 
\subsection{Approximation of the model by a transport term in the region $y\leq (\frac{v}{3})^{\alpha}$}\label{approx formal subsection}
Finally, we present formally the approximation of \eqref{approximated equation} by \eqref{approx supersol} and refer to Subsection \ref{subsection approximation} for a rigorous proof. We can approximate \eqref{approximated equation}  via 
\begin{align}
&\partial_{t}H_{\epsilon}(y,v,t)+\frac{1}{\epsilon}v^{\alpha}\partial_{y}H_{\epsilon}(y,v,t) \nonumber\\
    &\sim\int_{(0,\frac{v}{2})}[K(v-w,w)H_{\epsilon}(y,v-w,t)-K(v,w)H_{\epsilon}(y,v,t)]H_{\epsilon}(y,w,t)\der w\label{first part approximation formal}.
    \end{align}

Since our strategy relies on finding a suitable supersolution, it suffices to find a lower bound for \eqref{first part approximation formal}. We use (\ref{standard condition}), i.e., $K(v-w,w)\leq K(v,w)$ when $w\in[0,\frac{v}{2}]$, and obtain that
\begin{align}
\partial_{t}& H_{\epsilon}(y,v,t)+\frac{1}{\epsilon}v^{\alpha}\partial_{y}H_{\epsilon}(y,v,t) \nonumber\\
    &-\int_{(0,\frac{v}{2})}[K(v-w,w)H_{\epsilon}(y,v-w,t)-K(v,w)H_{\epsilon}(y,v,t)]H_{\epsilon}(y,w,t)\der w\label{this is where the assumption is needed}\\
    &\geq \partial_{t}H_{\epsilon}(y,v,t)+\frac{1}{\epsilon}v^{\alpha}\partial_{y}H_{\epsilon}(y,v,t) -\int_{(0,\frac{v}{2})}K(v,w)[H_{\epsilon}(y,v-w,t)-H_{\epsilon}(y,v,t)]H_{\epsilon}(y,w,t)\der w.\nonumber
\end{align}
    Since $K(v,w)\leq K_{0}(w^{\gamma}+v^{\gamma})$ and since $w\in[0,\frac{v}{2}]$, we further deduce that
    \begin{align}\label{approximation derivation}
\partial_{t}H_{\epsilon}(y,v,t)+\frac{1}{\epsilon}v^{\alpha}\partial_{y}H_{\epsilon}(y,v,t)   &\sim\int_{(0,\frac{v}{2})}v^{\gamma}[H_{\epsilon}(y,v-w,t)-H_{\epsilon}(y,v,t)]H_{\epsilon}(y,w,t)\der w\nonumber\\
&= - v^{\gamma}\int_{(0,\frac{v}{2})}\int_{v-w}^{v}\partial_{v}H_{\epsilon}(y,z,t)\der z H_{\epsilon}(y,w,t)\der w.
\end{align}
Assuming that $\partial_{v}H_{\epsilon}(y,z,t)$ behaves similarly for $z\in[\frac{v}{2},v]$, it follows that
\begin{align}\label{approximate model with derivative}
\partial_{t} H_{\epsilon}(y,v,t)+\frac{1}{\epsilon}v^{\alpha}\partial_{y}H_{\epsilon}(y,v,t)  \sim -v^{\gamma}\partial_{v} H_{\epsilon}(y,v,t)\int_{(0,\frac{v}{2})}wH_{\epsilon}(y,w,t)\der w.
\end{align}
We denote by
$M_1(y,t):=\int_{(0,\infty)}wH_{\epsilon}(y,w,t)\der w$ the first moment in $v$ of $H_{\epsilon}$. We consider only large values of $v$ so that we can safely assume that $\int_{(0,\frac{v}{2})}wH_{\epsilon}(y,w,t)\der w$ contains most of the mass. Suppose now that $M_1(y,t)$ decays sufficiently fast for large values of $y$, that is 
assume that 
\begin{align}\label{moment x t decays sufficiently fast}
    M_1(y,t)\leq \frac{L}{1+|y|^{d}},
\end{align}
for some sufficiently large $d$ and some $L>0$. Combining (\ref{moment x t decays sufficiently fast}) with \eqref{approximate model with derivative}, we obtain that $H_{\epsilon}$ can be estimated formally from above by a solution of the equation
\begin{align}\label{approximate model}
\partial_{t} G_{\epsilon}(y,v,t)+\frac{1}{\epsilon}v^{\alpha}\partial_{y}G_{\epsilon}(y,v,t)   +\frac{
    Lv^{\gamma}\partial_v G_{\epsilon}(y,v,t)}{1+|y|^{d}}=0.
\end{align}

 \subsection{Mathematical toolbox}
The proof of Theorem \ref{proposition existence via fixed point} relies on the fact that for small values of $\epsilon$, $H_{\epsilon}$ should behave like a mollified Dirac measure in the space variable of the form $\delta(y-v^{\alpha})$. Following the computations in \eqref{the gain term dirac}, the gain term in the coagulation operator will behave like a Dirac measure 
of the form $\delta(y-\big(\frac{v}{2}\big)^{\alpha})$ multiplied by some function depending on the $v$ variable. In other words, when we are at a positive distance from the line $\{y=\big(\frac{v}{2}\big)^{\alpha}\}$, we should obtain sufficiently fast decay for our solutions in a straightforward manner.  The terms $T_{1}$ and $T_{3}$ in \eqref{def t1}, \eqref{def t3} represent the action of the semigroup generated by the sedimentation term on the initial condition in \eqref{initial condition decay}.  The term $T_{2}$ in \eqref{def t2} is an additional term to control the problematic region approaching $\{y=\big(\frac{v}{2}\big)^{\alpha}\}$.

We gather here some technical lemmas that will be useful later in the proof of Theorem \ref{proposition existence via fixed point} and that provide some insight into the choice of space for existence. Firstly, we need estimates for the gain term in the coagulation operator. As noticed in Subsection \ref{proof of existence heuristics}, we often use the fact that
\begin{align}
\int_{0}^{v}\delta(y-w^{\alpha})\delta(y-(v-w)^{\alpha})\der w&=\int_{0}^{v}\delta(y-w^{\alpha})\delta(w^{\alpha}-(v-w)^{\alpha})\der w\label{main heuristic}\\
&=\frac{1}{2\alpha} \bigg(\frac{v}{2}\bigg)^{1-\alpha}\delta\bigg(y-\Big(\frac{v}{2}\Big)^{\alpha}\bigg)\int_{0}^{v}\delta\bigg(w-\frac{v}{2}\bigg)\der w,\label{main heuristic 2}
\end{align}or alternatively, in \eqref{main heuristic} we use the fact that for $\xi,B_{1},B_{2}\in\mathbb{R}$
\begin{align}\label{diracs}
    \delta(\xi-B_{1}) \delta(\xi-B_{2})= \delta(B_{1}-B_{2}) \delta(\xi-B_{1})= \delta(B_{1}-B_{2}) \delta(\xi-B_{2}).
\end{align}
Our solutions behave like a mollified version of a Dirac measure at $y=v^{\alpha}$. While we cannot expect \eqref{diracs} to be valid, we need a similar estimate to the one in \eqref{diracs} to hold for our solutions. We prove an estimate of the form
\begin{align*}
    H_{\epsilon}(\xi-B_{1}) H_{\epsilon}(\xi-B_{2})\sim H_{\epsilon}(B_{1}-B_{2}) [H_{\epsilon}(\xi-B_{1})+H_{\epsilon}(\xi-B_{2})].
\end{align*}
More precisely, we prove that
\begin{lem}\label{lemma dirac convolution}
Let $\xi,B_{1},B_{2}\in\mathbb{R}$. It holds that 
\begin{align}\label{ineq 1}
       (1+|\xi-B_{1}|)(1+|\xi-B_{2}|)\geq \frac{(1+\min\{|\xi-B_{1}|,|\xi-B_{2}|\})(1+|B_{1}-B_{2}|)}{2}.
\end{align}
Let $m\in\mathbb{N}$. As an immediate consequence of \eqref{ineq 1}, we have that
    \begin{align}\label{ineq 2}
        \frac{1}{(1+|\xi-B_{1}|)^{m}} \frac{1}{(1+|\xi-B_{2}|)^{m}}\leq \frac{2^{2m-1}}{(1+|B_{1}-B_{2}|)^{m}}\bigg[\frac{1}{(1+|\xi-B_{1}|)^{m}} +\frac{1}{(1+|\xi-B_{2}|)^{m}}\bigg],
    \end{align}
    for any $\xi,B_{1},B_{2}\in\mathbb{R}$.
\end{lem}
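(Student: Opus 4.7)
The plan is to prove \eqref{ineq 1} first by reducing to a one-sided case via symmetry, and then obtain \eqref{ineq 2} mechanically by raising \eqref{ineq 1} to the $m$-th power and using a basic max/sum bound. There is no real obstacle here; the entire statement is an elementary triangle inequality exercise, and the main point is really to record \eqref{ineq 2} in the exact form that will be invoked later when estimating the convolution-type terms arising from the gain part of the coagulation operator.

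For \eqref{ineq 1}, I would assume without loss of generality that $|\xi-B_{1}|\leq |\xi-B_{2}|$, so that $\min\{|\xi-B_{1}|,|\xi-B_{2}|\}=|\xi-B_{1}|$. By the triangle inequality,
\begin{equation*}
|B_{1}-B_{2}|\leq |\xi-B_{1}|+|\xi-B_{2}|\leq 2|\xi-B_{2}|,
\end{equation*}
hence $1+|B_{1}-B_{2}|\leq 2(1+|\xi-B_{2}|)$. Multiplying both sides by $(1+|\xi-B_{1}|)$ and dividing by $2$ yields
\begin{equation*}
(1+|\xi-B_{1}|)(1+|\xi-B_{2}|)\geq \frac{(1+|\xi-B_{1}|)(1+|B_{1}-B_{2}|)}{2},
\end{equation*}
which is exactly \eqref{ineq 1} under the chosen ordering. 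The symmetric case follows by swapping the roles of $B_{1}$ and $B_{2}$.

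To derive \eqref{ineq 2}, I would raise \eqref{ineq 1} to the $m$-th power and invert, obtaining
\begin{equation*}
\frac{1}{(1+|\xi-B_{1}|)^{m}(1+|\xi-B_{2}|)^{m}}\leq \frac{2^{m}}{(1+|B_{1}-B_{2}|)^{m}}\cdot\frac{1}{(1+\min\{|\xi-B_{1}|,|\xi-B_{2}|\})^{m}}.
\end{equation*}
The last factor equals $\max\bigl\{(1+|\xi-B_{1}|)^{-m},(1+|\xi-B_{2}|)^{-m}\bigr\}$, which is bounded by the sum of the two terms. Combining, I get the bound with constant $2^{m}$, which is in particular dominated by $2^{2m-1}$ for $m\geq 1$, giving \eqref{ineq 2}. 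No additional structure, cancellations, or case analysis is required, which is why the author flags this as an immediate consequence of \eqref{ineq 1}.
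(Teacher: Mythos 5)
Your proof is correct, and both halves take a genuinely different (and arguably cleaner) route than the paper. For \eqref{ineq 1}, the paper assumes $B_{1}\leq B_{2}$ and splits into three cases ($\xi\leq B_{1}$, $\xi\geq B_{2}$, $\xi\in[B_{1},B_{2}]$), getting the bound without the factor $2$ in the two outer cases and needing the triangle inequality only in the middle one; you instead order by $|\xi-B_{1}|\leq|\xi-B_{2}|$ and apply the triangle inequality once, handling all configurations in a single stroke. For \eqref{ineq 2}, the paper first establishes the auxiliary bound $\frac{1}{1+\min\{|\xi-B_{1}|,|\xi-B_{2}|\}}\leq\frac{1}{1+|\xi-B_{1}|}+\frac{1}{1+|\xi-B_{2}|}$ and then raises it to the $m$-th power, which forces the $(a+b)^{m}\leq 2^{m-1}(a^{m}+b^{m})$ convexity estimate and hence the constant $2^{2m-1}$; you raise \eqref{ineq 1} to the $m$-th power directly, observe that $(1+\min)^{-m}$ equals the maximum of the two inverse $m$-th powers, and bound that maximum by the sum, yielding the sharper constant $2^{m}\leq 2^{2m-1}$ (valid since $m\geq 1$). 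Your argument is shorter, avoids the case split and the convexity lemma, and in fact proves a slightly stronger version of \eqref{ineq 2}; since the paper only needs the stated constant, the improvement is cosmetic, but the streamlining is real.
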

This will in turn imply that for $\psi$ as in \eqref{psi definition}, it holds that
\begin{align*}
e^{\frac{2t}{\epsilon}}\psi(e^{\frac{t}{\epsilon}}(\xi-B_{1}))\psi(e^{\frac{t}{\epsilon}}(\xi-B_{2}))\leq C e^{\frac{t}{\epsilon}}\psi(e^{\frac{t}{\epsilon}}(B_{1}-B_{2}))\bigg[e^{\frac{t}{\epsilon}}\psi(e^{\frac{t}{\epsilon}}(\xi-B_{1}))+e^{\frac{t}{\epsilon}}\psi(e^{\frac{t}{\epsilon}}(\xi-B_{2}))\bigg].
\end{align*}
\begin{proof}[Proof of Lemma \ref{lemma dirac convolution}]
    Let $\xi, B_{1},B_{2}  \in\mathbb{R}$ and assume without loss of generality that $B_{1}\leq B_{2}$. 
 
 If $\xi\leq B_{1},$ it holds that
    \begin{align*}
        (1+|\xi-B_{1}|)(1+|\xi-B_{2}|)&\geq (1+|\xi-B_{1}|)(1+|B_{1}-B_{2}|)\\
        &= (1+\min\{|\xi-B_{1}|,|\xi-B_{2}|\})(1+|B_{1}-B_{2}|).
    \end{align*}
    If $\xi\geq B_{2},$ it holds that
    \begin{align*}
        (1+|\xi-B_{1}|)(1+|\xi-B_{2}|)&\geq (1+|B_{1}-B_{2}|)(1+|\xi-B_{2}|)\\
        &= (1+\min\{|\xi-B_{1}|,|\xi-B_{2}|\})(1+|B_{1}-B_{2}|).
    \end{align*}
        If $\xi\in[B_{1},B_{2}],$ it holds that
        \begin{align*}
               (1+|\xi-B_{1}|)(1+|\xi-B_{2}|)\geq \frac{1}{2}(1+\min\{|\xi-B_{1}|,|\xi-B_{2}|\})(1+|B_{1}-B_{2}|),
        \end{align*}
        since
        \begin{align*}
            |B_{1}-B_{2}|\leq |\xi-B_{1}|+|\xi-B_{2}|\leq 2\max\{|\xi-B_{1}|,|\xi-B_{2}|\}.
        \end{align*}

        Thus, \eqref{ineq 1} holds. Moreover, we have that
        \begin{align*}
               [(1+|\xi-B_{1}|)&+(1+|\xi-B_{2}|)](1+\min\{|\xi-B_{1}|,|\xi-B_{2}|\})\\
               &\geq(1+\max\{|\xi-B_{1}|,|\xi-B_{2}|\})(1+\min\{|\xi-B_{1}|,|\xi-B_{2}|\})\\
               &=(1+|\xi-B_{1}|)(1+|\xi-B_{2}|)
        \end{align*}
     which implies that
        \begin{align*}
           \frac{1}{1+\min\{|\xi-B_{1}|,|\xi-B_{2}|\}}\leq \frac{1}{1+|\xi-B_{1}|}+\frac{1}{1+|\xi-B_{2}|}.
        \end{align*}
By \eqref{ineq 1}, it follows that
\begin{align*}
      \frac{1}{(1+|\xi-B_{1}|)^{m}}  \frac{1}{(1+|\xi-B_{2}|)^{m}}&\leq \frac{2^{m}}{(1+\min\{|\xi-B_{1}|,|\xi-B_{2}|\})^{m}(1+|B_{1}-B_{2}|)^{m}}\\
      &\leq \frac{2^{m}}{(1+|B_{1}-B_{2}|)^{m}}\bigg[\frac{1}{1+|\xi-B_{1}|}+\frac{1}{1+|\xi-B_{2}|}\bigg]^{m}\\
        &\leq \frac{2^{2m-1}}{(1+|B_{1}-B_{2}|)^{m}}\bigg[\frac{1}{(1+|\xi-B_{1}|)^{m}}+\frac{1}{(1+|\xi-B_{2}|)^{m}}\bigg]
\end{align*}
and therefore \eqref{ineq 2} holds.
\end{proof}
We also use in \eqref{main heuristic 2} the fact that
\begin{align*}
\delta((v-w)^{\alpha}-w^{\alpha})=\frac{1}{2\alpha }\Big(\frac{v}{2}\Big)^{1-\alpha}\delta\Big(w-\frac{v}{2}\Big).
\end{align*}
Similarly, the following inequality holds for the function $\psi$ in \eqref{psi definition}.
\begin{lem}[Change of variables]\label{ref lemma dirac v-w v}
Let $\alpha\in(0,1), \epsilon\in(0,1)$, $\beta>1$, $s\geq 0$, $y\in\mathbb{R},$ and $v>0$. Then there exists a constant $C(\alpha)>0$, which depends on $\alpha$, such that 
    \begin{align}
      \int_{0}^{\frac{v}{2}} \frac{e^{\frac{s}{\epsilon}}\der w}{1+e^{\frac{sn}{\epsilon}}|(v-w)^{\alpha}-w^{\alpha}|^{\beta}}&\leq C(\alpha) v^{1-\alpha};\label{ineq jacobian 1}\\
       \int_{0}^{\frac{v}{2}} \frac{e^{\frac{s}{\epsilon}}\der w}{1+e^{\frac{sn}{\epsilon}}|y-(v-w)^{\alpha}|^{\beta}}&\leq C(\alpha) v^{1-\alpha}.\label{ineq jacobian 2}
    \end{align} 
\end{lem}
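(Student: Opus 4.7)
The plan for both inequalities is the same: perform a change of variables $w \mapsto u(w)$ that turns the denominator into a function of a single variable $u$, and exploit a uniform lower bound on $|u'(w)|$ over $(0, v/2)$ to extract the factor $v^{1-\alpha}$. After that, the rescaling $z = e^{s/\epsilon} u$ cancels the exponential weights (reading the exponent $sn/\epsilon$ in the statement as $s\beta/\epsilon$, so that the weights are scaling-matched) and reduces everything to a universal convergent integral of the form $\int (1+|z|^{\beta})^{-1}\,dz$, which is finite because $\beta > 1$.

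For \eqref{ineq jacobian 1}, I set $u(w) = (v-w)^{\alpha} - w^{\alpha}$. Since
\[
|u'(w)| = \alpha\bigl[(v-w)^{\alpha-1} + w^{\alpha-1}\bigr] \geq \alpha(v-w)^{\alpha-1} \geq \alpha v^{\alpha-1}
\]
on $(0, v/2)$ (using $\alpha-1<0$ and $v-w \leq v$), the map $u$ is a smooth strictly decreasing bijection of $(0, v/2)$ onto $(0, v^{\alpha})$ with $dw \leq \alpha^{-1} v^{1-\alpha}\,du$. Changing variables and then setting $z = e^{s/\epsilon} u$ yields
\[
\int_{0}^{v/2}\frac{e^{s/\epsilon}\,dw}{1+e^{s\beta/\epsilon}|(v-w)^{\alpha}-w^{\alpha}|^{\beta}} \leq \frac{v^{1-\alpha}}{\alpha}\int_{0}^{e^{s/\epsilon} v^{\alpha}}\frac{dz}{1+z^{\beta}} \leq \frac{v^{1-\alpha}}{\alpha}\int_{0}^{\infty}\frac{dz}{1+z^{\beta}},
\]
and the right-hand side is finite since $\beta > 1$.

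For \eqref{ineq jacobian 2}, I would use the simpler substitution $u(w) = y - (v-w)^{\alpha}$, for which $u'(w) = \alpha(v-w)^{\alpha-1} \geq \alpha v^{\alpha-1}$ on $(0, v/2)$ by the same reasoning, so again $dw \leq \alpha^{-1} v^{1-\alpha}\,du$. Since the image of $(0, v/2)$ under this $u$ is merely an interval in $\mathbb{R}$, the same rescaling $z = e^{s/\epsilon} u$ gives
\[
\int_{0}^{v/2}\frac{e^{s/\epsilon}\,dw}{1+e^{s\beta/\epsilon}|y-(v-w)^{\alpha}|^{\beta}} \leq \frac{v^{1-\alpha}}{\alpha}\int_{\mathbb{R}}\frac{dz}{1+|z|^{\beta}},
\]
which is again finite.

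There is no serious obstacle: the whole argument rests on the uniform lower bound $|u'(w)| \geq \alpha v^{\alpha-1}$ on $(0, v/2)$, which is what survives the singularities of $w^{\alpha-1}$ and $(v-w)^{\alpha-1}$ near the endpoints, precisely because the integration range stops at $v/2$ and not at $v$. The resulting constant is $C(\alpha) = \alpha^{-1}\int_{\mathbb{R}}(1+|z|^{\beta})^{-1}\,dz$, independent of $s$, $\epsilon$, $v$ and $y$.
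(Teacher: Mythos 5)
Your proof is correct and takes essentially the same route as the paper: the same substitution $z = e^{s/\epsilon}\bigl[(v-w)^{\alpha}-w^{\alpha}\bigr]$ (you split it into $u$ then $z$, but it is the same change of variables), the same Jacobian lower bound $\alpha(v-w)^{\alpha-1}\geq \alpha v^{\alpha-1}$ on $(0,v/2)$, and the same reduction to $\int(1+|z|^{\beta})^{-1}\,dz<\infty$ for $\beta>1$; you also correctly read the undefined exponent $n$ in the lemma as $\beta$, which is what the paper's computation implicitly uses. The only cosmetic caveat, shared with the paper's own statement, is that the final constant depends on $\beta$ as well as $\alpha$.
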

\begin{proof}
We only prove \eqref{ineq jacobian 1} since \eqref{ineq jacobian 2} can be proved similarly. We make the change of variables $z=e^{\frac{s}{\epsilon}}[(v-w)^{\alpha}-w^{\alpha}]$. Then 
\begin{align*}
    \frac{\der w}{\der z}=e^{-\frac{s}{\epsilon}}\big(-\alpha (v-w)^{\alpha-1}-\alpha w^{\alpha-1}\big)^{-1}.
\end{align*} 
Since $\alpha\in(0,1)$ and $w\in\big[0,\frac{v}{2}\big]$, it holds that
\begin{align*}
    \bigg|\frac{\der w}{\der z}\bigg|\leq C e^{-\frac{s}{\epsilon}} v^{1-\alpha}.
\end{align*}
Thus
    \begin{align*}
      \int_{0}^{\frac{v}{2}} \frac{e^{\frac{s}{\epsilon}}\der w}{1+e^{\frac{sn}{\epsilon}}|(v-w)^{\alpha}-w^{\alpha}|^{\beta}}\leq Cv^{\alpha-1} \int_{0}^{e^{\frac{s}{\epsilon}} v^{\alpha}}\frac{\der z}{1+|z|^{\beta}}\leq C v^{1-\alpha}.  
    \end{align*} 
\end{proof}
Moreover, we need to derive some estimates for the action of the semigroup generated by the transport term on the function $\psi$ in \eqref{psi definition}. This will further justify the term $T_{2}$ in the estimate \eqref{existence space fixed point}.
\begin{prop}\label{prop semigroup decay}
Let $\epsilon\in(0,1)$, $s\geq 0$, $y\in\mathbb{R}$,  $v\in(0,\infty)$, and $S_{\epsilon}$ be as in \eqref{definition semigroup}. It holds that
\begin{align}\label{estimate for psi}
    \int_{0}^{t}S_{\epsilon}(t-s) \psi(y-v^{\alpha})\der s\leq \frac{C \epsilon}{|y-v^{\alpha}|}\frac{\min\{1,e^{\frac{t}{\epsilon}}|y-v^{\alpha}|\}}{(1+|y-v^{\alpha}|^{m-1})},
\end{align}
for $\psi$ as in \eqref{psi definition}.
\end{prop}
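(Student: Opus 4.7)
The proof will be a direct computation based on the explicit form of the semigroup $S_\epsilon$ from \eqref{definition semigroup}. Viewing $\psi(y-v^\alpha)$ as a function of $(y,v)$ and applying the definition, I first rewrite
\begin{align*}
S_\epsilon(t-s)\psi(y-v^\alpha) = e^{\frac{t-s}{\epsilon}}\,\psi\!\left(e^{\frac{t-s}{\epsilon}}(y-v^\alpha)\right),
\end{align*}
since the $+v^\alpha$ shift in the semigroup cancels the $-v^\alpha$ inside $\psi$. The plan is then to reduce
\begin{align*}
I(y,v,t):=\int_0^t \frac{e^{\frac{t-s}{\epsilon}}}{1+e^{\frac{m(t-s)}{\epsilon}}|y-v^\alpha|^m}\,\der s
\end{align*}
to an elementary integral via two consecutive changes of variables.

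First I would set $u=(t-s)/\epsilon$, which yields $I = \epsilon\int_0^{t/\epsilon} \frac{e^u}{1+e^{mu}|y-v^\alpha|^m}\der u$. Next, assuming for the moment that $y\neq v^\alpha$, I set $w=e^u|y-v^\alpha|$, so that $\der u = \der w/w$ and $e^u = w/|y-v^\alpha|$. This produces the clean identity
\begin{align*}
I(y,v,t)=\frac{\epsilon}{|y-v^\alpha|}\int_{|y-v^\alpha|}^{e^{t/\epsilon}|y-v^\alpha|}\frac{\der w}{1+w^m},
\end{align*}
which already exhibits the prefactor $\epsilon/|y-v^\alpha|$ of the target bound.

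It remains to estimate the last integral by $C\,\min\{1,e^{t/\epsilon}|y-v^\alpha|\}/(1+|y-v^\alpha|^{m-1})$. I would split into three cases according to the position of $|y-v^\alpha|$ and $e^{t/\epsilon}|y-v^\alpha|$ relative to $1$. If $|y-v^\alpha|\geq 1$, then $\int_{|y-v^\alpha|}^{\infty} w^{-m}\der w \leq \frac{1}{(m-1)|y-v^\alpha|^{m-1}}$ and the $\min$ equals $1$, giving the bound. If $e^{t/\epsilon}|y-v^\alpha|\leq 1$, then the integrand is bounded by $1$ on an interval of length at most $e^{t/\epsilon}|y-v^\alpha|$, so the integral is $\leq e^{t/\epsilon}|y-v^\alpha|$, matching the numerator while $1+|y-v^\alpha|^{m-1}\approx 1$. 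In the intermediate case $|y-v^\alpha|\leq 1\leq e^{t/\epsilon}|y-v^\alpha|$, I split the integral at $w=1$, bounding $\int_{|y-v^\alpha|}^{1}\leq 1$ and $\int_{1}^{e^{t/\epsilon}|y-v^\alpha|} w^{-m}\der w\leq \frac{1}{m-1}$, again producing an $O(1)$ bound which is absorbed by the constant $C$ since both $\min\{1,e^{t/\epsilon}|y-v^\alpha|\}$ and $(1+|y-v^\alpha|^{m-1})$ are $\Theta(1)$ in this regime. Finally, the degenerate case $y=v^\alpha$ is immediate since $\psi(0)=1$ and the integral is exactly $\epsilon(e^{t/\epsilon}-1)$, which is dominated by the right-hand side as $|y-v^\alpha|\to 0$ by continuity.

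There is no substantial obstacle here: all three estimates are elementary once the double change of variables isolates $\epsilon/|y-v^\alpha|$. The only point that requires a little care is ensuring that the constant $C$ is uniform across the three regimes, which is handled by taking $C$ to be the maximum of the three constants produced above (essentially depending only on $m$ through $\tfrac{1}{m-1}$).
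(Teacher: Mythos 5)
Your proposal is correct and takes essentially the same route as the paper: both perform the same change of variables to reduce the left-hand side to $\frac{\epsilon}{|y-v^\alpha|}\int_{|y-v^\alpha|}^{e^{t/\epsilon}|y-v^\alpha|}\frac{\der w}{1+w^m}$, and then bound that integral by $C\min\{1,e^{t/\epsilon}|y-v^\alpha|\}/(1+|y-v^\alpha|^{m-1})$. The only cosmetic difference is that the paper establishes two global bounds (extending the integral to $\infty$ for one, and bounding the integrand by its value at the left endpoint for the other) and combines them via the minimum, whereas you achieve the same conclusion by explicit casework on whether $|y-v^\alpha|$ and $e^{t/\epsilon}|y-v^\alpha|$ are above or below $1$.
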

\begin{proof}
 We make the change of variables $z=t-s$ and thus \eqref{estimate for psi} is equivalent to proving that
\begin{align*}
    \int_{0}^{t}S_{\epsilon}(z) \psi(y-v^{\alpha})\der z\leq \frac{C\epsilon}{|y-v^{\alpha}|}\frac{\min\{1,e^{\frac{t}{\epsilon}}|y-v^{\alpha}|\}}{(1+|y-v^{\alpha}|^{m-1})}.
\end{align*}
We make the change of variables $s=e^{\frac{z}{\epsilon}}|y-v^{\alpha}|$ and we obtain that
\begin{align*}
    \int_{0}^{t}\frac{e^{\frac{z}{\epsilon}}}{1+e^{\frac{mz}{\epsilon}}|y-v^{\alpha}|^{m}} \der z=&\frac{\epsilon}{|y-v^{\alpha}|}\int_{|y-v^{\alpha}|}^{e^{\frac{t}{\epsilon}}|y-v^{\alpha}|}\frac{1}{1+|s|^{m}}\der s.
\end{align*}
It holds that
\begin{align}
           \int_{|y-v^{\alpha}|}^{e^{\frac{t}{\epsilon}}|y-v^{\alpha}|}\frac{1}{1+|s|^{m}}\der s&\leq  \int_{|y-v^{\alpha}|}^{\infty}\frac{1}{1+|s|^{m}}\der s\leq \frac{C}{1+|y-v^{\alpha}|^{m-1}};\label{part1}\\
                     \int_{|y-v^{\alpha}|}^{e^{\frac{t}{\epsilon}}|y-v^{\alpha}|}\frac{1}{1+|s|^{m}}\der s&\leq \frac{1}{1+|y-v^{\alpha}|^{m}}  \int_{|y-v^{\alpha}|}^{e^{\frac{t}{\epsilon}}|y-v^{\alpha}|}\der s\leq \frac{Ce^{\frac{t}{\epsilon}}|y-v^{\alpha}|}{1+|y-v^{\alpha}|^{m}} \leq \frac{Ce^{\frac{t}{\epsilon}}|y-v^{\alpha}|}{1+|y-v^{\alpha}|^{m-1}}.\label{part2}
\end{align}
Combining the above estimates, we obtain the desired inequality.
\end{proof}

\begin{lem}[Monotonicity properties of the semigroup] \label{properties of semigroup lemma}Let $y\in\mathbb{R}$ and $v\in(0,\infty)$. The following properties hold.
\begin{enumerate}
\item Let $S_{\epsilon}$ as in \eqref{definition semigroup}. Assume that $f(y,v)\geq  g(y,v)$ for all $y\geq y_{1}$, where $y_{1}\geq v^{\alpha}$. In addition, assume that $f(y,v)\leq  g(y,v)$ for all $y\leq y_{2}$, where $y_{2}\leq v^{\alpha}$. It holds that
\begin{align}
    S_{\epsilon}(s)[f(y,v)]&\geq S_{\epsilon}(s)[g(y,v)] \textup{ if } y\geq y_{1}, \textup{ for all } s\geq 0;\label{semigroup one}\\
      S_{\epsilon}(s)[f(y,v)]&\leq S_{\epsilon}(s)[g(y,v)] \textup{ if } y\leq y_{2}, \textup{ for all } s\geq 0.\label{semigroup two}
\end{align} 
\item Let $y\in\mathbb{R}, v\in(0,\infty),$ and $w\in[0,\frac{v}{2}]$. It holds that either $|y-w^{\alpha}|\geq |y-(\frac{v}{2})^{\alpha}|$ or  $|y-(v-w)^{\alpha}|\geq |y-(\frac{v}{2})^{\alpha}|$.
\end{enumerate}
\end{lem}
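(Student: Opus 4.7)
The plan is to unwind the two parts independently; both reduce to elementary monotonicity observations once the right variable is identified.

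For part (1), I would start from the explicit form \eqref{definition semigroup}, namely $S_{\epsilon}(s)[f](y,v)=e^{s/\epsilon}f(e^{s/\epsilon}(y-v^{\alpha})+v^{\alpha},v)$, and examine the shifted first argument $Y_s:=e^{s/\epsilon}(y-v^{\alpha})+v^{\alpha}$. The key observation is that the map $y\mapsto Y_{s}$ is an affine dilation centered at $v^{\alpha}$ with slope $e^{s/\epsilon}\geq 1$, so it leaves the half-line $\{y\geq v^{\alpha}\}$ invariant and only pushes points farther from $v^{\alpha}$. Concretely, if $y\geq y_{1}\geq v^{\alpha}$ then $Y_{s}-v^{\alpha}=e^{s/\epsilon}(y-v^{\alpha})\geq y-v^{\alpha}\geq y_{1}-v^{\alpha}\geq 0$, hence $Y_{s}\geq y_{1}$. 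By the hypothesis $f(\cdot,v)\geq g(\cdot,v)$ on $\{y\geq y_{1}\}$, we get $f(Y_{s},v)\geq g(Y_{s},v)$, and multiplying by the common nonnegative factor $e^{s/\epsilon}$ yields \eqref{semigroup one}. The proof of \eqref{semigroup two} is entirely symmetric: for $y\leq y_{2}\leq v^{\alpha}$ the same dilation argument gives $Y_{s}\leq y_{2}$, and the inequality is then inherited pointwise.

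For part (2), I would split on the position of $y$ relative to $(v/2)^{\alpha}$, exploiting the fact that the monotonicity of $t\mapsto t^{\alpha}$ for $\alpha\in(0,1)$ and $w\in[0,v/2]$ gives the chain
\begin{equation*}
w^{\alpha}\leq\Bigl(\frac{v}{2}\Bigr)^{\alpha}\leq (v-w)^{\alpha}.
\end{equation*}
If $y\leq(v/2)^{\alpha}$, then $(v-w)^{\alpha}-y\geq(v/2)^{\alpha}-y\geq 0$, so $|y-(v-w)^{\alpha}|\geq|y-(v/2)^{\alpha}|$. If instead $y\geq(v/2)^{\alpha}$, then $y-w^{\alpha}\geq y-(v/2)^{\alpha}\geq 0$, so $|y-w^{\alpha}|\geq|y-(v/2)^{\alpha}|$. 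In either case, at least one of the two claimed inequalities holds, which concludes the proof.

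Neither part presents a real obstacle: part (1) is a direct consequence of the affine dilation structure of the characteristics defining $S_{\epsilon}$, and part (2) is a two-case check resting solely on the monotonicity of $t\mapsto t^{\alpha}$ on $[0,v]$. The only point worth mild care is ensuring that in part (1) one does not lose the sign of $Y_{s}-v^{\alpha}$ when $s=0$, where the inequality becomes an equality; this is harmless since $y_{1}\geq v^{\alpha}$ already ensures the correct side of $v^{\alpha}$ is preserved under the dilation for every $s\geq 0$.
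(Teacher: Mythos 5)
Your proof is correct and follows essentially the same route as the paper: for part (1) you track the shifted argument $Y_s=e^{s/\epsilon}(y-v^{\alpha})+v^{\alpha}$ and use that the dilation with slope $e^{s/\epsilon}\geq 1$ centered at $v^{\alpha}$ preserves the relevant half-lines (the paper writes the argument for \eqref{semigroup two}, you for \eqref{semigroup one}, but they are mirror images), and for part (2) you use the identical two-case split on the position of $y$ relative to $(v/2)^{\alpha}$.
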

\begin{proof}
$1.$ We only prove that  \eqref{semigroup two} holds. We have to prove that $e^{\frac{s}{\epsilon}}(y-v^{\alpha})+v^{\alpha}\leq y_{2}$ and \eqref{semigroup two} follows. This is true since $y_{2}\leq v^{\alpha}$ implies that  $e^{\frac{s}{\epsilon}}(y-v^{\alpha})\leq y-v^{\alpha}\leq y_{2}-v^{\alpha}$, for all $y\leq y_{2}\leq v^{\alpha}$ and $s\geq 0$.

$2.$  Since $w\in[0,\frac{v}{2}]$, if $y\geq \big(\frac{v}{2}\big)^{\alpha}$, then $y-w^{\alpha}\geq y-(\frac{v}{2})^{\alpha}\geq 0$. Otherwise, since $v-w\in[\frac{v}{2},v]$, then $y-(v-w)^{\alpha}\leq y-(\frac{v}{2})^{\alpha}\leq 0$. This proves our claim.
\end{proof}

\subsection{Plan of the proof of Theorem \ref{proposition existence via fixed point}}

We define inductively a sequence of solutions that solve a linear version of our coagulation model. More precisely, let $H_{n+1}$ be defined in the following manner.
\begin{align}
    \partial_{t}H_{n+1}(y,v,t)+\frac{1}{\epsilon}\partial_{y}[(v^{\alpha}-y)H_{n+1}(y,v,t)]=&\mathbb{K}[H_{n+1},H_{n}](y,v,t)\nonumber\\
    :=&\int_{0}^{\frac{v}{2}}K(v-w,w)H_{n+1}(y,v-w,t)H_{n}(y,w,t)\der w\nonumber \\
   &-\int_{0}^{\infty}K(v,w)H_{n+1}(y,v,t)H_{n}(y,w,t)\der w ,\label{iteration supersol}
\end{align}
with $H_{n}(y,v,0)\in  C^{1}(\mathbb{R}\times (0,\infty))$, for all $n\in\mathbb{N}$ and such that
\begin{align}\label{initial condition linear}
(1+v^{b})H_{n+1}(y,v,0)=(1+v^{b})H_{n}(y,v,0)\leq \frac{A}{1+|y-v^{\alpha}|^{m}}, \textup{ for all } n\in \mathbb{N}.
\end{align}
At $n=0$, we take $H_{0}$ to be
\begin{align}\label{h0 form}
(1+v^{b})H_{0}(y,v,t)=\frac{A e^{\frac{t}{\epsilon}}}{1+e^{\frac{mt}{\epsilon}}|y-v^{\alpha}|^{m}}
\end{align}
or in other words 
\begin{align}
\partial_{t}H_{0}+\partial_{y}[(v^{\alpha}-y)H_{0}]&=0;\label{h0 pde}\\
(1+v^{b})H_{0}(y,v,0)&=\frac{A}{1+|y-v^{\alpha}|^{m}}.\nonumber
\end{align}
It holds that there exists $T\leq 1$ sufficiently small and independent of $\epsilon$ such that there exists a solution $H_{n+1}\in C^{1}([0,T]\times \mathbb{R}\times (0,\infty))$ which solves \eqref{iteration supersol} with $H_{n+1}(y,v,0)$ as in \eqref{initial condition linear}.
\begin{prop}\label{prop compact support functions}
Let $N>1$, $\epsilon>0$, $n\in\mathbb{N}$. Then there exists $T\leq 1$, sufficiently small but independent of $\epsilon,n,$ or $N$, such that if $H_{\epsilon,\textup{in}}\in C^{1}(\mathbb{R}\times(0,\infty))$ such that  $\supp  H_{\epsilon,\textup{in}}\subseteq \{[-N^{\alpha},N^{\alpha}]\times[0,N]\}$ and if $H_{n}\in C^{1}([0,T]\times\mathbb{R}\times (0,\infty))$ is such that  $\supp  H_{n}(t)\subseteq \{[-N^{\alpha},N^{\alpha}]\times[0,N]\}$, for all $t\in[0,T]$, then there exists $H_{n+1}\in C^{1}([0,T]\times\mathbb{R}\times (0,\infty))$ that solves
    \begin{align}
 \partial_{t}H_{n+1}(y,v,t)+\frac{1}{\epsilon}\partial_{y}[(v^{\alpha}-y)H_{n+1}(y,v,t)]=&\mathbb{K}_{N}[H_{n+1},H_{n}](y,v,t)\label{equation appendix}\\ 
 :=&\int_{0}^{\frac{v}{2}}K_{N}(v-w,w)H_{n+1}(y,v-w,t)H_{n}(y,w,t)\der w\nonumber \\
   &-\int_{0}^{\infty}K_{N}(v,w)H_{n+1}(y,v,t)H_{n}(y,w,t)\der w ,\nonumber
\end{align}
for all $t\in[0,T]$, where
\begin{align}
K_{N}(v,w)&:=K(v,w)\chi_{N}(v+w), \textup{ with }\label{truncation kernel in m}\\
\chi_{N}\in C([0,\infty)),&  \quad \chi_{N}\colon [0,\infty)\rightarrow [0,1] \; \mbox{ such that } \chi_{N}(v)=1
\mbox{ if } v\leq \frac{N}{2} \mbox{ and } \chi_{N}(v)=0 \mbox{ if } v\geq N.
\nonumber
\end{align}
Moreover, it holds that 
\begin{align}
    \supp H_{n+1}(t)\subseteq \{[-N^{\alpha},N^{\alpha}]\times[0,N]\}, \textup{ for all }t\in[0,T].
\end{align}
\end{prop}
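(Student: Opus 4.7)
The plan is to recast \eqref{equation appendix} as a linear integral equation along the characteristics of the transport operator and solve it by Picard iteration. The backward characteristic of $\partial_t+\frac{1}{\epsilon}(v^\alpha-y)\partial_y$ through $(y,v,t)$ is explicit,
\begin{equation*}
Y_\epsilon(s;y,v,t)=v^\alpha+(y-v^\alpha)e^{(t-s)/\epsilon},\qquad s\in[0,t],
\end{equation*}
and coincides with the curve encoded in $S_\epsilon$ from \eqref{definition semigroup}. Rewriting \eqref{equation appendix} in non-divergence form (absorbing $-\frac{1}{\epsilon}H_{n+1}$ from $\partial_y[(v^\alpha-y)H_{n+1}]$ into a multiplicative weight) and integrating along $Y_\epsilon$, I would obtain the Duhamel-type mild formulation, in exact analogy with \eqref{mild solution equation},
\begin{equation*}
H_{n+1}(y,v,t)=S_\epsilon(t)[H_{\epsilon,\mathrm{in}}](y,v)\,D_n(y,v,0,t)+\int_0^t D_n(y,v,s,t)\,S_\epsilon(t-s)[\mathcal{G}_N[H_{n+1},H_n](\cdot,\cdot,s)](y,v)\,ds,
\end{equation*}
where $\mathcal{G}_N[H_{n+1},H_n]$ denotes the gain integral on the right-hand side of \eqref{equation appendix} and $D_n(y,v,s,t):=\exp\bigl(-\int_s^t a_N[H_n](Y_\epsilon(\tau;y,v,t),v,\tau)\,d\tau\bigr)$ is built from $a_N[H_n](y,v,t):=\int_0^\infty K_N(v,w)H_n(y,w,t)\,dw$ exactly as in \eqref{definition s mild}.

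I would then solve this integral equation by Picard iteration in $C([0,T];L^\infty(\mathbb{R}\times(0,\infty)))$. Since $\mathrm{supp}\,H_n(t)\subseteq\{|y|\le N^\alpha,\,0\le v\le N\}$ and $K_N\le 2K_0N^\gamma$, both $a_N[H_n]$ and the linear map $H\mapsto\mathcal{G}_N[H,H_n]$ are bounded operators on $L^\infty$ with norm controlled by $\|H_n\|_{L^\infty}$, while $D_n\le 1$ since $a_N[H_n]\ge 0$. The Duhamel map is therefore affine in $H_{n+1}$ with Lipschitz constant bounded by $CT$ on $[0,T]$, so for $T$ small it is a contraction and admits a unique fixed point $H_{n+1}$. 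The $C^1$ regularity of this fixed point follows by differentiating the mild formula once in each of $(t,y,v)$ and bootstrapping with the $C^1$ regularity of $H_{\epsilon,\mathrm{in}}$ and $H_n$, the smoothness of $Y_\epsilon$, and the $C^1$ truncation $\chi_N$.

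Support preservation can then be read off the mild formula. If $v>N$, then $\chi_N(v)=0$ and thus $K_N(v,w)=K(v,w)\chi_N(v+w)=0$ for all $w\ge 0$, while for $w\in[0,v/2]$ also $K_N(v-w,w)=K(v-w,w)\chi_N(v)=0$; consequently both $a_N[H_n]$ and $\mathcal{G}_N$ vanish, and since $H_{\epsilon,\mathrm{in}}(\cdot,v)=0$ for $v>N$, the Duhamel formula gives $H_{n+1}(\cdot,v,\cdot)\equiv 0$. If $|y|>N^\alpha$ and $v\le N$ (hence $v^\alpha\le N^\alpha$), the characteristic satisfies $|Y_\epsilon(s;y,v,t)-v^\alpha|=|y-v^\alpha|e^{(t-s)/\epsilon}\ge|y-v^\alpha|$ and $Y_\epsilon(s)-v^\alpha$ has the same sign as $y-v^\alpha$, so $|Y_\epsilon(s)|\ge|y|>N^\alpha$ for every $s\in[0,t]$; hence $H_{\epsilon,\mathrm{in}}(Y_\epsilon(0),v)=0$ and $H_n(Y_\epsilon(s),\cdot,s)\equiv 0$, and the Duhamel formula gives $H_{n+1}(y,v,t)=0$, as required.

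The main obstacle is to make $T$ genuinely independent of $\epsilon$, $n$, and $N$. Independence from $\epsilon$ is built into the Duhamel formula because the $e^{t/\epsilon}$ dilation in $S_\epsilon$ is compensated by the Jacobian of the change of variables $s\mapsto Y_\epsilon(s;\cdot)$, yielding $\mathcal{O}(1)$ effective coefficients (the same mechanism used in Proposition \ref{prop semigroup decay}). Independence from $n$ and $N$ requires an a priori $L^\infty$ bound on $H_n$ preserved through the iteration together with an estimate of $a_N[H_n]$ and $\mathcal{G}_N$ that exploits the actual support and decay of $H_n$ rather than the crude bound $\|K_N\|_\infty\le 2K_0N^\gamma$. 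In practice, this is achieved by propagating the pointwise envelope $T_1+T_2+T_3$ from \eqref{def t1}--\eqref{def t3} through the Picard iteration, so that the contraction constant depends only on $A$ and on $\alpha,\gamma,b,m$.
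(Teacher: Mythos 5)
The support-preservation half of your proposal is correct and clean: the observation that $Y_\epsilon(s;y,v,t)-v^\alpha$ keeps the sign of $y-v^\alpha$ and satisfies $|Y_\epsilon(s)-v^\alpha|\ge|y-v^\alpha|$ for $s\le t$ gives $|Y_\epsilon(s)|\ge|y|$ directly, and the vanishing of $K_N(v,\cdot)$ and $K_N(v-\cdot,\cdot)$ for $v>N$ is exactly right. The $v$-support argument matches what the paper intends when it says compact support ``follows directly from \eqref{equation appendix} and the choice of $K_N$.''

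The existence half, however, contains a genuine error. You write that the Duhamel map is affine with Lipschitz constant bounded by $CT$, implicitly treating $S_\epsilon(t-s)$ as a bounded operator of unit norm. It is not: from \eqref{definition semigroup}, $\|S_\epsilon(s)g\|_{L^\infty}=e^{s/\epsilon}\|g\|_{L^\infty}$, so the honest Lipschitz constant of the affine map is of order $\int_0^T e^{(T-s)/\epsilon}\,\mathrm{d}s\cdot\|K_N\|_\infty\|H_n\|_\infty\sim\epsilon(e^{T/\epsilon}-1)\|K_N\|_\infty\|H_n\|_\infty$, which blows up as $\epsilon\to 0$. Your proposed repair via the Jacobian ``compensation'' is the mechanism behind Proposition \ref{prop semigroup decay}, but that result gives a pointwise bound $\sim\epsilon/|y-v^\alpha|$ that is large near $y=v^\alpha$; it does not produce an $\mathcal{O}(1)$ $L^\infty$ bound. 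And your fallback of propagating the envelope $T_1+T_2+T_3$ through the Picard iteration is circular here: Proposition \ref{prop compact support functions} is invoked precisely as a preliminary to the continuity/iteration machinery (Remark \ref{rmk compact support for delta uniform}) that establishes that envelope, so the envelope cannot be assumed when constructing $H_{n+1}$ in this appendix step.

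The paper takes a different route precisely to avoid this: it first solves the pre-transformed equation $\partial_t f_{n+1}+\frac{1}{\epsilon}v^\alpha\partial_x f_{n+1}=\mathbb{K}_N[f_{n+1},f_n]$, whose free transport is an $L^\infty$-\emph{isometry}, so the fixed-point contraction constant is $\sim T\|K_N\|_\infty\|f_n\|_\infty$ with no $e^{t/\epsilon}$ factor, and then transforms back to $H_{n+1}$. (The change of variables written in the appendix drops the $\epsilon$'s; it should be the one from \eqref{our model to rain model}/\eqref{rain model to our model}, with $e^{\tau/\epsilon}$ in place of $e^\tau$, but the intent is clear.) If you want to stay in the $H$-variables, you would need either a weighted norm that exactly cancels the $e^{t/\epsilon}$ growth against the $e^{s/\epsilon}$ decay of the backward characteristic time, or to accept a $T$ depending on $\epsilon$ at this step and extend it to a uniform time by the main continuity/iteration argument of Section \ref{section three}.
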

 We prove Proposition \ref{prop compact support functions} in Appendix \ref{appendix a}.
\begin{rmk}\label{first remark compact support}
In principle, since the functions in Proposition \ref{prop compact support functions} depend on the iteration $n$ and on the truncation $N$,  we have to keep in mind that we work with a sequence $H_{n,N}$. The compact support of functions will be used in Proposition \ref{continuity argument prop}. We will then obtain a uniform  fast decaying bound for the sequence of solutions $H_{n,N}$, which is in particular independent of the truncation $N$ in \eqref{truncation kernel in m} and the iteration in $n\in\mathbb{N}$. Then one can pass to the limit $N \to \infty$. Since this procedure is standard once one has the bounds on the solution, we omit the dependence on $N$ and work directly with $K$ and equation \eqref{iteration supersol} in order to simplify the notation.
\end{rmk}

We now focus on proving Theorem \ref{proposition existence via fixed point}.
\begin{proof}[Plan of the proof of Theorem \ref{proposition existence via fixed point}]
Let $\epsilon\in(0,1)$, fixed. We will prove that there exists $T\leq 1$ which is independent of $\epsilon$ such that for all $t\in[0,T]$, it holds that if 
\begin{align*}
H_{n}(y,v,t)\leq T_{1}(y,v,t)+T_{2}(y,v,t)+T_{3}(y,v,t), \textup{ for all } y\in\mathbb{R}, v\in(0,\infty),
\end{align*}
where $T_{1},T_{2},T_{3}$ are as in \eqref{def t1}-\eqref{def t3}, then
\begin{align}\label{main bound hn}
H_{n+1}(y,v,t)\leq T_{1}(y,v,t)+T_{2}(y,v,t)+T_{3}(y,v,t), \textup{ for all } y\in\mathbb{R}, v\in(0,\infty).
\end{align}
\begin{itemize}
    \item \textbf{Step $1.$ Region $ y\leq (\frac{v}{3})^{\alpha}$.} 
\end{itemize}
We will prove that in this region $G_{\epsilon}$ is a supersolution of \eqref{iteration supersol}, where $G_{\epsilon}$ is defined as follows. 
\begin{align*}
  \partial_{t}G_{\epsilon}(y,v,t)+\frac{1}{\epsilon}\partial_{y}[(v^{\alpha}-y)G_{\epsilon}(y,v,t)]+\frac{L\xi(v)v^{\gamma}}{1+|y|^{d}}\partial_{v}G_{\epsilon}(y,v,t)=0,
  \end{align*}
  for some suitably chosen $d\in\mathbb{N}$, some sufficiently large $L>1$, and where $\xi$ is such that
  \begin{equation}
\xi\in C([0,\infty))\,, \quad \xi\colon [0,\infty)\rightarrow [0,1] \; \mbox{ such that } \xi(v)=1
\mbox{ if } v\geq 1 \mbox{ and } \xi(v)=0 \mbox{ if } v\leq \frac{1}{2}.
\end{equation}
We then prove that when $y\leq \big(\frac{v}{3}\big)^{\alpha}$, it holds that $G_{\epsilon}$ satisfies the following upper bound
\begin{align}
 G_{\epsilon}(y,v,t)\leq T_{3}(y,v,t),
\end{align}
where $T_{3}$ is as in \eqref{def t3}.

This is the content of Section \ref{section four}.

\begin{itemize}
    \item \textbf{Step $2.$ Region $ y\geq (\frac{v}{3})^{\alpha}$.} 
\end{itemize}

\textit{\underline{Continuity argument:}} We first prove by continuity that if at time $s\geq 0$, we have that $H_{n+1}(y,v,s)\leq T_{1}(y,v,s)+T_{2}(y,v,s)+T_{3}(y,v,s)$, then for a sufficiently small $\tilde{\delta}$ (which depends on $\epsilon$) it holds that $H_{n+1}(y,v,z)$ $\leq 4[T_{1}(y,v,z)+T_{2}(y,v,z)+T_{3}(y,v,z)]$, for all $z\in[s,s+\tilde{\delta}]$. This is the content of Proposition \ref{continuity argument prop}.

\textit{\underline{Iteration argument:}} We then work with the mild formulation of the linear version of our coagulation model \eqref{iteration supersol}. More precisely, we have that
\begin{align}\label{mild form iteration in n}
&H_{n+1}(y,v,t)=S_{\epsilon}(t)[H_{n+1}(y,v,0) ] D[H_{n}](y,v,0,t)\\
&+\int_{0}^{t}\int_{(0,\frac{v}{2})}D[H_{n}](y,v,s,t)
S_{\epsilon}(t-s)\bigg[K(v-v',v')H_{n+1}(y,v-v',s) H_{n}(y,v',s)\bigg]\der s,\nonumber 
\end{align}
with $D[H_{n}]$ defined as in \eqref{definition s mild}. In other words, we have that
\begin{align*}
H_{n+1}(y,v,t)\leq S_{\epsilon}(t)[H_{n+1}(y,v,0) ] +2K_{0}v^{\gamma}\int_{0}^{t}\int_{(0,\frac{v}{2})}
S_{\epsilon}(t-s)\bigg[H_{n+1}(y,v-v',s) H_{n}(y,v',s)\bigg]\der s.\nonumber 
\end{align*}
For $t\leq s+\tilde{\delta}$, the following holds.
\begin{itemize}
    \item By induction, we have that $H_{n}(y,v,t)\leq T_{1}+T_{2}+T_{3}$.
    \item By the continuity argument, we have that $H_{n+1}(y,v,t)\leq 4(T_{1}+T_{2}+T_{3})$.
\end{itemize}We then prove that as long as $t\leq 1$ and $y\geq (\frac{v}{3})^{\alpha}$, it holds that
\begin{align*}
Cv^{\gamma}\int_{0}^{t}\int_{(0,\frac{v}{2})}
S_{\epsilon}(t-s)\bigg[H_{n+1}(y,v-v',s) H_{n}(y,v',s)\bigg]\der s\leq \frac{T_{1}+T_{2}}{2}.
\end{align*}
Since if  $y\geq (\frac{v}{3})^{\alpha}$ we also have that $S_{\epsilon}(t)[H_{n+1}(y,v,0) ] \leq \frac{T_{1}+T_{2}}{2}$, we deduce that
\begin{align}\label{bound h n+1}
    H_{n+1}(y,v,t)\leq T_{1}+T_{2}, \textup{ for all } t\leq \min\{1,s+\tilde{\delta}\},  y\geq (\frac{v}{3})^{\alpha}.
\end{align}
From Step $1$, we have 
\begin{align*}
    H_{n+1}(y,v,t)&\leq T_{3}, \textup{ for all } t\leq 1,  y\leq (\frac{v}{3})^{\alpha}.
\end{align*}
We can thus conclude that $H_{n+1}(y,v,t)\leq T_{1}+T_{2}+T_{3}, \textup{ for all } t\leq \min\{1,s+\tilde{\delta}\}$ and then iterate the argument in order to obtain that the upper bound \eqref{bound h n+1} holds true for all $t\leq 1$. 

This is the content of Section \ref{section three} of this paper.
\begin{itemize}
    \item \textbf{Step $3.$ Limit as $n\rightarrow\infty$} 
\end{itemize}
Let $t\geq 0$, $y\in\mathbb{R}$ and $v> 0$. For $n\in\mathbb{N}$, we denote by 
\begin{align}
R_n(y,v,t):=H_{n+1}(y,v,t)-H_n(y,v,t).
\end{align}

From \eqref{iteration supersol}, we find that $R_{n}$ satisfies
\begin{equation}
\left\{\begin{aligned}
\partial_t R_n(y,v,t)+\frac{1}{\epsilon}\partial_y[(v^{\alpha}-y) R_n(y,v,t)]&=\mathbb{K}[R_n,H_{n}]+\mathbb{K}[H_{n},R_{n-1}]; \\
R_{n}(y,v,0)&=0,
   \end{aligned}\right.
   \end{equation}
where $\mathbb{K}[f,g]$, for two functions $f,g$, is as in \eqref{iteration supersol}. Since $\mathbb{K}[R_n,H_{n}]$ is a linear operator in $R_{n}$ and $\mathbb{K}[H_{n},R_{n-1}]$ does not depend on $H_{n}$, we use Duhamel's principle in order to deduce that in order to find estimates for $R_{n}$  it is sufficient to find estimates for the system
    \begin{equation}
\left\{\begin{aligned}
\partial_t R^{s}_n(x,v,t)+\frac{1}{\epsilon}\partial_y[(v^{\alpha}-y) R^{s}_n(y,v,t)]&=\mathbb{K}[R^{s}_n,H_{n}], \textup{ for } t>s; \\
R^{s}_{n}(y,v,s)&=\mathbb{K}[H_{n},R^{s}_{n-1}].
   \end{aligned}\right.
   \end{equation}
We then use induction in order to prove that
\begin{align}\label{first bound rn}
    |R_{n}(t)|\leq (Cte^{\frac{1}{\epsilon}})^{n}, \textup{ for all } n\in\mathbb{N}.
\end{align}
We can thus conclude that 
\begin{align}\label{rn converges}
    \textup{ for all } t\approx\mathcal{O}(e^{-\frac{1}{\epsilon}}),  R_{n}(t)\rightarrow 0 \textup{ as } n\rightarrow\infty.
\end{align} This was also the strategy in \cite{cristianinhom}. Nonetheless, the difference is that functions in \cite{cristianinhom} were uniformly bounded in $\epsilon$ in time, while in our case $H_{n}(t)\leq Ce^{\frac{t}{\epsilon}}$, which is the reason why the exponential term appears in \eqref{first bound rn}. We then wish to extend \eqref{rn converges} to hold for times of order $\mathcal{O}(1)$. By \eqref{first bound rn}, we have that $R_{n}\big(\frac{e^{-\frac{1}{\epsilon}}}{2}\big)\leq 2^{-n}$ and we can use the invariance under translations in time in order to iterate the argument. Let
\begin{align*}
\tilde{R}_{n}(t)=R_{n}\bigg(t+\frac{e^{-\frac{1}{\epsilon}}}{2}\bigg).
\end{align*}
$\tilde{R}_{n}$ then satisfies
\begin{equation}\label{formal argument duhamel}
\left\{\begin{aligned}
\partial_t \tilde{R}_n(y,v,t)+\frac{1}{\epsilon}\partial_y[(v^{\alpha}-y) \tilde{R}_n(y,v,t)]&=\mathbb{K}\bigg[\tilde{R}_n,H_{n}\bigg(t+\frac{e^{-\frac{1}{\epsilon}}}{2}\bigg)\bigg]+\mathbb{K}\bigg[H_{n}\bigg(t+\frac{e^{-\frac{1}{\epsilon}}}{2}\bigg),\tilde{R}_{n-1}\bigg]; \\
\tilde{R}_{n}(y,v,0)&=R_{n}\bigg(\frac{e^{-\frac{1}{\epsilon}}}{2}\bigg).
   \end{aligned}\right.
   \end{equation}
Since $R_{n}\bigg(t+\frac{e^{-\frac{1}{\epsilon}}}{2}\bigg)\leq 2^{-n}\rightarrow 0$ as $n\rightarrow \infty$, then \eqref{formal argument duhamel} is to be understood at a formal level as
 \begin{equation}
\left\{\begin{aligned}
\partial_t \tilde{R}_n(y,v,t)+\frac{1}{\epsilon}\partial_y[(v^{\alpha}-y) \tilde{R}_n(y,v,t)]&=\mathbb{K}\bigg[\tilde{R}_n,H_{n}\bigg(t+\frac{e^{-\frac{1}{\epsilon}}}{2}\bigg)\bigg]+\mathbb{K}\bigg[H_{n}\bigg(t+\frac{e^{-\frac{1}{\epsilon}}}{2}\bigg),\tilde{R}_{n-1}\bigg]; \\
\tilde{R}_{n}(y,v,0)&=0.
   \end{aligned}\right.
   \end{equation}
We are able to use again Duhamel's principle and find suitable estimates that allow us to extend the argument to hold for longer times. We refer to Section \ref{section five} for a rigorous proof.

The choice of presentation, namely we present the proof of Step $2$ before Step $1$, is since the region $y\geq (\frac{v}{3})^{\alpha}$ contains the part of the solution where most of the mass will concentrate and is thus more difficult to prove.
\end{proof} 
\section{Region $y\geq (\frac{v}{3})^{\alpha}$}\label{section three}
\subsection{Continuity argument}
We first prove the continuity argument.

  \begin{prop}[Continuity argument]\label{continuity argument prop} Assume that at time $s\geq 0$, we have that
\begin{align}\label{initial condition continuity argument}
   H_{n+1}(y,v,s)\leq T_{1}(y,v,s)+T_{2}(y,v,s)+T_{3}(y,v,s),
\end{align}
where $T_{1},T_{2},T_{3}$ are as in \eqref{def t1}-\eqref{def t3}. Then there exists $\tilde{\delta}$, which may depend on $n$, $\epsilon$,  but is independent of $s,$ $y,v$, such that for all $z\in[s,\min\{s+\frac{\epsilon}{m},1,s+\tilde{\delta}\}]$, where $m$ is as in \eqref{initial condition decay}, it holds that
\begin{align}\label{continuity argument statement}
     H_{n+1}(y,v,z)\leq 4\big[ T_{1}(y,v,z)+ T_{2}(y,v,z)+ T_{3}(y,v,z)\big].
\end{align}
\end{prop}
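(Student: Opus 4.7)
The plan is a bootstrap argument exploiting the $C^{1}$-regularity of $H_{n+1}$ guaranteed by Proposition \ref{prop compact support functions}. I would define the bootstrap time
\begin{align*}
t^{\ast} := \sup\bigl\{z \in [s,\min(s+\epsilon/m,1)] : H_{n+1}(y,v,\tau) \leq 4[T_{1}+T_{2}+T_{3}](y,v,\tau) \text{ for all } \tau\in[s,z],\, y\in\mathbb{R},\, v>0\bigr\}.
\end{align*}
By the hypothesis \eqref{initial condition continuity argument}, at $\tau=s$ the bound holds with factor $1<4$, and by continuity of $H_{n+1}$ in $z$ one has $t^{\ast}>s$. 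The goal is to exhibit $\tilde{\delta}>0$, depending only on $n$ and $\epsilon$, such that $t^{\ast}\geq \min(s+\epsilon/m,1,s+\tilde{\delta})$.

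For $z\in[s,t^{\ast}]$, I would use the mild formulation \eqref{mild solution equation} restarted at $s$, together with $D[H_{n}]\leq 1$, to write
\begin{align*}
H_{n+1}(y,v,z) \leq S_{\epsilon}(z-s)[H_{n+1}(\cdot,\cdot,s)](y,v) + \int_{s}^{z} S_{\epsilon}(z-\sigma)\bigl[\mathbb{K}_{1}[H_{n+1},H_{n}](\cdot,\cdot,\sigma)\bigr](y,v)\,\der\sigma,
\end{align*}
and estimate the two terms separately. For the initial-data term I would substitute the hypothesis at time $s$ and analyse pointwise how $S_{\epsilon}(z-s)$, whose flow $y\mapsto e^{(z-s)/\epsilon}(y-v^{\alpha})+v^{\alpha}$ contracts toward $v^{\alpha}$, transforms the cut-offs $\chi_{1},\chi_{2},\chi_{3}$. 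Since $e^{(z-s)/\epsilon}\leq e^{1/m}$ stays of order one on $z-s\leq \epsilon/m$, direct inspection gives $S_{\epsilon}(z-s)[T_{i}(\cdot,\cdot,s)]\leq T_{i}(\cdot,\cdot,z)$ for $i=1,2$, while the action on $T_{3}(\cdot,\cdot,s)$ may spill over from the $\chi_{3}$-region into the $\chi_{2}\cup\chi_{1}$-region; this spillover is absorbed by $T_{1}(z)+T_{2}(z)$ provided $M_{1},M_{2}$ are taken as in \eqref{defm2}. The semigroup contribution is thus bounded by $2[T_{1}+T_{2}+T_{3}](y,v,z)$.

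For the gain term, the bootstrap bound on $H_{n+1}$ together with the inductive bound on $H_{n}$ controls $\mathbb{K}_{1}[H_{n+1},H_{n}]$ by a quadratic expression in $T_{1}+T_{2}+T_{3}$. I would apply Lemma \ref{lemma dirac convolution} and Lemma \ref{ref lemma dirac v-w v} to split the product of two concentrated profiles into two terms each carrying a single integrable factor, and then Proposition \ref{prop semigroup decay} to evaluate the resulting time integral under $S_{\epsilon}$. A rough version of these estimates yields a pointwise bound of the form
\begin{align*}
\int_{s}^{z} S_{\epsilon}(z-\sigma)\bigl[\mathbb{K}_{1}[H_{n+1},H_{n}](\cdot,\cdot,\sigma)\bigr]\,\der\sigma \leq C(n,\epsilon)(z-s)\,[T_{1}+T_{2}+T_{3}](y,v,z),
\end{align*}
where $C(n,\epsilon)$ collects the uniform bounds on $H_{n},H_{n+1}$ furnished by the compact support from Proposition \ref{prop compact support functions}. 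Choosing $\tilde{\delta}:=\min\{\epsilon/m,\,1/(2C(n,\epsilon))\}$ then closes the bootstrap, and the maximality of $t^{\ast}$ forces $t^{\ast}\geq s+\tilde{\delta}$, giving \eqref{continuity argument statement}. The main obstacle will be the pointwise gain estimate above: the crude bound $\|\mathbb{K}_{1}\|_{\infty}\lesssim \|H_{n+1}\|_{\infty}\|H_{n}\|_{\infty}$ is insufficient, since $T_{1}+T_{2}+T_{3}$ has no positive lower bound on its support, so the concentration structure of the profiles near $\{y=v^{\alpha}\}$ must survive both the convolution in $w$ and the subsequent semigroup action in $\sigma$. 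This is exactly the mechanism exploited by the fine estimates of Section \ref{section three}; here, however, a coarse version suffices because the constant $C(n,\epsilon)$ may depend on $n$ and $\epsilon$.
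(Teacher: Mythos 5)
Your proposal takes a genuinely different route from the paper, and the route has a concrete gap.

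The paper's proof never touches the mild formulation or the semigroup at all. It compares $T_{i}(y,v,s)$ with $T_{i}(y,v,z)$ at the \emph{same} spatial point $(y,v)$: for $T_{3}$ the cut-off $\chi_{3}$ is time-independent and the restriction $z-s\leq \epsilon/m$ gives $1+e^{\frac{zm}{\epsilon}}|y-v^{\alpha}|^{m}\leq 3(1+e^{\frac{sm}{\epsilon}}|y-v^{\alpha}|^{m})$, hence $T_{3}(s)\leq 3T_{3}(z)$ with no spillover; the small shrinkage of $\chi_{1}$'s support between times $s$ and $z$ is then absorbed into $T_{2}(z)$. The step from $H_{n+1}(\cdot,s)$ to $H_{n+1}(\cdot,z)$ is handled purely by time-continuity of $H_{n+1}$ together with compact support (Proposition \ref{prop compact support functions}), which yields a positive lower bound $\epsilon_{N}\leq \tilde{\epsilon}=\min\{\frac{A\psi(e^{1/\epsilon}(y-v^{\alpha}))}{1+v^{b}},\frac{A\epsilon}{|y-v^{\alpha}|(1+v^{b})}\}$ and hence a uniform modulus of continuity $\tilde{\delta}$. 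No Duhamel term, no coagulation estimate, no semigroup transport.

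The gap in your argument is the assertion that the semigroup's action on $T_{3}(\cdot,\cdot,s)$ produces a spillover ``absorbed by $T_{1}(z)+T_{2}(z)$,'' so that the transported initial-data term is $\leq 2[T_{1}+T_{2}+T_{3}](z)$. Computing directly, $S_{\epsilon}(z-s)[T_{3}(\cdot,\cdot,s)](y,v)=\frac{M_{2}A}{1+v^{b}}e^{\frac{z}{\epsilon}}\psi(e^{\frac{z}{\epsilon}}(y-v^{\alpha}))\,\mathbbm{1}_{\{y\leq v^{\alpha}-e^{-\frac{z-s}{\epsilon}}(v^{\alpha}-(v/3)^{\alpha})\}}$. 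On the overlap with $\chi_{1}(y,v,z)$ this is exactly $\frac{M_{2}}{2}T_{1}(y,v,z)$, and on the overlap with $\chi_{2}(y,v,z)$ one gets $\leq \frac{M_{2}}{2A}T_{2}(y,v,z)$. Since $M_{2}$ is a fixed constant set by \eqref{def m2} to meet the needs of Section~\ref{section four} (it has the form $3C(\alpha,b,m)$, not adjustable to be $\leq 2$), the spillover costs a factor $\frac{M_{2}}{2}$ (resp.\ $\frac{M_{2}}{2A}$), not $1$. The bootstrap therefore closes only at a constant of order $M_{2}$ rather than the stated $4$. Separately, the coarse gain-term bound $C(n,\epsilon)(z-s)[T_{1}+T_{2}+T_{3}](z)$ is fine in spirit because the proposition permits $\tilde{\delta}$ to depend on $n,\epsilon$; but this also signals the approach is doing far more work than the statement calls for, which is precisely why the paper sidesteps the mild formulation altogether.
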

\begin{rmk}\label{rmk compact support for delta uniform}  We will need in the proof of Proposition \ref{supersol prop above region} that $\tilde{\delta}$ is independent of $y, v$. By Proposition \ref{prop compact support functions}, it holds that $H_{n+1}$ has compact support and this will imply that $\tilde{\delta}$ does not depend on $y$ or $v$. We refer to Remark \ref{first remark compact support}. We remember we work with a sequence $H_{n,N}$, which is compactly supported and the support depends on $N$. We will then obtain a uniform  fast decaying bound for the sequence of solutions $H_{n,N}$, which is in particular independent of the truncation $N$ and then one can pass to the limit as $N \to \infty$. The rest of the estimates in this paper do not depend on the truncation $N$ and thus do not depend on the compact support. As mentioned in Remark \ref{first remark compact support}, we omit the dependence on $N$ in order to avoid complicated notation.
\end{rmk}
\begin{proof}[Proof of Proposition \ref{continuity argument prop}]
We have that \eqref{initial condition continuity argument} holds. Let 
\begin{align}\label{definition epsilon tilde}
\tilde{\epsilon}:=\min\{\frac{A\psi(e^{\frac{1}{\epsilon}}(y-v^{\alpha}))}{1+v^{b}},\frac{A\epsilon}{|y-v^{\alpha}|(1+v^{b})}\}.
\end{align} Since $H_{n+1}$ has compact support, it holds that there exists $\epsilon_{N}$, where $N$ is as in \eqref{truncation kernel in m}, which depends on $N$ but is otherwise independent on $y,v$, such that $\tilde{\epsilon}\geq \epsilon_{N}$. By continuity in time of $H_{n+1}$, we have that there exists  $\tilde{\delta}$ such that for all $z\in[s,s+\tilde{\delta}]$ it holds that $|H_{n+1}(y,v,z)- H_{n+1}(y,v,s)|\leq \epsilon_{N}$. It thus holds that
\begin{align*}
H_{n+1}(y,v,z)\leq  T_{1}(y,v,s)+T_{2}(y,v,s)+T_{3}(y,v,s)+\epsilon_{N}.
\end{align*}
Take $z\in[s,\min\{1,s+\frac{\epsilon}{m},s+\tilde{\delta}\}]$. We analyze each term separately.
\begin{align*}
  T_{1}(y,v,s)&=   \frac{2A}{1+v^{b}}e^{\frac{s}{\epsilon}}\psi(e^{\frac{s}{\epsilon}}(y-v^{\alpha}))\chi_{1}(y,v,s)\leq  \frac{2A}{1+v^{b}}e^{\frac{z}{\epsilon}}\psi(e^{\frac{s}{\epsilon}}(y-v^{\alpha}))\chi_{1}(y,v,s),
\end{align*}
with $\chi_{1}$ as in \eqref{chi1}. Since $z\leq s+\frac{\epsilon}{m}$, it holds that 
\begin{align}\label{bound t1 and t3}
1+e^{\frac{zm}{\epsilon}}|y-v^{\alpha}|^{m}=1+e^{\frac{sm}{\epsilon}}|y-v^{\alpha}|^{m}e^{\frac{(z-s)m}{\epsilon}}\leq 3(1+e^{\frac{sm}{\epsilon}}|y-v^{\alpha}|^{m})
\end{align}and thus we further have that
\begin{align*}
  T_{1}(y,v,s)\leq  \frac{6A }{1+v^{b}}e^{\frac{z}{\epsilon}}\psi(e^{\frac{z}{\epsilon}}(y-v^{\alpha}))\chi_{1}(y,v,s).
\end{align*}
We define the set
\begin{align*}
    U(s):=\{y\leq  v^{\alpha}-C_{0}^{-1}\epsilon^{-\frac{1}{m-1}}  e^{-\frac{z}{\epsilon}}\}.
\end{align*}Since $z\geq s$, we have $v^{\alpha}-C_{0}^{-1}\epsilon^{-\frac{1}{m-1}}  e^{-\frac{s}{\epsilon}}\leq v^{\alpha}-C_{0}^{-1}\epsilon^{-\frac{1}{m-1}}  e^{-\frac{z}{\epsilon}}$. Thus
\begin{align}\label{t1 bound continuity first}
  T_{1}(y,v,s)\leq  3T_{1}(y,v,z)+\frac{6A }{1+v^{b}}e^{\frac{z}{\epsilon}}\psi(e^{\frac{z}{\epsilon}}(y-v^{\alpha}))\mathbbm{1}_{U(z)\setminus U(s)}, \textup{ for } s\leq z.
\end{align}
However, in the region where $  \{y\leq v^{\alpha}-C_{0}^{-1}\epsilon^{-\frac{1}{m-1}}  e^{-\frac{z}{\epsilon}}\}$, it holds that $e^{\frac{z}{\epsilon}}|y-v^{\alpha}|\geq C_{0}^{-1}\epsilon^{-\frac{1}{m-1}}$. Thus
\begin{align*}
    e^{\frac{z}{\epsilon}}\psi(e^{\frac{z}{\epsilon}}(y-v^{\alpha}))\mathbbm{1}_{U(z)\setminus U(s)}&\leq \frac{1}{|y-v^{\alpha}|}\frac{1}{e^{\frac{z(m-1)}{\epsilon}}|y-v^{\alpha}|^{m-1}}\mathbbm{1}_{U(z)\setminus U(s)}\leq  \frac{C_{0}^{m-1}\epsilon}{|y-v^{\alpha}|}\mathbbm{1}_{U(z)\setminus U(s)}
\end{align*}
from which we can deduce that 
\begin{align}\label{t1 bound continuity second}
\frac{6A }{1+v^{b}}e^{\frac{z}{\epsilon}}\psi(e^{\frac{z}{\epsilon}}(y-v^{\alpha}))\mathbbm{1}_{U(z)\setminus U(s)}\leq  \frac{6A C_{0}^{m-1}\epsilon}{(1+v^{b})|y-v^{\alpha}|}\mathbbm{1}_{U(z)\setminus U(s)}\leq 3T_{2}(y,v,z).
\end{align}
From \eqref{t1 bound continuity first} and \eqref{t1 bound continuity second}, we deduce that
\begin{align*}
  T_{1}(y,v,s)\leq  3[T_{1}(y,v,z)+T_{2}(y,v,z)],\textup{ } s\leq z.
\end{align*}
Moreover, since $z\geq s$, we also have that
\begin{align*}
T_{2}(y,v,s)=\frac{2 C_{0}^{m-1}A^{2} \epsilon}{(1+v^{b})|y-v^{\alpha}|}\chi_{2}(y,v,s)\leq T_{2}(y,v,z), \textup{ } s\leq z,
\end{align*}
where $\chi_{2}$ is as in \eqref{chi2}. Using \eqref{bound t1 and t3}, we also have that $T_{3}(y,v,s)\leq 3 T_{3}(y,v,z)$. We now find an upper bound for $\epsilon_{N}$. Let $\tilde{\epsilon}$ be as in \eqref{definition epsilon tilde}. We have that
\begin{align*}
\epsilon_{N}\leq \tilde{\epsilon}\leq \frac{1}{1+v^{b}}\psi(e^{\frac{1}{\epsilon}}(y-v^{\alpha}))\mathbbm{1}_{ \{y\geq v^{\alpha}-C_{0}^{-1}\epsilon^{-\frac{1}{m-1}}  e^{-\frac{z}{\epsilon}}\}\cup\{y\leq (\frac{v}{3})^{\alpha}\}}+T_{2}(y,v,z)
\end{align*}
and moreover since $z\leq 1$ it follows that
\begin{align*}
\frac{A}{1+v^{b}}\psi(e^{\frac{1}{\epsilon}}(y-v^{\alpha}))\leq \frac{e^{\frac{z}{\epsilon}}}{1+v^{b}}\psi(e^{\frac{z}{\epsilon}}(y-v^{\alpha})).
\end{align*}
In other words,
\begin{align*}
    \tilde{\epsilon}\leq T_{1}(y,v,z)+T_{2}(y,v,z)+T_{3}(y,v,z).
\end{align*}
Thus, we have that \eqref{continuity argument statement} holds for all $z\in[s,\min\{s+\epsilon,s+\tilde{\delta},1\}],$  . This concludes our proof.
\end{proof}

  \subsection{Iteration argument}\label{subsection region above}
  
We are now able to find an upper bound for $H_{n+1}$ in the region $y\geq (\frac{v}{3})^{\alpha}$. 
  \begin{prop}[Region $y\geq (\frac{v}{3})^{\alpha}$]\label{supersol prop above region}
There exist $\epsilon_{1}\in(0,1)$ and $T(A)\in[0,1]$, sufficiently small and depending on $A$ in \eqref{def t1}, such that for all $\epsilon\leq \epsilon_{1}$ and for all $n\in\mathbb{N}$, the following statement holds. Let  $H_{n}$ be as in \eqref{iteration supersol}. Let $T_{1}$, $T_{2}, T_{3}$ as in \eqref{def t1}-\eqref{def t3} with  $b
\geq \max\{\overline{b}(\gamma,\alpha),\gamma+1\}$ and $m>\max\{\frac{2(\gamma+1)}{\alpha},\frac{b}{\alpha}+1\}$, where  $\overline{b}(\gamma,\alpha)$ is as in  \eqref{definition overline b}. Assume that $H_{n}\leq \sum_{i=1}^{3}T_{i}$. Then it holds that $H_{n+1}(y,v,t)\leq T_{1}+T_{2},$ for all $t\in[0,T(A)]$ and $y\geq (\frac{v}{3})^{\alpha}$.
\end{prop}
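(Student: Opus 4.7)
The plan is to propagate the induction hypothesis $H_{n+1} \le T_1 + T_2 + T_3$ forward in time on $\{y \ge (v/3)^\alpha\}$ by combining the mild formulation \eqref{mild form iteration in n} with Proposition \ref{continuity argument prop}. Suppose $H_{n+1}(s) \le T_1(s) + T_2(s) + T_3(s)$ at some time $s \in [0,T(A)]$; Proposition \ref{continuity argument prop} then produces a $\tilde\delta > 0$, independent of $s,y,v$, such that $H_{n+1} \le 4(T_1 + T_2 + T_3)$ throughout $[s, s+\tilde\delta]$. I would plug this a priori bound into the right-hand side of \eqref{mild form iteration in n} and show that the resulting estimate closes as $H_{n+1} \le T_1 + T_2$ on $\{y \ge (v/3)^\alpha\}$ over the same interval. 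Since Step $1$ simultaneously yields $H_{n+1} \le T_3$ on $\{y \le (v/3)^\alpha\}$, the hypothesis is re-established at time $s+\tilde\delta$ and the iteration continues, covering $[0,T(A)]$ in finitely many steps.

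For the free term, using $D[H_n] \le 1$ and the initial bound \eqref{initial condition linear} gives
\[
S_\epsilon(t)[H_{n+1}(y,v,0)]\,D[H_n](y,v,0,t) \;\le\; \frac{A}{1+v^b}\,e^{t/\epsilon}\psi\bigl(e^{t/\epsilon}(y-v^\alpha)\bigr),
\]
which equals $T_1/2$ on $\{\chi_1=1\}$. On $\{\chi_2 = 1\}$ the bound $e^{t/\epsilon}|y-v^\alpha| \ge C_0^{-1}\epsilon^{-1/(m-1)}$ lets me extract one factor $|y-v^\alpha|$ and absorb the remaining $m-1$ powers via the defining relation $C_0^{m-1} = M_1 A$, producing $M_1 A^2 \epsilon / [(1+v^b)|y-v^\alpha|] = T_2/(2A) \le T_2/2$ when $A \ge 1$. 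Hence the free term is bounded by $(T_1 + T_2)/2$ on $\{y \ge (v/3)^\alpha\}$.

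The Duhamel term is the crux. After using $D \le 1$ and $K(v-v',v') \le 2K_0 v^\gamma$, the a priori bound reduces the task to controlling nine integrals of the form
\[
v^\gamma \int_0^t \int_0^{v/2} S_\epsilon(t-s)\bigl[T_i(y,v-v',s)\,T_j(y,v',s)\bigr]\,dv'\,ds, \qquad i,j \in \{1,2,3\}.
\]
The dominant case $T_1 \cdot T_1$ is treated by applying Lemma \ref{lemma dirac convolution} with $B_1 = (v-v')^\alpha$, $B_2 = v'^\alpha$, which converts the product of two mollified Diracs into $e^{s/\epsilon}\psi(e^{s/\epsilon}((v-v')^\alpha - v'^\alpha))$ times a single remaining mollifier; integrating in $v'$ via Lemma \ref{ref lemma dirac v-w v} contributes a Jacobian $Cv^{1-\alpha}$, while $\int_0^t S_\epsilon(t-s)[\cdot]\,ds$ applied to the remaining mollifier yields $\epsilon/|y-v^\alpha|$ supported on $\{(v/2)^\alpha \le y \le v^\alpha\}$ via Proposition \ref{prop semigroup decay}. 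Combining these factors with the $v^\gamma$ and $(1+v^b)^{-2}$ prefactors produces a term of the form $C A^2 \epsilon v^{\gamma+1-\alpha}/[(1+v^b)^2 |y-v^\alpha|]$, which the hypothesis $b \ge \gamma+1$ and a sufficiently large choice of $M_1$ absorb into $T_2/4$. The mixed products $T_1 \cdot T_2$ and $T_2 \cdot T_2$ carry additional powers of $\epsilon$ and behave analogously, while products involving $T_3$ only contribute where one of $v-v',v'$ lies in the region $\{y \le (w/3)^\alpha\}$ and are dominated by choosing $T(A)$ small.

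The main obstacle is the careful case analysis for the nine cross-products and the bookkeeping of the supports: for each one must verify that after the change of variables and the semigroup action the image sits inside the appropriate $\chi_1$ or $\chi_2$ region, and that the extra $v^\gamma$ together with the Jacobian $v^{1-\alpha}$ is absorbed by the decay $(1+v^b)^{-1}$ of $T_2$. The decay thresholds $b \ge \max\{\overline{b}(\gamma,\alpha), \gamma+1\}$ and $m > \max\{2(\gamma+1)/\alpha, b/\alpha+1\}$ are exactly what makes the bookkeeping close: the constraint on $m$ controls the tails of $\psi$ after convolution, while the constraint on $b$ preserves the $v$-decay through the $T_1 \cdot T_1$ contribution. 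Once the resulting estimate gives that the Duhamel term is bounded by $(T_1 + T_2)/2$ on $\{y \ge (v/3)^\alpha\}$, adding the free-term bound closes the induction step.
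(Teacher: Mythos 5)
Your high-level structure matches the paper's: mild formulation, continuity argument to get a factor-$4$ a priori bound on a small interval, reduction to the nine cross products $\int_0^t S_\epsilon(t-s)\int_0^{v/2} T_i T_j$, and iteration in time. The free-term bound and the role of $C_0^{m-1}=M_1A$ are also correctly identified. However, the description of where the $T_1\cdot T_1$ contribution lands is not just hand-wavy but would not close if taken literally, and that is a genuine gap.

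You claim that after Lemma~\ref{lemma dirac convolution}, Lemma~\ref{ref lemma dirac v-w v}, and the semigroup estimate, the $T_1T_1$ term ``produces a term of the form $CA^2\epsilon v^{\gamma+1-\alpha}/[(1+v^b)^2|y-v^\alpha|]$, which $\ldots$ a sufficiently large choice of $M_1$ absorb[s] into $T_2/4$.'' This is only one of four distinct regimes, and the other three are not absorbable by $T_2$ at all. For $y\ge v^\alpha$ the semigroup integral produces $\sim t\,e^{t/\epsilon}\psi(e^{t/\epsilon}(y-v^\alpha))$ (see \eqref{estimate for t1 t1 y greater v alpha}), which has the $T_1$-structure and is only small because $T=T(A)$ is taken small; no amount of enlarging $M_1$ helps there. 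For $y\le v^\alpha$ the paper obtains $\sim \epsilon\min\{1,e^{t/\epsilon}|y-v^\alpha|\}/|y-v^\alpha|$: when $e^{t/\epsilon}|y-v^\alpha|\le 1$ this reduces to $\epsilon e^{t/\epsilon}/(1+v^b)$, which is absorbed into $T_1$ only because $\epsilon$ is small (\eqref{e2}); when $1\le e^{t/\epsilon}|y-v^\alpha|\le C_0^{-1}\epsilon^{-1/(m-1)}$ one is still in the $\chi_1$-support of $T_1$ (not $\chi_2$), and the absorption must again go into $T_1$, via the algebraic trick \eqref{c0 initial mention}--\eqref{e3} that converts $\epsilon/|y-v^\alpha|$ back into a $\psi$-type bound at the cost of a factor $C_0^{-(m-1)}$. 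Only in the genuine $\chi_2$-region $e^{t/\epsilon}|y-v^\alpha|\ge C_0^{-1}\epsilon^{-1/(m-1)}$ does the contribution go into $T_2$. This threefold use of smallness ($T$ small, $\epsilon$ small, $M_1$ large) is precisely why the proposition carries both an $\epsilon_1$ and a $T(A)$, and your proposal misattributes the whole absorption to the $M_1$ mechanism.

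A second, smaller gap: you do not notice that $T_1(y,v-v',s)T_2(y,v',s)\equiv0$ and $T_2(y,v-v',s)T_3(y,v',s)\equiv0$ for $v'\in[0,v/2]$, because $(v-v')^\alpha\ge v'^\alpha$ forces the supports of $\chi_1(\cdot,v-v',s)$ and $\chi_2(\cdot,v',s)$ (resp.\ $\chi_2$ and $\chi_3$) to be disjoint; this is exactly \eqref{estimate for t1 t2} and removes two of the nine products for free, while your description suggests all mixed products must be estimated. Finally, your phrasing ``Suppose $H_{n+1}(s)\le T_1(s)+T_2(s)+T_3(s)$'' skips the verification that this holds at $s=0$, which follows from Assumption~\ref{assumption initial condition} together with the observation that $H_{\epsilon,\mathrm{in}}\chi_2\le T_2(0)/2$ via the same $C_0$-inequality as for the free term.
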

\begin{proof}
Let $y\geq (\frac{v}{3})^{\alpha}$. Using the mild formulation of \eqref{iteration supersol}, it holds that
    \begin{align*}
&H_{n+1}(y,v,t)=S_{\epsilon}(t)[H_{n+1}(y,v,0) ] D[H_{n}](y,v,0,t)\\
&+\int_{0}^{t}\int_{(0,\frac{v}{2})}D[H_{n}](y,v,s,t)
S_{\epsilon}(t-s)\bigg[K(v-v',v')H_{n+1}(y,v-v',s) H_{n}(y,v',s)\bigg]\der s,\nonumber 
\end{align*}
with $D[H_{n}]$ defined as in \eqref{definition s mild}. Since $D[H_{n}]\leq 1$ and $K(v-w,w)\leq 2K_{0}v^{\gamma}$ when $w\in[0,\frac{v}{2}]$, we further have that
\begin{align*}
H_{n+1}(y,v,t)\leq S_{\epsilon}(t)[H_{n+1}(y,v,0) ] +2K_{0}v^{\gamma}\int_{0}^{t}\int_{(0,\frac{v}{2})}
S_{\epsilon}(t-s)\bigg[H_{n+1}(y,v-v',s) H_{n}(y,v',s)\bigg]\der s.\nonumber 
\end{align*}
We prove that if $y\geq (\frac{v}{3})^{\alpha}$, then
\begin{align}\label{step i}
S_{\epsilon}(t)H_{\epsilon,\textup{in}}(y,v)\leq \frac{T_{1}+T_{2}}{2}
\end{align}
and 
\begin{align}\label{step ii}
2K_{0}v^{\gamma}\int_{0}^{t}\int_{(0,\frac{v}{2})}
S_{\epsilon}(t-s)\bigg[H_{n+1}(y,v-v',s) H_{n}(y,v',s)\bigg]\der s\leq \frac{T_{1}+T_{2}}{2}.
\end{align}
We then conclude that $H_{n+1}\leq T_{1}+T_{2}$ when $y\geq (\frac{v}{3})^{\alpha}$ as desired. We first prove \eqref{step i}. We thus bound $ S_{\epsilon}(t)H_{\epsilon,\textup{in}}(y,v)$ when $y\geq (\frac{v}{3})^{\alpha}$. From (\ref{initial condition decay}), it holds that
\begin{align*}
     S_{\epsilon}(t)H_{\epsilon,\textup{in}}(y,v)=e^{\frac{t}{\epsilon}}H_{\epsilon,\textup{in}}(e^{\frac{t}{\epsilon}}(y-v^{\alpha})+v^{\alpha},v)\leq \frac{Ae^{\frac{t}{\epsilon}}}{(1+v^{b})(1+e^{\frac{mt}{\epsilon}}|y-v^{\alpha}|^{m})}=\frac{Ae^{\frac{t}{\epsilon}}\psi(e^{\frac{t}{\epsilon}}(y-v^{\alpha}))}{1+v^{b}}.
\end{align*}
On the one hand,
\begin{align*}
\frac{Ae^{\frac{t}{\epsilon}}\psi(e^{\frac{t}{\epsilon}}(y-v^{\alpha}))}{1+v^{b}}\chi_{1}(y,v,t)\leq \frac{T_{1}}{2},
\end{align*}
where $\chi_{1}$ is as in \eqref{chi1}. On the other hand, if  $y\leq v^{\alpha}-C_{0}^{-1}\epsilon^{-\frac{1}{m-1}}  e^{-\frac{t}{\epsilon}}$ then it holds that $e^{\frac{t}{\epsilon}}|y-v^{\alpha}|\geq C_{0}^{-1}\epsilon^{-\frac{1}{m-1}} $ and thus
\begin{align*}
    Ae^{\frac{t}{\epsilon}}\psi(e^{\frac{t}{\epsilon}}(y-v^{\alpha}))\chi_{2}(y,v,t)&\leq \frac{A}{e^{\frac{t(m-1)}{\epsilon}}|y-v^{\alpha}|^{m}}\chi_{2}(y,v,t)\leq \frac{AC_{0}^{m-1}\epsilon}{|y-v^{\alpha}|}\chi_{2}(y,v,t)\leq \frac{T_{2}}{2},
\end{align*}
where $\chi_{2}$ is as in \eqref{chi2}. We now focus on \eqref{step ii}. By assumption it holds that $H_{n}\leq T_{1}+T_{2}+T_{3}$, where  $T_{1},T_{2},T_{3}$ are as in \eqref{def t1}-\eqref{def t3}. We also have that   $H_{\epsilon,\textup{in}}(y,v)\leq T_{1}(y,v,0)+T_{2}(y,v,0)+T_{3}(y,v,0)$. By Proposition \ref{continuity argument prop} and Remark \ref{rmk compact support for delta uniform}, this implies that $H_{n+1}(y,v,z)\leq 4[T_{1}(y,v,z)+T_{2}(y,v,z)+T_{3}(y,v,z)]$, for all $z\in[0,\min\{\frac{\epsilon}{m},1,\tilde{\delta}\}]$, with $\tilde{\delta}$ as in Proposition \ref{continuity argument prop}. Thus, for all $t\in[0,\min\{\frac{\epsilon}{m},1,\tilde{\delta}\}]$, it holds that
\begin{align*}
H_{n+1}(y,v,t)&\leq S_{\epsilon}(t)[H_{n+1}(y,v,0) ] +2K_{0}v^{\gamma}\int_{0}^{t}\int_{(0,\frac{v}{2})}
S_{\epsilon}(t-s)\bigg[H_{n+1}(y,v-v',s) H_{n}(y,v',s)\bigg]\der s
\\
&\leq \frac{T_{1}+T_{2}}{2}+8K_{0}v^{\gamma}\sum_{i,j=1}^{3}\int_{0}^{t}\int_{(0,\frac{v}{2})}
S_{\epsilon}(t-s)\bigg[T_{i}(y,v-v',s) T_{j}(y,v',s)\bigg]\der s.
\end{align*}
We will prove in Proposition \ref{main ingredient} that for $m>\frac{2(\gamma+1)}{\alpha}$ and $b\geq \gamma+1$, there exist a time $T(A)$, which depends on $A$,  and $\epsilon_{1}\in(0,1)$, which depends on $A$, such that 
\begin{align}\label{main ingredient ineq}
8K_{0}v^{\gamma} & \sum_{i,j=1}^{3}\int_{0}^{t}\int_{(0,\frac{v}{2})}
S_{\epsilon}(t-s)\bigg[T_{i}(y,v-v',s) T_{j}(y,v',s)\bigg]\der s\leq \frac{T_{1}+T_{2}}{2}, \\
&\textup{ for all }y\geq (\frac{v}{3})^{\alpha}, t\leq \frac{1}{T(A)}, \epsilon\leq \epsilon_{1}\nonumber
    \end{align}
and thus 
\begin{align*}
H_{n+1}(y,v,t)\leq T_{1}+T_{2},\textup{ for all } t\in[0,\min\{\epsilon,1,\tilde{\delta}\}], \quad y\geq (\frac{v}{3})^{\alpha}.\end{align*}
 From Section \ref{section four}, we also have that if $b\geq \overline{b}(\gamma,\alpha)$, $m\geq \frac{b}{\alpha}+1$ it holds that
 \begin{align*}
H_{n+1}(y,v,t)\leq T_{3}, \quad y\leq (\frac{v}{3})^{\alpha}.
\end{align*}
Thus $H_{n+1}(y,v,t)\leq T_{1}+T_{2}+T_{3}$ at $t:=\min\{\epsilon,1,\tilde{\delta}\}$ and we can then use Proposition \ref{continuity argument prop} and iterate the argument in order to extend the result for all times $t\leq \frac{1}{T(A)}$, where $T(A)$ is as in \eqref{main ingredient ineq}.
\end{proof}

\begin{rmk}
Here and in all the following, we only treat the case when $(\frac{v}{3})^{\alpha} \leq v^{\alpha}-C_{0}^{-1}\epsilon^{-\frac{1}{m-1}}  e^{-\frac{t}{\epsilon}}$ since the case $(\frac{v}{3})^{\alpha} \geq v^{\alpha}-C_{0}^{-1}\epsilon^{-\frac{1}{m-1}}  e^{-\frac{t}{\epsilon}}$ follows similarly and with fewer technicalities. On the one hand, if $(\frac{v}{3})^{\alpha} \geq v^{\alpha}-C_{0}^{-1}\epsilon^{-\frac{1}{m-1}}  e^{-\frac{t}{\epsilon}}$, then $\{y\leq  v^{\alpha}-C_{0}^{-1}\epsilon^{-\frac{1}{m-1}}  e^{-\frac{t}{\epsilon}}\}\subseteq \{y\leq(\frac{v}{3})^{\alpha}\}$ and thus we can bound the region $\{y\leq  v^{\alpha}-C_{0}^{-1}\epsilon^{-\frac{1}{m-1}}  e^{-\frac{t}{\epsilon}}\}$ as in Section \ref{section four}. On the other hand, we can bound the region $\{y\geq   v^{\alpha}-C_{0}^{-1}\epsilon^{-\frac{1}{m-1}}  e^{-\frac{t}{\epsilon}}\}$ using the estimates from Proposition \ref{prop estimates t1 t1} and then we replace $\mathbbm{1}_{\{y\geq (\frac{v}{3})^{\alpha}\}}$ in the proof of Proposition \ref{main ingredient} by  $\mathbbm{1}_{\{y\geq v^{\alpha}-C_{0}^{-1}\epsilon^{-\frac{1}{m-1}}  e^{-\frac{t}{\epsilon}}\}}$. Notice we do not use Proposition \ref{Estimates for the combinations} in this case.
\end{rmk}
\begin{rmk}\label{remark region}
Notice that in order to prove \eqref{main ingredient ineq}, since we are estimating the function on the set $\{y\geq (\frac{v}{3})^{\alpha}\}$, we actually only need to bound
\begin{align*}
     S_{\epsilon}(t)H_{\epsilon,\textup{in}}(y,v)+ \mathbbm{1}_{\{y\geq (\frac{v}{3})^{\alpha}\}} v^{\gamma}\int_{0}^{t}S_{\epsilon}(t-s)\int_{0}^{\frac{v}{2}} T_{i}(y,v-w,s)T_{j}(y,w,s)\der w\der s.
\end{align*}
\end{rmk}

We now focus on proving \eqref{main ingredient ineq}. More precisely, we prove that

\begin{prop}\label{main ingredient}
Let $\epsilon\in(0,1)$ and $S_{\epsilon}$ as in \eqref{definition semigroup}. Let $m>\frac{2(\gamma+1)}{\alpha}$ and $b\geq \gamma+1$. There exist a time $T(A)\in[0,1]$, $\epsilon_{1}\in(0,1)$, and $C_{0}$, which are dependent on $A$ as in \eqref{initial condition decay}  and on the parameters $\alpha,\gamma,b,m$, such that  if $t\leq T(A)$, $\epsilon\leq \epsilon_{1}$, then the following estimate holds for all $y\in\mathbb{R}$ and $v\in(0,\infty)$. 
    \begin{align}\label{prop main ingredient ineq}
\mathbbm{1}_{\{y\geq (\frac{v}{3})^{\alpha}\}}   8K_{0} \sum_{i,j=1}^{3} v^{\gamma}\int_{0}^{t}S_{\epsilon}(t-s)\int_{0}^{\frac{v}{2}} T_{i}(y,v-w,s)T_{j}(y,w,s)\der w\der s\leq \frac{T_{1}(y,v,t)+T_{2}(y,v,t)}{2}.
\end{align} 
\end{prop}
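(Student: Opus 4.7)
The proof proceeds by bounding each of the nine cross-terms in \eqref{prop main ingredient ineq} separately. The first reduction I would make is that on the region $\{y \geq (v/3)^\alpha\}$ all cross-terms involving a copy of $T_3$ vanish identically: the cutoff $\chi_3(y, w, s) = \mathbbm{1}_{\{y \leq (w/3)^\alpha\}}$ forces $w \geq 3 y^{1/\alpha} \geq v$, which is incompatible with the integration domain $w \in (0, v/2]$, and an analogous argument rules out $T_3(y, v-w, s)$. Thus only the four terms with indices in $\{1,2\}^2$ remain. For each of these, my strategy is: (i) decouple the two peaked factors using Lemma~\ref{lemma dirac convolution} with $\xi = y$, $B_1 = (v-w)^\alpha$, $B_2 = w^\alpha$, which replaces the product by a near-diagonal factor in $(v-w)^\alpha - w^\alpha$ times the sum of the individual factors; (ii) perform the $w$-integration using Lemma~\ref{ref lemma dirac v-w v}, which converts the near-diagonal factor into a contribution $C v^{1-\alpha}$; (iii) apply the semigroup and integrate in $s$, using the identity $S_\epsilon(t-s)[e^{s/\epsilon}\psi(e^{s/\epsilon}(y - v^\alpha))] = e^{t/\epsilon}\psi(e^{t/\epsilon}(y - v^\alpha))$ for the $T_1$-type residue and Proposition~\ref{prop semigroup decay} for the $T_2$-type residue; (iv) absorb the resulting $v^{\gamma + 1 - \alpha}$ weight into the polynomial decay $1/(1+v^b)^2$ using $b \geq \gamma+1$.

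For the principal term $T_1 \cdot T_1$, steps (i)--(iv) produce a bound of the form $C A^2 t \cdot T_1(y,v,t)$; choosing $T(A) \leq 1/(16 C A^2)$ controls this by $T_1/8$. For the mixed terms $T_1 \cdot T_2$ and $T_2 \cdot T_1$, I would additionally invoke Lemma~\ref{properties of semigroup lemma}(2) to ensure that at least one of $|y - w^\alpha|, |y - (v-w)^\alpha|$ is bounded below by $|y - (v/2)^\alpha|$, so that the singular factor $\epsilon/|y - w^\alpha|$ (restricted by $\chi_2$, which keeps us at distance at least $C_0^{-1} \epsilon^{-1/(m-1)} e^{-s/\epsilon}$ from the relevant singularity) is controlled in tandem with the mollifier. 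Applying Proposition~\ref{prop semigroup decay} to the $s$-integral then yields a bound of the form $C A \cdot T_2(y,v,t)$, where the extra $A$ is precisely the factor that must be absorbed by the convention $C_0^{m-1} = M_1 A$ with $M_1$ large; this controls the contribution by $T_2/8$. The $T_2 \cdot T_2$ term is simpler: both factors carry an $\epsilon$ and the singular pairing is handled by the same splitting, yielding a bound of order $\epsilon$ which is disposed of by choosing $\epsilon_1$ small enough depending on $A$.

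The main obstacle is the careful book-keeping of constants and $A$-dependences through the chain of bounds, as different cross-terms produce different powers of $A$. The $T_1 \cdot T_1$ estimate forces $T(A)$ to shrink like $1/A^2$; the $T_i \cdot T_2$ estimates force the normalization $C_0^{m-1} = M_1 A$ to absorb the residual $A$ from pairing an $L^\infty$-scale $T_1$-factor with a $T_2$-factor; and all of this must be compatible with the requirement that the residue of step (i) after the $w$-integration in step (ii) remain in the correct scale class (namely again a rescaled $\psi$ or a $1/|y-v^\alpha|$). A delicate point is to ensure that step (i) never pushes the integrand into the regime $|y - (v/2)^\alpha| \ll e^{-t/\epsilon}$ in an uncontrolled manner; this is prevented by the $e^{-t/\epsilon}$ safety margin built into $\chi_1, \chi_2$ and is the reason why the factor $\epsilon^{-1/(m-1)}$ in the cutoff \eqref{chi1}--\eqref{chi2} is chosen exactly as it is. Once $M_1$, then $C_0$, then $T(A)$, then $\epsilon_1$ are fixed in this order, summing the four surviving contributions yields the $(T_1 + T_2)/2$ bound \eqref{prop main ingredient ineq}.
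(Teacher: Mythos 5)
The first reduction in your proposal — that all cross-terms involving a copy of $T_3$ vanish identically on the region $\{y\geq(v/3)^{\alpha}\}$ — is incorrect, and this invalidates the structure of your argument. The indicator $\mathbbm{1}_{\{y\geq(v/3)^{\alpha}\}}$ sits \emph{outside} the semigroup $S_{\epsilon}(t-s)$, whereas the cutoffs $\chi_3$ inside $T_3$ are evaluated at the shifted coordinate $\tilde y := e^{(t-s)/\epsilon}(y-v^{\alpha})+v^{\alpha}$, not at $y$. For $y\in[(v/3)^{\alpha},v^{\alpha})$ and $s<t$ one has $e^{(t-s)/\epsilon}>1$, so $\tilde y\leq y$ and indeed $\tilde y$ can be pushed arbitrarily far below $(v/3)^{\alpha}$ (as $s\to 0$ or $\epsilon\to 0$); this is precisely the content of the unnumbered Lemma in the paper stating that $S_{\epsilon}(t-s)[\mathbbm{1}_{\{y\geq X(\epsilon)\}}]=\mathbbm{1}_{\{y\geq(v/3)^{\alpha}\}}$ for some $X(\epsilon)\leq 0$. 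Consequently the constraint $\tilde y\leq(w/3)^{\alpha}$ in $\chi_3$ is compatible with $w\in(0,v/2]$, and the products $T_1\cdot T_3$, $T_3\cdot T_1$, $T_3\cdot T_3$, and $T_3\cdot T_2$ give genuinely nonzero contributions. The only combinations that do vanish pointwise (before the semigroup is applied) are $T_1(y,v-w,s)T_2(y,w,s)$ and $T_2(y,v-w,s)T_3(y,w,s)$, because there the incompatibility is between the two inner cutoffs evaluated at the same $\tilde y$: the first forces $(v-w)^{\alpha}-c\leq \tilde y\leq w^{\alpha}-c$ and the second $((v-w)/3)^{\alpha}\leq \tilde y\leq(w/3)^{\alpha}$, each requiring $v-w\leq w$ and hence $w\geq v/2$, a measure-zero set in the domain of integration. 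This is the content of \eqref{estimate for t1 t2} in the paper.

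Because of this error your tally of surviving terms is wrong: you keep four, while the paper must control the full set $\{i,j\}\subseteq\{1,3\}$ together with $T_2\cdot T_1$, $T_3\cdot T_2$, and $T_2\cdot T_2$. The paper circumvents the shift issue by proving unconditional bounds (valid for \emph{all} $y\in\mathbb{R}$) for the inner integral before applying $S_{\epsilon}(t-s)$ and only at the very end restricting to $y\geq(v/3)^{\alpha}$; this is the role of Propositions \ref{prop estimates t1 t1} and \ref{Estimates for the combinations}, which crucially exploit that $T_1$ and $T_3$ have the same $\psi$-shaped profile so that $i,j\in\{1,3\}$ can be bounded in a unified way, while the singular $T_2$ factor is handled by further splitting the $\tilde y$-region relative to $(v/2)^{\alpha}$ and absorbing the logarithmic divergence from $\int \mathrm{d}w/|y-w^{\alpha}|$ using the $C_0^{-1}\epsilon^{-1/(m-1)}e^{-s/\epsilon}$ safety margin in the cutoffs. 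Your steps (i)--(iv) and the order of fixing $M_1$, $C_0$, $T(A)$, $\epsilon_1$ are aligned with the paper, but to repair the proof you would have to retain the $T_3$ terms and carry out the same two-region analysis for them.
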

\begin{rmk}
We remember that $T_{2}$ as in \eqref{def t2} depends on $C_{0}$ and this is the reason why we require $C_{0}$ to depend on $A$ in the statement of Proposition \ref{main ingredient} in order for \eqref{prop main ingredient ineq} to hold. We refer to \eqref{defc0} for the exact choice of $C_{0}$ and to Proposition \ref{Estimates for the combinations} and to \eqref{e3} in order to see why this definition was required.
\end{rmk}

We first prove upper bounds for $\int_{0}^{\frac{v}{2}} T_{i}(y,v-w,s)T_{j}(y,w,s)\der w$ and then make use of Lemma \ref{properties of semigroup lemma}. However, despite the fact that we only need to analyze the region ${\{y\geq (\frac{v}{3})^{\alpha}\}}$ as noticed in Remark \ref{remark region}, when proving the inequalities before applying the semigroup, we need to analyze a wider range of values of $y$. More precisely, we have the following.
\begin{lem}
Let $\epsilon\in(0,1)$, $t\in[0,1],$ and $s\in[0,\frac{t}{2}]$. Let $S_{\epsilon}$ as in \eqref{definition semigroup}, $y\in\mathbb{R},$ and $v\in(0,\infty)$.  There exists $\overline{\epsilon}\in(0,1)$ sufficiently small such that for all $\epsilon\leq \overline{\epsilon}$, there exists $X(\epsilon)\leq 0$ such that $S_{\epsilon}(t-s)\big[\mathbbm{1}_{\{y\geq X(\epsilon)\}}\big] =\mathbbm{1}_{\{y\geq (\frac{v}{3})^{\alpha}\}}$.
\end{lem}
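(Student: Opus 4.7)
The plan is to exploit the explicit, affine structure of the semigroup $S_{\epsilon}$ defined in \eqref{definition semigroup} and to reduce the claim to a one-line algebraic computation. The transport map $y\mapsto e^{(t-s)/\epsilon}(y-v^{\alpha})+v^{\alpha}$ is strictly increasing in $y$, so it carries any half-line in $y$ to another half-line, and it therefore suffices to locate the new endpoint and then choose the starting endpoint $X(\epsilon)$ accordingly.

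Unfolding \eqref{definition semigroup}, for any $X\in\mathbb{R}$,
\begin{align*}
S_{\epsilon}(t-s)\bigl[\mathbbm{1}_{\{y\geq X\}}\bigr](y,v)=e^{\frac{t-s}{\epsilon}}\,\mathbbm{1}_{\{e^{(t-s)/\epsilon}(y-v^{\alpha})+v^{\alpha}\,\geq\, X\}}(y,v).
\end{align*}
Rewriting the condition inside the indicator in terms of $y$ alone gives, up to the prefactor $e^{(t-s)/\epsilon}$, the indicator $\mathbbm{1}_{\{y\geq Y_{\epsilon}(v)\}}$ with new threshold
\begin{align*}
Y_{\epsilon}(v)=v^{\alpha}\bigl(1-e^{-(t-s)/\epsilon}\bigr)+X\,e^{-(t-s)/\epsilon}.
\end{align*}
Matching $Y_{\epsilon}(v)=\bigl(\tfrac{v}{3}\bigr)^{\alpha}$ and solving for $X$ yields the natural candidate
\begin{align*}
X(\epsilon)=v^{\alpha}\bigl[\,1-(1-3^{-\alpha})\,e^{(t-s)/\epsilon}\,\bigr].
\end{align*}

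It remains to verify $X(\epsilon)\leq 0$. Since $\alpha\in(0,1)$ one has $1-3^{-\alpha}>0$, so the bracket is non-positive as soon as
\begin{align*}
\frac{t-s}{\epsilon}\,\geq\,\ln\!\Bigl(\tfrac{3^{\alpha}}{3^{\alpha}-1}\Bigr).
\end{align*}
Using the hypothesis $s\in[0,t/2]$ we have $t-s\geq t/2$, hence it is enough to take
\begin{align*}
\overline{\epsilon}\,\leq\,\frac{t}{2\ln\bigl(3^{\alpha}/(3^{\alpha}-1)\bigr)},
\end{align*}
which is a valid choice in $(0,1)$. For every $\epsilon\leq\overline{\epsilon}$ and every $v>0$, the expression above is non-positive, giving the desired $X(\epsilon)\leq 0$ and the claimed identity of indicators.

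The only subtle point is notational: the candidate threshold $X$ produced above does in fact depend on $v$, $s$, $t$ in addition to $\epsilon$; the notation $X(\epsilon)$ in the statement is meant to emphasize that the non-positivity is obtained uniformly once $\epsilon$ is taken sufficiently small (in a way governed by $t/2$ and $\alpha$). No substantial obstacle is expected beyond this computation, since the entire content of the lemma is the order-preserving, affine behaviour of the characteristic lines of the transport operator $\frac{1}{\epsilon}\partial_{y}[(v^{\alpha}-y)\cdot]$.
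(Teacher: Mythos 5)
Your proof is correct and follows the same route as the paper: unfold the affine transport in $y$ from the definition of $S_{\epsilon}$, observe it preserves half-lines, solve the resulting linear equation for the threshold, and then note that the bracket $1-(1-3^{-\alpha})e^{(t-s)/\epsilon}$ becomes non-positive once $\epsilon$ is small. You arrive at exactly the paper's $X(\epsilon)=v^{\alpha}\bigl(1-e^{\frac{t-s}{\epsilon}}\bigr)+e^{\frac{t-s}{\epsilon}}\bigl(\frac{v}{3}\bigr)^{\alpha}$, just written in factored form.

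Two remarks, both already half-acknowledged in your write-up but worth stating sharply. First, as you note, the threshold $X$ depends on $v$ (and $t-s$), and also the bound on $\overline{\epsilon}$ degenerates as $t\to 0$: when $t=s=0$ the semigroup is the identity and $X$ must equal $(v/3)^{\alpha}>0$, so no $\overline{\epsilon}$ works. The paper's own proof has the same feature; the lemma is implicitly used only for $t-s$ bounded below by a positive quantity, and the statement (and the paper's ``for sufficiently small $\epsilon$'') should be read with that understanding. Second, you are right to say ``up to the prefactor $e^{(t-s)/\epsilon}$'': strictly, $S_{\epsilon}(t-s)[\mathbbm{1}_{\{y\geq X\}}]=e^{(t-s)/\epsilon}\mathbbm{1}_{\{y\geq(\frac{v}{3})^{\alpha}\}}$, and the claimed equality of indicators is an identity of supports rather than of functions. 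The paper commits the same small abuse; what is really used downstream is the transformation of the cut-off region, so this is harmless, but it is good that you flagged it.
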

\begin{proof}
It holds that
\begin{align*}
S_{\epsilon}(t-s)\big[\mathbbm{1}_{\{y\geq X(\epsilon)\}}\big] =\mathbbm{1}_{\{y\geq (\frac{v}{3})^{\alpha}\}}&\textup{ if and only if }\mathbbm{1}_{\{e^{\frac{t-s}{\epsilon}}(y-v^{\alpha})+v^{\alpha}\geq X(\epsilon)\}} =\mathbbm{1}_{\{y\geq (\frac{v}{3})^{\alpha}\}}.
\end{align*}
We then have that $e^{-\frac{t-s}{\epsilon}}(X(\epsilon)-v^{\alpha})+v^{\alpha}= \Big(\frac{v}{3}\Big)^{\alpha}$ if and only if $ X(\epsilon)=e^{\frac{t-s}{\epsilon}}\Big(\Big(\frac{v}{3}\Big)^{\alpha}-v^{\alpha}\Big)+v^{\alpha}=v^{\alpha}\bigg(1-e^{\frac{t-s}{\epsilon}}\bigg)+e^{\frac{t-s}{\epsilon}}\Big(\frac{v}{3}\Big)^{\alpha}.$ In other words, for sufficiently small $\epsilon$ we can have that $X(\epsilon)\leq 0$.
\end{proof}

We have the following estimates. We first prove estimates for the convolution of $T_{1}, T_{3}$ with themselves considering different regions of $y$.
\begin{prop}[Estimates for $T_{1}$ and $T_{3}$]\label{prop estimates t1 t1} Let $\epsilon\in(0,1)$, $t\in[0,1],$ and $s\in[0,t]$. Let $S_{\epsilon}$ as in \eqref{definition semigroup}. Let $i,j\in\{1,3\}$.  Let $b\geq \gamma+1-\alpha$ and $m>\frac{2(\gamma+1)}{\alpha}$. Let $T_{1}$ and $T_{3}$ be as in \eqref{def t1} and \eqref{def t3}. There exists a constant $C_{1}(\alpha,\gamma,b,m)>0$, which depends on $\alpha,\gamma,b,m$, such that for all $y\in\mathbb{R}$ and $v\in(0,\infty)$ the following estimates hold. 
    \begin{align}
      v^{\gamma}\int_{0}^{t}S_{\epsilon}(t-s)\bigg[\int_{0}^{\frac{v}{2}} T_{i}(y,v-w,s)T_{j}(y,w,s) & \mathbbm{1}_{\{y\geq v^{\alpha}\}}\der w\bigg]\der s\leq  \frac{C_{1}(\alpha,\gamma,b,m)}{1+v^{b}} \frac{A^{2} t e^{\frac{t}{\epsilon}}\mathbbm{1}_{\{y\geq v^{\alpha}\}}}{(1+e^{\frac{t}{\epsilon}}|y-v^{\alpha}|)^{m}}  ;\label{estimate for t1 t1 y greater v alpha}\\
       v^{\gamma}\int_{0}^{t}S_{\epsilon}(t-s) \bigg[\int_{0}^{\frac{v}{2}} T_{i}(y,v-w,s)T_{j}(y,w,s) &\mathbbm{1}_{\{y\leq v^{\alpha}\}}\der w\bigg]\der s\nonumber\\
       & \leq \frac{  C_{1}(\alpha,\gamma,b,m)M_{2}^{2}}{1+v^{b}}\frac{A^{2}\epsilon \min\{1,e^{\frac{t}{\epsilon}}|y-v^{\alpha}|\}}{|y-v^{\alpha}|}\label{estimate for t1 t1 y negative}.
     \end{align}
\end{prop}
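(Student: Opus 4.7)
The plan is to reduce both estimates to a common post-semigroup bound, then handle the two regions $y\geq v^{\alpha}$ and $y\leq v^{\alpha}$ separately; the second region is where the main obstacle lies.

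First I observe that $T_{1}$ and $T_{3}$ share the structure $T_{i}(y,v,s)=\frac{c_{i}A e^{s/\epsilon}\psi(e^{s/\epsilon}(y-v^{\alpha}))\chi_{i}(y,v,s)}{1+v^{b}}$ with $c_{1}=2$ and $c_{3}=M_{2}$, so dropping the indicators yields
\begin{equation*}
T_{i}(y,v-w,s)T_{j}(y,w,s) \leq \frac{C M_{2}^{2} A^{2} e^{2s/\epsilon}\psi(e^{s/\epsilon}(y-(v-w)^{\alpha}))\psi(e^{s/\epsilon}(y-w^{\alpha}))}{(1+(v-w)^{b})(1+w^{b})}.
\end{equation*}
I apply Lemma \ref{lemma dirac convolution} with $\xi=e^{s/\epsilon}y$, $B_{1}=e^{s/\epsilon}(v-w)^{\alpha}$, $B_{2}=e^{s/\epsilon}w^{\alpha}$ to decouple the two $\psi$ factors, producing the additional decay factor $(1+e^{s/\epsilon}|(v-w)^{\alpha}-w^{\alpha}|)^{-m}$. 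Using $v-w\geq v/2$ to bound $1/(1+(v-w)^{b})\leq 2^{b}/(1+v^{b})$, noting that factors independent of $y$ commute with $S_{\epsilon}$, and computing directly that $S_{\epsilon}(t-s)[e^{s/\epsilon}\psi(e^{s/\epsilon}(y-z^{\alpha}))] = e^{t/\epsilon}\psi(e^{t/\epsilon}(y-v^{\alpha})+e^{s/\epsilon}(v^{\alpha}-z^{\alpha}))$ for any $z$, I arrive at the common bound
\begin{equation*}
S_{\epsilon}(t-s)[T_{i}T_{j}] \leq \frac{C M_{2}^{2} A^{2} e^{s/\epsilon} e^{t/\epsilon}[\psi(\tilde{A})+\psi(\tilde{B})]}{(1+v^{b})(1+w^{b})(1+e^{s/\epsilon}|(v-w)^{\alpha}-w^{\alpha}|)^{m}},
\end{equation*}
where $\tilde{A}=e^{t/\epsilon}(y-v^{\alpha})+e^{s/\epsilon}(v^{\alpha}-(v-w)^{\alpha})$ and $\tilde{B}$ is defined analogously with $w$.

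For \eqref{estimate for t1 t1 y greater v alpha} the region $y\geq v^{\alpha}$ is the easy case: since both $e^{t/\epsilon}(y-v^{\alpha})\geq 0$ and the shifts $e^{s/\epsilon}(v^{\alpha}-z^{\alpha})\geq 0$ (because $z\leq v$) share the same sign, monotonicity of $\psi$ gives $\psi(\tilde{A}),\psi(\tilde{B})\leq \psi(e^{t/\epsilon}(y-v^{\alpha}))$, which I pull outside. I then split the $w$-integration into $[v/4,v/2]$ and $[0,v/4]$. On $[v/4,v/2]$ the estimate $1+w^{b}\geq C(1+v^{b})$ yields an extra $1/(1+v^{b})$ factor, and the change of variables in Lemma \ref{ref lemma dirac v-w v} supplies $v^{1-\alpha}$, so the residual $v^{\gamma+1-\alpha}/(1+v^{b})^{2}$ is dominated by $C/(1+v^{b})$ thanks to the hypothesis $b\geq\gamma+1-\alpha$. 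On $[0,v/4]$ the separation $|(v-w)^{\alpha}-w^{\alpha}|\geq c v^{\alpha}$ forces rapid decay through $(1+e^{s/\epsilon}v^{\alpha})^{-m}$, and the hypothesis $m>2(\gamma+1)/\alpha$ makes this contribution uniformly small in $v$. Integrating in $s$ produces the linear factor $t$ that appears in the target.

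The estimate \eqref{estimate for t1 t1 y negative} in the region $y\leq v^{\alpha}$ is the main obstacle: now $e^{t/\epsilon}(y-v^{\alpha})\leq 0$ can exactly cancel the positive shift $e^{s/\epsilon}(v^{\alpha}-z^{\alpha})$, so $|\tilde{A}|$ and $|\tilde{B}|$ can vanish and the previous monotonicity argument fails. Instead I would perform the change of variable $u=e^{s/\epsilon}(v^{\alpha}-z^{\alpha})$ in the $s$-integral, yielding
\begin{equation*}
\int_{0}^{t} e^{s/\epsilon}\psi(\tilde{B})\,ds = \frac{\epsilon}{v^{\alpha}-z^{\alpha}}\int_{v^{\alpha}-z^{\alpha}}^{e^{t/\epsilon}(v^{\alpha}-z^{\alpha})}\psi\bigl(u+e^{t/\epsilon}(y-v^{\alpha})\bigr)\,du,
\end{equation*}
so that $L^{1}$-integrability of $\psi$, together with a careful analysis of the integration interval (short for $|y-v^{\alpha}|$ small, long and bounded by the tail of $\psi$ for $|y-v^{\alpha}|$ large), extracts the behaviour $\min\{1,e^{t/\epsilon}|y-v^{\alpha}|\}/|y-v^{\alpha}|$ in the spirit of Proposition \ref{prop semigroup decay}. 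The delicate point is the $\tilde{A}$-contribution for $w$ close to $0$, where $v^{\alpha}-(v-w)^{\alpha}\to 0$ renders the prefactor $\epsilon/(v^{\alpha}-(v-w)^{\alpha})$ singular; fortunately in precisely this regime $|(v-w)^{\alpha}-w^{\alpha}|\approx v^{\alpha}$ is bounded away from $0$, so the companion factor $(1+e^{s/\epsilon}|(v-w)^{\alpha}-w^{\alpha}|)^{-m}$ supplies the smallness needed to keep the $w$-integral convergent, again exploiting $m>2(\gamma+1)/\alpha$. Integrating in $w$ using the $1/(1+w^{b})$ decay and reducing the residual $v^{\gamma+1-\alpha}/(1+v^{b})$ via the hypothesis $b\geq\gamma+1-\alpha$ yields \eqref{estimate for t1 t1 y negative}, with the $M_{2}^{2}$ constant reflecting the worst-case $T_{3}\cdot T_{3}$ combination.
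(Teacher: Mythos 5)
Your treatment of \eqref{estimate for t1 t1 y greater v alpha} is sound and matches the paper's: dropping the indicators, decoupling via Lemma \ref{lemma dirac convolution}, and using the sign structure in $y\geq v^{\alpha}$ to bound $\psi(\tilde{A}),\psi(\tilde{B})$ by $\psi(e^{t/\epsilon}(y-v^{\alpha}))$ is equivalent to the paper's invocation of Item 1 of Lemma \ref{properties of semigroup lemma}; the subsequent $w$-split plus Lemma \ref{ref lemma dirac v-w v} and the $t$-factor from the $s$-integration are exactly as in the paper.

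The gap is in \eqref{estimate for t1 t1 y negative}. Your proposed substitution $u=e^{s/\epsilon}(v^{\alpha}-z^{\alpha})$ produces the Jacobian prefactor $\epsilon/(v^{\alpha}-z^{\alpha})$, whose denominator is a function of $w$, not of $|y-v^{\alpha}|$; this does not by itself extract $\epsilon\min\{1,e^{t/\epsilon}|y-v^{\alpha}|\}/|y-v^{\alpha}|$. The paper instead first reduces (via Item 2 of Lemma \ref{properties of semigroup lemma}, which you never use) one of the two $\psi$ factors to $\psi(e^{s/\epsilon}(y-(v/2)^{\alpha}))$ with the $w$-independent reference point $(v/2)^{\alpha}$, performs the $w$-integral \emph{before} the $s$-integral (absorbing one power of $e^{s/\epsilon}$ together with the damping factor through Lemma \ref{ref lemma dirac v-w v}), and only then carries out the $s$-integration. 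The resulting scalar $s$-integral \eqref{needed later for t2 t1} has a single $e^{t/\epsilon}$ in front, and the change of variables $z=y-v^{\alpha}+e^{-s}(v^{\alpha}-(v/2)^{\alpha})$ yields the crucial Jacobian $1/(z+(v^{\alpha}-y))$, which after rescaling is the $1/z$ factor that produces the $1/|y-v^{\alpha}|$ singularity. In your order of operations the residual factor $(1+e^{s/\epsilon}|(v-w)^{\alpha}-w^{\alpha}|)^{-m}$ is still present and depends on $s$, so after your substitution you are left with a product of two $u$-decaying kernels (one of which, for the $\tilde{A}$-piece near $w=0$, has the rapidly growing scale parameter $\rho'=|(v-w)^{\alpha}-w^{\alpha}|/(v^{\alpha}-(v-w)^{\alpha})\sim v/w$) and a singular prefactor $\epsilon/(v^{\alpha}-(v-w)^{\alpha})\sim\epsilon/(v^{\alpha-1}w)$; you assert these cancel but give no calculation, and the subsequent $w$-integral no longer has the $v^{1-\alpha}$ gain from Lemma \ref{ref lemma dirac v-w v} because you spent the $e^{s/\epsilon}$ on the change of variables. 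You also do not split the case $y\leq 0$ from $y\in[0,v^{\alpha}]$, nor $v\geq1$ from $v\leq1$, both of which the paper must treat separately to control the logarithms and small-$v$ contributions. As written, the argument for \eqref{estimate for t1 t1 y negative} is incomplete.
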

We then focus on proving estimates for the remaining possible combinations of the convolution of $T_{i}$ with $T_{j}$.

\begin{prop}[Estimates for the rest of combinations]\label{Estimates for the combinations}Let $\epsilon\in(0,1)$, $t\in[0,1],$ and $s\in[0,t]$. Let $S_{\epsilon}$ as in \eqref{definition semigroup}. Let $i\in\{2,3\}$. Let $b\geq\gamma+1$ and $m>\frac{\gamma+1}{\alpha}+2$. Let $T_{1},\ldots,T_{3}$ as in \eqref{def t1}-\eqref{def t3}. There exists a constant $C_{2}(\alpha,\gamma,b,m)>0$, which depends on $\alpha,\gamma,b,m$, such that  for all $y\in\mathbb{R}$ and $v\in(0,\infty)$ the following estimates hold.
\begin{align}
      v^{\gamma} &\int_{0}^{t}S_{\epsilon}(t-s)\int_{0}^{\frac{v}{2}} T_{i-1}(y,v-w,s)T_{i}(y,w,s)\der w\der s   =0; \label{estimate for t1 t2}\\
   v^{\gamma} &\int_{0}^{t}S_{\epsilon}(t-s)\int_{0}^{\frac{v}{2}} T_{i}(y,v-w,s)T_{i-1}(y,w,s)\der w\der s  \nonumber\\
   &   \leq \frac{C_{2}(\alpha,\gamma,b,m)M_{2}}{1+v^{b}}\frac{A^{3}C_{0}^{m-1} \epsilon[t+\epsilon+C_{0}^{-(m-1)}] }{v^{\alpha}-y}\mathbbm{1}_{\{y\leq v^{\alpha}-e^{-\frac{t}{\epsilon}}C^{-1}_{0}\epsilon^{-\frac{1}{m-1}}\}};\label{estimate for t2 t1}\\
   v^{\gamma} &\int_{0}^{t}S_{\epsilon}(t-s)\int_{0}^{\frac{v}{2}}T_{2}(y,v-w,s)T_{2}(y,w,s)\der w\der s \nonumber\\
   &\leq \frac{C_{2}(\alpha,\gamma,b,m)}{1+v^{b}}\frac{A^{4}C_{0}^{2(m-1)} \epsilon (t+\epsilon)^{2} }{v^{\alpha}-y}\mathbbm{1}_{\{y\leq v^{\alpha}-e^{-\frac{t}{\epsilon}}C^{-1}_{0}\epsilon^{-\frac{1}{m-1}}\}}.\label{estimate for t2 t2}
\end{align}
\end{prop}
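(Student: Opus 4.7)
I would split the proof into the three separate estimates in the statement, treating them in order of increasing complexity. The first estimate \eqref{estimate for t1 t2} is a pure support argument. For $i=2$, the product $T_{1}(y,v-w,s)\, T_{2}(y,w,s)$ requires simultaneously $y\geq (v-w)^{\alpha}-C_{0}^{-1}\epsilon^{-1/(m-1)}e^{-s/\epsilon}$ (from $\chi_{1}$) and $y\leq w^{\alpha}-C_{0}^{-1}\epsilon^{-1/(m-1)}e^{-s/\epsilon}$ (from $\chi_{2}$); since $w\leq v/2\leq v-w$ gives $w^{\alpha}\leq (v-w)^{\alpha}$, these two constraints are incompatible away from the measure-zero set $\{w=v/2\}$. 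For $i=3$, the analogous incompatibility comes from $\chi_{2}(\cdot,v-w,s)$ requiring $y\geq ((v-w)/3)^{\alpha}$ while $\chi_{3}(\cdot,w,s)$ requires $y\leq (w/3)^{\alpha}$, again impossible off $\{w=v/2\}$.

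For \eqref{estimate for t2 t1} I would concentrate on the case $i=2$; the case $i=3$ follows by the same template but is easier since the $\psi$-weight in $T_{3}(y,v-w,s)$ is already of order $e^{-sm/\epsilon}$ on the support of $\chi_{3}$. On the joint support of $T_{2}(y,v-w,s)\, T_{1}(y,w,s)$ the inequality $1+(v-w)^{b}\geq 2^{-b}(1+v^{b})$ allows me to pull out the outer $v$-decay $1/(1+v^{b})$, and the hypothesis $b\geq\gamma+1$ then absorbs the pre-factor $v^{\gamma}$. The remaining $w$-integrand couples the mollifier $e^{s/\epsilon}\psi(e^{s/\epsilon}(y-w^{\alpha}))$ with the singular factor $1/((v-w)^{\alpha}-y)$. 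On the bulk of the support I would replace $(v-w)^{\alpha}-y$ by $v^{\alpha}-y$, while the endpoint region near $w\approx v/2$ is controlled by the lower bound $(v-w)^{\alpha}-y\geq C_{0}^{-1}\epsilon^{-1/(m-1)} e^{-s/\epsilon}$ enforced by $\chi_{2}$, which is the source of the $C_{0}^{-(m-1)}$ contribution. Lemma \ref{ref lemma dirac v-w v} then bounds the $w$-integral of the mollifier by $Cv^{1-\alpha}$. Finally, $S_{\epsilon}(t-s)$ leaves the form $1/(v^{\alpha}-y)$ invariant up to an exponential prefactor, and combining with Proposition \ref{prop semigroup decay} and integrating in $s\in[0,t]$ produces the stated factor $\epsilon[t+\epsilon+C_{0}^{-(m-1)}]/(v^{\alpha}-y)$ together with the outer indicator arising from the semigroup shift.

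For \eqref{estimate for t2 t2} both factors carry $\epsilon$, producing $\epsilon^{2}$ and a joint pre-factor $C_{0}^{2(m-1)}$. On the overlap of supports, $w$ lies in a bounded subinterval of $[(y+\eta_{s})^{1/\alpha},3y^{1/\alpha}]$ with $\eta_{s}=C_{0}^{-1}\epsilon^{-1/(m-1)}e^{-s/\epsilon}$. After pulling out $1/(1+v^{b})$ as before, the remaining $w$-integral is $\int dw/[(1+w^{b})((v-w)^{\alpha}-y)(w^{\alpha}-y)]$: the first singular factor is essentially constant in $w$ and comparable to $v^{\alpha}-y$, while the change of variable $u=w^{\alpha}-y$ turns the second into an integral of $du/u$ from $\eta_{s}$ up to $\mathcal{O}(y)$, of order $s/\epsilon$. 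Combining with $\epsilon^{2}$, applying $S_{\epsilon}(t-s)$, and integrating in $s$ then yields the bound $\epsilon(t+\epsilon)^{2}/(v^{\alpha}-y)$. The principal technical obstacle across both non-vanishing estimates is the transfer of the inverse-distance singularity from $(v-w)^{\alpha}-y$ or $w^{\alpha}-y$ to a cleaner singularity in $v^{\alpha}-y$ without destroying either the $\epsilon$-smallness or the $(1+v^{b})^{-1}$ decay; this is precisely what the choice of the endpoint $C_{0}^{-1}\epsilon^{-1/(m-1)} e^{-s/\epsilon}$ in $\chi_{1}$ and $\chi_{2}$ was designed to enable, and it is responsible for the appearance of the factor $C_{0}^{-(m-1)}$ in the final bounds.
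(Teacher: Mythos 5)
Your treatment of \eqref{estimate for t1 t2} is correct and coincides with the paper's argument: the incompatible support constraints $y \geq (v-w)^{\alpha}-\eta_s$ versus $y\leq w^{\alpha}-\eta_s$ (for $i=2$), and $y\geq ((v-w)/3)^{\alpha}$ versus $y\leq (w/3)^{\alpha}$ (for $i=3$), force $w= v/2$, a null set.

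For \eqref{estimate for t2 t1} and \eqref{estimate for t2 t2} there is a genuine gap. The proposed replacement of $(v-w)^{\alpha}-y$ by $v^{\alpha}-y$ goes the wrong way for an upper bound: since $v-w\leq v$ on $(0,v/2)$, one has $(v-w)^{\alpha}-y\leq v^{\alpha}-y$, hence $1/((v-w)^{\alpha}-y)\geq 1/(v^{\alpha}-y)$. Worse, on the region that actually dominates the integral, the product $T_2(y,v-w,s)T_1(y,w,s)$ is concentrated near $w\approx v/2$ and $y\approx (v/2)^{\alpha}$, where $(v-w)^{\alpha}-y$ can shrink to $\eta_s=C_0^{-1}\epsilon^{-1/(m-1)}e^{-s/\epsilon}$ while $v^{\alpha}-y$ stays of order $v^{\alpha}(1-2^{-\alpha})$. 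The discrepancy is not an isolated endpoint phenomenon in $w$; it occurs across the central range of $y$. The paper handles this by splitting $y$ into three regions relative to $(v/2)^{\alpha}$ (below by more than $\eta_s$, above by more than $\eta_s$, within $\eta_s$) with a different bound in each.

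The more serious omission is the semigroup step. After the $w$-integration the quantity is a singularity or mollifier centered at $y=(v/2)^{\alpha}$, \emph{not} at $y=v^{\alpha}$, so Proposition \ref{prop semigroup decay} (which is stated for $\psi(y-v^{\alpha})$) does not apply directly, and $S_{\epsilon}$ does not leave this object invariant. The paper introduces the explicit change of variables $z=e^{t/\epsilon}(v^{\alpha}-y)-e^{s/\epsilon}\bigl(v^{\alpha}-(v/2)^{\alpha}\bigr)$ (see the analysis of $L(y,v,t)$ and equation \eqref{needed later for t2 t1}) to convert the $s$-integral of the semigroup image into a $z$-integral; the two logarithmic integrals over ranges of the form $[\eta_s,\,cv^{\alpha}]$ are what produce the factors $t/\epsilon+|\ln v^{\alpha}|$, and only after multiplying by $\epsilon$ and absorbing the $\epsilon\,|\ln v^{\alpha}|$ tail into the $v$-decay does the $\epsilon[t+\epsilon+C_0^{-(m-1)}]$ structure emerge. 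Without this mechanism your sketch cannot reproduce the $t$-dependence in the bound (a naive invariance would give $\epsilon\cdot t\cdot e^{s/\epsilon}$-type growth from the unmollified $\eta_s^{-1}$ near the cutoff, or, if Proposition \ref{prop semigroup decay} were forced to apply, an extraneous decay $1/(1+|y-v^{\alpha}|^{m-1})$ and no $t+\epsilon$ factor at all). The $C_0^{-(m-1)}$ term, which you attribute to the cutoff $\eta_s$, in fact arises from the third $y$-region $|y-(v/2)^{\alpha}|\leq\eta_s$ via the inequality $\epsilon^{m/(m-1)}/2\leq C_0^{-m}/(1+e^{sm/\epsilon}|y-(v/2)^{\alpha}|^{m})$, after which \eqref{needed later for t2 t1} is applied. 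A further point you elide is the split of the $w$-integral into $[0,\delta v]$ and $[\delta v, v/2]$: for small $w$ the decay $1/(1+w^{b})$ is ineffective and the paper instead trades powers of the mollifier $\psi(e^{s/\epsilon}(y-w^{\alpha}))$ for decay in $v$, using that $y-w^{\alpha}\geq C(\delta,\alpha)v^{\alpha}$ there.
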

Assuming the estimates in Proposition \ref{prop estimates t1 t1} and Proposition \ref{Estimates for the combinations} hold true, we are now able to prove Proposition \ref{main ingredient}. 

\begin{proof}[Proof of Proposition \ref{main ingredient}]
We wish to prove \eqref{prop main ingredient ineq}. We remember that $C_{0}^{m-1}=M_{1}A$, see \eqref{defc0}. More precisely, $C_{0}$ is of order $A^{\frac{1}{m-1}}$. We choose $M_{1}$, depending on $M_{2}$, such that 
\begin{align}\label{choice of m1}
    8K_{0}\max\{C_{1}(\alpha,\gamma,b,m),C_{2}(\alpha,\gamma,b,m)\}M_{2}^{2}\leq \frac{M_{1}}{24},
\end{align} 
where $C_{1}(\alpha,\gamma,b,m)$ is as in Proposition \ref{prop estimates t1 t1} and $C_{2}(\alpha,\gamma,b,m)$ is as in Proposition \ref{Estimates for the combinations}.

From \eqref{estimate for t1 t1 y greater v alpha}, we can choose a sufficiently small $T$, which depends on $A$, such that for all $t\leq T$, it holds that
\begin{align*}
 8K_{0}\sum_{i,j\in\{1,3\}}v^{\gamma}\int_{0}^{t}S_{\epsilon}(t-s)\bigg[\int_{0}^{\frac{v}{2}}&  T_{i}(y,v-w,s)T_{j}(y,w,s)\mathbbm{1}_{\{y\geq v^{\alpha}\}}\der w\bigg]\der s\\
 &\leq  8K_{0}C_{1}(\alpha,\gamma,b,m)At T_{1}(y,v,t)\leq \frac{T_{1}(y,v,t)}{12}.
 \end{align*}
We now  estimate \eqref{estimate for t1 t1 y negative}.  We prove in this case that 
\begin{align}\label{use also later}
     \mathbbm{1}_{\{y\geq (\frac{v}{3})^{\alpha}\}} \frac{  8K_{0}C_{1}(\alpha,\gamma,b,m)M_{2}^{2}}{1+v^{b}}\frac{A^{2}\epsilon \min\{1,e^{\frac{t}{\epsilon}}|y-v^{\alpha}|\}}{|y-v^{\alpha}|}& \leq \frac{T_{1}(y,v,t)}{12}+\frac{T_{2}(y,v,t)}{12}.
\end{align} 
\begin{itemize}
    \item Let us assume first that $e^{\frac{t}{\epsilon}}|y-v^{\alpha}|\leq 1$. Then 
\end{itemize}\begin{align}
    \frac{ \epsilon}{|y-v^{\alpha}|(1+v^{b})}\min\{1,e^{\frac{t}{\epsilon}}|y-v^{\alpha}|\}=\frac{\epsilon e^{\frac{t}{\epsilon}}}{(1+v^{b})}\leq \frac{2\epsilon e^{\frac{t}{\epsilon}}}{(1+e^{\frac{m t}{\epsilon}}|y-v^{\alpha}|^{m})(1+v^{b})}.
\end{align}
Thus, if $e^{\frac{t}{\epsilon}}|y-v^{\alpha}|\leq 1$, then for sufficiently small $\epsilon$, which depends on $A$, it holds that
\begin{align}\label{e2}
&  \frac{  8K_{0}C_{1}(\alpha,\gamma,b,m)M_{2}^{2}}{1+v^{b}} \frac{A^{2}\epsilon \min\{1,e^{\frac{t}{\epsilon}}|y-v^{\alpha}|\}}{|y-v^{\alpha}|}\nonumber \\
  &\leq 8K_{0} C_{1}(\alpha,\gamma,b,m)M_{2}^{2} A\epsilon T_{1}(y,v,t)\leq \frac{T_{1}(y,v,t)}{24}.
\end{align}
\begin{itemize}
    \item If $C_{0}^{-1}\epsilon^{-\frac{1}{m-1}}\geq e^{\frac{t}{\epsilon}}|y-v^{\alpha}|\geq 1$, then 
\end{itemize}
\begin{align}
&\frac{\epsilon}{|y-v^{\alpha}|}\min\{1,e^{\frac{t}{\epsilon}}|y-v^{\alpha}|\}\mathbbm{1}_{\{1\leq e^{\frac{t}{\epsilon}}|y-v^{\alpha}|\leq C_{0}^{-1}\epsilon^{-\frac{1}{m-1}}\}} = \frac{\epsilon}{|y-v^{\alpha}|}\mathbbm{1}_{\{1\leq e^{\frac{t}{\epsilon}}|y-v^{\alpha}|\leq C_{0}^{-1}\epsilon^{-\frac{1}{m-1}}\}}\nonumber\\
&= \frac{\epsilon e^{\frac{t}{\epsilon}}}{e^{\frac{t}{\epsilon}}|y-v^{\alpha}|}\mathbbm{1}_{\{1\leq e^{\frac{t}{\epsilon}}|y-v^{\alpha}|\leq C_{0}^{-1}\epsilon^{-\frac{1}{m-1}}\}}\leq \frac{ C_{0}^{-(m-1)}e^{\frac{t}{\epsilon}}}{(e^{\frac{t}{\epsilon}}|y-v^{\alpha}|)^{m}}\mathbbm{1}_{\{1\leq e^{\frac{t}{\epsilon}}|y-v^{\alpha}|\leq C_{0}^{-1}\epsilon^{-\frac{1}{m-1}}\}}\nonumber\\
&\leq \frac{ 2 C_{0}^{-(m-1)}e^{\frac{t}{\epsilon}}}{1+(e^{\frac{t}{\epsilon}}|y-v^{\alpha}|)^{m}}\mathbbm{1}_{\{1\leq e^{\frac{t}{\epsilon}}|y-v^{\alpha}|\leq C_{0}^{-1}\epsilon^{-\frac{1}{m-1}}\}}.\label{c0 initial mention}
\end{align}
It thus follows by \eqref{choice of m1} that 
\begin{align}
 & \frac{  8K_{0}C_{1}(\alpha,\gamma,b,m)M_{2}^{2}}{1+v^{b}} \frac{A^{2}\epsilon \min\{1,e^{\frac{t}{\epsilon}}|y-v^{\alpha}|\}}{|y-v^{\alpha}|}\nonumber\\
  &\leq 8K_{0}C_{1}(\alpha,\gamma,b,m)M_{2}^{2}A C_{0}^{-(m-1)}T_{1}(y,v,t)\leq \frac{T_{1}(y,v,t)}{24}.\label{e3}
\end{align}
\begin{itemize}
    \item If $e^{\frac{t}{\epsilon}}|y-v^{\alpha}|\geq C_{0}^{-1}\epsilon^{-\frac{1}{m-1}}$, then by the choice of $T_{2}$ in \eqref{def t2}
\end{itemize}
\begin{align}\label{region above dirac constant bound}
&\mathbbm{1}_{\{v^{\alpha}\geq y\geq (\frac{v}{3})^{\alpha}\}} \frac{  8K_{0}C_{1}(\alpha,\gamma,b,m) M_{2}^{2}}{1+v^{b}} \frac{A^{2}\epsilon}{|y-v^{\alpha}|}\mathbbm{1}_{\{ e^{\frac{t}{\epsilon}}|y-v^{\alpha}|\geq C_{0}^{-1}\epsilon^{-\frac{1}{m-1}}\}}\nonumber\\
&\leq  8K_{0}C_{1}(\alpha,\gamma,b,m)M_{2}^{2} C_{0}^{-(m-1)} T_{2}(y,v,t)\leq  \frac{T_{2}(y,v,t)}{12}.
\end{align}

Combining estimates \eqref{e1}-\eqref{region above dirac constant bound} together with the fact that $S_{\epsilon}(t-s)(\mathbbm{1}_{\{y\leq v^{\alpha}\}})=\mathbbm{1}_{\{y\leq v^{\alpha}\}}$ which follows by Item $1$ of Lemma 
\ref{properties of semigroup lemma}, we obtain \eqref{use also later}.

We now proceed with \eqref{estimate for t2 t1}. If $t,\epsilon$ are sufficiently small and for sufficiently large $C_{0}$, then
\begin{align*}
  & \mathbbm{1}_{\{y\geq (\frac{v}{3})^{\alpha}\}} 8K_{0}     v^{\gamma} \sum_{i\in \{2,3\}}\int_{0}^{t}S_{\epsilon}(t-s)\int_{0}^{\frac{v}{2}} T_{i}(y,v-w,s)T_{i-1}(y,w,s)\der w\der s\\
      &  \leq \frac{8K_{0} C_{2}(\alpha,\gamma,b,m)M_{2}}{1+v^{b}}\frac{A^{3}C_{0}^{m-1} \epsilon[t+\epsilon+C_{0}^{-(m-1)}] }{v^{\alpha}-y}\chi_{2}(y,v,t) \leq \frac{T_{2}(y,v,t)}{24}.
\end{align*}
This is since by \eqref{choice of m1}, it holds that
\begin{align}\label{most important c0 dependence}
    \frac{8K_{0} C_{2}(\alpha,\gamma,b,m) M_{2} C_{0}^{-(m-1)} A}{1+v^{b}}\frac{A^{2}C_{0}^{m-1} \epsilon }{v^{\alpha}-y}\chi_{2}(y,v,t)  &= 4K_{0} C_{2}(\alpha,\gamma,b,m) M_{2} C_{0}^{-(m-1)} AT_{2}(y,v,t)\nonumber\\
    &\leq \frac{T_{2}(y,v,t)}{48}.
\end{align}

Similarly, for \eqref{estimate for t2 t2}, there exist $T$ and $\epsilon_{1}$, which depend on $A$, such that for all $t\leq T$ and $\epsilon\leq \epsilon_{1}$, then
\begin{align*}
  & \mathbbm{1}_{\{y\geq (\frac{v}{3})^{\alpha}\}}  8K_{0} v^{\gamma} \sum_{i\in \{2,3\}}\int_{0}^{t}S_{\epsilon}(t-s)\int_{0}^{\frac{v}{2}} T_{2}(y,v-w,s)T_{2}(y,w,s)\der w\der s\\
  &\leq \frac{8K_{0}C_{2}(\alpha,\gamma,b,m)}{1+v^{b}}\frac{A^{4}C_{0}^{2(m-1)} \epsilon(t+\epsilon)^{2} }{v^{\alpha}-y}\chi_{2}(y,v,t)\\
      &  \leq 8K_{0}C_{2}(\alpha,\gamma,b,m)A^{2}C_{0}^{m-1} (t+\epsilon)^{2} T_{2}(y,v,t)\leq \frac{T_{2}(y,v,t)}{24}.
\end{align*}
This concludes our proof.
\end{proof}

The rest of this section is dedicated to proving Proposition \ref{prop estimates t1 t1} and Proposition \ref{Estimates for the combinations}.
\subsection{Proof of Proposition \ref{prop estimates t1 t1}}
\begin{proof}[Proof of Proposition 
\ref{prop estimates t1 t1}] Let $i,j\in\{1,3\}$. We focus on finding an upper bound for 

\begin{align}\label{semigroup t1 t1}
    v^{\gamma}\int_{0}^{t}S_{\epsilon}(t-s)\int_{0}^{\frac{v}{2}} T_{i}(y,v-w,s)T_{j}(y,w,s)\der w\der s.
\end{align}
We notice that since $w\in[0,\frac{v}{2}]$, it holds that
\begin{align*}
   v^{\gamma}&\int_{0}^{t}S_{\epsilon}(t-s)\int_{0}^{\frac{v}{2}} T_{i}(y,v-w,s)T_{j}(y,w,s)\der w\der s\\
    &\leq \int_{0}^{\frac{v}{2}}\frac{C(\alpha,\gamma,b,m) A^{2}v^{\gamma}}{(1+w^{b})(1+(v-w)^{b})}\psi(e^{\frac{s}{\epsilon}}(y-w^{\alpha}))\psi(e^{\frac{s}{\epsilon}}(y-(v-w)^{\alpha})\der w\\
    &\leq \frac{C(\alpha,\gamma,b,m)  A^{2} v^{\gamma}}{1+v^{b}}\int_{0}^{\frac{v}{2}}\frac{1}{1+w^{b}}\psi(e^{\frac{s}{\epsilon}}(y-w^{\alpha}))\psi(e^{\frac{s}{\epsilon}}(y-(v-w)^{\alpha})\der w.
\end{align*}
Since $w\in[0,\frac{v}{2}]$, it holds that $w^{\alpha}\leq (v-w)^{\alpha}$, for all $v>0, w\in[0,\frac{v}{2}]$. By \eqref{ineq 2}, we have that
\begin{align}
&\int_{0}^{\frac{v}{2}}\frac{v^{\gamma}}{1+w^{b}}e^{\frac{2s}{\epsilon}}\psi(e^{\frac{s}{\epsilon}}(y-w^{\alpha}))\psi(e^{\frac{s}{\epsilon}}(y-(v-w)^{\alpha})\der w\nonumber\\
&= \int_{0}^{\frac{v}{2}}\frac{v^{\gamma}}{1+w^{b}}\frac{e^{\frac{2s}{\epsilon}}}{(1+e^{\frac{s}{\epsilon}}|y-w^{\alpha}|)^{m}}\frac{\der w}{(1+e^{\frac{s}{\epsilon}}|y-(v-w)^{\alpha}|)^{m}}\nonumber\\
&\leq \int_{0}^{\frac{v}{2}}\frac{C(m)v^{\gamma}}{1+w^{b}}\frac{e^{\frac{2s}{\epsilon}}}{(1+e^{\frac{s}{\epsilon}}|y-w^{\alpha}|)^{m}}\frac{\der w}{(1+e^{\frac{s}{\epsilon}}|(v-w)^{\alpha}-w^{\alpha}|)^{m}}\nonumber\\
&+\int_{0}^{\frac{v}{2}}\frac{C(m)v^{\gamma}}{1+w^{b}}\frac{e^{\frac{2s}{\epsilon}}}{(1+e^{\frac{s}{\epsilon}}|y-(v-w)^{\alpha}|)^{m}}\frac{\der w}{(1+e^{\frac{s}{\epsilon}}|(v-w)^{\alpha}-w^{\alpha}|)^{m}}=:I_{1}+I_{2}.\label{first step y greater v alpha}
\end{align}

\textbf{Proof of \eqref{estimate for t1 t1 y greater v alpha}.} We will prove that there exists a constant $C(\alpha,\gamma,b,m)$, which depends on $\alpha$, $\gamma$, $b$, $m$, such that
\begin{align*}
 v^{\gamma}&\int_{0}^{t}S_{\epsilon}(t-s)\bigg[\int_{0}^{\frac{v}{2}} T_{1}(y,v-w,s)T_{1}(y,w,s)\der w\mathbbm{1}_{\{y\geq v^{\alpha}\}}\bigg]\der s\\
 \leq &
4A^{2}\int_{0}^{t}S_{\epsilon}(t-s)\bigg[\int_{0}^{\frac{v}{2}}\frac{e^{\frac{2s}{\epsilon}}v^{\gamma}}{(1+w^{b})(1+(v-w)^{b})} \psi(e^{\frac{s}{\epsilon}}(y-w^{\alpha}))\psi(e^{\frac{s}{\epsilon}}(y-(v-w)^{\alpha})\der w \mathbbm{1}_{\{y\geq v^{\alpha}\}}\bigg]\der s\\
\leq & \frac{C(\alpha,\gamma,b,m) }{1+v^{b}}\frac{ A^{2}t e^{\frac{t}{\epsilon}}}{1+e^{\frac{tm}{\epsilon}}|y-v^{\alpha}|^{m}} \mathbbm{1}_{\{y\geq v^{\alpha}\}}.
\end{align*}
In order to do this, we will first prove some estimates for
\begin{align}\label{estimate t1 t1}
e^{\frac{2s}{\epsilon}}\int_{0}^{\frac{v}{2}}\frac{v^{\gamma}}{(1+w^{b})(1+(v-w)^{b})}\psi(e^{\frac{s}{\epsilon}}(y-w^{\alpha}))\psi(e^{\frac{s}{\epsilon}}(y-(v-w)^{\alpha})\der w
\end{align}
and then use Item $1$ of Lemma 
\ref{properties of semigroup lemma} to obtain a bound for \eqref{semigroup t1 t1}.  Let $\delta\in(0,\frac{1}{2})$ be sufficiently small. We start by estimating $I_{1}$ in \eqref{first step y greater v alpha}. We analyze separately the regions $w\in[0,\delta v]$ and $w\in[\delta v, \frac{v}{2}]$.  Moreover, since we are in the case when $y\geq v^{\alpha}$, we have that $y-w^{\alpha}\geq y-(\frac{v}{2})^{\alpha}\geq C(\alpha)v^{\alpha}$ and $y-w^{\alpha}\geq y-v^{\alpha}\geq 0$. We analyze first $w\in[\delta v, \frac{v}{2}]$. In this case
\begin{align}
   \mathbbm{1}_{\{y\geq v^{\alpha}\}} \int_{\delta v}^{\frac{v}{2}} & \frac{v^{\gamma}}{1+w^{b}}\frac{e^{\frac{2s}{\epsilon}}}{(1+e^{\frac{s}{\epsilon}}|y-w^{\alpha}|)^{m}}\frac{\der w}{(1+e^{\frac{s}{\epsilon}}|(v-w)^{\alpha}-w^{\alpha}|)^{m}}\nonumber\\
&\leq  \mathbbm{1}_{\{y\geq v^{\alpha}\}} \frac{C(\delta,b)e^{\frac{s}{\epsilon}}}{(1+e^{\frac{s}{\epsilon}}|y-v^{\alpha}|)^{m}}  \int_{\delta v}^{\frac{v}{2}} \frac{v^{\gamma}}{1+v^{b}}\frac{e^{\frac{s}{\epsilon}}\der w}{(1+e^{\frac{s}{\epsilon}}|(v-w)^{\alpha}-w^{\alpha}|)^{m}}\nonumber\\
&\leq  \mathbbm{1}_{\{y\geq v^{\alpha}\}} \frac{C(\delta,\alpha,b) e^{\frac{s}{\epsilon}}}{(1+e^{\frac{s}{\epsilon}}|y-v^{\alpha}|)^{m}}   \frac{ v^{\gamma+1-\alpha}}{1+v^{b}}\leq \mathbbm{1}_{\{y\geq v^{\alpha}\}}\frac{C(\delta,\alpha,b) e^{\frac{s}{\epsilon}}}{(1+e^{\frac{s}{\epsilon}}|y-v^{\alpha}|)^{m}},\label{part one}
\end{align}
where in the last line we used Lemma \ref{ref lemma dirac v-w v} for $\beta=m$ and then the fact that $b\geq \gamma+1-\alpha$. Notice also that since $\gamma>1$ and $\alpha\in(0,1)$, we have that $\gamma+1-\alpha>0$.

We now analyze the region where $w\leq \delta v$. Then $(v-w)^{\alpha}-w^{\alpha}>(1-\delta)^{\alpha}v^{\alpha}-\delta^{\alpha} v^{\alpha}\geq C(\delta,\alpha) v^{\alpha}$ since $\delta<\frac{1}{2}$. Then, since $m>\frac{\gamma+1}{\alpha}$, it holds that
  \begin{align}\label{part two}
    \mathbbm{1}_{\{y\geq v^{\alpha}\}}\int_{0}^{\delta v} & \frac{v^{\gamma}}{1+w^{b}}\frac{e^{\frac{2s}{\epsilon}}}{(1+e^{\frac{s}{\epsilon}}|y-w^{\alpha}|)^{m}}\frac{\der w}{(1+e^{\frac{s}{\epsilon}}|(v-w)^{\alpha}-w^{\alpha}|)^{m}}\nonumber\\
&\leq   \mathbbm{1}_{\{y\geq v^{\alpha}\}} \frac{e^{\frac{s}{\epsilon}} v^{\gamma}}{(1+e^{\frac{s}{\epsilon}}|y-v^{\alpha}|)^{m}}   \int_{0}^{\delta v} \frac{e^{\frac{s}{\epsilon}}\der w}{(1+e^{\frac{s}{\epsilon}}|(v-w)^{\alpha}-w^{\alpha}|)^{m}}\nonumber\\
&\leq \mathbbm{1}_{\{y\geq v^{\alpha}\}}\frac{C(\delta,\alpha,\gamma) e^{\frac{s}{\epsilon}}}{(1+e^{\frac{s}{\epsilon}}|y-v^{\alpha}|)^{m}}  \int_{0}^{\delta v} \frac{ v^{\gamma}}{1+v^{\gamma+1-\alpha}}\frac{e^{\frac{s}{\epsilon}}\der w}{(1+e^{\frac{s}{\epsilon}}|(v-w)^{\alpha}-w^{\alpha}|)^{m+1-\frac{\gamma+1}{\alpha}}}\nonumber\\
&\leq    \mathbbm{1}_{\{y\geq v^{\alpha}\}}\frac{C(\delta,\alpha,\gamma) e^{\frac{s}{\epsilon}}}{(1+e^{\frac{s}{\epsilon}}|y-v^{\alpha}|)^{m}}   \frac{v^{\gamma+1-\alpha}}{1+v^{\gamma+1-\alpha}},
\end{align}
where in the last line we used Lemma \ref{ref lemma dirac v-w v} for $\beta=m-\frac{\gamma+1}{\alpha}+1$.

We now estimate $I_{2}$ in \eqref{first step y greater v alpha}. We start by analyzing the region $w\in[\delta v, \frac{v}{2}]$. Following the estimates for $I_{1}$, when $y\geq v^{\alpha}$ it holds that $y-(v-w)^{\alpha}\geq y-v^{\alpha}\geq 0$. By Lemma \ref{ref lemma dirac v-w v}, we thus have as before that
\begin{align*}
\mathbbm{1}_{\{y\geq v^{\alpha}\}}\int_{\delta v}^{\frac{v}{2}}\frac{v^{\gamma}}{1+w^{b}}\frac{e^{\frac{2s}{\epsilon}}}{(1+e^{\frac{s}{\epsilon}}|y-(v-w)^{\alpha}|)^{m}}\frac{\der w}{(1+e^{\frac{s}{\epsilon}}|(v-w)^{\alpha}-w^{\alpha}|)^{m}}\leq   \mathbbm{1}_{\{y\geq v^{\alpha}\}} \frac{C(\delta,\alpha,b)e^{\frac{s}{\epsilon}}}{(1+e^{\frac{s}{\epsilon}}|y-v^{\alpha}|)^{m}}  .
\end{align*}

In the region where $w\in[0,\delta v]$, we have that $(v-w)^{\alpha}-w^{\alpha}\geq C(\delta)v^{\alpha}$ and thus we obtain as before that 
  \begin{align*}
  \mathbbm{1}_{\{y\geq v^{\alpha}\}}  \int_{0}^{\delta v} & \frac{v^{\gamma}}{1+w^{b}}\frac{e^{\frac{2s}{\epsilon}}}{(1+e^{\frac{s}{\epsilon}}|y-(v-w)^{\alpha}|)^{m}}\frac{\der w}{(1+e^{\frac{s}{\epsilon}}|(v-w)^{\alpha}-w^{\alpha}|)^{m}}\\
       &\leq   \mathbbm{1}_{\{y\geq v^{\alpha}\}} \frac{C(\delta,\alpha,\gamma)e^{\frac{s}{\epsilon}}}{(1+e^{\frac{s}{\epsilon}}|y-v^{\alpha}|)^{m}}  .
\end{align*}
Combining all the above estimates and since $b>2$, we obtain that
\begin{align*}
   \mathbbm{1}_{\{y\geq v^{\alpha}\}} e^{\frac{2s}{\epsilon}}&\int_{0}^{\frac{v}{2}}\frac{v^{\gamma}}{(1+w^{b})(1+(v-w)^{b})}\psi(e^{\frac{s}{\epsilon}}(y-w^{\alpha}))\psi(e^{\frac{s}{\epsilon}}(y-(v-w)^{\alpha})\der w\\
    &\leq  \mathbbm{1}_{\{y\geq v^{\alpha}\}} \frac{e^{\frac{s}{\epsilon}}}{(1+e^{\frac{s}{\epsilon}}|y-v^{\alpha}|)^{m}}    \frac{C(\delta,\alpha,\gamma,b,m)}{1+v^{b}}
\end{align*}
and then using Item $1$ of Lemma 
\ref{properties of semigroup lemma} it follows that when $y\geq v^{\alpha},$ it holds that
\begin{align*}
   v^{\gamma}\int_{0}^{t}S_{\epsilon}(t-s)\bigg[\int_{0}^{\frac{v}{2}} T_{1}(y,v-w,s)T_{1}(y,w,s)\der w\mathbbm{1}_{\{y\geq v^{\alpha}\}}\bigg]\der s\leq  \mathbbm{1}_{\{y\geq v^{\alpha}\}}\frac{C(\delta,\alpha,\gamma,b,m)t e^{\frac{t}{\epsilon}}}{(1+e^{\frac{t}{\epsilon}}|y-v^{\alpha}|)^{m}}    \frac{A^{2}}{1+v^{b}}.
   \end{align*}
\textbf{Proof of \eqref{estimate for t1 t1 y negative}.}
\begin{itemize}
    \item \textbf{Case $y\leq 0$:}  We will prove estimates for \eqref{estimate t1 t1} in this case too. 
\end{itemize}
We have that $0\geq 2^{-\alpha}(y-v^{\alpha})\geq y-(\frac{v}{2})^{\alpha}$ and thus $| y-(\frac{v}{2})^{\alpha}|\geq  2^{-\alpha}|y-v^{\alpha}|$. Moreover, for $w\in[0,\frac{v}{2}]$, it holds that 
\begin{align}
y-(v-w)^{\alpha}\leq y-(\frac{v}{2})^{\alpha}\leq -(\frac{v}{2})^{\alpha}\leq 0.
\end{align}
We thus have that 
\begin{align}\label{ineq negative y}
    |y-(v-w)^{\alpha}|\geq |y-(\frac{v}{2})^{\alpha}|\geq  2^{-\alpha}|y-v^{\alpha}| \textup{ and }  |y-(v-w)^{\alpha}|\geq C(\alpha) v^{-\alpha}.
\end{align} 

As in \eqref{first step y greater v alpha}, by \eqref{ineq 2}, it holds that
\begin{align*}
e^{\frac{2s}{\epsilon}}\int_{0}^{\frac{v}{2}}\frac{v^{\gamma}}{1+w^{b}}\psi(e^{\frac{s}{\epsilon}}(y-w^{\alpha}))\psi(e^{\frac{s}{\epsilon}}(y-(v-w)^{\alpha})\der w \leq I_{1}+I_{2},
\end{align*}
with $I_{1}$ and $I_{2}$ as in \eqref{first step y greater v alpha}. We start by estimating $I_{2}$ in \eqref{first step y greater v alpha}. We first analyze the region where $w\in[\delta v, \frac{v}{2}]$. By \eqref{ineq negative y}, we have that

\begin{align}
\mathbbm{1}_{\{y\leq 0\}}\int_{\delta v}^{\frac{v}{2}}&\frac{v^{\gamma}}{1+w^{b}}\frac{e^{\frac{2s}{\epsilon}}}{(1+e^{\frac{s}{\epsilon}}|y-(v-w)^{\alpha}|)^{m}}\frac{\der w}{(1+e^{\frac{s}{\epsilon}}|(v-w)^{\alpha}-w^{\alpha}|)^{m}}\nonumber\\
&\leq \mathbbm{1}_{\{y\leq 0\}}\frac{C(\delta,\alpha,b)}{(1+e^{\frac{s}{\epsilon}}|y-v^{\alpha}|)^{m-1}} \int_{\delta v}^{\frac{v}{2}}\frac{v^{\gamma-\alpha}}{1+v^{b}}\frac{e^{\frac{s}{\epsilon}}\der w}{(1+e^{\frac{s}{\epsilon}}|(v-w)^{\alpha}-w^{\alpha}|)^{m}}\label{y neg w close to v term v-w}
\end{align}
and we use Lemma \ref{ref lemma dirac v-w v} in order to conclude that
\begin{align}
\mathbbm{1}_{\{y\leq 0\}}\int_{\delta v}^{\frac{v}{2}}&\frac{v^{\gamma}}{1+w^{b}}\frac{e^{\frac{2s}{\epsilon}}}{(1+e^{\frac{s}{\epsilon}}|y-(v-w)^{\alpha}|)^{m}}\frac{\der w}{(1+e^{\frac{s}{\epsilon}}|(v-w)^{\alpha}-w^{\alpha}|)^{m}}\nonumber\\
&\leq   \mathbbm{1}_{\{y\leq 0\}}  \frac{C(\delta,\alpha,b)}{(1+|y-v^{\alpha}|)^{m-1}}  \frac{ v^{\gamma+1-2\alpha}}{1+v^{b}}\label{y neg w close to v term v-w part two}
\end{align}
in this case.

If  $w\in[0, \delta v]$, by \eqref{ineq negative y} and \eqref{part two}, it follows that
\begin{align*}
\mathbbm{1}_{\{y\leq 0\}}\int_{0}^{\delta v}&\frac{v^{\gamma}}{1+w^{b}}\frac{e^{\frac{2s}{\epsilon}}}{(1+e^{\frac{s}{\epsilon}}|y-(v-w)^{\alpha}|)^{m}}\frac{\der w}{(1+e^{\frac{s}{\epsilon}}|(v-w)^{\alpha}-w^{\alpha}|)^{m}}\leq    \mathbbm{1}_{\{y\leq 0\}}\frac{C(\delta,\alpha,\gamma)}{(1+|y-v^{\alpha}|)^{m-1}}.
\end{align*}

We now estimate $I_{1}$ in \eqref{first step y greater v alpha}.  We analyze the region $w\in[\delta v, \frac{v}{2}]$. In this region, it holds that $y-w^{\alpha}\leq y-\delta^{\alpha} v^{\alpha}\leq -\delta^{\alpha} v^{\alpha}\leq 0$ and $y-w^{\alpha}\leq y-\delta^{\alpha} v^{\alpha}\leq \delta^{\alpha}(y-v^{\alpha})\leq 0$. As such, we can proceed as in \eqref{y neg w close to v term v-w} and \eqref{y neg w close to v term v-w part two} in order to prove that
\begin{align*}
   \mathbbm{1}_{\{y\leq 0\}} \int_{\delta v}^{\frac{v}{2}} & \frac{v^{\gamma}}{1+w^{b}}\frac{e^{\frac{2s}{\epsilon}}}{(1+e^{\frac{s}{\epsilon}}|y-w^{\alpha}|)^{m}}\frac{\der w}{(1+e^{\frac{s}{\epsilon}}|(v-w)^{\alpha}-w^{\alpha}|)^{m}}  \leq  \mathbbm{1}_{\{y\leq 0\}}\frac{C(\delta,\alpha,b)}{(1+|y-v^{\alpha}|)^{m-1}} .
\end{align*}
We now estimate $I_{1}$ in \eqref{first step y greater v alpha} in the region $w\in[0, \delta v]$. We have that
\begin{align*}
    \mathbbm{1}_{\{y\leq 0\}}  \int_{0}^{\delta v} & \frac{v^{\gamma}}{1+w^{b}}\frac{e^{\frac{2s}{\epsilon}}}{(1+e^{\frac{s}{\epsilon}}|y-w^{\alpha}|)^{m}}\frac{\der w}{(1+e^{\frac{s}{\epsilon}}|(v-w)^{\alpha}-w^{\alpha}|)^{m}}\\
    &  =    \mathbbm{1}_{\{y\in[-v^{\alpha},0]\}}  \int_{0}^{\delta v} \frac{v^{\gamma}}{1+w^{b}}\frac{e^{\frac{2s}{\epsilon}}}{(1+e^{\frac{s}{\epsilon}}|y-w^{\alpha}|)^{m}}\frac{\der w}{(1+e^{\frac{s}{\epsilon}}|(v-w)^{\alpha}-w^{\alpha}|)^{m}}\\
    & +  \mathbbm{1}_{\{y\leq -v^{\alpha}\}} \int_{0}^{\delta v}\frac{v^{\gamma}}{1+w^{b}}\frac{e^{\frac{2s}{\epsilon}}}{(1+e^{\frac{s}{\epsilon}}|y-w^{\alpha}|)^{m}}\frac{\der w}{(1+e^{\frac{s}{\epsilon}}|(v-w)^{\alpha}-w^{\alpha}|)^{m}}.
\end{align*}
For the region $y\leq -v^{\alpha}$ we have that 
\begin{align*}
    y-w^{\alpha}\leq y = \frac{y}{2}+\frac{y}{2}\leq \frac{y-v^{\alpha}}{2}\leq 0 \textup{ and } y-w^{\alpha}\leq -v^{\alpha}\leq 0.
\end{align*}
Thus, when $y\leq -v^{\alpha}$, we have that
\begin{align*}
   \mathbbm{1}_{\{y\leq -v^{\alpha}\}}   \int_{0}^{\delta v} & \frac{v^{\gamma}}{1+w^{b}}\frac{e^{\frac{2s}{\epsilon}}}{(1+e^{\frac{s}{\epsilon}}|y-w^{\alpha}|)^{m}}\frac{\der w}{(1+e^{\frac{s}{\epsilon}}|(v-w)^{\alpha}-w^{\alpha}|)^{m}}\\
      &\leq\mathbbm{1}_{\{y\leq -v^{\alpha}\}} \frac{C(m)v^{\gamma-\alpha}}{(1+e^{\frac{s}{\epsilon}}|y-v^{\alpha}|)^{m-1}}  \int_{0}^{\delta v} \frac{e^{\frac{s}{\epsilon}}\der w}{(1+e^{\frac{s}{\epsilon}}|(v-w)^{\alpha}-w^{\alpha}|)^{m}}
\end{align*}
and we can proceed as in \eqref{part two} in order to show that
\begin{align*}
      \mathbbm{1}_{\{y\leq -v^{\alpha}\}} \int_{0}^{\delta v}\frac{v^{\gamma}}{1+w^{b}}\frac{e^{\frac{2s}{\epsilon}}}{(1+e^{\frac{s}{\epsilon}}|y-w^{\alpha}|)^{m}}\frac{\der w}{(1+e^{\frac{s}{\epsilon}}|(v-w)^{\alpha}-w^{\alpha}|)^{m}}\leq   \frac{C(\delta,\alpha,\gamma,m)}{(1+|y-v^{\alpha}|)^{m-1}}.
\end{align*}
On the other hand, if $y\in[-v^{\alpha},0]$ and $w\in[0,\delta v]$, we have that $v^{\alpha}\leq |y-v^{\alpha}|\leq  2v^{\alpha}$ and that $|(v-w)^{\alpha}-w^{\alpha}|\geq C(\delta,\alpha) v^{\alpha}$. Thus
\begin{align*}
    \mathbbm{1}_{\{y\in[-v^{\alpha},0]\}}&   \int_{0}^{\delta v}\frac{v^{\gamma}}{1+w^{b}}\frac{e^{\frac{2s}{\epsilon}}}{(1+e^{\frac{s}{\epsilon}}|y-w^{\alpha}|)^{m}}\frac{\der w}{(1+e^{\frac{s}{\epsilon}}|(v-w)^{\alpha}-w^{\alpha}|)^{m}}\\
       &\leq  \mathbbm{1}_{\{y\in[-v^{\alpha},0]\}} v^{\gamma}\int_{0}^{\delta v}\frac{e^{\frac{2s}{\epsilon}}\der w}{(1+e^{\frac{s}{\epsilon}}|(v-w)^{\alpha}-w^{\alpha}|)^{m}}\\
       &\leq   \mathbbm{1}_{\{y\in[-v^{\alpha},0]\}}\frac{C(\delta,\alpha,m)v^{\gamma}}{1+|y-v^{\alpha}|^{\frac{m}{2}}}\int_{0}^{\delta v}\frac{e^{\frac{2s}{\epsilon}}\der w}{(1+e^{\frac{s}{\epsilon}}|(v-w)^{\alpha}-w^{\alpha}|)^{\frac{m}{2}}} \\
       &   \leq   \mathbbm{1}_{\{y\in[-v^{\alpha},0]\}}\frac{C(\delta,\alpha,m)v^{\gamma-\alpha}}{1+|y-v^{\alpha}|^{\frac{m}{2}}}\int_{0}^{\delta v}\frac{e^{\frac{s}{\epsilon}}\der w}{(1+e^{\frac{s}{\epsilon}}|(v-w)^{\alpha}-w^{\alpha}|)^{\frac{m}{2}-1}}  \\
       &\leq   \mathbbm{1}_{\{y\in[-v^{\alpha},0]\}}\frac{C(\delta,\alpha,\gamma,m)}{1+|y-v^{\alpha}|^{\frac{m}{2}}}\frac{v^{\gamma-\alpha}}{1+v^{\gamma+1-2\alpha}}\int_{0}^{\delta v}\frac{e^{\frac{s}{\epsilon}}\der w}{(1+e^{\frac{s}{\epsilon}}|(v-w)^{\alpha}-w^{\alpha}|)^{\frac{m}{2}+1-\frac{\gamma+1}{\alpha}}} \\
       &\leq  \mathbbm{1}_{\{y\in[-v^{\alpha},0]\}} \frac{C(\delta,\alpha,\gamma,m)}{1+|y-v^{\alpha}|^{\frac{m}{2}}},  
\end{align*}
where in the last line we used Lemma \ref{ref lemma dirac v-w v} for $\beta=\frac{m}{2}+1-\frac{\gamma+1}{\alpha}$.

Combining all the above estimates and performing similar computations as the ones in Proposition \ref{prop semigroup decay} and since $m>\frac{2(\gamma+1)}{\alpha}>2$, we obtain that
\begin{align}\label{e1}
  v^{\gamma}\int_{0}^{t}S_{\epsilon}(t-s)\bigg[ \int_{0}^{\frac{v}{2}} T_{1}(y,w,s)T_{1}(y,v-w,s)\der w  \mathbbm{1}_{\{y\leq 0\}}  \bigg]\der s\leq \frac{  C(\alpha,\gamma,b,m)A^{2}\epsilon\min\{1,e^{\frac{t}{\epsilon}}|y-v^{\alpha}|\}}{(1+v^{b})|y-v^{\alpha}|}.
\end{align}
\begin{itemize}
    \item \textbf{Case $y\in[0,v^{\alpha}]$:} We have in this case that $|y-v^{\alpha}|\leq v^{\alpha}$.
    \end{itemize}
    \begin{itemize}
    \item \textbf{Subcase $v\geq 1$:} We will prove that 
\end{itemize}
\begin{align}\label{main estimate y close to v alpha}
\mathbbm{1}_{\{0\leq y\leq v^{\alpha}\}}&\int_{0}^{\frac{v}{2}}\frac{e^{\frac{2s}{\epsilon}}v^{\gamma}}{(1+w^{b})(1+(v-w)^{b})}\psi(e^{\frac{s}{\epsilon}}(y-w^{\alpha}))\psi(e^{\frac{s}{\epsilon}}(y-(v-w)^{\alpha})\der w\nonumber\\
&\leq \frac{C(\alpha,\gamma,b,m) e^{\frac{s}{\epsilon}}}{(1+v^{b})(1+e^{\frac{s}{\epsilon}}|y-(\frac{v}{2})^{\alpha}|)^{l}},
  \end{align}
  for  $l=\frac{m}{2}$. Notice that $l>2$ since $m>\frac{2(\gamma+1)}{\alpha}>4.$ If \eqref{main estimate y close to v alpha} holds true, then we prove that
  \begin{align}\label{needed later for t2 t1}
      \int_{0}^{t}S_{\epsilon}(t  -s)\bigg[\frac{ e^{\frac{s}{\epsilon}}}{(1+e^{\frac{sd}{\epsilon}}|y-(\frac{v}{2})^{\alpha}|)^{l}}\bigg]\der s\leq\frac{ C(\alpha)\epsilon\min\{1,e^{\frac{t}{\epsilon}}|y-v^{\alpha}|\}}{|y-v^{\alpha}|}.
  \end{align}

  We now prove \eqref{needed later for t2 t1}. Assume first that $|y-v^{\alpha}|\geq e^{-\frac{t}{\epsilon}}$. It follows that
  \begin{align*}
      \int_{0}^{t}S_{\epsilon}(t-s)\bigg[\frac{e^{\frac{s}{\epsilon}}}{(1+e^{\frac{sl}{\epsilon}}|y-(\frac{v}{2})^{\alpha}|)^{l}}\bigg]\der s&=\int_{0}^{t}\frac{e^{\frac{t}{\epsilon}}\der s}{1+e^{\frac{sl}{\epsilon}}|(y-v^{\alpha})e^{\frac{t-s}{\epsilon}}+v^{\alpha}-(\frac{v}{2})^{\alpha}|^{l}}\\
      &=\int_{0}^{t}\frac{e^{\frac{t}{\epsilon}}\der s}{1+e^{\frac{tl}{\epsilon}}|(y-v^{\alpha})+e^{-\frac{t-s}{\epsilon}}\big(v^{\alpha}-(\frac{v}{2})^{\alpha}\big)|^{l}}\\
      &=\int_{0}^{t}\frac{e^{\frac{t}{\epsilon}}\der s}{1+e^{\frac{tl}{\epsilon}}|(y-v^{\alpha})+e^{-\frac{s}{\epsilon}}\big(v^{\alpha}-(\frac{v}{2})^{\alpha}\big)|^{l}},
  \end{align*}
where in the last line we made the change of variables $z=t-s$. Making now the change of variable $z=\frac{s}{\epsilon}$, we further obtain that
 \begin{align*}
      \int_{0}^{t}S_{\epsilon}(t-s)\bigg[\frac{ e^{\frac{s}{\epsilon}}}{(1+e^{\frac{sl}{\epsilon}}|y-(\frac{v}{2})^{\alpha}|)^{l}}\bigg]\der s&=\epsilon \int_{0}^{\frac{t}{\epsilon}}\frac{e^{\frac{t}{\epsilon}}\der s}{1+e^{\frac{tl}{\epsilon}}|(y-v^{\alpha})+e^{-s}\big(v^{\alpha}-(\frac{v}{2})^{\alpha}\big)|^{l}}.
  \end{align*}
We further make the change of variables $z=y-v^{\alpha}+e^{-s}\big(v^{\alpha}-(\frac{v}{2})^{\alpha}\big)$. Notice that
\begin{align*}
    \der z &= -e^{-s}(v^{\alpha}-(\frac{v}{2})^{\alpha})\der s;\\
    e^{-s} &=\frac{z+(v^{\alpha}-y)}{v^{\alpha}-(\frac{v}{2})^{\alpha}};\\
    \bigg|\frac{\der s}{\der z}\bigg|&=\frac{1}{z+(v^{\alpha}-y)}.
\end{align*}
It then follows that
\begin{align*}
      \int_{0}^{t}S_{\epsilon}(t-s)\bigg[\frac{ e^{\frac{s}{\epsilon}}}{(1+e^{\frac{sl}{\epsilon}}|y-(\frac{v}{2})^{\alpha}|)^{l}}\bigg]\der s&=\epsilon \int_{y-v^{\alpha}+e^{-\frac{t}{\epsilon}}(v^{\alpha}-(\frac{v}{2})^{\alpha})}^{y-(\frac{v}{2})^{\alpha}}\frac{e^{\frac{t}{\epsilon}}\der z}{1+e^{\frac{tl}{\epsilon}}|z|^{l}}\bigg|\frac{\der s}{\der z}\bigg|\\
      &=\epsilon \int_{y-v^{\alpha}+e^{-\frac{t}{\epsilon}}(v^{\alpha}-(\frac{v}{2})^{\alpha})}^{y-(\frac{v}{2})^{\alpha}}\frac{e^{\frac{t}{\epsilon}}\der z}{1+e^{\frac{tl}{\epsilon}}|z|^{l}}\frac{1}{z+(v^{\alpha}-y)}.
  \end{align*}
  It follows that
  \begin{align*}
      \int_{0}^{t}S_{\epsilon}(t-s)\bigg[\frac{ e^{\frac{s}{\epsilon}}}{(1+e^{\frac{sl}{\epsilon}}|y-(\frac{v}{2})^{\alpha}|)^{l}}\bigg]\der s
      &=\epsilon e^{\frac{t}{\epsilon}} \int_{e^{\frac{t}{\epsilon}}(y-v^{\alpha})+(v^{\alpha}-(\frac{v}{2})^{\alpha})}^{e^{\frac{t}{\epsilon}}(y-(\frac{v}{2})^{\alpha})}\frac{\der z}{1+|z|^{l}}\frac{1}{z+e^{\frac{t}{\epsilon}}(v^{\alpha}-y)}\\
      &=\epsilon e^{\frac{t}{\epsilon}} \int_{(v^{\alpha}-(\frac{v}{2})^{\alpha})}^{e^{\frac{t}{\epsilon}}(v^{\alpha}-(\frac{v}{2})^{\alpha})}\frac{\der z}{1+|z-e^{\frac{t}{\epsilon}}(v^{\alpha}-y)|^{l}}\frac{1}{z}.
  \end{align*}
We thus have that
\begin{align*}
 \epsilon e^{\frac{t}{\epsilon}} \int_{(v^{\alpha}-(\frac{v}{2})^{\alpha})}^{e^{\frac{t}{\epsilon}}(v^{\alpha}-(\frac{v}{2})^{\alpha})}\frac{\der z}{1+|z-e^{\frac{t}{\epsilon}}(v^{\alpha}-y)|^{l}}\frac{1}{z}&=\epsilon e^{\frac{t}{\epsilon}} \int_{(v^{\alpha}-(\frac{v}{2})^{\alpha}), |z|\leq \frac{e^{\frac{t}{\epsilon}}}{2}(v^{\alpha}-y)}^{e^{\frac{t}{\epsilon}}(v^{\alpha}-(\frac{v}{2})^{\alpha})}\frac{\der z}{1+|z-e^{\frac{t}{\epsilon}}(v^{\alpha}-y)|^{l}}\frac{1}{z}\\
    &+\epsilon e^{\frac{t}{\epsilon}} \int_{(v^{\alpha}-(\frac{v}{2})^{\alpha}), |z|\geq \frac{e^{\frac{t}{\epsilon}}}{2}(v^{\alpha}-y)}^{e^{\frac{t}{\epsilon}}(v^{\alpha}-(\frac{v}{2})^{\alpha})}\frac{\der z}{1+|z-e^{\frac{t}{\epsilon}}(v^{\alpha}-y)|^{l}}\frac{1}{z}.
\end{align*}
For the first term, since $v\geq 1$, we have that
\begin{align*}
 & \epsilon e^{\frac{t}{\epsilon}} \int_{(v^{\alpha}-(\frac{v}{2})^{\alpha}), |z|\leq \frac{e^{\frac{t}{\epsilon}}}{2}(v^{\alpha}-y)}^{e^{\frac{t}{\epsilon}}(v^{\alpha}-(\frac{v}{2})^{\alpha})}\frac{\der z}{1+|z-e^{\frac{t}{\epsilon}}(v^{\alpha}-y)|^{l}}\frac{1}{z}\leq \frac{C(\alpha)\epsilon e^{\frac{t}{\epsilon}} }{v^{\alpha}}\int_{(v^{\alpha}-(\frac{v}{2})^{\alpha}), |z|\leq \frac{e^{\frac{t}{\epsilon}}}{2}(v^{\alpha}-y)}^{e^{\frac{t}{\epsilon}}(v^{\alpha}-(\frac{v}{2})^{\alpha})}\frac{\der z}{1+|z-e^{\frac{t}{\epsilon}}(v^{\alpha}-y)|^{l}}\\
    &\leq \frac{C(\alpha)\epsilon e^{\frac{t}{\epsilon}} }{ e^{\frac{t}{\epsilon}}(v^{\alpha}-y)}\int_{(v^{\alpha}-(\frac{v}{2})^{\alpha}), |z|\leq \frac{e^{\frac{t}{\epsilon}}}{2}(v^{\alpha}-y)}^{e^{\frac{t}{\epsilon}}(v^{\alpha}-(\frac{v}{2})^{\alpha})}\frac{\der z}{1+|z-e^{\frac{t}{\epsilon}}(v^{\alpha}-y)|^{l-1}}\leq  \frac{C(\alpha) \epsilon }{ (v^{\alpha}-y)},
\end{align*}
since $l>2$, which is the correct estimate for the region $|y-v^{\alpha}|\geq e^{-\frac{t}{\epsilon}}$. 

For the second term, we have that
\begin{align*}
 \epsilon e^{\frac{t}{\epsilon}} \int_{(v^{\alpha}-(\frac{v}{2})^{\alpha}), |z|\geq \frac{e^{\frac{t}{\epsilon}}}{2}(v^{\alpha}-y)}^{e^{\frac{t}{\epsilon}}(v^{\alpha}-(\frac{v}{2})^{\alpha})}\frac{\der z}{1+|z-e^{\frac{t}{\epsilon}}(v^{\alpha}-y)|^{l}}\frac{1}{z} &\leq \frac{2\epsilon  }{v^{\alpha}-y}\int_{(v^{\alpha}-(\frac{v}{2})^{\alpha}), |z|\geq  \frac{e^{\frac{t}{\epsilon}}}{2}(v^{\alpha}-y)}^{e^{\frac{t}{\epsilon}}(v^{\alpha}-(\frac{v}{2})^{\alpha})}\frac{\der z}{1+|z-e^{\frac{t}{\epsilon}}(v^{\alpha}-y)|^{l}}\\
  &\leq \frac{C\epsilon  }{v^{\alpha}-y},
 \end{align*}
 which concludes our proof in the case when $v\geq 1, |y-v^{\alpha}|\geq e^{-\frac{t}{\epsilon}}$. 
 
We now focus on the case $v\geq 1, |y-v^{\alpha}|\leq  e^{-\frac{t}{\epsilon}}$. Since $v\geq 1$, we have that
\begin{align*}
 &    \int_{0}^{t}S_{\epsilon}(t-s)\bigg[\frac{ e^{\frac{s}{\epsilon}}}{(1+e^{\frac{sd}{\epsilon}}|y-(\frac{v}{2})^{\alpha}|)^{l}}\bigg]\der s\leq \frac{C(\alpha)\epsilon e^{\frac{t}{\epsilon}} }{v^{\alpha}}\int_{(v^{\alpha}-(\frac{v}{2})^{\alpha})}^{e^{\frac{t}{\epsilon}}(v^{\alpha}-(\frac{v}{2})^{\alpha})}\frac{\der z}{1+|z-e^{\frac{t}{\epsilon}}(v^{\alpha}-y)|^{l}}\\
    &\leq C(\alpha)\epsilon e^{\frac{t}{\epsilon}}\int_{(v^{\alpha}-(\frac{v}{2})^{\alpha})}^{e^{\frac{t}{\epsilon}}(v^{\alpha}-(\frac{v}{2})^{\alpha})}\frac{\der z}{1+|z-e^{\frac{t}{\epsilon}}(v^{\alpha}-y)|^{l}}\leq  C(\alpha)\epsilon e^{\frac{t}{\epsilon}}.
\end{align*}
We now focus on proving \eqref{main estimate y close to v alpha} when $v\geq 1$. We assume without loss of generality that $|y-w^{\alpha}|\geq |y-(\frac{v}{2})^{\alpha}|$ since the region $|y-(v-w)^{\alpha}|\geq |y-(\frac{v}{2})^{\alpha}|$ can be treated similarly. As before, we analyze separately the regions $w\in[\delta v, \frac{v}{2}]$ and $w\in(0,\delta v),$ for some $\delta<\frac{1}{2}$. First, we notice that
\begin{align*}
\int_{\delta v}^{\frac{v}{2}}&\frac{e^{\frac{2s}{\epsilon}}v^{\gamma}}{(1+w^{b})(1+(v-w)^{b})}\psi(e^{\frac{s}{\epsilon}}(y-w^{\alpha}))\psi(e^{\frac{s}{\epsilon}}(y-(v-w)^{\alpha})\der w\\
&\leq  \frac{C(\delta,b)e^{\frac{s}{\epsilon}}}{(1+e^{\frac{s}{\epsilon}}|y-(\frac{v}{2})^{\alpha}|)^{m}}\int_{\delta v}^{\frac{v}{2}}\frac{v^{\gamma}}{(1+v^{b})^{2}}\frac{e^{\frac{s}{\epsilon}}\der w}{(1+e^{\frac{s}{\epsilon}}|y-(v-w)^{\alpha}|)^{m}}\\
&\leq \frac{e^{\frac{s}{\epsilon}}}{(1+e^{\frac{s}{\epsilon}}|y-(\frac{v}{2})^{\alpha}|)^{m}}\frac{C(\delta,\alpha,b) v^{\gamma+1-\alpha}}{(1+v^{b})^{2}}\leq  \frac{C(\delta,\alpha,b) e^{\frac{s}{\epsilon}}}{(1+v^{b})(1+e^{\frac{s}{\epsilon}}|y-(\frac{v}{2})^{\alpha}|)^{m}},
\end{align*}
where we used Lemma \ref{ref lemma dirac v-w v}.

For the region where $w\in[0,\delta v]$, we will prove that
\begin{align*}
    &\int_{0}^{\delta v}\frac{v^{\gamma}}{1+w^{b}}\frac{e^{\frac{2s}{\epsilon}}}{(1+e^{\frac{s}{\epsilon}}|y-w^{\alpha}|)^{m}}\frac{\der w}{(1+e^{\frac{s}{\epsilon}}|(v-w)^{\alpha}-w^{\alpha}|)^{m}}\nonumber\\
&+\int_{0}^{\delta v}\frac{v^{\gamma}}{1+w^{b}}\frac{e^{\frac{2s}{\epsilon}}}{(1+e^{\frac{s}{\epsilon}}|y-(v-w)^{\alpha}|)^{m}}\frac{\der w}{(1+e^{\frac{s}{\epsilon}}|(v-w)^{\alpha}-w^{\alpha}|)^{m}}\\
&\leq \frac{C(\delta,\alpha,\gamma,m) e^{\frac{s}{\epsilon}}}{(1+e^{\frac{s}{\epsilon}}|y-(\frac{v}{2})^{\alpha}|)^{l}},
\end{align*}
for some $l=\frac{m}{2}>2$ as before and then \eqref{main estimate y close to v alpha} follows by \eqref{first step y greater v alpha}. We can assume without loss of generality that $|y-w^{\alpha}|\geq |y-(\frac{v}{2})^{\alpha}|$. 
  Moreover, since when $w\in[0,\delta v]$ it holds that $(v-w)^{\alpha}-w^{\alpha}\geq C(\delta,\alpha) v^{\alpha}$, we have that
    \begin{align*}
&    \int_{0}^{\delta v}\frac{v^{\gamma}}{1+w^{b}}\frac{e^{\frac{2s}{\epsilon}}}{(1+e^{\frac{s}{\epsilon}}|y-w^{\alpha}|)^{m}}\frac{\der w}{(1+e^{\frac{s}{\epsilon}}|(v-w)^{\alpha}-w^{\alpha}|)^{m}}\\
         &\leq \frac{C(\delta,\alpha,\gamma) e^{\frac{s}{\epsilon}}v^{\gamma}}{(1+e^{\frac{s}{\epsilon}}|y-(\frac{v}{2})^{\alpha}|)^{m}}  \int_{0}^{\delta v}\frac{e^{\frac{s}{\epsilon}}\der w}{(1+e^{\frac{s}{\epsilon}}|(v-w)^{\alpha}-w^{\alpha}|)^{m-\frac{\gamma+1}{\alpha}+1}}\frac{1}{1+v^{\gamma+1-\alpha}}\\
         &\leq \frac{C(\delta,\alpha,\gamma) e^{\frac{s}{\epsilon}}v^{\gamma+1-\alpha}}{(1+v^{\gamma+1-\alpha})(1+e^{\frac{s}{\epsilon}}|y-(\frac{v}{2})^{\alpha}|)^{m}}  \leq \frac{C(\delta,\alpha,\gamma) e^{\frac{s}{\epsilon}}}{(1+e^{\frac{s}{\epsilon}}|y-(\frac{v}{2})^{\alpha}|)^{m}},  
  \end{align*}
  where in the last line we used Lemma \ref{ref lemma dirac v-w v}.

  We now estimate $I_{2}$. 
   When $w\in[0,\delta v]$, then $(v-w)^{\alpha}-w^{\alpha}\geq C(\delta,\alpha) v^{\alpha}\geq C(\delta,\alpha)|y-(\frac{v}{2})^{\alpha}|$ and we  have that
  \begin{align*}
    &  \mathbbm{1}_{\{0\leq y\leq v^{\alpha}\}}\int_{0}^{\delta v}\frac{v^{\gamma}}{1+w^{b}}\frac{e^{\frac{2s}{\epsilon}}}{(1+e^{\frac{s}{\epsilon}}|y-(v-w)^{\alpha}|)^{m}}\frac{\der w}{(1+e^{\frac{s}{\epsilon}}|(v-w)^{\alpha}-w^{\alpha}|)^{m}}\\
    &  \leq  \mathbbm{1}_{\{0\leq y\leq v^{\alpha}\}}C(\delta,\alpha,\gamma,m) e^{\frac{s}{\epsilon}} v^{\gamma}\int_{0}^{\delta v}\frac{e^{\frac{s}{\epsilon}}\der w}{(1+e^{\frac{s}{\epsilon}}|(v-w)^{\alpha}-w^{\alpha}|)^{\frac{m}{2}-\frac{\gamma+1}{\alpha}+1}}\frac{1}{1+v^{\gamma+1-\alpha}}\frac{1}{1+e^{\frac{sm}{2\epsilon}}v^{\frac{\alpha m}{2}}}\\
    & \leq  \mathbbm{1}_{\{0\leq y\leq v^{\alpha}\}}\frac{C(\delta,\alpha,\gamma,m) e^{\frac{s}{\epsilon}} }{1+e^{\frac{sm}{2\epsilon}}v^{\frac{\alpha m}{2}}} \leq  \mathbbm{1}_{\{0\leq y\leq v^{\alpha}\}}\frac{C(\delta,\alpha,\gamma,m) e^{\frac{s}{\epsilon}} }{1+e^{\frac{sm}{2\epsilon}}|y-(\frac{v}{2})^{\alpha}|^{\frac{m}{2}}},
  \end{align*}
  which concludes our proof.

     \begin{itemize}
    \item \textbf{Subcase $v\leq 1$:} We will prove that 
\end{itemize}
\begin{align}\label{main estimate y close to v alpha part two}
&\mathbbm{1}_{\{0\leq y \leq v^{\alpha}\}}\int_{0}^{\frac{v}{2}}\frac{e^{\frac{2s}{\epsilon}}v^{\gamma}}{(1+w^{b})(1+(v-w)^{b})}\psi(e^{\frac{s}{\epsilon}}(y-w^{\alpha}))\psi(e^{\frac{s}{\epsilon}}(y-(v-w)^{\alpha})\der w\nonumber\\
&\leq \frac{C(\alpha,b) v^{\gamma}e^{\frac{s}{\epsilon}}}{(1+v^{b})(1+e^{\frac{s}{\epsilon}}|y-(\frac{v}{2})^{\alpha}|)^{m}}.
  \end{align}
  If \eqref{main estimate y close to v alpha part two} holds true, then as in the case $v\geq 1$, we have that
  \begin{align*}
      \int_{0}^{t}S_{\epsilon}(t-s)\bigg[\frac{ e^{\frac{s}{\epsilon}}v^{\gamma}}{(1+e^{\frac{sm}{\epsilon}}|y-(\frac{v}{2})^{\alpha}|)^{m}}\bigg]\der s= \epsilon e^{\frac{t}{\epsilon}}v^{\gamma} \int_{(v^{\alpha}-(\frac{v}{2})^{\alpha})}^{e^{\frac{t}{\epsilon}}(v^{\alpha}-(\frac{v}{2})^{\alpha})}\frac{\der z}{1+|z-e^{\frac{t}{\epsilon}}(v^{\alpha}-y)|^{m}}\frac{1}{z}.
      \end{align*}
      For the region $v\leq 1, |y-v^{\alpha}|\leq  e^{-\frac{t}{\epsilon}}$, we have that
\begin{align*}
 &   v^{\gamma} \int_{0}^{t}S_{\epsilon}(t-s)\bigg[\frac{ e^{\frac{s}{\epsilon}}}{(1+e^{\frac{sm}{\epsilon}}|y-(\frac{v}{2})^{\alpha}|)^{m}}\bigg]\der s\leq C(\alpha)\epsilon e^{\frac{t}{\epsilon}} v^{\gamma-\alpha}\int_{(v^{\alpha}-(\frac{v}{2})^{\alpha})}^{e^{\frac{t}{\epsilon}}(v^{\alpha}-(\frac{v}{2})^{\alpha})}\frac{\der z}{1+|z-e^{\frac{t}{\epsilon}}(v^{\alpha}-y)|^{m}}\\
    &\leq C(\alpha)\epsilon e^{\frac{t}{\epsilon}}v^{\gamma-\alpha} \leq  C(\alpha)\epsilon e^{\frac{t}{\epsilon}},
\end{align*}
since $\gamma>1>\alpha$ and $v\leq 1$.

For the region  $|y-v^{\alpha}|\geq e^{-\frac{t}{\epsilon}}$, it follows as before that
\begin{align*}
 & \epsilon e^{\frac{t}{\epsilon}} \int_{(v^{\alpha}-(\frac{v}{2})^{\alpha}), |z|\leq \frac{e^{\frac{t}{\epsilon}}}{2}(v^{\alpha}-y)}^{e^{\frac{t}{\epsilon}}(v^{\alpha}-(\frac{v}{2})^{\alpha})}\frac{v^{\gamma}\der z}{1+|z-e^{\frac{t}{\epsilon}}(v^{\alpha}-y)|^{m}}\frac{1}{z}\leq C(\alpha)\epsilon e^{\frac{t}{\epsilon}}\int_{(v^{\alpha}-(\frac{v}{2})^{\alpha}), |z|\leq \frac{e^{\frac{t}{\epsilon}}}{2}(v^{\alpha}-y)}^{e^{\frac{t}{\epsilon}}(v^{\alpha}-(\frac{v}{2})^{\alpha})}\frac{v^{\gamma-\alpha}\der z}{1+|z-e^{\frac{t}{\epsilon}}(v^{\alpha}-y)|^{m}}\\
    &\leq \frac{C(\alpha)\epsilon e^{\frac{t}{\epsilon}} v^{\gamma-\alpha} }{ e^{\frac{t}{\epsilon}}(v^{\alpha}-y)}\int_{(v^{\alpha}-(\frac{v}{2})^{\alpha}), |z|\leq \frac{e^{\frac{t}{\epsilon}}}{2}(v^{\alpha}-y)}^{e^{\frac{t}{\epsilon}}(v^{\alpha}-(\frac{v}{2})^{\alpha})}\frac{\der z}{1+|z-e^{\frac{t}{\epsilon}}(v^{\alpha}-y)|^{m-1}}\leq  \frac{C(\alpha) \epsilon }{ (v^{\alpha}-y)},
\end{align*}
since $v\leq 1$. In the same manner, we have that
\begin{align*}
 v^{\gamma}\epsilon e^{\frac{t}{\epsilon}} \int_{(v^{\alpha}-(\frac{v}{2})^{\alpha}), |z|\geq \frac{e^{\frac{t}{\epsilon}}}{2}(v^{\alpha}-y)}^{e^{\frac{t}{\epsilon}}(v^{\alpha}-(\frac{v}{2})^{\alpha})}\frac{\der z}{1+|z-e^{\frac{t}{\epsilon}}(v^{\alpha}-y)|^{m}}\frac{1}{z} &\leq \frac{2\epsilon v^{\gamma}  }{v^{\alpha}-y}\int_{(v^{\alpha}-(\frac{v}{2})^{\alpha}), |z|\geq  \frac{e^{\frac{t}{\epsilon}}}{2}(v^{\alpha}-y)}^{e^{\frac{t}{\epsilon}}(v^{\alpha}-(\frac{v}{2})^{\alpha})}\frac{\der z}{1+|z-e^{\frac{t}{\epsilon}}(v^{\alpha}-y)|^{m}}\\
  &\leq \frac{C\epsilon  }{v^{\alpha}-y},
 \end{align*}
 since $\gamma>1$ and $v\leq 1$.
We now focus on proving \eqref{main estimate y close to v alpha part two} when $v\leq 1$. We assume without loss of generality that $|y-w^{\alpha}|\geq |y-(\frac{v}{2})^{\alpha}|$. It holds that
\begin{align*}
&\int_{0}^{\frac{v}{2}}\frac{e^{\frac{2s}{\epsilon}}v^{\gamma}}{(1+w^{b})(1+(v-w)^{b})}\psi(e^{\frac{s}{\epsilon}}(y-w^{\alpha}))\psi(e^{\frac{s}{\epsilon}}(y-(v-w)^{\alpha})\der w\\
&\leq  \frac{e^{\frac{s}{\epsilon}}v^{\gamma}}{(1+e^{\frac{s}{\epsilon}}|y-(\frac{v}{2})^{\alpha}|)^{m}}\int_{0}^{\frac{v}{2}}\frac{C(b)}{1+v^{b}}\frac{e^{\frac{s}{\epsilon}}\der w}{(1+e^{\frac{s}{\epsilon}}|y-(v-w)^{\alpha}|)^{m}}\\
&\leq \frac{e^{\frac{s}{\epsilon}}v^{\gamma}}{(1+e^{\frac{s}{\epsilon}}|y-(\frac{v}{2})^{\alpha}|)^{m}}\frac{C(\alpha,b) v^{1-\alpha}}{1+v^{b}}\leq  \frac{C(\alpha,b) e^{\frac{s}{\epsilon}}v^{\gamma}}{(1+e^{\frac{s}{\epsilon}}|y-(\frac{v}{2})^{\alpha}|)^{m}}\frac{1}{1+v^{b}},
\end{align*}
where in the last line we used Lemma \ref{ref lemma dirac v-w v} and the fact that $\alpha<1$.

  \end{proof}
\subsection{Proof of Proposition \ref{Estimates for the combinations}}
\begin{proof}[Proof of Proposition \ref{Estimates for the combinations}]
\textbf{Proof of \eqref{estimate for t1 t2}.} Let $i\in\{2,3\}$. We want to bound from above the term 
\begin{align}\label{semigroup t1 t2}
    v^{\gamma}\int_{0}^{t}S_{\epsilon}(t-s)\int_{0}^{\frac{v}{2}} T_{i-1}(y,v-w,s)T_{i}(y,w,s)\der w\der s.
\end{align}
We start by estimating
\begin{align*}
    v^{\gamma}&\int_{0}^{\frac{v}{2}} T_{1}(y,v-w,s)T_{2}(y,w,s)\der w\\
    &  \leq v^{\gamma}\int_{0}^{\frac{v}{2}} \frac{4 A^{3}C_{0}^{m-1}}{(1+w^{b})(1+(v-w)^{b})}\frac{e^{\frac{s}{\epsilon}}\mathbbm{1}_{\{y\geq (v-w)^{\alpha}-C_{0}^{-1}\epsilon^{-\frac{1}{m-1}}e^{-\frac{s}{\epsilon}}\}}}{1+e^{\frac{ms}{\epsilon}}|y-(v-w)^{\alpha}|^{m}}\frac{\epsilon\mathbbm{1}_{\{y\leq w^{\alpha}-C_{0}^{-1}\epsilon^{-\frac{1}{m-1}}e^{-\frac{s}{\epsilon}}\}}}{|y-w^{\alpha}|}\der w=0.
\end{align*}
Thus, this term does not play any role. Similarly
\begin{align*}
    v^{\gamma}&\int_{0}^{\frac{v}{2}} T_{2}(y,v-w,s)T_{3}(y,w,s)\der w\\
    & \leq  v^{\gamma}\int_{0}^{\frac{v}{2}} \frac{2M_{2}A^{3}C_{0}^{m-1}}{(1+w^{b})(1+(v-w)^{b})}\frac{\epsilon\mathbbm{1}_{\{y\geq (\frac{v-w}{3})^{\alpha}\}}}{|y-(v-w)^{\alpha}|}\frac{e^{\frac{s}{\epsilon}}\mathbbm{1}_{\{y\leq (\frac{w}{3})^{\alpha}\}}}{1+e^{\frac{ms}{\epsilon}}|y-w^{\alpha}|^{m}}\der w=0.
\end{align*}

\textbf{Proof of \eqref{estimate for t2 t1}.} 
\begin{enumerate}
    \item[i)] We continue by estimating from above the term 
\end{enumerate}
\begin{align}\label{semigroup t2 t1}
    v^{\gamma}\int_{0}^{t}S_{\epsilon}(t-s)\int_{0}^{\frac{v}{2}} T_{2}(y,v-w,s)T_{1}(y,w,s)\der w\der s.
\end{align}
In the following, we bound first
  \begin{align}\label{obs on the region}
     v^{\gamma}\int_{0}^{t}S_{\epsilon}(t-s)\int_{\delta v}^{\frac{v}{2}} T_{2}(y,v-w,s)T_{1}(y,w,s)\der w\der s,
  \end{align}
  for some sufficiently small but fixed $\delta>0$. We focus then on bounding
 \begin{align*}
     v^{\gamma}\int_{0}^{t}S_{\epsilon}(t-s)\int_{0}^{\delta v}  T_{2}(y,v-w,s)T_{1}(y,w,s)\der w\der s.
  \end{align*}
We start by estimating
\begin{align*}
    v^{\gamma}\int_{\delta v}^{\frac{v}{2}} & T_{2}(y,v-w,s)T_{1}(y,w,s)\der w\\
       \leq &\frac{C(\delta, b)A^{3} C_{0}^{m-1}v^{\gamma}}{(1+v^{b})^{2}}\int_{\delta v}^{\frac{v}{2}}\frac{e^{\frac{s}{\epsilon}}\mathbbm{1}_{\{y\geq w^{\alpha}-C_{0}^{-1}\epsilon^{-\frac{1}{m-1}}e^{-\frac{s}{\epsilon}}\}}}{1+e^{\frac{ms}{\epsilon}}|y-w^{\alpha}|^{m}}\frac{\epsilon\mathbbm{1}_{\{y\leq (v-w)^{\alpha}-C_{0}^{-1}\epsilon^{-\frac{1}{m-1}}e^{-\frac{s}{\epsilon}}\}}}{|y-(v-w)^{\alpha}|}\der w\\
    \leq & \frac{C(\delta, b)A^{3}C_{0}^{m-1} v^{\gamma}}{(1+v^{b})^{2}}\int_{\delta v}^{\frac{v}{2}}\frac{e^{\frac{s}{\epsilon}}}{1+e^{\frac{ms}{\epsilon}}|y-w^{\alpha}|^{m}}\frac{\epsilon}{|y-(v-w)^{\alpha}|}\der w\\
   & \times \mathbbm{1}_{\{-C_{0}^{-1}\epsilon^{-\frac{1}{m-1}}e^{-\frac{s}{\epsilon}}\leq y\leq v^{\alpha}-C_{0}^{-1}\epsilon^{-\frac{1}{m-1}}e^{-\frac{s}{\epsilon}}\}}.
\end{align*}
\begin{rmk}\label{remark nice region}
Notice that the region $y\leq v^{\alpha}-C_{0}^{-1}\epsilon^{-\frac{1}{m-1}}e^{-\frac{s}{\epsilon}}$ implies that after the action of the semigroup we have that $e^{\frac{t-s}{\epsilon}}(y-v^{\alpha})+v^{\alpha}\leq v^{\alpha}-C_{0}^{-1}\epsilon^{-\frac{1}{m-1}}e^{-\frac{s}{\epsilon}}$ or in other words after the action of the semigroup we are in the region where $ e^{\frac{t}{\epsilon}}|y-v^{\alpha}|\geq C_{0}^{-1}\epsilon^{-\frac{1}{m-1}}$. 
\end{rmk}
 We can write $\mathbbm{1}_{\{-C_{0}^{-1}e^{-\frac{s}{\epsilon}}\epsilon^{-\frac{1}{m-1}}\leq y\leq v^{\alpha}-C_{0}^{-1}\epsilon^{-\frac{1}{m-1}}e^{-\frac{s}{\epsilon}}\}}$ as a sum of the form 
 \begin{align*}
     &\mathbbm{1}_{\{-C_{0}^{-1}e^{-\frac{s}{\epsilon}}\epsilon^{-\frac{1}{m-1}}\leq y\leq v^{\alpha}-C_{0}^{-1}\epsilon^{-\frac{1}{m-1}}e^{-\frac{s}{\epsilon}}\}}=\mathbbm{1}_{\{-C_{0}^{-1}e^{-\frac{s}{\epsilon}}\epsilon^{-\frac{1}{m-1}}\leq y\leq (\frac{v}{2})^{\alpha}-C_{0}^{-1}\epsilon^{-\frac{1}{m-1}}e^{-\frac{s}{\epsilon}}\}}\\
     &+\mathbbm{1}_{\{(\frac{v}{2})^{\alpha}+C_{0}^{-1}e^{-\frac{s}{\epsilon}}\epsilon^{-\frac{1}{m-1}}\leq y\leq v^{\alpha}-C_{0}^{-1}\epsilon^{-\frac{1}{m-1}}e^{-\frac{s}{\epsilon}}\}}+\mathbbm{1}_{\{(\frac{v}{2})^{\alpha}-C_{0}^{-1}e^{-\frac{s}{\epsilon}}\epsilon^{-\frac{1}{m-1}}\leq y\leq (\frac{v}{2})^{\alpha}+C_{0}^{-1}e^{-\frac{s}{\epsilon}}\epsilon^{-\frac{1}{m-1}}\}}.
 \end{align*}

 We analyze first the region where $\{-C_{0}^{-1}e^{-\frac{s}{\epsilon}}\epsilon^{-\frac{1}{m-1}}\leq y\leq (\frac{v}{2})^{\alpha}-C_{0}^{-1}\epsilon^{-\frac{1}{m-1}}e^{-\frac{s}{\epsilon}}\}$. In this region, since $|y-(v-w)^{\alpha}|\geq |y-(\frac{v}{2})^{\alpha}|$, then 
\begin{align*}
   \frac{e^{\frac{s}{\epsilon}}}{1+e^{\frac{ms}{\epsilon}}|y-w^{\alpha}|^{m}}\frac{\epsilon\mathbbm{1}_{\{y\leq (v-w)^{\alpha}-C_{0}^{-1}\epsilon^{-\frac{1}{m-1}}e^{-\frac{s}{\epsilon}}\}}}{|y-(v-w)^{\alpha}|}\leq  \frac{\epsilon}{|y-(\frac{v}{2})^{\alpha}|}  \frac{e^{\frac{s}{\epsilon}}}{1+e^{\frac{ms}{\epsilon}}|y-w^{\alpha}|^{m}}\mathbbm{1}_{\{y\leq (v-w)^{\alpha}-C_{0}^{-1}\epsilon^{-\frac{1}{m-1}}e^{-\frac{s}{\epsilon}}\}}.
\end{align*}Thus, it holds that
\begin{align*}
   I_{1}(y,v,s):= v^{\gamma} &\int_{\delta v}^{\frac{v}{2}} T_{2}(y,v-w,s)T_{1}(y,w,s)\der w\mathbbm{1}_{\{-C_{0}^{-1}e^{-\frac{s}{\epsilon}}\epsilon^{-\frac{1}{m-1}}\leq y\leq (\frac{v}{2})^{\alpha}-C_{0}^{-1}\epsilon^{-\frac{1}{m-1}}e^{-\frac{s}{\epsilon}}\}}\\
       \leq& \frac{C(\delta, b) A^{3}C_{0}^{m-1}v^{\gamma}}{(1+v^{b})^{2}}\int_{\delta v}^{\frac{v}{2}}\frac{\epsilon}{|y-(\frac{v}{2})^{\alpha}|}  \frac{e^{\frac{s}{\epsilon}}}{1+e^{\frac{ms}{\epsilon}}|y-w^{\alpha}|^{m}}\der w\\
  &  \times \mathbbm{1}_{\{-C_{0}^{-1}e^{-\frac{s}{\epsilon}}\epsilon^{-\frac{1}{m-1}}\leq y\leq (\frac{v}{2})^{\alpha}-C_{0}^{-1}\epsilon^{-\frac{1}{m-1}}e^{-\frac{s}{\epsilon}}\}}\\
 \leq &\frac{C(\delta, \alpha,b)\epsilon}{|y-(\frac{v}{2})^{\alpha}|} \frac{A^{3}C_{0}^{m-1} v^{\gamma+1-\alpha}}{(1+v^{b})^{2}}\mathbbm{1}_{\{-C_{0}^{-1}e^{-\frac{s}{\epsilon}}\epsilon^{-\frac{1}{m-1}}\leq y\leq (\frac{v}{2})^{\alpha}-C_{0}^{-1}\epsilon^{-\frac{1}{m-1}}e^{-\frac{s}{\epsilon}}\}},
\end{align*}
where in the last line we used Lemma \ref{ref lemma dirac v-w v}.
 
Then
\begin{align}
  v^{\gamma}\int_{0}^{t}S_{\epsilon}(t-s)\bigg[I_{1}(y,v,s)\bigg]\der s\leq &\frac{C(\delta, \alpha,b)A^{3}C_{0}^{m-1} v^{\gamma+1-\alpha}\epsilon }{(1+v^{b})^{2}}\int_{0}^{t}\frac{e^{\frac{t-s}{\epsilon}}}{|e^{\frac{t-s}{\epsilon}}(y-v^{\alpha})+v^{\alpha}-(\frac{v}{2})^{\alpha}|} \nonumber\\
      &\times\mathbbm{1}_{\{-C_{0}^{-1}e^{-\frac{s}{\epsilon}}\epsilon^{-\frac{1}{m-1}}\leq e^{\frac{t-s}{\epsilon}}(y-v^{\alpha})+v^{\alpha}\leq (\frac{v}{2})^{\alpha}-C_{0}^{-1}\epsilon^{-\frac{1}{m-1}}e^{-\frac{s}{\epsilon}}\}}\der s\nonumber\\
      =  &\frac{C(\delta, \alpha,b)A^{3}C_{0}^{m-1} v^{\gamma+1-\alpha}\epsilon }{(1+v^{b})^{2}}\int_{0}^{t}\frac{e^{\frac{t}{\epsilon}}}{|e^{\frac{t}{\epsilon}}(y-v^{\alpha})+e^{\frac{s}{\epsilon}}(v^{\alpha}-(\frac{v}{2})^{\alpha})|} \nonumber\\
      &\times \mathbbm{1}_{\{-e^{\frac{s}{\epsilon}}v^{\alpha}-C_{0}^{-1}\epsilon^{-\frac{1}{m-1}}\leq e^{\frac{t}{\epsilon}}(y-v^{\alpha})\leq e^{\frac{s}{\epsilon}}((\frac{v}{2})^{\alpha}-v^{\alpha})-C_{0}^{-1}\epsilon^{-\frac{1}{m-1}}\}}\der s\nonumber\\
         = & \frac{C(\delta, \alpha,b)A^{3}C_{0}^{m-1} v^{\gamma+1-\alpha}\epsilon }{(1+v^{b})^{2}}\int_{0}^{t}\frac{e^{\frac{t}{\epsilon}}}{|e^{\frac{t}{\epsilon}}(v^{\alpha}-y)-e^{\frac{s}{\epsilon}}(v^{\alpha}-(\frac{v}{2})^{\alpha})|} \nonumber\\
         &\times \mathbbm{1}_{\{e^{\frac{s}{\epsilon}}v^{\alpha}+C_{0}^{-1}\epsilon^{-\frac{1}{m-1}}\geq e^{\frac{t}{\epsilon}}(v^{\alpha}-y)\geq e^{\frac{s}{\epsilon}}(v^{\alpha}-(\frac{v}{2})^{\alpha})+C_{0}^{-1}\epsilon^{-\frac{1}{m-1}}\}}\der s\nonumber\\
         &=:C(\delta, \alpha,b)L(y,v,t).\label{region of integration}
\end{align}
We make the change of variables $z=e^{\frac{t}{\epsilon}}(v^{\alpha}-y)-e^{\frac{s}{\epsilon}}(v^{\alpha}-(\frac{v}{2})^{\alpha})$. We thus have that
\begin{align}\label{definition of L}
    L(y,v,t)        =  &\frac{A^{3}C_{0}^{m-1} v^{\gamma+1-\alpha}\epsilon^{2} }{(1+v^{b})^{2}}\int_{e^{\frac{t}{\epsilon}}((\frac{v}{2})^{\alpha}-y)}^{e^{\frac{t}{\epsilon}}(v^{\alpha}-y)-(v^{\alpha}-(\frac{v}{2})^{\alpha})}\frac{e^{\frac{t}{\epsilon}}}{e^{\frac{t}{\epsilon}}(v^{\alpha}-y)-z}\frac{1}{|z|}\nonumber\\
    &\times \mathbbm{1}_{\{e^{\frac{s}{\epsilon}}(\frac{v}{2})^{\alpha}+C_{0}^{-1}\epsilon^{-\frac{1}{m-1}}\geq z\geq C_{0}^{-1}\epsilon^{-\frac{1}{m-1}}\}}\der z.
\end{align}
On the one hand, it holds that
\begin{align}
 &  \frac{ v^{\gamma+1-\alpha}\epsilon^{2} }{(1+v^{b})^{2}}\int_{e^{\frac{t}{\epsilon}}((\frac{v}{2})^{\alpha}-y), z\leq \frac{e^{\frac{t}{\epsilon}}(v^{\alpha}-y)}{2}}^{e^{\frac{t}{\epsilon}}(v^{\alpha}-y)-(v^{\alpha}-(\frac{v}{2})^{\alpha})}\frac{e^{\frac{t}{\epsilon}}}{e^{\frac{t}{\epsilon}}(v^{\alpha}-y)-z}\frac{1}{|z|} \mathbbm{1}_{\{e^{\frac{s}{\epsilon}}(\frac{v}{2})^{\alpha}+C_{0}^{-1}\epsilon^{-\frac{1}{m-1}}\geq z\geq C_{0}^{-1}\epsilon^{-\frac{1}{m-1}}\}}\der z\nonumber\\
   &\leq \frac{2 v^{\gamma+1-\alpha}\epsilon^{2} }{(1+v^{b})^{2}(v^{\alpha}-y)}\int_{e^{\frac{t}{\epsilon}}((\frac{v}{2})^{\alpha}-y), z\leq \frac{e^{\frac{t}{\epsilon}}(v^{\alpha}-y)}{2}}^{e^{\frac{t}{\epsilon}}(v^{\alpha}-y)-(v^{\alpha}-(\frac{v}{2})^{\alpha})}\frac{1}{|z|} \mathbbm{1}_{\{e^{\frac{s}{\epsilon}}(\frac{v}{2})^{\alpha}+C_{0}^{-1}\epsilon^{-\frac{1}{m-1}}\geq z\geq C_{0}^{-1}\epsilon^{-\frac{1}{m-1}}\}}\der z\nonumber\\
      &\leq \frac{2 v^{\gamma+1-\alpha}\epsilon^{2} }{(1+v^{b})^{2}(v^{\alpha}-y)}\int_{1}^{e^{\frac{t}{\epsilon}}(v^{\alpha}-y)-(v^{\alpha}-(\frac{v}{2})^{\alpha})}\frac{1}{z} \der z\leq \frac{C(\alpha) v^{\gamma+1-\alpha}\epsilon^{2} }{(1+v^{b})^{2}(v^{\alpha}-y)}\bigg[\frac{t}{\epsilon}+|\ln{v^{\alpha}}|\bigg]\label{estimate L part one before},
      \end{align}
      where in \eqref{estimate L part one before} we used that we are in the region $e^{\frac{s}{\epsilon}}v^{\alpha}+C_{0}^{-1}\epsilon^{-\frac{1}{m-1}}\geq e^{\frac{t}{\epsilon}}(v^{\alpha}-y)\geq e^{\frac{s}{\epsilon}}(v^{\alpha}-(\frac{v}{2})^{\alpha})+C_{0}^{-1}\epsilon^{-\frac{1}{m-1}}$ from \eqref{region of integration} and that $C_{0}^{-1}\epsilon^{-\frac{1}{m-1}}e^{-\frac{t}{\epsilon}}\leq v^{\alpha}\big[1-3^{-\alpha}\big]$. We then have that
      \begin{align}
   \frac{ v^{\gamma+1-\alpha}\epsilon^{2} }{(1+v^{b})^{2}(v^{\alpha}-y)}\bigg[\frac{t}{\epsilon}+|\ln{v^{\alpha}}|\bigg]   & \leq \frac{ v^{\gamma+1-\alpha}t \epsilon }{(1+v^{b})^{2}(v^{\alpha}-y)}+\frac{ v^{\gamma+1} \epsilon^{2} }{(1+v^{b})^{2}(v^{\alpha}-y)}; \qquad v\geq 1,\label{estimate L part one}\\
     \frac{ v^{\gamma+1-\alpha}\epsilon^{2} }{(1+v^{b})^{2}(v^{\alpha}-y)}\bigg[\frac{t}{\epsilon}+|\ln{v^{\alpha}}|\bigg]   & \leq \frac{ v^{\gamma+1-\alpha}t \epsilon }{(1+v^{b})^{2}(v^{\alpha}-y)}+\frac{ v^{\gamma-\alpha} \epsilon^{2} }{(1+v^{b})^{2}(v^{\alpha}-y)}; \qquad v\leq 1.
\end{align}
 We then use that $b\geq \gamma+1$ in \eqref{estimate L part one}.

On the other hand, it holds that
\begin{align}\label{estimate L part two}
 &  \frac{ v^{\gamma+1-\alpha}\epsilon^{2} }{(1+v^{b})^{2}}\int_{e^{\frac{t}{\epsilon}}((\frac{v}{2})^{\alpha}-y), z\geq  \frac{e^{\frac{t}{\epsilon}}(v^{\alpha}-y)}{2}}^{e^{\frac{t}{\epsilon}}(v^{\alpha}-y)-(v^{\alpha}-(\frac{v}{2})^{\alpha})}\frac{e^{\frac{t}{\epsilon}}}{e^{\frac{t}{\epsilon}}(v^{\alpha}-y)-z}\frac{1}{|z|} \mathbbm{1}_{\{e^{\frac{s}{\epsilon}}(\frac{v}{2})^{\alpha}+C_{0}^{-1}\epsilon^{-\frac{1}{m-1}}\geq z\geq C_{0}^{-1}\epsilon^{-\frac{1}{m-1}}\}}\der z\nonumber\\
   &\leq \frac{2 v^{\gamma+1-\alpha}\epsilon^{2} }{(1+v^{b})^{2}(v^{\alpha}-y)}\int_{e^{\frac{t}{\epsilon}}((\frac{v}{2})^{\alpha}-y), z\geq \frac{e^{\frac{t}{\epsilon}}(v^{\alpha}-y)}{2}}^{e^{\frac{t}{\epsilon}}(v^{\alpha}-y)-(v^{\alpha}-(\frac{v}{2})^{\alpha})}\frac{1}{e^{\frac{t}{\epsilon}}(v^{\alpha}-y)-z}\mathbbm{1}_{\{e^{\frac{s}{\epsilon}}(\frac{v}{2})^{\alpha}+C_{0}^{-1}\epsilon^{-\frac{1}{m-1}}\geq z\geq C_{0}^{-1}\epsilon^{-\frac{1}{m-1}}\}}\der z\nonumber\\
   &\leq \frac{2 v^{\gamma+1-\alpha}\epsilon^{2} }{(1+v^{b})^{2}(v^{\alpha}-y)}\int_{1}^{e^{\frac{t}{\epsilon}}(v^{\alpha}-y)-(v^{\alpha}-(\frac{v}{2})^{\alpha})}\frac{1}{e^{\frac{t}{\epsilon}}(v^{\alpha}-y)-z}\der z\nonumber\\
    &\leq \frac{2 v^{\gamma+1-\alpha}\epsilon^{2} }{(1+v^{b})^{2}(v^{\alpha}-y)}\int_{v^{\alpha}-(\frac{v}{2})^{\alpha}}^{e^{\frac{t}{\epsilon}}(v^{\alpha}-y)-1}\frac{1}{z}\der z\leq \frac{C(\alpha) v^{\gamma+1-\alpha}\epsilon^{2} }{(1+v^{b})^{2}(v^{\alpha}-y)}\bigg[\frac{t}{\epsilon}+|\ln{v^{\alpha}}|\bigg]
\end{align}
which concludes our proof.

 We now estimate the region where $\{e^{\frac{s}{\epsilon}}|y-(\frac{v}{2})^{\alpha}|\leq C_{0}^{-1}\epsilon^{-\frac{1}{m-1}}\}$. In this region, it holds that
 \begin{align*}
     \frac{\epsilon^{\frac{m}{m-1}}}{2}\leq \frac{C_{0}^{-m}}{1+e^{\frac{sm}{\epsilon}}|y-(\frac{v}{2})^{\alpha}|^{m}}.
 \end{align*}
 Thus
\begin{align*}
  \frac{e^{\frac{s}{\epsilon}}}{1+e^{\frac{ms}{\epsilon}}|y-w^{\alpha}|^{m}}&\frac{\epsilon\mathbbm{1}_{\{y\leq (v-w)^{\alpha}-C_{0}^{-1}\epsilon^{-\frac{1}{m-1}}e^{-\frac{s}{\epsilon}}\}}}{|y-(v-w)^{\alpha}|}\\
   &\leq  \frac{2\epsilon \mathbbm{1}_{\{y\leq (v-w)^{\alpha}-C_{0}^{-1}\epsilon^{-\frac{1}{m-1}}e^{-\frac{s}{\epsilon}}\}} }{|y-(v-w)^{\alpha}|}  \frac{e^{\frac{s}{\epsilon}}}{1+e^{\frac{ms}{\epsilon}}|y-w^{\alpha}|^{m}}\frac{C_{0}^{-m}\epsilon^{-\frac{m}{m-1}}}{1+e^{\frac{sm}{\epsilon}}|y-(\frac{v}{2})^{\alpha}|^{m}}\\
   &\leq  \frac{2 C_{0}^{-(m-1)}e^{\frac{s}{\epsilon}}}{1+e^{\frac{ms}{\epsilon}}|y-w^{\alpha}|^{m}}\frac{e^{\frac{s}{\epsilon}}}{1+e^{\frac{sm}{\epsilon}}|y-(\frac{v}{2})^{\alpha}|^{m}}.
\end{align*}
Denote by
      \begin{align*}
      I_{2}(y,v,s):=  v^{\gamma}\int_{\delta v}^{\frac{v}{2}} T_{2}(y,v-w,s)T_{1}(y,w,s)\der w\mathbbm{1}_{\{e^{\frac{s}{\epsilon}}|y-(\frac{v}{2})^{\alpha}|\leq C_{0}^{-1}\epsilon^{-\frac{1}{m-1}}\}}.
      \end{align*}
Thus, in this region it holds that
\begin{align*}
  I_{2}(y,v,s)&\leq \frac{C(\delta,b)C_{0}^{m-1}A^{3} C_{0}^{-(m-1)} v^{\gamma}}{(1+v^{b})^{2}} \frac{e^{\frac{s}{\epsilon}}}{1+e^{\frac{sm}{\epsilon}}|y-(\frac{v}{2})^{\alpha}|^{m}}\int_{\delta v}^{\frac{v}{2}} \frac{e^{\frac{s}{\epsilon}}}{1+e^{\frac{ms}{\epsilon}}|y-w^{\alpha}|^{m}}\der w\\
& \leq \frac{C(\delta,\alpha,b)C_{0}^{m-1}A^{3} e^{\frac{s}{\epsilon}}}{1+e^{\frac{sm}{\epsilon}}|y-(\frac{v}{2})^{\alpha}|^{m}} \frac{ C_{0}^{-(m-1)} v^{\gamma+1-\alpha}}{(1+v^{b})^{2}},
\end{align*}
 where in the last line we used Lemma \ref{ref lemma dirac v-w v}.

Thus, in this region, we have that 
\begin{align*}
\int_{0}^{t} & S_{\epsilon}(t-s)\big[ I_{2}(y,v,s)\big]\der s\\
&\leq \frac{C(\delta,\alpha,b)C_{0}^{m-1}A^{3}  C_{0}^{-(m-1)}v^{\gamma+1-\alpha} }{(1+v^{b})^{2}}\int_{0}^{t}\frac{e^{\frac{t}{\epsilon}}\mathbbm{1}_{\{|e^{\frac{t}{\epsilon}}(y-v^{\alpha})+e^{\frac{s}{\epsilon}}(v^{\alpha}-(\frac{v}{2})^{\alpha})|\leq C_{0}^{-1}\epsilon^{-\frac{1}{m-1}} \}}}{1+|e^{\frac{t}{\epsilon}}(y-v^{\alpha})+e^{\frac{s}{\epsilon}}(v^{\alpha}-(\frac{v}{2})^{\alpha})|^{m}} \der s.
      \end{align*}
We then use \eqref{needed later for t2 t1}, $m>2$, in order to deduce that
\begin{align*}
   \int_{0}^{t}S_{\epsilon}(t-s)\big[ I_{2}(y,v,s)\big]\der s\leq   \frac{ C(\delta,\alpha,b)C_{0}^{m-1}A^{3}  C_{0}^{-(m-1)} \epsilon}{(1+v^{b})|y-v^{\alpha}|}\min\{1,e^{\frac{t}{\epsilon}}|y-v^{\alpha}|\}.
\end{align*}
 By Remark \ref{remark nice region} we further obtain that 
 \begin{align}\label{complicated choice c0}
   \int_{0}^{t}S_{\epsilon}(t-s)\big[ I_{2}(y,v,s)\big]\der s\leq   \frac{ C(\delta,\alpha,b)C_{0}^{m-1}A^{3}  C_{0}^{-(m-1)} \epsilon}{(1+v^{b})|y-v^{\alpha}|}\mathbbm{1}_{\{y\leq v^{\alpha}-C_{0}^{-1}\epsilon^{-\frac{1}{m-1}}e^{-\frac{t}{\epsilon}}\}}.
\end{align}
\begin{rmk}
Notice that since $C_{0}^{m-1}$ is of order $\mathcal{O}(A)$ by \eqref{defc0}, this implies in particular that
\begin{align*}
 \int_{0}^{t}S_{\epsilon}(t-s)\big[ I_{2}(y,v,s)\big]\der s &\leq   \frac{ C(\delta,\alpha,b)A^{3}  \epsilon}{(1+v^{b})|y-v^{\alpha}|}\mathbbm{1}_{\{y\leq v^{\alpha}-C_{0}^{-1}\epsilon^{-\frac{1}{m-1}}e^{-\frac{t}{\epsilon}}\}}\\
 &=  \frac{ C(\delta,\alpha,\gamma,b,m)C_{0}^{m-1}A^{2}  \epsilon}{(1+v^{b})|y-v^{\alpha}|}\mathbbm{1}_{\{y\leq v^{\alpha}-C_{0}^{-1}\epsilon^{-\frac{1}{m-1}}e^{-\frac{t}{\epsilon}}\}}.
\end{align*}
The reason we keep the form \eqref{complicated choice c0} is because this estimate is part of the more general estimate in \eqref{estimate for t2 t1}.
\end{rmk}

We now estimate the region where $\{(\frac{v}{2})^{\alpha}+C_{0}^{-1}e^{-\frac{s}{\epsilon}}\epsilon^{-\frac{1}{m-1}}\leq y\leq v^{\alpha}+C_{0}^{-1}e^{-\frac{s}{\epsilon}}\epsilon^{-\frac{1}{m-1}}\}$. In this region, it holds that $|y-w^{\alpha}|\geq |y-(\frac{v}{2})^{\alpha}|$. Then 
\begin{align*}
  \frac{e^{\frac{s}{\epsilon}}}{1+e^{\frac{ms}{\epsilon}}|y-w^{\alpha}|^{m}}&\frac{\epsilon\mathbbm{1}_{\{y\leq (v-w)^{\alpha}-C_{0}^{-1}\epsilon^{-\frac{1}{m-1}}e^{-\frac{s}{\epsilon}}\}}}{|y-(v-w)^{\alpha}|}\\
   &\leq  \frac{\epsilon}{|y-(v-w)^{\alpha}|}  \frac{e^{\frac{s}{\epsilon}}}{1+e^{\frac{ms}{\epsilon}}|y-(\frac{v}{2})^{\alpha}|^{m}}\mathbbm{1}_{\{y\leq (v-w)^{\alpha}-C_{0}^{-1}\epsilon^{-\frac{1}{m-1}}e^{-\frac{s}{\epsilon}}\}}.
\end{align*}
We have that
\begin{align}\label{log growth}
    \int_{0}^{\frac{v}{2}} \frac{\der w}{|y-(v-w)^{\alpha}|} \mathbbm{1}_{\{y\leq (v-w)^{\alpha}-C_{0}^{-1}\epsilon^{-\frac{1}{m-1}}e^{-\frac{s}{\epsilon}}\}}&\leq  C(\alpha)v^{1-\alpha} \int_{y-v^{\alpha}}^{y-(\frac{v}{2})^{\alpha}} \frac{\der z}{|z|} \mathbbm{1}_{\{z\leq -C_{0}^{-1}\epsilon^{-\frac{1}{m-1}}e^{-\frac{s}{\epsilon}}\}}\nonumber\\
    &\leq C(\alpha)v^{1-\alpha} \int_{(\frac{v}{2})^{\alpha}-y}^{v^{\alpha}-y}\frac{\der z}{z} \mathbbm{1}_{\{z\geq C_{0}^{-1}\epsilon^{-\frac{1}{m-1}}e^{-\frac{s}{\epsilon}}\}}\nonumber\\
    &\leq C(\alpha)v^{1-\alpha}\bigg[\frac{t}{\epsilon}+|\ln{v^{\alpha}}|\bigg]
\end{align}
since $0\leq y\leq v^{\alpha}$. We can thus conclude that in this region, we have that 
\begin{align*}
      v^{\gamma}\int_{0}^{t}& S_{\epsilon}(t-s)\bigg[\int_{\delta v}^{\frac{v}{2}} T_{2}(y,v-w,s)T_{1}(y,w,s)\der w\mathbbm{1}_{\{(\frac{v}{2})^{\alpha}+C_{0}^{-1}e^{-\frac{s}{\epsilon}}\epsilon^{-\frac{1}{m-1}}\leq y\leq v^{\alpha}+C_{0}^{-1}e^{-\frac{s}{\epsilon}}\epsilon^{-\frac{1}{m-1}}\}}\bigg]\der s\\
      \leq &\frac{C(\delta,\alpha,b)A^{3}C_{0}^{m-1} [v^{\gamma+1-\alpha}t+v^{\gamma+1}\epsilon\mathbbm{1}_{\{v\geq 1\}} +v^{\gamma-\alpha}\epsilon\mathbbm{1}_{\{v\leq 1\}} ]}{(1+v^{b})^{2}}\\
      &\times \int_{0}^{t}\frac{e^{\frac{t}{\epsilon}}}{1+|e^{\frac{t}{\epsilon}}(y-v^{\alpha})+e^{\frac{s}{\epsilon}}(v^{\alpha}-(\frac{v}{2})^{\alpha})|^{m}} \mathbbm{1}_{\{e^{\frac{t}{\epsilon}}(y-v^{\alpha})+e^{\frac{s}{\epsilon}}(v^{\alpha}-(\frac{v}{2})^{\alpha})\geq C_{0}^{-1}\epsilon^{-\frac{1}{m-1}} \}}\der s.
      \end{align*}
      We can now conclude as before using \eqref{needed later for t2 t1}.
      This concludes this case.
      We now bound \begin{align*}
     v^{\gamma}\int_{0}^{t}S_{\epsilon}(t-s)\int_{0}^{\delta v}  T_{2}(y,v-w,s)T_{1}(y,w,s)\der w\der s.
  \end{align*}
  Notice that we only have to make sure that we have sufficiently fast decay in the $v$ variable. We then follow the computations for the region $w\in[\delta v, \frac{v}{2}]$ in order to conclude. More precisely, it suffices to show that
  \begin{align*}
      I_{3}(y,v,s):=&v^{\gamma}\int_{0}^{\delta v}  T_{2}(y,v-w,s)T_{1}(y,w,s)\der w \\
      &\leq \frac{C(\delta,\alpha,\gamma,b)A^{3}C_{0}^{m-1} v^{\gamma}}{(1+v^{b})(1+v^{\gamma+1})}\int_{0}^{\delta v}\frac{e^{\frac{s}{\epsilon}}\mathbbm{1}_{\{y\geq w^{\alpha}-C_{0}^{-1}\epsilon^{-\frac{1}{m-1}}e^{-\frac{s}{\epsilon}}\}}}{1+e^{\frac{(m-\frac{\gamma+1}{\alpha})s}{\epsilon}}|y-w^{\alpha}|^{m-\frac{\gamma+1}{\alpha}}}\frac{\epsilon\mathbbm{1}_{\{y\leq (v-w)^{\alpha}-C_{0}^{-1}\epsilon^{-\frac{1}{m-1}}e^{-\frac{s}{\epsilon}}\}}}{|y-(v-w)^{\alpha}|}\der w,
  \end{align*}
  for a sufficiently small $\delta<\frac{1}{2}$.
  We have that
  \begin{align*}
         I_{3}(y,v,s)&\leq \frac{C(b)A^{3}C_{0}^{m-1} v^{\gamma}}{1+v^{b}}\int_{0}^{\delta v}\frac{1}{1+w^{b}}\frac{e^{\frac{s}{\epsilon}}\mathbbm{1}_{\{y\geq w^{\alpha}-C_{0}^{-1}\epsilon^{-\frac{1}{m-1}}e^{-\frac{s}{\epsilon}}\}}}{1+e^{\frac{ms}{\epsilon}}|y-w^{\alpha}|^{m}}\frac{\epsilon\mathbbm{1}_{\{(\frac{v-w}{3})^{\alpha}\leq y\leq (v-w)^{\alpha}-C_{0}^{-1}\epsilon^{-\frac{1}{m-1}}e^{-\frac{s}{\epsilon}}\}}}{|y-(v-w)^{\alpha}|}\der w\\
        &\leq \frac{C(b)A^{3}C_{0}^{m-1} v^{\gamma}}{1+v^{b}}\int_{0}^{\delta v}\frac{e^{\frac{s}{\epsilon}}\mathbbm{1}_{\{y\geq w^{\alpha}-C_{0}^{-1}\epsilon^{-\frac{1}{m-1}}e^{-\frac{s}{\epsilon}}\}}}{1+e^{\frac{ms}{\epsilon}}|y-w^{\alpha}|^{m}}\frac{\epsilon\mathbbm{1}_{\{(\frac{v-w}{3})^{\alpha}\leq y\leq (v-w)^{\alpha}-C_{0}^{-1}\epsilon^{-\frac{1}{m-1}}e^{-\frac{s}{\epsilon}}\}}}{|y-(v-w)^{\alpha}|}\der  w.
  \end{align*}
We have that $(\frac{v-w}{3})^{\alpha}\leq y $, which implies that for $\delta$ sufficiently small it holds that $y-w^{\alpha}\geq (\frac{v-w}{3})^{\alpha}-w^{\alpha}\geq v^{\alpha}\bigg[\big(\frac{1-\delta}{3}\big)^{\alpha}-\delta^{\alpha}\bigg]\geq C(\delta,\alpha) v^{\alpha}$. We thus have that
\begin{align*}
      I_{3}(y,v,s) \leq &\frac{C(\delta,\alpha,\gamma,b)A^{3}C_{0}^{m-1} v^{\gamma}}{(1+v^{b})(1+v^{\gamma+1})}\int_{0}^{\delta v}\frac{e^{\frac{s}{\epsilon}}\mathbbm{1}_{\{y\geq w^{\alpha}-C_{0}^{-1}\epsilon^{-\frac{1}{m-1}}e^{-\frac{s}{\epsilon}}\}}}{1+e^{\frac{(m-\frac{\gamma+1}{\alpha})s}{\epsilon}}|y-w^{\alpha}|^{m-\frac{\gamma+1}{\alpha}}}\\
      &\times \frac{\epsilon\mathbbm{1}_{\{(\frac{v-w}{3})^{\alpha}\leq y\leq (v-w)^{\alpha}-C_{0}^{-1}\epsilon^{-\frac{1}{m-1}}e^{-\frac{s}{\epsilon}}\}}}{|y-(v-w)^{\alpha}|}\der  w
\end{align*}
and we can proceed as in the region $w\in [\delta v, \frac{v}{2}]$ since $m>\frac{\gamma+1}{\alpha}+2$.

\begin{enumerate}
    \item[ii)] We continue by bounding from above the term 
\end{enumerate}
\begin{align}
    v^{\gamma}\int_{0}^{t}S_{\epsilon}(t-s)\int_{0}^{\frac{v}{2}} T_{3}(y,v-w,s)T_{2}(y,w,s)\der w\der s.
\end{align}
We will show that
  \begin{align*}
    I_{4}(y,v,s):= & v^{\gamma}\int_{0}^{\frac{v}{2}}  T_{3}(y,v-w,s)T_{2}(y,w,s)\der w \\
      &\leq \frac{C(\alpha,\gamma,b)M_{2}A^{3}C_{0}^{m-1} v^{\gamma}}{(1+v^{b})(1+v^{\gamma+1})}\int_{0}^{\frac{v}{2}}\frac{e^{\frac{s}{\epsilon}}\mathbbm{1}_{\{y\leq (\frac{v-w}{3})^{\alpha}\}}}{1+e^{\frac{(m-\frac{\gamma+1}{\alpha})s}{\epsilon}}|y-(v-w)^{\alpha}|^{m-\frac{\gamma+1}{\alpha}}}\\
      &\times \frac{\epsilon\mathbbm{1}_{\{(\frac{w}{3})^{\alpha}\leq y\leq w^{\alpha}-C_{0}^{-1}\epsilon^{-\frac{1}{m-1}}e^{-\frac{s}{\epsilon}}\}}}{|y-w^{\alpha}|}\der w.
  \end{align*}
  Since $0\leq y\leq (\frac{v-w}{3})^{\alpha}$, we have that $|y-(v-w)^{\alpha}|\geq C(\alpha)(v-w)^{\alpha}\geq C(\alpha)v^{\alpha}.$ Thus
   \begin{align*}
     I_{4}(y,v,s)
      \leq &\frac{2M_{2}A^{3}C_{0}^{m-1} v^{\gamma}}{1+v^{b}}\int_{0}^{\frac{v}{2}}\frac{e^{\frac{s}{\epsilon}}\mathbbm{1}_{\{y\leq (\frac{v-w}{3})^{\alpha}\}}}{1+e^{\frac{m s}{\epsilon}}|y-(v-w)^{\alpha}|^{m}}\frac{\epsilon\mathbbm{1}_{\{(\frac{w}{3})^{\alpha}\leq y\leq w^{\alpha}-C_{0}^{-1}\epsilon^{-\frac{1}{m-1}}e^{-\frac{s}{\epsilon}}\}}}{|y-w^{\alpha}|}\der w\\ \leq &\frac{C(\alpha,\gamma,b)M_{2}A^{3}C_{0}^{m-1} v^{\gamma}}{(1+v^{b})(1+v^{\gamma+1})}\int_{0}^{\frac{v}{2}}\frac{e^{\frac{s}{\epsilon}}\mathbbm{1}_{\{y\leq (\frac{v-w}{3})^{\alpha}\}}}{1+e^{\frac{(m-\frac{\gamma+1}{\alpha})s}{\epsilon}}|y-(v-w)^{\alpha}|^{m-\frac{\gamma+1}{\alpha}}}\\
      &\times \frac{\epsilon\mathbbm{1}_{\{(\frac{w}{3})^{\alpha}\leq y\leq w^{\alpha}-C_{0}^{-1}\epsilon^{-\frac{1}{m-1}}e^{-\frac{s}{\epsilon}}\}}}{|y-w^{\alpha}|}\der w.
  \end{align*}
  We further have that
     \begin{align*}
     I_{4}(y,v,s)
      \leq & \frac{C(\alpha,\gamma,b)M_{2}A^{3}C_{0}^{m-1} v^{\gamma}}{(1+v^{b})(1+v^{\gamma+1})}\frac{e^{\frac{s}{\epsilon}}}{1+e^{\frac{(m-\frac{\gamma+1}{\alpha})s}{\epsilon}}|y-(\frac{v}{2})^{\alpha}|^{m-\frac{\gamma+1}{\alpha}}}\\
      &\times \int_{0}^{\frac{v}{2}}\frac{\epsilon\mathbbm{1}_{\{ y\leq w^{\alpha}-C_{0}^{-1}\epsilon^{-\frac{1}{m-1}}e^{-\frac{s}{\epsilon}}\}}}{|y-w^{\alpha}|}\der w\mathbbm{1}_{\{0\leq y\leq (\frac{v}{2})^{\alpha}-C_{0}^{-1}\epsilon^{-\frac{1}{m-1}}e^{-\frac{s}{\epsilon}}\}}.
\end{align*}
We bound $\int_{0}^{\frac{v}{2}}\frac{\mathbbm{1}_{\{ y\leq w^{\alpha}-C_{0}^{-1}\epsilon^{-\frac{1}{m-1}}e^{-\frac{s}{\epsilon}}\}}}{|y-w^{\alpha}|}\der w$ as in  \eqref{log growth}. Thus \begin{align}
\int_{0}^{\frac{v}{2}}\frac{\mathbbm{1}_{\{y\leq w^{\alpha}-C_{0}^{-1}\epsilon^{-\frac{1}{m-1}}e^{-\frac{s}{\epsilon}}\}}}{|y-w^{\alpha}|}\der w\leq C(\alpha)v^{1-\alpha}\bigg[\frac{t}{\epsilon}+|\ln{v^{\alpha}}|\bigg].
\end{align}
It follows that 
  \begin{align*}
     I_{4}(y,v,s)
      \leq & \frac{C(\alpha,\gamma,b)M_{2}A^{3}C_{0}^{m-1}\max\{v^{\gamma+1},v^{\gamma-\alpha}\}(\epsilon+t)}{(1+v^{b})(1+v^{\gamma+1})}\\
      &\times\frac{e^{\frac{s}{\epsilon}}}{1+e^{\frac{(m-\frac{\gamma+1}{\alpha})s}{\epsilon}}|y-(\frac{v}{2})^{\alpha}|^{m-\frac{\gamma+1}{\alpha}}}\mathbbm{1}_{\{0\leq y\leq (\frac{v}{2})^{\alpha}-C_{0}^{-1}\epsilon^{-\frac{1}{m-1}}e^{-\frac{s}{\epsilon}}\}}.
\end{align*}
We then use \eqref{needed later for t2 t1} since $m>\frac{\gamma+1}{\alpha}+2$ in order to deduce that
\begin{align*}
    \int_{0}^{t}S_{\epsilon}(t-s)\big[I_{4}(y,v,s)\big]\der s \leq  \frac{ C(\alpha,\gamma,b)M_{2}A^{3}C_{0}^{m-1}(\epsilon+t) \epsilon}{(1+v^{b})|y-v^{\alpha}|}\min\{1,e^{\frac{t}{\epsilon}}|y-v^{\alpha}|\}.
\end{align*}
\textbf{Proof of \eqref{estimate for t2 t2}.} We now bound from above the term 

\begin{align}
    v^{\gamma}\int_{0}^{t}S_{\epsilon}(t-s)\int_{0}^{\frac{v}{2}} T_{2}(y,v-w,s)T_{2}(y,w,s)\der w\der s.
\end{align}
We notice that $(\frac{v-w}{3})^{\alpha}\leq y\leq  w^{\alpha}-C_{0}^{-1}\epsilon^{-\frac{1}{m-1}}e^{-\frac{s}{\epsilon}}\leq w^{\alpha}$, which implies that $w\geq \frac{v}{4}$. In other words, it suffices to bound from above the following term
\begin{align}\label{semigroup t2 t2}
    v^{\gamma}\int_{0}^{t}S_{\epsilon}(t-s)\int_{\frac{v}{4}}^{\frac{v}{2}} T_{2}(y,v-w,s)T_{2}(y,w,s)\der w\der s.
\end{align}
We start by estimating
\begin{align*}
   I_{5}(y,v,s):=& v^{\gamma}\int_{\frac{v}{4}}^{\frac{v}{2}} T_{2}(y,v-w,s)T_{2}(y,w,s)\der w\\
    &   \leq 4A^{4}C_{0}^{2(m-1)}v^{\gamma}\int_{\frac{v}{4}}^{\frac{v}{2}}\frac{1}{(1+w^{b})(1+(v-w)^{b})}\frac{\epsilon\mathbbm{1}_{\{y\leq w^{\alpha}-C_{0}^{-1}\epsilon^{-\frac{1}{m-1}}e^{-\frac{s}{\epsilon}}\}}}{|y-w^{\alpha}|}\\
    &\times\frac{\epsilon\mathbbm{1}_{\{y\leq (v-w)^{\alpha}-C_{0}^{-1}\epsilon^{-\frac{1}{m-1}}e^{-\frac{s}{\epsilon}}\}}}{|y-(v-w)^{\alpha}|}\der w\\
& \leq  \frac{C(b)A^{4}C_{0}^{2(m-1)}v^{\gamma}}{(1+v^{b})^{2}}\int_{\frac{v}{4}}^{\frac{v}{2}}\frac{\epsilon\mathbbm{1}_{\{y\leq w^{\alpha}-C_{0}^{-1}\epsilon^{-\frac{1}{m-1}}e^{-\frac{s}{\epsilon}}\}}}{|y-w^{\alpha}|}\frac{\epsilon\mathbbm{1}_{\{y\leq (v-w)^{\alpha}-C_{0}^{-1}\epsilon^{-\frac{1}{m-1}}e^{-\frac{s}{\epsilon}}\}}}{|y-(v-w)^{\alpha}|}\der w\\
& \leq  \frac{C(b)A^{4}C_{0}^{2(m-1)}v^{\gamma}}{(1+v^{b})^{2}}\frac{1}{|y-(\frac{v}{2})^{\alpha}|}\int_{\frac{v}{4}}^{\frac{v}{2}}\frac{\epsilon^{2}\mathbbm{1}_{\{y\leq w^{\alpha}-C_{0}^{-1}\epsilon^{-\frac{1}{m-1}}e^{-\frac{s}{\epsilon}}\}}}{|y-w^{\alpha}|}\der w\mathbbm{1}_{\{y\leq (\frac{v}{2})^{\alpha}-C_{0}^{-1}\epsilon^{-\frac{1}{m-1}}e^{-\frac{s}{\epsilon}}\}}.
\end{align*}
Following the computations in \eqref{log growth}, it holds that
\begin{align}\label{y-w integral}
\int_{0}^{\frac{v}{2}}\frac{\mathbbm{1}_{\{y\leq w^{\alpha}-C_{0}^{-1}\epsilon^{-\frac{1}{m-1}}e^{-\frac{s}{\epsilon}}\}}}{|y-w^{\alpha}|}\der w\leq C(\alpha)v^{1-\alpha}\bigg[\frac{t}{\epsilon}+|\ln{v^{\alpha}}|\bigg].
\end{align}
Thus, we have that
\begin{align*}
       I_{5}(y,v,s)&\leq  \frac{C(\alpha,b)A^{4}C_{0}^{2(m-1)}\epsilon  [v^{\gamma+1-\alpha}t+v^{\gamma+\frac{1}{2}}\epsilon\mathbbm{1}_{\{v\geq 1\}} +v^{\gamma-\frac{\alpha}{2}}\epsilon\mathbbm{1}_{\{v\leq 1\}} ]}{(1+v^{b})^{2}}\frac{1}{|y-(\frac{v}{2})^{\alpha}|}\mathbbm{1}_{\{y\leq (\frac{v}{2})^{\alpha}-\epsilon^{-\frac{1}{m-1}}e^{-\frac{s}{\epsilon}}\}}\\
       &\leq  \frac{C(\alpha,b) \max\{v^{\gamma+\frac{1}{2}},v^{\gamma-\frac{\alpha}{2}}\}(t+\epsilon)}{(1+v^{b})^{2}}\frac{A^{4}C_{0}^{m-1}\epsilon}{|y-(\frac{v}{2})^{\alpha}|}\mathbbm{1}_{\{y\leq (\frac{v}{2})^{\alpha}-\epsilon^{-\frac{1}{m-1}}e^{-\frac{s}{\epsilon}}\}}
\end{align*}
and then we can bound  $ v^{\gamma}\int_{0}^{t}S_{\epsilon}(t-s)\big[ I_{5}(y,v,s)\big]\der s$ in the same manner as $ L(y,v,t),$ where $L(y,v,t)$ is as in \eqref{definition of L}. We then conclude that \eqref{estimate for t2 t2} holds using estimates similar to  \eqref{estimate L part one} and \eqref{estimate L part two}.

This concludes our proof.
\end{proof}

\section{Region $y\leq (\frac{v}{3})^{\alpha}$}\label{section four}
\subsection{Moment estimates}
In this section we prove that 
 \begin{prop}\label{main statement region below}
   There exists a sufficiently small $T\leq 1$ and a sufficiently small $\epsilon_{1}\in(0,1)$, such that for all $\epsilon\leq\epsilon_{1}$ and for all $n\in\mathbb{N}$ the following statement holds. Let $b\geq \overline{b}(\gamma,\alpha)$, $m\geq \frac{b}{\alpha}+1$ in \eqref{def t1}-\eqref{def t3}, where $\overline{b}(\gamma,\alpha)$ is as in \eqref{definition overline b}.  Let $H_{n}$ be as in \eqref{iteration supersol}. If 
     \begin{align*}
     H_{n}(y,v,t)\leq & \textup{ } T_{1}(y,v,t)+ T_{2}(y,v,t)+ T_{3}(y,v,t),
    \end{align*}
 where $T_{1},T_{2},T_{3}$ are as in \eqref{def t1}-\eqref{def t3}, then it holds that  $H_{n+1}(y,v,t)\leq T_{3}(y,v,t),$ for all $t\in[0,T]$ and $y\leq (\frac{v}{3})^{\alpha}$, $v\in(0,\infty)$.
 \end{prop}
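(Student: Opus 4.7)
The plan is to realize rigorously the approximation scheme sketched in Subsection \ref{approx formal subsection}: in the region $y\leq (\frac{v}{3})^{\alpha}$ the coagulation operator acts essentially as a transport in the $v$-variable with coefficient proportional to the first moment of $H_{n}$, and this turns the problem into an estimate for a first-order linear PDE whose characteristics can be controlled. Accordingly, I would split the argument into three blocks: (i) a moment estimate for $H_{n}$; (ii) the construction of a supersolution $G_{\epsilon}$ of the form \eqref{supersol formal example}; (iii) a characteristic analysis showing $G_{\epsilon}\leq T_{3}$.

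The first step is the moment bound
\begin{align*}
M_{1}(y,t):=\int_{(0,\infty)}wH_{n}(y,w,t)\,\der w\leq \frac{L}{1+|y|^{\overline{d}}},
\end{align*}
with $\overline{d}$ as in \eqref{def overline d} and $L=L(A,M_{2})$. Using the inductive hypothesis $H_{n}\leq T_{1}+T_{2}+T_{3}$, one integrates each $T_{i}$ against $w$. The terms $T_{1}$ and $T_{3}$ carry the factor $(1+v^{b})^{-1}$ and are supported on the respective regions determined by $\chi_{i}$; for $y$ large, any point in $\supp T_{1}\cup \supp T_{3}$ forces $v\gtrsim |y|^{1/\alpha}$, and then the weight $w(1+w^{b})^{-1}$ gives decay $|y|^{(2-b)/\alpha}\leq |y|^{-\overline{d}}$ since $b\geq \overline{b}(\gamma,\alpha)=\alpha\overline{d}+2$. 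The contribution of $T_{2}$ is handled using the explicit $\epsilon/(v^{\alpha}-y)$ bound together with the cut-off $\chi_{2}$, which again forces $v\gtrsim |y|^{1/\alpha}$. Parity of $\overline{d}$ (see Remark \ref{we need function to be even minimum recovered}) is what allows us to write the bound with $1+|y|^{\overline{d}}$ without worrying about the sign of $y$.

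The second step is to exploit \eqref{standard condition} to produce a supersolution. Writing the linear coagulation operator in \eqref{iteration supersol} and splitting the $w$-integral at $v/2$, the inequality $K(v-w,w)\leq K(v,w)$ on $[0,v/2]$ together with the upper bound $K(v,w)\leq K_{0}(v^{\gamma}+w^{\gamma})\lesssim v^{\gamma}$ (valid for $w\leq v/2$) gives, exactly as in \eqref{this is where the assumption is needed}–\eqref{approximate model with derivative},
\begin{align*}
\mathbb{K}[H_{n+1},H_{n}](y,v,t)\geq -Lv^{\gamma}\,\partial_{v}H_{n+1}(y,v,t)\int_{(0,\frac{v}{2})}wH_{n}(y,w,t)\,\der w
\end{align*}
modulo terms that vanish outside the region of interest; combined with the moment estimate this yields that any $G_{\epsilon}$ satisfying
\begin{align*}
\partial_{t}G_{\epsilon}+\frac{1}{\epsilon}\partial_{y}[(v^{\alpha}-y)G_{\epsilon}]+\frac{L\xi(v)v^{\gamma}}{1+|y|^{\overline{d}}}\partial_{v}G_{\epsilon}=0,\qquad G_{\epsilon}(\cdot,\cdot,0)\geq H_{n+1}(\cdot,\cdot,0),
\end{align*}
is a supersolution of \eqref{iteration supersol} in $\{y\leq (\frac{v}{3})^{\alpha}\}$. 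A maximum-principle comparison, restricted to this region and complemented by the already proved bound of Section \ref{section three} on the boundary curve $\{y=(\frac{v}{3})^{\alpha}\}$, gives $H_{n+1}\leq G_{\epsilon}$ there.

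The third step, and the one I expect to be the main obstacle, is the characteristic analysis for \eqref{supersol formal example}. Unlike the simpler PDE in \eqref{characteristics approx equation}, the characteristics of $G_{\epsilon}$,
\begin{align*}
\partial_{t}Y_{\epsilon}=-\tfrac{1}{\epsilon}(V_{\epsilon}^{\alpha}-Y_{\epsilon}),\qquad \partial_{t}V_{\epsilon}=-\tfrac{LV_{\epsilon}^{\gamma}}{1+|Y_{\epsilon}|^{\overline{d}}},
\end{align*}
do not separate. I would show, however, that as long as $Y_{\epsilon}(s)\leq (\frac{V_{\epsilon}(s)}{3})^{\alpha}$ for $s\in[0,t]$, the first equation behaves essentially like $\partial_{s}Y_{\epsilon}\approx -\tfrac{1}{\epsilon}V_{\epsilon}^{\alpha}$, so that $Y_{\epsilon}(s)\leq Y_{\epsilon}(0)-\tfrac{c}{\epsilon}\int_{0}^{s}V_{\epsilon}(\tau)^{\alpha}\,\der \tau$ on a uniform time interval, and $\frac{\der V_{\epsilon}}{\der Y_{\epsilon}}$ can be controlled by $\epsilon L V_{\epsilon}^{\gamma-\alpha}/(1+|Y_{\epsilon}|^{\overline{d}})$. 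The choice $\gamma\leq \tfrac{\overline{d}\alpha}{\overline{d}+1}+1$ together with $\overline{d}$ even (Remark \ref{we need function to be even minimum recovered}) is exactly what is needed to integrate this ODE and conclude that $V_{\epsilon}$ remains bounded on $[0,T]$ for $T$ sufficiently small and independent of $\epsilon$, so that the characteristics do not blow up. Propagating the initial bound $G_{\epsilon}(\cdot,\cdot,0)\lesssim A(1+v^{b})^{-1}(1+|y-v^{\alpha}|^{m})^{-1}$ along these characteristics, using $m\geq b/\alpha+1$ to convert decay in $v$ to decay in $y-v^{\alpha}$, yields $G_{\epsilon}(y,v,t)\leq T_{3}(y,v,t)$ on $\{y\leq (\frac{v}{3})^{\alpha}\}$, completing the proof. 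The rigorous implementation of the approximation step (replacing ``$\sim$'' by controlled error terms) and the non-blow-up of the characteristics are where the bulk of the technical work lies; this is carried out in Subsection \ref{subsection approximation}.
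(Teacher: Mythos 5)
Your plan has the correct skeleton and matches the paper's overall strategy: a moment estimate for $H_n$, construction of a supersolution $G_\epsilon$ of a transport equation of the form \eqref{supersol formal example}, and a characteristic analysis to close the argument. However, the plan omits what is arguably the hardest and most essential ingredient of the section. The inequality you write for $\mathbb{K}[H_{n+1},H_n]$ in terms of $\partial_v H_{n+1}$ is only formal, because nothing controls $\partial_v H_{n+1}$; the whole point of the comparison is to work with $\partial_v G_\epsilon$ instead, and to make this work one needs (a) $\partial_v G_\epsilon(y,v,t)\leq 0$ for $y\leq(v/3)^\alpha$, and (b) a quantitative comparison $-\partial_v G_\epsilon(y,w,t)\leq -K_2\,\partial_v G_\epsilon(y,v,t)$ for $w\in[v/2,v]$ (Propositions~\ref{der v g negative} and~\ref{bounds derivative prop}). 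Without these, one cannot pass from the double integral $\int_0^{v/2}K(v,w)\int_{v-w}^{v}\partial_v G_\epsilon(y,z,t)\,\der z\,H_n(y,w,t)\,\der w$ to a product of $\partial_v G_\epsilon(y,v,t)$ with the first moment of $H_n$, which is the step on which everything rests. The derivative bounds are nontrivial: they require analysing $\partial_v Y_\epsilon$ and $\partial_v V_\epsilon$ along the characteristics (not only $Y_\epsilon, V_\epsilon$ themselves) and this is where the bulk of Section~\ref{section four} is spent.

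Two further inaccuracies. You attribute the exponent condition $\gamma\leq\frac{\overline d\alpha}{\overline d+1}+1$ to integrability of $\der V_\epsilon/\der Y_\epsilon$ and the non-blow-up of $V_\epsilon$; in the paper, boundedness of $V_\epsilon$ only uses $\gamma<1+\alpha$ (see \eqref{epsilon does not depend on v}), and the stronger condition on $\gamma$ is needed precisely to prove $\partial_v V_\epsilon\geq\tfrac12$ via the estimates \eqref{bound i}--\eqref{bound ii}, which is what gives $\partial_v G_\epsilon\leq 0$. Similarly, parity of $\overline d$ enters to make $|Y|^d$ smooth so that \eqref{der v v} is valid, not merely to handle signs in the moment bound. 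Finally, the plan does not mention the cut-off $\xi(v)$ near $v\leq 1$ and the compensating factor $e^{K_3Lt}$ used to absorb the error \eqref{bound derivative by function} where the transport-in-$v$ approximation fails for small $v$; without these the supersolution inequality does not close. These are not cosmetic details — they are the technical core of the proof — so the proposal as written has a genuine gap even as a roadmap.
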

 \begin{rmk}
 $T$ will be chosen in \eqref{definition time supersol} and depends on $A$ in \eqref{initial condition decay}.
 \end{rmk}
 Let
\begin{equation}\label{chirdef}
\xi\in C([0,\infty))\,, \quad \xi\colon [0,\infty)\rightarrow [0,1] \; \mbox{ such that } \xi(v)=1
\mbox{ if } v\geq 1 \mbox{ and } \xi(v)=0 \mbox{ if } v\leq \frac{1}{2}.
\end{equation}
The idea behind the proof of Proposition \ref{main statement region below} is to show that $G_{\epsilon}(y,v,t)=e^{\frac{t}{\epsilon}}\tilde{G}_{\epsilon}(y,v,t)$ (or a suitably modified version of this function) is a supersolution  for \eqref{iteration supersol}, where $\tilde{G}_{\epsilon}$ solves
\begin{align*}
\partial_{t}\tilde{G}_{\epsilon}(y,v,t)+\frac{1}{\epsilon}(v^{\alpha}-y)\partial_{y}\tilde{G}_{\epsilon}(y,v,t)+\frac{Lv^{\gamma}\xi(v)}{1+|y|^{d}}\partial_{v}\tilde{G}_{\epsilon}(y,v,t)&=0,\\
(1+v^{b})\tilde{G}_{\epsilon}(y,v,0)&=\frac{A}{1+|y-v^{\alpha}|^{m}},
\end{align*}
with $A$ as in \eqref{initial condition decay} and for some suitably chosen $L>0$ and $d\in\mathbb{N}$. We will define $d$ in \eqref{we need function to be even} and $L$ in \eqref{definition l} and notice that $L$ depends on $A$. We will then choose $\epsilon_{1}$ in Proposition \ref{main statement region below} to depend on $L$. For the choice of $\epsilon_{1}$, we refer to Proposition \ref{proposition about inequalities} and Proposition \ref{der v g negative}. Notice that $G_{\epsilon}$ satisfies the following equation
\begin{align}\label{equation supersolution}
      \partial_{t}G_{\epsilon}(y,v,t)+\frac{1}{\epsilon}(v^{\alpha}-y)\partial_{y}G_{\epsilon}(y,v,t)+\frac{Lv^{\gamma}\xi(v)}{1+|y|^{d}}\partial_{v}G_{\epsilon}(y,v,t)-\frac{1}{\epsilon}G_{\epsilon}(y,v,t)=0.
\end{align}
We first prove some moment estimates.

\begin{prop}[Moment estimates]\label{moment estimates prop}
Let $T\leq 1$, $k\geq 0$. Assume that $P:\mathbb{R}\times (0,\infty)\times [0,T]\rightarrow [0,\infty)$ is such that
\begin{align*}
 P(y,v,t)\leq T_{1}(y,v,t)+T_{2}(y,v,t)+T_{3}(y,v,t),
    \end{align*}
 for all $t\in[0,T]$,  where $T_{1},T_{2},T_{3}$ are as in \eqref{def t1}-\eqref{def t3} with $b>k+1$, $m\geq \frac{b}{\alpha}+1$ in \eqref{def t1}-\eqref{def t2}. It then holds that there exists a constant $K_{1}>0$, which depends on $\alpha,\gamma,b,m,$ such that
\begin{align}\label{desired moment estimate}
    \int_{(0,\infty)}w^{k}P(y,w,t)\der w\leq \frac{K_{1}A^{3}}{1+|y|^{\frac{b-k-1}{\alpha}}}, \textup{ for all } t\in[0,T], y\in\mathbb{R},
\end{align}
where $A$ is as in \eqref{initial condition decay}. In particular, for $k=1$, it holds that if $b>2$, then
\begin{align}
    \int_{(0,\infty)}wP(y,w,t)\der w\leq \frac{K_{1}A^{3}}{1+|y|^{\frac{b-2}{\alpha}}}, \textup{ for all } t\in[0,T], y\in\mathbb{R}.
\end{align}
\end{prop}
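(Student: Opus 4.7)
The plan is to use the pointwise upper bound $P \leq T_1 + T_2 + T_3$ to split $\int w^k P\,\der w$ into three pieces and estimate each by exploiting the support restrictions encoded by $\chi_1, \chi_2, \chi_3$ together with the $w^{-b}$ decay of the prefactors. We work separately in the regimes $|y| \lesssim 1$ (where the right-hand side is $O(A^3)$ and only a uniform bound is required) and $|y| \gg 1$ (where the polynomial decay $|y|^{-(b-k-1)/\alpha}$ must be extracted); in the latter case we further split into $y \geq 1$ and $y \leq -1$.

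For the $T_1$ contribution, the natural change of variables $u = e^{t/\epsilon}(y - w^\alpha)$ produces a Jacobian $|\der w / \der u| = w^{1-\alpha}/(\alpha e^{t/\epsilon})$ that cancels exactly the $e^{t/\epsilon}$ amplitude of the mollifier, giving
\begin{equation*}
\int w^k T_1(y,w,t)\,\der w \;\leq\; \frac{2A}{\alpha}\int_{\mathbb{R}} \frac{w(u)^{k+1-\alpha}}{(1+w(u)^b)(1+|u|^m)}\,\der u,
\end{equation*}
with $w(u) = (y - e^{-t/\epsilon} u)_{+}^{1/\alpha}$. For $y \geq 1$ the bulk of the integral comes from $|u| \lesssim 1$, where $w(u) \approx y^{1/\alpha}$, producing a bound of order $A\, y^{(k+1-\alpha-b)/\alpha}$ which is strictly better than required. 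For $y \leq -1$, $\chi_1$ is supported on $w^\alpha \leq y + C_0^{-1}\epsilon^{-1/(m-1)} e^{-t/\epsilon}$, vanishing once $|y|$ exceeds this cutoff; otherwise $|y-w^\alpha| \geq |y|$, which yields $e^{t/\epsilon} \psi(e^{t/\epsilon}(y-w^\alpha)) \leq |y|^{-m}$, and $m \geq b/\alpha+1 > (b-k-1)/\alpha$ gives the desired rate.

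For $T_3$ the argument is parallel. For $y \geq 1$, the support $y \leq (w/3)^\alpha$ forces $w \geq 3 y^{1/\alpha}$, so the tail estimate $\int_{3 y^{1/\alpha}}^\infty w^k/(1+w^b)\,\der w \lesssim y^{(k+1-b)/\alpha}$ (valid because $b > k+1$), combined with $e^{t/\epsilon}\psi \leq 1$ and refined to $e^{-(m-1)t/\epsilon}|y-w^\alpha|^{-m}$ in the far tail $\{e^{t/\epsilon}|y-w^\alpha| \gg 1\}$, produces the sharp rate with prefactor $M_2 A$. For $y \leq -1$, we use $|y-w^\alpha| \geq |y|$ uniformly in $w$ to obtain $\psi$-decay of order $|y|^{-m}$, again dominated by the target.

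The crux of the proof is the $T_2$ contribution, which carries the prefactor $M_1 A^3$ responsible for the $A^3$ in the final bound. Its support confines $w$ to $[y^{1/\alpha}, 3^{1/\alpha} y^{1/\alpha}]$ and enforces $|y-w^\alpha| \geq C_0^{-1}\epsilon^{-1/(m-1)} e^{-t/\epsilon}$; after the substitution $u = w^\alpha - y$ we obtain, for $y \geq 1$,
\begin{equation*}
\int w^k T_2(y,w,t)\,\der w \;\lesssim\; M_1 A^3 \epsilon\, \frac{y^{(k+1-\alpha)/\alpha}}{1+y^{b/\alpha}}\int_{C_0^{-1}\epsilon^{-1/(m-1)} e^{-t/\epsilon}}^{(3^\alpha-1) y} \frac{\der u}{u}.
\end{equation*}
The logarithm produced here is of order $t/\epsilon + |\log y|$; the essential observation is that $\epsilon\cdot(t/\epsilon) = t \leq T \leq 1$, so the exponentially large part is absorbed by the $\epsilon$-prefactor, while the $|\log y|$ factor is absorbed by the spare power of $y$ (namely $y^{-1}$) gained in the $T_1$, $T_3$ estimates. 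Summing the three bounds and using $A \geq 1$ to absorb the smaller prefactors into $A^3$ yields the inequality with a single constant $K_1 = K_1(\alpha,\gamma,b,m)$. The main obstacle is precisely the simultaneous control of this logarithm and of the $\epsilon$-prefactor, and it is for this reason that the cutoff $|y-w^\alpha| \geq C_0^{-1}\epsilon^{-1/(m-1)} e^{-t/\epsilon}$ in $\chi_2$ was built into the definition of $T_2$ in the first place.
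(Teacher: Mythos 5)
Your proposal is correct and follows essentially the same route as the paper's proof: the same decomposition of $P \leq T_1 + T_2 + T_3$ (the paper organizes by the regions cut out by $\chi_1,\chi_2,\chi_3$, which is the same thing), the same change of variables $z = e^{t/\epsilon}(y-w^\alpha)$ for the $T_1$-type and $T_3$-type terms, the same observation that on the $T_2$ support $w \approx y^{1/\alpha}$, the same logarithmic integral $\int \der u/u \lesssim t/\epsilon + |\log y|$, and the same absorption of the $t/\epsilon$-term by the $\epsilon$ prefactor using $t\leq T\leq 1$ together with the absorption of $|\log y|$ by the spare factor $y^{-1}$ coming from $b>k+1$ (and, for $y$ near $0$, by the positive power $y^{(k+1-\alpha)/\alpha}$; you avoid this subtlety by treating $|y|\lesssim 1$ with a uniform bound). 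Two small imprecisions worth noting but not affecting validity: the $\chi_2$-support gives $w\leq 3 y^{1/\alpha}$ rather than $3^{1/\alpha}y^{1/\alpha}$, and your passing claim ``$e^{t/\epsilon}\psi\leq 1$'' for $T_3$ is not literally true for all $w$; what is true, and what you then use, is the refined form $e^{t/\epsilon}\psi(e^{t/\epsilon}(y-w^\alpha))\leq e^{-(m-1)t/\epsilon}|y-w^\alpha|^{-m}$ when $e^{t/\epsilon}|y-w^\alpha|\geq 1$, which suffices because on the $\chi_3$ support $|y-w^\alpha|\gtrsim w^\alpha$.
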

\begin{proof}

We begin by noticing that
\begin{align*}
  (1+v^{b})[T_{1}(y,v,t)+T_{2}(y,v,t)+T_{3}(y,v,t)] \leq &C(\alpha,\gamma,b,m)A^{3}\big[ e^{\frac{t}{\epsilon}}\psi(e^{\frac{t}{\epsilon}}(y-v^{\alpha}))     \chi_{1}(y,v,t)\\
  &+\frac{ \epsilon}{|y-v^{\alpha}|} \chi_{2}(y,v,t)+e^{\frac{t}{\epsilon}}\psi(e^{\frac{t}{\epsilon}}(y-v^{\alpha}))  \chi_{3}(y,v,t)\big],
\end{align*}
where $\chi_{1},\chi_{2},\chi_{3}$ are as in \eqref{chi1}-\eqref{chi3}. This is since by the definition of $C_{0}$ in \eqref{defc0}, we have that $C_{0}$ is of order $A^{\frac{1}{m-1}}$.

 Let $y\geq 0$. We distinguish between different regions. \begin{itemize}
\item Assume first that $\{y\geq w^{\alpha}-C_{0}^{-1}\epsilon^{-\frac{1}{m-1}} e^{-\frac{t}{\epsilon}}\}$. Then
\end{itemize}
\begin{align*}
    \int_{\{y\geq w^{\alpha}-C_{0}^{-1}\epsilon^{-\frac{1}{m-1}} e^{-\frac{t}{\epsilon}}\}}w^{k}P(y,w,t)\der w\leq \int_{(0,\infty)} \frac{2 A w^{k} e^{\frac{t}{\epsilon}}}{(1+w^{b})(1+e^{\frac{tm}{\epsilon}}|y-w^{\alpha}|^{m})}\der w.
\end{align*}
 We make the change of variables
 $z=e^{\frac{t}{\epsilon}}(y-w^{\alpha})$ and we obtain that
 \begin{align*}
        \int_{\{y\geq w^{\alpha}-C_{0}^{-1}\epsilon^{-\frac{1}{m-1}} e^{-\frac{t}{\epsilon}}\}}w^{k}P(y,w,t)\der w&\leq \int_{(-\infty,e^{\frac{t}{\epsilon}}y)} \frac{2A (y-e^{-\frac{t}{\epsilon}}z)^{\frac{k+1-\alpha}{\alpha}}}{(1+(y-e^{-\frac{t}{\epsilon}}z)^{\frac{b}{\alpha}})(1+|z|^{m})}\der z.
        \end{align*}
        On one side, if $|z|\leq \frac{e^{\frac{t}{\epsilon}}y}{2}$, it holds that $\frac{y}{2}\leq y-e^{-\frac{t}{\epsilon}}z\leq \frac{3 y}{2}$. Thus
        \begin{align}\label{step 1 moments} \int_{|z|\leq \frac{e^{\frac{t}{\epsilon}}y}{2}} \frac{(y-e^{-\frac{t}{\epsilon}}z)^{\frac{k+1-\alpha}{\alpha}}}{(1+(y-e^{-\frac{t}{\epsilon}}z)^{\frac{b}{\alpha}})(1+|z|^{m})}\der z\leq \frac{C(\alpha,b) |y|^{\frac{k+1-\alpha}{\alpha}}}{1+|y|^{\frac{b}{\alpha}}}\int_{(-\infty,\infty)} \frac{C(\alpha,b) }{1+|z|^{m}}\leq\frac{C(\alpha,b)| y|^{\frac{k+1-\alpha}{\alpha}}}{1+|y|^{\frac{b}{\alpha}}}.
 \end{align}
On the other hand, $\frac{(y-e^{-\frac{t}{\epsilon}}z)^{\frac{k+1-\alpha}{\alpha}}}{(1+(y-e^{-\frac{t}{\epsilon}}z)^{\frac{b}{\alpha}})}\leq 1$. Thus
\begin{align}\label{step 2 moments}
   \int_{ |z|\geq \frac{e^{\frac{t}{\epsilon}}y}{2}} \frac{(y-e^{-\frac{t}{\epsilon}}z)^{\frac{k+1-\alpha}{\alpha}}}{(1+(y-e^{-\frac{t}{\epsilon}}z)^{\frac{b}{\alpha}})(1+|z|^{m})}\der z\leq   \int_{ |z|\geq \frac{e^{\frac{t}{\epsilon}}y}{2}} \frac{\der z}{1+|z|^{m}}\leq \frac{C}{1+(e^{\frac{t}{\epsilon}}|y|)^{m-1}}\leq\frac{C}{1+|y|^{m-1}}.
\end{align}
Combining \eqref{step 1 moments} with \eqref{step 2 moments} and the fact that $m\geq \frac{b}{\alpha}+1$, we obtain that \eqref{desired moment estimate} holds in the region $\{y\geq w^{\alpha}-C_{0}^{-1}\epsilon^{-\frac{1}{m-1}} e^{-\frac{t}{\epsilon}}\}$.
\begin{itemize}
  \item  We now analyze the contribution of the region $(\frac{w}{3})^{\alpha}\leq y\leq w^{\alpha}-C_{0}^{-1}\epsilon^{-\frac{1}{m-1}}  e^{-\frac{t}{\epsilon}}$. Notice that in this region it holds that $y^{\frac{1}{\alpha}}\leq (y+C_{0}^{-1}\epsilon^{-\frac{1}{m-1}}  e^{-\frac{t}{\epsilon}})^{\frac{1}{\alpha}}\leq w\leq 3y^{\frac{1}{\alpha}}$. We remember that $C_{0}$ is of order $A^{\frac{1}{m-1}}$. Thus
\end{itemize} 
 \begin{align*}
      \int_{\{(\frac{w}{3})^{\alpha}\leq y\leq w^{\alpha}-C_{0}^{-1}\epsilon^{-\frac{1}{m-1}}  e^{-\frac{t}{\epsilon}}\}}w^{k}P(y,w,t)\der w&\leq \int^{3y^{\frac{1}{\alpha}}}_{(y+C_{0}^{-1}\epsilon^{-\frac{1}{m-1}}  e^{-\frac{t}{\epsilon}})^{\frac{1}{\alpha}}} \frac{C(\alpha,\gamma,b,m)A^{3}w^{k} \epsilon}{(1+w^{b})|y-w^{\alpha}|}\der w\\
      &\leq \frac{C(\alpha,\gamma,b,m)A^{3}\epsilon y^{\frac{k}{\alpha}}}{1+y^{\frac{b}{\alpha}}}\int^{3y^{\frac{1}{\alpha}}}_{(y+C_{0}^{-1}\epsilon^{-\frac{1}{m-1}}  e^{-\frac{t}{\epsilon}})^{\frac{1}{\alpha}}} \frac{1}{|y-w^{\alpha}|}\der w.
 \end{align*}
We make the change of variables $z=y-w^{\alpha}$ and obtain that 
\begin{align*}
  \frac{\epsilon y^{\frac{k}{\alpha}}}{1+y^{\frac{b}{\alpha}}}  \int^{3y^{\frac{1}{\alpha}}}_{(y+C_{0}^{-1}\epsilon^{-\frac{1}{m-1}}  e^{-\frac{t}{\epsilon}})^{\frac{1}{\alpha}}} \frac{1}{|y-w^{\alpha}|}\der w&\leq \frac{C(\alpha)\epsilon y^{\frac{k+1-\alpha}{\alpha}}}{1+y^{\frac{b}{\alpha}}}\int_{-(3^{\alpha}-1)y}^{-C_{0}^{-1}\epsilon^{-\frac{1}{m-1}}e^{-\frac{t}{\epsilon}}}\frac{\der z}{|z|}\\
  & \leq \frac{C(\alpha)\epsilon y^{\frac{k+1-\alpha}{\alpha}}}{1+y^{\frac{b}{\alpha}}}\big[\frac{t}{\epsilon}+|\ln{y}|\big]\leq \frac{C(\alpha)}{1+y^{\frac{b-k-1}{\alpha}}},
\end{align*}
where in the last inequality we used the fact that $t\leq 1$.
\begin{itemize}
 \item We now look at the contribution of the region $\{0\leq y \leq (\frac{w}{3})^{\alpha}\}$. In this region, we have that $|y-w^{\alpha}|=w^{\alpha}-y$ and $\big[1-\frac{1}{3^{\alpha}}\big]w^{\alpha}\leq w^{\alpha}-y\leq w^{\alpha}$. Then
\end{itemize}
\begin{align*}
        \int_{\{0\leq y\leq (\frac{w}{3})^{\alpha}\}}w^{k}P(y,w,t)\der w &\leq \int_{\{0\leq y\leq (\frac{w}{3})^{\alpha}\}}\frac{C(\alpha,\gamma,b,m)A e^{\frac{t}{\epsilon}}}{1+e^{\frac{tm}{\epsilon}}(w^{\alpha}-y)^{m}}\frac{ w^{k}}{1+w^{b}}\der w \\
        &\leq \int_{\{0\leq y\leq (\frac{w}{3})^{\alpha}\}}\frac{C(\alpha,\gamma,b,m)A e^{\frac{t}{\epsilon}}}{1+e^{\frac{tm}{\epsilon}}w^{\alpha m}}\frac{ w^{k}}{1+w^{b}}\der w.
        \end{align*}
        We make the change of variable $z=e^{\frac{t}{\epsilon}}w^{\alpha}$ and obtain that 
\begin{align*}
         \int_{\{0\leq y\leq (\frac{w}{3})^{\alpha}\}}w^{k}P(y,w,t)\der w &\leq C(\alpha,\gamma,b,m)A\int_{(3^{\alpha}e^{\frac{t}{\epsilon}}y,\infty)}\frac{(ze^{-\frac{t}{\epsilon}})^{\frac{k+1-\alpha}{\alpha}}\der z}{1+z^{m}}\\
         &\leq C(\alpha,\gamma,b,m)A\int_{(3^{\alpha} e^{\frac{t}{\epsilon}} y,\infty)}\frac{z^{\frac{k+1-\alpha}{\alpha}}\der z}{1+z^{m}}\leq \frac{C(\alpha,\gamma,b,m)A}{1+y^{m-\frac{k+1-\alpha}{\alpha}}},
\end{align*}
This concludes our proof in the case $y\geq 0$ since $m\geq \frac{b}{\alpha}$. If $y\leq 0$, by making the change of variable $z=e^{\frac{t}{\epsilon}}(w^{\alpha}-y)$, we obtain that
\begin{align*}
    \int w^{k}P(y,w,t)\der w&\leq \int_{(0,\infty)} \frac{C(\alpha,\gamma,b,m)Aw^{k} e^{\frac{t}{\epsilon}}}{(1+w^{b})(1+e^{\frac{tm}{\epsilon}}|y-w^{\alpha}|^{m})}\der w\\
    &\leq \int_{(e^{\frac{t}{\epsilon}}|y|,\infty)} \frac{C(\alpha,\gamma,b,m)A }{1+|z|^{m}}\der z\leq \frac{C(\alpha,\gamma,b,m)A}{1+(e^{\frac{t}{\epsilon}}|y|)^{m-1}}.
\end{align*}
This concludes our proof.
\end{proof}
We continue by analyzing properties of the transport equation \eqref{equation supersolution}. Define $d$ via
  \begin{equation}\label{we need function to be even}
\left\{\begin{aligned}
d&=\bigg\lfloor\frac{b-2}{\alpha}\bigg\rfloor-1; &\textup{ if }\bigg\lfloor\frac{b-2}{\alpha}\bigg\rfloor& \textup{ odd; }\\
d&=\bigg\lfloor\frac{b-2}{\alpha}\bigg\rfloor;&\textup{ if }\bigg\lfloor\frac{b-2}{\alpha}\bigg\rfloor& \textup{ even, }
   \end{aligned}\right.
   \end{equation}
   where $\lfloor t \rfloor$ denotes the floor function. We use the method of characteristics in \eqref{equation supersolution}
  \begin{equation}\label{the original characteristics}
\left\{\begin{aligned}
\partial_{t}{Y_{\epsilon}}(y,v,t)&=-\frac{1}{\epsilon}({V_{\epsilon}}^{\alpha}-Y), & \qquad {Y_{\epsilon}}(y,v,0)&=y\leq \big(\frac{v}{3}\big)^{\alpha}\,, \\
\partial_{t}{V_{\epsilon}}(y,v,t)&=-\frac{{L}{V_{\epsilon}}^{\gamma}\xi(V_{\epsilon})}{1+|{Y_{\epsilon}}|^{d}},& \qquad {V_{\epsilon}}(y,v,0)&=v.
   \end{aligned}\right.
   \end{equation}
 Notice that for $b>4$ then $d>1$ and even.  It immediately follows from \eqref{the original characteristics} that
  \begin{align}\label{der v der y}
      \frac{\der V_{\epsilon}}{\der Y_{\epsilon}}=\frac{\epsilon L V_{\epsilon}^{\gamma}\xi(V_{\epsilon})}{(1+|Y_{\epsilon}|^{d})(V_{\epsilon}^{\alpha}-Y_{\epsilon})}.
  \end{align}
 Notice that this is different from the analysis of the characteristics performed in \cite{cristianinhom} since in that case we could make use of the separation of variables in \eqref{der v der y} since the transport term in the space variable  contributed with $V_{\epsilon}^{\alpha}$ instead of $V_{\epsilon}^{\alpha}-Y_{\epsilon}$. We thus have to apply different methods than in \cite{cristianinhom} to prove the desired estimates in our case.
\subsection{Properties of the characteristics of the equation satisfied by the supersolution}
   \begin{prop}\label{proposition about inequalities}
Let $\alpha\in(0,1)$, $\gamma\in(1,1+\alpha)$, and $d>1$ even. Let $L>0$ and let $Y_{\epsilon}, V_{\epsilon}$ be as in \eqref{the original characteristics}. Then there exists a sufficiently small $\epsilon_{1}\in(0,1)$, which depends on $L$, such that for all $t \geq 0$ and $\epsilon\leq \epsilon_{1}$, the following estimates hold for all $v>0$ and $y\leq \big(\frac{v}{3}\big)^{\alpha}$.
      \begin{align}
        \frac{9}{10} v &\leq  V_{\epsilon}(y,v,t)\leq v;\label{ineq}\\
     Y_{\epsilon}(y,v,t)&\leq \big(\frac{v}{3}\big)^{\alpha};\label{ineq part two}\\
e^{\frac{t}{\epsilon}} (y-v^{\alpha}) & \leq   Y_{\epsilon}-V_{\epsilon}^{\alpha};\label{ineqq}\\
 Y_{\epsilon}-V_{\epsilon}^{\alpha}&\leq e^{\frac{t}{\epsilon}} (y-\Big(\frac{9}{10}\Big)^{\alpha}v^{\alpha}).\label{ineq 3}
      \end{align}
  \end{prop}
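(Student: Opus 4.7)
The upper bound $V_\epsilon \leq v$ in \eqref{ineq} will be immediate from $\partial_t V_\epsilon = -L V_\epsilon^\gamma \xi(V_\epsilon)/(1+|Y_\epsilon|^d) \leq 0$. For \eqref{ineqq}, I would set $W := Y_\epsilon - V_\epsilon^\alpha$ and compute
\begin{align*}
\partial_t W - \frac{W}{\epsilon} = -\alpha V_\epsilon^{\alpha-1}\partial_t V_\epsilon = \frac{\alpha L V_\epsilon^{\alpha+\gamma-1}\xi(V_\epsilon)}{1+|Y_\epsilon|^d} \geq 0.
\end{align*}
Multiplying by the integrating factor $e^{-t/\epsilon}$ and integrating yields $W(t) \geq e^{t/\epsilon}(y - v^\alpha)$, which is \eqref{ineqq}. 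This step is unconditional; the remaining three estimates I will prove simultaneously by a continuity/bootstrap argument.

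For the bootstrap, set $T^* := \sup\{t \geq 0 : \eqref{ineq}, \eqref{ineq part two}, \eqref{ineq 3}\text{ all hold on }[0,t]\}$. The inequalities \eqref{ineq} and \eqref{ineq 3} are strict at $t=0$ (since $V_\epsilon(0)=v$ and $(9/10)^\alpha v^\alpha < v^\alpha$), and when $y = (v/3)^\alpha$ saturates \eqref{ineq part two} one has $\partial_t Y_\epsilon(0) = -(v^\alpha - (v/3)^\alpha)/\epsilon < 0$, so $T^* > 0$. The plan is to show that on $[0,T^*]$ all three inequalities in fact hold strictly with a margin independent of $T^*$, contradicting $T^* < \infty$. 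To that end, the bootstrap assumption \eqref{ineq 3} together with $V_\epsilon^\alpha \geq (9/10)^\alpha v^\alpha$ (from \eqref{ineq}) will imply
\begin{align*}
Y_\epsilon(s) \leq V_\epsilon^\alpha + e^{s/\epsilon}(y - (9/10)^\alpha v^\alpha) \leq v^\alpha - c\, v^\alpha e^{s/\epsilon},
\end{align*}
with $c := (9/10)^\alpha - (1/3)^\alpha > 0$, so $Y_\epsilon$ becomes negative and $|Y_\epsilon(s)| \gtrsim v^\alpha e^{s/\epsilon}$ beyond a transient of length $s_* = O(\epsilon)$.

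Duhamel's principle applied to the $V_\epsilon$-equation and to the $W$-equation will turn \eqref{ineq} and \eqref{ineq 3} into integral estimates that, using $V_\epsilon \leq v$, both reduce to showing $\mathcal{I}(t) := \int_0^t ds/(1+|Y_\epsilon(s)|^d) \lesssim \epsilon/v^\alpha$ (and the same bound for the weighted version $\int_0^t e^{-s/\epsilon}\,ds/(1+|Y_\epsilon|^d)$). The tail $s \geq s_*$ contributes only $O(\epsilon/v^{\alpha d})$ via the exponential lower bound just obtained; for the transient I would change variables from $s$ to $Y$ along the monotone characteristic using $ds = -\epsilon\,dY/(V_\epsilon^\alpha - Y_\epsilon)$, where the bootstrap gives $V_\epsilon^\alpha - Y_\epsilon \geq (9/10)^\alpha v^\alpha - (v/3)^\alpha \gtrsim v^\alpha$ throughout the range of $Y$, and the convergence of $\int_\mathbb{R} dY/(1+|Y|^d)$ for $d \geq 2$ then closes the estimate. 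Substituting $\mathcal{I} \lesssim \epsilon/v^\alpha$ back, I obtain $v - V_\epsilon(t) \leq CL\epsilon v^{\gamma-\alpha}$ and an excess in \eqref{ineq 3} of size at most $CL\epsilon v^{\gamma-1}e^{t/\epsilon}$. The hypothesis $\gamma < 1+\alpha$ makes the polynomial $v^{\gamma-1-\alpha}$ uniformly bounded for $v \geq 1$, and the ranges $v \leq 1/2$ (trivial since $\xi \equiv 0$) and $v \in [1/2,1]$ (bounded exponents) are treated separately. Choosing $\epsilon \leq \epsilon_1(L,\alpha,\gamma)$ produces strict versions of \eqref{ineq} and \eqref{ineq 3} at $T^*$, while \eqref{ineq part two} follows from \eqref{ineq} by the barrier argument at $Y_\epsilon = (v/3)^\alpha$, where \eqref{ineq} gives $\partial_t Y_\epsilon \leq -v^\alpha((9/10)^\alpha - (1/3)^\alpha)/\epsilon < 0$. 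These strict inequalities contradict the maximality of $T^*$. The main technical difficulty will be establishing the quantitative integral bound $\mathcal{I} \lesssim \epsilon/v^\alpha$: the extra factor $v^{-\alpha}$ beyond the crude $\leq \epsilon$ is precisely what converts the otherwise-uncontrolled factor $v^{\gamma-1}$ into the bounded expression $v^{\gamma-1-\alpha}$ permitted by $\gamma < 1+\alpha$.
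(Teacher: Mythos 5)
Your proposal is correct, and it takes a genuinely different route from the paper. The paper establishes \eqref{ineq} by an interval-by-interval iteration: on each short interval where $V_\epsilon \geq \tfrac{9}{10}v$ is known, it proves a linear-in-time lower bound $|Y_\epsilon(s)| \geq \tfrac{c v^\alpha}{\epsilon}|s-\tau|$ by a three-case analysis (depending on the sign of $y$ and of $Y_\epsilon$ at the end of the interval), substitutes $\xi = \tfrac{v^\alpha}{\epsilon}(z-\tau)$ in the $V_\epsilon$-integral, recovers the strengthened bound $V_\epsilon \geq \tfrac{19}{20}v$, and iterates; \eqref{ineq part two}--\eqref{ineq 3} are then deduced afterwards from \eqref{ineq}. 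You instead (a) obtain \eqref{ineqq} unconditionally from the single monotonicity fact $\partial_t\big(e^{-t/\epsilon}(Y_\epsilon-V_\epsilon^\alpha)\big) = -\alpha e^{-t/\epsilon}V_\epsilon^{\alpha-1}\partial_t V_\epsilon \geq 0$, and (b) close \eqref{ineq}, \eqref{ineq part two}, \eqref{ineq 3} by a single continuation argument, bounding $\int_0^t \tfrac{ds}{1+|Y_\epsilon|^d}$ via the exact change of variables $ds = -\epsilon\,dY/(V_\epsilon^\alpha - Y_\epsilon)$, which the bootstrap hypotheses keep non-degenerate since $V_\epsilon^\alpha - Y_\epsilon \geq \big((\tfrac{9}{10})^\alpha - (\tfrac{1}{3})^\alpha\big)v^\alpha$. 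That change of variables does exactly the work of the paper's linear-in-time lower bound but without the case analysis, because the ODE gives you the Jacobian directly rather than through an a priori estimate. (Your separate tail estimate for $s \geq s_*$ is in fact redundant: the monotone change of variables covers the whole interval, since the bootstrap bound $V_\epsilon^\alpha - Y_\epsilon \gtrsim v^\alpha$ holds for all $s \in [0,T^*]$.) Both proofs then close on the same two ingredients, namely convergence of $\int \tfrac{dY}{1+|Y|^d}$ for $d > 1$ and $\gamma < 1+\alpha$ to keep $v^{\gamma-1-\alpha}$ bounded for $v \geq 1$, with $v \leq \tfrac{1}{2}$ (where $\xi \equiv 0$) and $v \in [\tfrac{1}{2},1]$ handled directly as you note.
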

  \begin{rmk}
      We will later fix $L$ in \eqref{definition l} to depend on $A$ and other constants depending only on the parameters $\alpha,\gamma,b,m$. Thus $\epsilon_{1}$ will depend on $A$.
  \end{rmk}
  \begin{proof}[Proof of Proposition \ref{proposition about inequalities}]
In order to simplify the notation, we replace the notation of the pair $(Y_{\epsilon},V_{\epsilon})$ in \eqref{the original characteristics} with $(Y,V)$, while keeping in mind the dependence on $\epsilon$. Moreover, we will assume in all the following that $v\geq 1$ and will work with the following system of ODE's.
 \begin{equation}
\left\{\begin{aligned}\label{system ode without epsilon notation}
\partial_{t}{Y}(y,v,t)&=-\frac{1}{\epsilon}({V}^{\alpha}-Y), & \qquad {Y}(y,v,0)&=y\leq (\frac{v}{3})^{\alpha}\,, \\
\partial_{t}{V}(y,v,t)&=-\frac{{L}{V}^{\gamma}}{1+|{Y}|^{d}},& \qquad {V}(y,v,0)&=v.
   \end{aligned}\right.
   \end{equation} This is since there is nothing to prove in the case $v\leq \frac{1}{2}$ since by the definition of $\xi$ in \eqref{chirdef}, it holds that $\partial_{t}V_{\epsilon}=0$ when $v\leq \frac{1}{2}$. Moreover, assume that  \eqref{ineq}-\eqref{ineq 3} hold for $v\geq \frac{1}{2}$ and for \eqref{system ode without epsilon notation}. We notice that the following inequalities hold. We have that $\partial_{t}V_{\epsilon}(y,v,t)=-\frac{LV_{\epsilon}^{\gamma}\xi(V_{\epsilon})}{1+|Y_{\epsilon}|^{d}}\geq -\frac{LV_{\epsilon}^{\gamma}}{1+|Y_{\epsilon}|^{d}}$ and that $\partial_{t}V_{\epsilon}(y,v,t)\leq 0$. Thus, \eqref{ineq} follows. If \eqref{ineq} holds, then \eqref{ineq part two} follows from \eqref{the original characteristics}.  \eqref{ineqq}-\eqref{ineq 3} follow from \eqref{ineq}-\eqref{ineq part two}. Thus, it suffices to analyze the system \eqref{system ode without epsilon notation} with $v\geq \frac{1}{2}$. We assume in addition that $v\geq 1$ for simplicity of notation.

   From \eqref{the original characteristics}, we have that
   \begin{align}\label{lower bound der t v}
       \partial_{t}{V}(y,v,t)=-\frac{{L}{V}^{\gamma}}{1+|{Y}|^{d}}\geq-L V^{\gamma}.
   \end{align}Thus, there exists a time $t_{1}:=t_{1}(v)$ sufficiently small, which is independent of $\epsilon$ but dependent on $v$, such that
      \begin{align*}
      \frac{9}{10}v\leq V(y,v,s)\leq v,
      \end{align*}
      for all $0\leq s\leq t_{1}$. This implies that for all $s\leq t_{1}$ it holds that
  \begin{align}\label{iterative argument characteristics}
  \partial_{s}{Y}(y,v,s)&=\frac{1}{\epsilon}(Y-{V}^{\alpha})\leq \frac{1}{\epsilon}(Y-\Big(\frac{9}{10}\Big)^{\alpha}{v}^{\alpha}).
  \end{align}

  We analyze the sign of the following ODE.
  \begin{align}\label{ode keep sign}
  \partial_{z}{\overline{Y}}(y,v,z)&= \frac{1}{\epsilon}(\overline{Y}-\Big(\frac{9}{10}\Big)^{\alpha}{v}^{\alpha}), \quad \overline{Y}(y,v,0)=y\leq (\frac{v}{3})^{\alpha}.
  \end{align}
  Since $\overline{Y}=\Big(\frac{9}{10}\Big)^{\alpha}{v}^{\alpha}$ is the zero point of \eqref{ode keep sign} and $\overline{Y}(y,v,0)=y\leq (\frac{v}{3})^{\alpha}\leq \Big(\frac{9}{10}\Big)^{\alpha}{v}^{\alpha}$, then we have that $\overline{Y}(y,v,z)\leq \Big(\frac{9}{10}\Big)^{\alpha}{v}^{\alpha}$, for all $z\geq 0$ and furthermore $\overline{Y}(y,v,z)\leq y\leq (\frac{v}{3})^{\alpha}$, for all $z\geq 0$.

  By \eqref{iterative argument characteristics} and using a comparison principle, this implies that $Y(y,v,s)\leq (\frac{v}{3})^{\alpha}$, for all $s\leq t_{1}$ and in particular $Y(y,v,t_{1})\leq (\frac{v}{3})^{\alpha}$.
We thus have that $V(s)\geq \frac{9}{10}v$ and $Y(s)\leq (\frac{v}{3})^{\alpha}$, for all $s\leq t_{1}$. Then \eqref{iterative argument characteristics} becomes
  \begin{align}\label{better inequality for Y}
      \partial_{s}Y\leq \frac{v^{\alpha}}{\epsilon}\bigg[\frac{1}{3^{\alpha}}-\Big(\frac{9}{10}\Big)^{\alpha}\bigg]\leq -\frac{c v^{\alpha}}{\epsilon},
  \end{align}
for some $c>0$.

\begin{enumerate}\item[i)] Assume first that $Y(y,v,0)=y\in[0,(\frac{v}{3})^{\alpha}]$ and  $Y(y,v,t_{1})<0$.
\end{enumerate} We have that for any $y\in\mathbb{R}$ and $v>0$ there exists $\tau:=\tau(y,v)\in[0,t_{1})$ such that $Y(y,v,\tau(y,v))=0$. Thus, for all $s\in[0,t_{1}]$, it holds that
\begin{align*}
 |Y(y,v,s)|=   |Y(y,v,s)-Y(y,v,\tau(y,v))|\geq \frac{c v^{\alpha}}{\epsilon}|s- \tau(y,v)|.
\end{align*}

Let $s\in[0,t_{1}]$. Since $\frac{9}{10}v\leq V\leq v$, it holds that
\begin{align*}
    \partial_{s}{V}(y,v,s)=-\frac{{L}{V}^{\gamma}}{1+|{Y}|^{d}}\geq -\frac{{L}{V}^{\gamma}}{1+\frac{c v^{\alpha d}}{\epsilon^{d}}|s-\tau(y,v)|^{d}}\geq -\frac{C{L}{v}^{\gamma}}{1+\frac{c v^{\alpha d}}{\epsilon^{d}}|s-\tau(y,v)|^{d}}.
\end{align*}
In other words, we have that
\begin{align}\label{inequality for V}
    V(y,v,s)-v\geq -v^{\gamma}\int_{0}^{s}\frac{CL\der z}{1+\frac{c v^{\alpha d}}{\epsilon^{d}}|z-\tau(y,v)|^{d}}.
\end{align}
By making the change of variables $\xi=\frac{v^{\alpha}}{\epsilon}(z-\tau(y,v))$ in \eqref{inequality for V}, it follows that
\begin{align}\label{epsilon does not depend on v}
    V(y,v,s)\geq v -\epsilon v^{\gamma-\alpha}\int_{-\infty}^{\infty}\frac{CL\der \xi}{1+c|\xi|^{d}}\geq v\bigg[1-CL\epsilon v^{\gamma-\alpha-1}\bigg].
\end{align}
Since $v\geq 1$ and $\gamma<\alpha+1$, there exists $\epsilon_{1}\in(0,1)$ sufficiently small, depending on $L$, but independent of $v$ such that $\bigg[1-CL\epsilon v^{\gamma-\alpha-1}\bigg]\geq \frac{19}{20}$ and thus
\begin{align}\label{iteration in vv}
    V(y,v,s)\geq \frac{19}{20}v,
\end{align}
for all $s\in[0,t_{1}]$ and all $\epsilon\leq \epsilon_{1}$. 

From \eqref{iteration in vv}, \eqref{lower bound der t v}, and by continuity, we deduce that there exists $t_{2}:=t_{2}(v)>t_{1}$ such that
  \begin{align*}
      \frac{9}{10}v\leq V(y,v,s)\leq v,       \textup{ for all } s\in[t_{1},t_{2}].
      \end{align*}

 \begin{enumerate}\item[ii)]   Assume $Y(y,v,0)=y\in[0,(\frac{v}{3})^{\alpha}]$ and  $Y(y,v,t_{1})\geq 0$. 
 \end{enumerate}Since $Y(y,v,s)\geq Y(y,v,t_{1})\geq 0$, for all $s\in[0,t_{1}]$, it holds that
\begin{align*}
 |Y(y,v,s)|=Y(y,v,s)\geq Y(y,v,s)-Y(y,v,t_{1})\geq   |Y(y,v,s)-Y(y,v,t_{1})|\geq \frac{c v^{\alpha}}{\epsilon}|s- t_{1}|.
\end{align*}
    We can then perform similar computations as in the case $Y(y,v,0)=y\in[0,(\frac{v}{3})^{\alpha}]$ and  $Y(y,v,t_{1})< 0$ in order to deduce that $V(y,v,s)\geq \frac{19}{20}v,$ for all $s\leq t_{1}$ and all $\epsilon\leq \epsilon_{1}$. 
      
     \begin{enumerate}\item[iii)]    Assume $Y(y,v,0)=y<0$. \end{enumerate}Then for all $s\in[0,t_{1}]$, it holds by  \eqref{iterative argument characteristics} that
  \begin{align}
Y(y,v,s)\leq -\Big(\frac{9}{10}\Big)^{\alpha}\frac{v^{\alpha}s}{\epsilon}
  \end{align}
        and thus, as before, we can conclude that
        \begin{align*}
    \partial_{s}{V}(y,v,s)=-\frac{{L}{V}^{\gamma}}{1+|{Y}|^{d}}\geq -\frac{{L}{V}^{\gamma}}{1+\frac{c v^{\alpha d}}{\epsilon^{d}}s^{d}}\geq -\frac{C{L}{v}^{\gamma}}{1+\frac{c v^{\alpha d}}{\epsilon^{d}}s^{d}}.
\end{align*}
        As before, we obtain that \begin{align}
    V(y,v,s)\geq \frac{19}{20}v,
\end{align}
for all $s\in[0,t_{1}]$ and all $\epsilon\leq \epsilon_{1}$.

We are now able to iterate the argument in order to obtain that \eqref{ineq} holds for all $t\geq 0$. Notice that at each iteration the times $t_{i}$, $i\in\mathbb{N}$, depend on $v$, but after the iteration we obtain that the estimate \eqref{ineq} holds for all times $t\geq 0$. Moreover, $\epsilon_{1}$ in \eqref{epsilon does not depend on v} does not depend on $v$. The inequality \eqref{ineq part two} follows from \eqref{ineq} and \eqref{iterative argument characteristics}.

By \eqref{ineq} and \eqref{the original characteristics} it follows that
\begin{align*}
    \partial_{t}Y-\frac{Y}{\epsilon}=-\frac{V^{\alpha}}{\epsilon}\geq -\frac{v^{\alpha}}{\epsilon}
\end{align*}
     and thus 
     \begin{align*}
         \partial_{t}\bigg(e^{-\frac{t}{\epsilon}}Y\bigg )\geq \partial_{t}\bigg(e^{-\frac{t}{\epsilon}}v^{\alpha}\bigg).
     \end{align*}
    Integrating in time and using the fact that $V\leq v$ we obtain \eqref{ineqq}. \eqref{ineq 3} follows in a similar manner by noticing that
    \begin{align*}
           \partial_{t}Y-\frac{Y}{\epsilon}=-\frac{V^{\alpha}}{\epsilon}\leq  -\Big(\frac{9}{10}\Big)^{\alpha}\frac{v^{\alpha}}{\epsilon}.
    \end{align*}
This concludes our proof.
  \end{proof}
  \begin{rmk}\label{remark v alpha y}
     From \eqref{ineq 3} we immediately deduce that
   for all $y\leq (\frac{v}{3})^{\alpha},$ the following inequality holds.  \begin{align*}
         Y_{\epsilon}-V_{\epsilon}^{\alpha}\leq \bigg[\Big(\frac{9}{10}\Big)^{\alpha}-\frac{1}{3^{\alpha}}\bigg]e^{\frac{t}{\epsilon}}(y-v^{\alpha}).
     \end{align*}
  \end{rmk}
  \begin{proof}
      For $y\leq 0 $, it immediately follows that
      \begin{align*}
          Y_{\epsilon}-V_{\epsilon}^{\alpha}\leq e^{\frac{t}{\epsilon}} (y-\Big(\frac{9}{10}\Big)^{\alpha}v^{\alpha})\leq\Big(\frac{9}{10}\Big)^{\alpha} e^{\frac{t}{\epsilon}} (y-v^{\alpha})\leq  \bigg[\Big(\frac{9}{10}\Big)^{\alpha}-\frac{1}{3^{\alpha}}\bigg]e^{\frac{t}{\epsilon}}(y-v^{\alpha}).
      \end{align*}
      If $y\in(0,(\frac{v}{3})^{\alpha})$, then
       \begin{align*}
          Y_{\epsilon}-V_{\epsilon}^{\alpha}\leq e^{\frac{t}{\epsilon}} (y-\Big(\frac{9}{10}\Big)^{\alpha}v^{\alpha})\leq -\bigg[\Big(\frac{9}{10}\Big)^{\alpha}-\frac{1}{3^{\alpha}}\bigg] e^{\frac{t}{\epsilon}} v^{\alpha} \leq \bigg[\Big(\frac{9}{10}\Big)^{\alpha}-\frac{1}{3^{\alpha}}\bigg] e^{\frac{t}{\epsilon}} (y-v^{\alpha}).
          \end{align*}
  \end{proof}
  \subsection{Properties of the supersolution in the region $y\leq (\frac{v}{3})^{\alpha} $}
 \begin{prop}\label{der v g negative}
Fix $\alpha\in(0,1)$, $\gamma\in(1,1+\alpha)$, $L>0$ and $\epsilon\in(0,1)$. Let $d>1$ in \eqref{we need function to be even} even and sufficiently large such that $\gamma\leq \frac{d\alpha}{d+1}+1$. Let $G_{\epsilon,L}$ solve equation \eqref{equation supersolution}. Then there exists a sufficiently small $\epsilon_{1}\in(0,1)$ such that for all $t \geq 0$ and $\epsilon\leq \epsilon_{1}$, we have that for all $v>0$ and $y\leq \big(\frac{v}{3}\big)^{\alpha}$,
\begin{align*}
    \partial_{v}G_{\epsilon,L}(y,v,t)\leq 0.
\end{align*}
 \end{prop}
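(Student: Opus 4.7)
\emph{Plan.} The strategy is to write $G_{\epsilon,L}$ explicitly along the backward characteristics \eqref{the original characteristics} and then control the dependence of $(Y_\epsilon,V_\epsilon)$ on the initial datum $v$. Integrating \eqref{equation supersolution} along $(Y_\epsilon(y,v,t),V_\epsilon(y,v,t))$ (for which the source $-G/\epsilon$ contributes the factor $e^{t/\epsilon}$) and invoking the initial condition $(1+v^b)\tilde G_\epsilon(y,v,0)=A/(1+|y-v^\alpha|^m)$ together with Remark~\ref{remark v alpha y}, which guarantees $V_\epsilon^\alpha-Y_\epsilon>0$, yields
\begin{equation}\label{planGexpl}
G_{\epsilon,L}(y,v,t)=\frac{A\,e^{t/\epsilon}}{(1+V_\epsilon(y,v,t)^{b})\bigl(1+(V_\epsilon(y,v,t)^{\alpha}-Y_\epsilon(y,v,t))^{m}\bigr)}.
\end{equation}
Differentiating \eqref{planGexpl} in $v$,
\begin{equation}\label{plandvG}
\partial_v G_{\epsilon,L}=-Ae^{t/\epsilon}\!\left[\frac{bV_\epsilon^{b-1}\partial_vV_\epsilon}{(1+V_\epsilon^b)^2(1+(V_\epsilon^\alpha-Y_\epsilon)^m)}+\frac{m(V_\epsilon^\alpha-Y_\epsilon)^{m-1}\bigl(\alpha V_\epsilon^{\alpha-1}\partial_vV_\epsilon-\partial_vY_\epsilon\bigr)}{(1+V_\epsilon^b)(1+(V_\epsilon^\alpha-Y_\epsilon)^m)^2}\right],
\end{equation}
so the conclusion reduces to the two sensitivity bounds
\begin{equation}\label{plantwoclaims}
\partial_vV_\epsilon(y,v,t)\geq 0\qquad\text{and}\qquad\partial_vY_\epsilon(y,v,t)\leq 0.
\end{equation}

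The case $v\leq 1/2$ is trivial: by \eqref{ineq} $V_\epsilon\leq v\leq 1/2$, so $\xi(V_\epsilon)\equiv 0$ by \eqref{chirdef}; hence $V_\epsilon\equiv v$ and $Y_\epsilon=v^\alpha+(y-v^\alpha)e^{t/\epsilon}$, and a direct computation from \eqref{planGexpl} gives $\partial_v G_{\epsilon,L}\leq 0$. For $v\geq 1/2$ I differentiate the characteristic system \eqref{the original characteristics} in $v$ (using that $d$ is even, so $|Y_\epsilon|^d=Y_\epsilon^{d}$ is smooth) to obtain the linear system for $y_v:=\partial_vY_\epsilon$ and $v_v:=\partial_vV_\epsilon$,
\begin{align}
\partial_ty_v&=\tfrac{1}{\epsilon}y_v-\tfrac{\alpha V_\epsilon^{\alpha-1}}{\epsilon}v_v,\label{planYODE}\\
\partial_tv_v&=-\frac{L\bigl(\gamma V_\epsilon^{\gamma-1}\xi(V_\epsilon)+V_\epsilon^{\gamma}\xi'(V_\epsilon)\bigr)}{1+Y_\epsilon^{d}}\,v_v+\frac{LV_\epsilon^{\gamma}\xi(V_\epsilon)\,dY_\epsilon^{\,d-1}}{(1+Y_\epsilon^{d})^{2}}\,y_v,\label{planVODE}
\end{align}
with $y_v(0)=0$ and $v_v(0)=1$. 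The integrating factor $e^{-t/\epsilon}$ turns \eqref{planYODE} into the Duhamel representation
\begin{equation}\label{planyvrep}
y_v(t)=-\alpha\int_0^t\frac{V_\epsilon(s)^{\alpha-1}}{\epsilon}\,e^{(t-s)/\epsilon}\,v_v(s)\,ds,
\end{equation}
from which $v_v\geq 0$ on $[0,t]$ forces $y_v\leq 0$ on $[0,t]$ automatically. Thus only the positivity of $v_v$ remains.

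I plan to obtain $v_v\geq 1/2$ by a continuity/bootstrap argument. Assuming $v_v\in[1/2,3/2]$ on $[0,t_0)$, \eqref{planyvrep} gives $|y_v(s)|\leq C\alpha v^{\alpha-1}e^{s/\epsilon}$ on that interval. Combined with the bounds from Proposition~\ref{proposition about inequalities} (namely $V_\epsilon\asymp v$, the linear lower bound $|Y_\epsilon(s)|\geq c v^{\alpha}|s-\tau(y,v)|/\epsilon$ near the zero crossing, and the exponential lower bound $V_\epsilon^\alpha-Y_\epsilon\geq cv^\alpha e^{s/\epsilon}$ for $s\gtrsim\epsilon$), splitting the time integral into $\{|Y_\epsilon|\leq 1\}$ and $\{|Y_\epsilon|\geq 1\}$ and using the elementary identity $\int_{-\infty}^{\infty}dr/(1+|r|^d)=C(d)<\infty$ yields
\begin{equation*}
\int_{0}^{t_0}|a(s)|\,ds\leq CL\epsilon\,v^{\gamma-\alpha-1},\qquad\int_{0}^{t_0}|b(s)\,y_v(s)|\,ds\leq CL\epsilon\bigl(1+v^{\gamma-\alpha d-1}\bigr),
\end{equation*}
where $a$ and $b$ denote the coefficients of $v_v$ and $y_v$ in \eqref{planVODE}. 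The hypothesis $\gamma\leq d\alpha/(d+1)+1$ is precisely the requirement $\gamma-\alpha d-1\leq 0$ which, combined with $v\geq 1/2$, bounds both exponents uniformly in $v$ by constants depending only on $\alpha,\gamma,b,m$. The variation-of-constants formula applied to \eqref{planVODE} then gives $v_v(t)\geq\exp(-CL\epsilon)-CL\epsilon\geq 3/4$ on $[0,t_0)$ once $\epsilon\leq\epsilon_1(L)$ is small, so the bootstrap closes and $v_v\geq 1/2$ extends to all $t\geq 0$. Feeding this back into \eqref{planyvrep}, \eqref{plantwoclaims} and hence \eqref{plandvG} conclude the proof.

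\emph{Main obstacle.} The hardest point is controlling the feedback term $b(s)y_v(s)$ in \eqref{planVODE}: the coefficient $b(s)\propto Y_\epsilon(s)^{d-1}$ changes sign whenever $y>0$ and $Y_\epsilon$ crosses zero, while $y_v(s)$ grows like $e^{s/\epsilon}$, so no naive sign/monotonicity comparison is available. The smallness that makes the bootstrap succeed must be extracted from the algebraic decay of the weight $(1+|Y_\epsilon|^d)^{-2}$ balanced against the exponential growth of $|Y_\epsilon|$ coming from \eqref{ineq 3}, and it is precisely in this balance that the threshold $\gamma\leq d\alpha/(d+1)+1$ in Remark~\ref{we need function to be even minimum recovered} enters.
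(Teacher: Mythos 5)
Your proposal follows the same route as the paper's proof: the explicit representation of $G_{\epsilon,L}$ along the backward characteristics of \eqref{the original characteristics}, the reduction of $\partial_v G_{\epsilon,L}\le 0$ to the two sensitivity bounds $\partial_v V_\epsilon\ge 0$ and $\partial_v Y_\epsilon\le 0$, the linearized characteristic system for $(\partial_v Y_\epsilon,\partial_v V_\epsilon)$, the Duhamel/integrating-factor identity expressing $\partial_v Y_\epsilon$ in terms of $\partial_v V_\epsilon$ so that positivity of the latter forces nonpositivity of the former, the bootstrap on $\partial_v V_\epsilon\in[1/2,3/2]$, and the $O(\epsilon)$ bounds on the two time integrals obtained from the algebraic decay of $(1+|Y_\epsilon|^d)^{-1}$ (split at the crossing time $\tau$ as in the paper's $(I)$ and $(II)$). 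There is, however, a concrete misstatement in the final estimate: you assert that the hypothesis $\gamma\le \frac{d\alpha}{d+1}+1$ ``is precisely the requirement $\gamma-\alpha d-1\le 0$,'' and you record the bound $\int|b(s)\,\partial_v Y_\epsilon(s)|\,\der s\le CL\epsilon\bigl(1+v^{\gamma-\alpha d-1}\bigr)$. These two conditions are not the same: $\gamma\le 1+\alpha d$ is strictly weaker than $\gamma\le 1+\frac{\alpha d}{d+1}$, and the exponent that emerges from the paper's rescaling $\xi=v^{\frac{d\alpha}{d+1}}(e^{(z-\tau)/\epsilon}-1)$ (see the steps leading to \eqref{bound ii} and \eqref{bound i}) is $\gamma-\frac{d\alpha}{d+1}-1$, not $\gamma-\alpha d-1$. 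Since the hypothesis of the Proposition imposes the more restrictive $\gamma\le 1+\frac{\alpha d}{d+1}$, your conclusion is not invalidated, but the bookkeeping should be corrected: the critical power of $v$ that must be $\le 0$ after the rescaling of $Y_\epsilon$ is $\gamma-\frac{d\alpha}{d+1}-1$, and it is precisely this exponent, in combination with $v\ge 1$ (the case $v\le 1/2$ being handled separately via the cutoff $\xi$ as you note), that the hypothesis of Proposition~\ref{der v g negative} was chosen to control.
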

 \begin{proof}
  Let $Y_{\epsilon}, V_{\epsilon}$ be as in \eqref{the original characteristics}.    We have that 
     \begin{align*}
         G_{\epsilon,L}(y,v,t)=\frac{A e^{\frac{t}{\epsilon}}}{(1+V_{\epsilon}^{b})(1+|Y_{\epsilon}-V_{\epsilon}^{\alpha}|^{m})},
     \end{align*}
with $A$ as in \eqref{initial condition decay}. As before, in order to simplify the notation, we replace the notation of the pair $(Y_{\epsilon},V_{\epsilon})$ in \eqref{the original characteristics} with $(Y,V)$, while keeping in mind the dependence on $\epsilon$.    Moreover, from Proposition \ref{proposition about inequalities}, we know that $Y-V^{\alpha}\leq 0$, for all $t\geq 0$. Thus
     \begin{align*}
         \partial_{v}G_{\epsilon,L}(y,v,t)=-\frac{mA e^{\frac{t}{\epsilon}}(V^{\alpha}-Y)^{m-1}[\alpha V^{\alpha-1}\partial_{v}V-\partial_{v}Y]}{(1+V^{b})(1+(V^{\alpha}-Y)^{m})^{2}}-\frac{b Ae^{\frac{t}{\epsilon}}V^{b-1}\partial_{v}V}{(1+V^{b})^{2}(1+(V^{\alpha}-Y)^{m})}.
     \end{align*}
     Thus, in order to prove that $\partial_{v}G_{\epsilon,L}(y,v,t)\leq 0$ it suffices to prove that $\alpha V^{\alpha-1}\partial_{v}V-\partial_{v}Y\geq 0$ and $\partial_{v}V\geq 0$. Fix $t>0$. We know that $V\geq 0$ by Proposition \ref{proposition about inequalities}. We will prove that
     \begin{align*}
         \partial_{v}V(y,v,t)\geq 0, \textup{ for all } v>0, y\leq (\frac{v}{3})^{\alpha},
     \end{align*}
     and that
     \begin{align*}
         \partial_{v}Y(y,v,t)\leq 0, \textup{ for all } v>0, y\leq (\frac{v}{3})^{\alpha},
     \end{align*}
     thus concluding our proof. Since $\partial_{v}V(y,v,0)=1$, by continuity there exists $t_{1}>0$ such that $\partial_{v}V(y,v,s)\in[\frac{1}{2},1]$, for all $s\in[0,t_{1}]$. By \eqref{the original characteristics}, we have that
     \begin{align*}
         \partial_{s}\partial_{v}Y(y,v,s)=\frac{\partial_{v}Y}{\epsilon}-\frac{\alpha V^{\alpha-1}\partial_{v}V}{\epsilon}, \qquad \partial_{v}Y(y,v,0)=0.
     \end{align*}
     In other words, using Proposition \ref{proposition about inequalities} and since $\alpha<1$, it holds that for all $s\in[0,t_{1}]$ 
     \begin{align*}
             \partial_{s}\big(e^{-\frac{s}{\epsilon}}\partial_{v}Y(y,v,s)\big)=-e^{-\frac{s}{\epsilon}}\frac{\alpha V^{\alpha-1}\partial_{v}V}{\epsilon}\leq -e^{-\frac{s}{\epsilon}}\frac{\alpha v^{\alpha-1}}{2\epsilon}.
     \end{align*}
     Thus, by integrating in time, we obtain that
     \begin{align*}
         e^{-\frac{s}{\epsilon}}\partial_{v}Y\leq \frac{\alpha}{2} e^{-\frac{s}{\epsilon}}v^{\alpha-1}-\frac{\alpha}{2} v^{\alpha-1},
     \end{align*}
     for all $s\in[0,t_{1}],$ and thus
     \begin{align*}
         \partial_{v}Y(y,v,s)\leq \frac{\alpha}{2} v^{\alpha-1}(1-e^{\frac{s}{\epsilon}})\leq 0.
     \end{align*}
Using similar arguments, we obtain that
\begin{align}\label{exponential bound for der y}
    \partial_{v}Y(y,v,s)\geq \frac{10^{1-\alpha}\alpha}{9^{1-\alpha}} v^{\alpha-1}(1-e^{\frac{s}{\epsilon}}).
\end{align}

By \eqref{the original characteristics}, we also have that for all $s\in[0,t_{1}]$,
\begin{align}\label{der v v}
    \partial_{s}\partial_{v}V(y,v,s)=-\frac{\gamma L V^{\gamma-1}\partial_{v}V}{1+|Y|^{d}}+\frac{d L V^{\gamma}|Y|^{d-2}Y\partial_{v}Y}{(1+|Y|^{d})^{2}}, \qquad \partial_{v}V(y,v,0)=1.
\end{align}
We will prove that 
\begin{align}\label{linear ode part 1}
    \int_{0}^{s}\frac{ V^{\gamma-1}(z)\der z}{1+|Y|^{d}(z)}\leq C\epsilon
\end{align}
and that 
\begin{align}\label{linear ode part 2}
    \int_{0}^{s}\frac{V^{\gamma}|Y|^{d-1}|\partial_{v}Y|}{(1+|Y|^{d})^{2}}\der z \leq C\epsilon,
\end{align}
for all $s\in[0,t_{1}]$. We first prove that \eqref{linear ode part 1} holds. By \eqref{ineq} we have that
\begin{align*}
   \frac{V^{\gamma-1}}{1+|Y|^{d}}\leq \frac{ v^{\gamma-1}}{1+|Y|^{d}}.
\end{align*}
Moreover, we remember that 
\begin{align*}
    \partial_{s}Y(y,v,s)=\frac{Y}{\epsilon}-\frac{V^{\alpha}}{\epsilon}\leq \frac{v^{\alpha}}{\epsilon}\bigg[\frac{1}{3^{\alpha}}-\Big(\frac{9}{10}\Big)^{\alpha}\bigg]\leq -\frac{c v^{\alpha}}{\epsilon}
\end{align*}
and then as before we have the following cases. \begin{enumerate}\item[i)]  $Y(y,v,0)=y\in[0,(\frac{v}{3})^{\alpha}]$ and $Y(y,v,t_{1})< 0$.
\end{enumerate}
Then there exists $\tau:=\tau(y,v)\in[0,t_{1})$ such that $Y(y,v,\tau(y,v))=0$ and, for all $s\in[0,t_{1}]$, it holds that
\begin{align}\label{linear bound for y}
 |Y(y,v,s)|\geq \frac{c v^{\alpha}}{\epsilon}|s- \tau(y,v)|.
\end{align}
Then 
\begin{align*}
        \int_{0}^{s}\frac{ V^{\gamma-1}(z)\der z}{1+|Y|^{d}(z)}\leq      \int_{0}^{s}\frac{ Cv^{\gamma-1}\der z}{1+ \frac{ v^{\alpha d}}{\epsilon^{d}}|z- \tau(y,v)|^{d}}.
\end{align*}
We make the change of variable $\xi=\frac{v^{\alpha}}{\epsilon}(z-\tau)$ and use the fact that we can consider without loss of generality that $v\geq 1$ in order to obtain that
\begin{align*}
        \int_{0}^{s}\frac{ V^{\gamma-1}(z)\der z}{1+|Y|^{d}(z)}\leq      \int_{-\infty}^{\infty}\frac{ C\epsilon v^{\gamma-\alpha-1}\der \xi}{1+ |\xi|^{d}}\leq C \epsilon.
\end{align*}
\begin{enumerate}
    \item[ii)]If $Y(y,v,0)=y\in[0,(\frac{v}{3})^{\alpha}]$ and $Y(y,v,t_{1})\geq 0$, we have that \begin{align*}
    |Y(y,v,s)|=Y(y,v,s)\geq Y(y,v,s)-Y(y,v,t_{1})=|Y(y,v,s)-Y(y,v,t_{1})|\geq \frac{c v^{\alpha}}{\epsilon}|s- t_{1}|
\end{align*}\end{enumerate}and the result follows as before. 

\begin{enumerate}\item[iii)]Otherwise $Y(y,v,0)=y<0$. Then for all $s\in[0,t_{1}]$, it holds by  \eqref{iterative argument characteristics} that
  \begin{align*}
Y(y,v,s)\leq -\Big(\frac{9}{10}\Big)^{\alpha}\frac{v^{\alpha}s}{\epsilon}
  \end{align*}\end{enumerate}and proceed as in the previous case in order to show that \eqref{linear ode part 1} holds.

We now prove \eqref{linear ode part 2}. We only analyze the case when $Y(y,v,0)=y\in[0,(\frac{v}{3})^{\alpha}]$ and $Y(y,v,t_{1})<0$ as the other two cases can be treated in a similar, more straightforward manner.
Thus, let us assume $Y(y,v,0)=y\in[0,(\frac{v}{3})^{\alpha}]$ and $Y(y,v,t_{1})<0$. Then, for every $y$ and every $v>0$, there exists $\tau:=\tau(y,v)\in[0,t_{1})$ such that $Y(y,v,\tau(y,v))=0$. We can write 
\begin{align*}
     \int_{0}^{s}\frac{V^{\gamma}|Y|^{d-1}|\partial_{v}Y|}{(1+|Y|^{d})^{2}}\der z = \int_{0}^{\tau(y,v)}\frac{V^{\gamma}|Y|^{d-1}|\partial_{v}Y|}{(1+|Y|^{d})^{2}}\der z + \int_{\tau(y,v)}^{s}\frac{V^{\gamma}|Y|^{d-1}|\partial_{v}Y|}{(1+|Y|^{d})^{2}}\der z =:(I)+(II).
\end{align*}
We bound $(II)$.
We have that
\begin{align*}
    \partial_{s}Y=\frac{Y}{\epsilon}-\frac{V^{\alpha}}{\epsilon}\leq \frac{Y}{\epsilon}-(\frac{9}{10})^{\alpha}\frac{ v^{\alpha}}{\epsilon}
\end{align*}
and thus
\begin{align*}
    \partial_{s}(e^{-\frac{s}{\epsilon}}Y)\leq -e^{-\frac{s}{\epsilon}}(\frac{9}{10})^{\alpha}\frac{v^{\alpha}}{\epsilon}.
\end{align*}
We integrate from $\tau(y,v)$ to $s\in[\tau(y,v),t_{1}]$ and use the fact that $Y(y,v,\tau(y,v))=0$ in order to obtain that
\begin{align*}
   e^{-\frac{s}{\epsilon}}Y(s)\leq e^{-\frac{s}{\epsilon}}(\frac{9}{10})^{\alpha}v^{\alpha}-e^{-\frac{\tau}{\epsilon}}(\frac{9}{10})^{\alpha}v^{\alpha}
\end{align*}
or in other words
\begin{align}\label{exponential bound for y}
    Y(y,v,s)\leq (\frac{9}{10})^{\alpha}v^{\alpha}(1-e^{\frac{s-\tau}{\epsilon}})\leq 0.
\end{align}
Combining \eqref{exponential bound for y} with \eqref{exponential bound for der y}, we have that
\begin{align*}
 \bigg|\frac{V^{\gamma}|Y|^{d-1}\partial_{v}Y}{(1+|Y|^{d})^{2}}\bigg|&\leq  \frac{C  v^{\gamma}|\partial_{v}Y|}{1+|Y|^{d+1}}\leq  \frac{C  v^{\gamma}|\partial_{v}Y|}{1+C v^{\alpha (d+1)}(e^{\frac{s-\tau}{\epsilon}}-1)^{d+1}}\\
 &\leq \frac{C  v^{\gamma+\alpha-1}(e^{\frac{s}{\epsilon}}-1)}{1+v^{\alpha (d+1)}(e^{\frac{s-\tau}{\epsilon}}-1)^{d+1}}\\
 &\leq \frac{C  v^{\gamma-1} (e^{\frac{s}{\epsilon}}-1)}{1+ v^{\alpha d}(e^{\frac{s-\tau}{\epsilon}}-1)^{d+1}}.
\end{align*}
Thus
\begin{align*}
\int_{\tau}^{s} \bigg|\frac{V^{\gamma}|Y|^{d-2}Y\partial_{v}Y}{(1+|Y|^{d})^{2}}\bigg|\der z \leq \int_{\tau}^{s}\frac{C  v^{\gamma-1} (e^{\frac{z}{\epsilon}}-1)}{1+ v^{\alpha d}(e^{\frac{z-\tau}{\epsilon}}-1)^{d+1}}\der z.
\end{align*}
We make the change of variables $\xi=v^{\frac{d\alpha }{d+1}}(e^{\frac{z-\tau}{\epsilon}}-1)$ and thus
\begin{align*}
    \der \xi = \frac{v^{\frac{\alpha d}{d+1}}}{\epsilon}e^{\frac{z-\tau}{\epsilon}}\der z.
\end{align*}
We have that  $\tau\leq C\epsilon$. This is since if we choose $s=0$ in \eqref{linear bound for y}, then $|y|\geq \frac{c v^{\alpha}}{\epsilon}| \tau(y,v)|$ and we are in the case when $y\in[0, (\frac{v}{3})^{\alpha}]$. Using that $\tau\leq C\epsilon$ and $v\geq 1$, we further obtain that
\begin{align}\label{bound ii}
\int_{\tau}^{s} \bigg|\frac{V^{\gamma}|Y|^{d-2}Y\partial_{v}Y}{(1+|Y|^{d})^{2}}\bigg|\der z &\leq \int_{0}^{\infty}\frac{C \epsilon  v^{\gamma-\frac{\alpha d}{d+1}-1}e^{-\frac{z-\tau}{\epsilon}}(e^{\frac{z}{\epsilon}}-1) }{1+ |\xi|^{d+1}}\der \xi\leq \int_{0}^{\infty}\frac{C \epsilon  v^{\gamma-\frac{\alpha d}{d+1}-1}e^{\frac{\tau}{\epsilon}}(1-e^{-\frac{z}{\epsilon}}) }{1+ |\xi|^{d+1}}\der \xi\nonumber\\
&\leq \int_{0}^{\infty}\frac{C \epsilon  v^{\gamma-\frac{\alpha d}{d+1}-1}e^{\frac{\tau}{\epsilon}}}{1+ |\xi|^{d+1}}\der \xi\leq\int_{0}^{\infty}\frac{C \epsilon  v^{\gamma-\frac{\alpha d}{d+1}-1}}{1+ |\xi|^{d+1}}\der \xi\leq  C\epsilon
\end{align}
since we chose $d$ sufficiently large such that $\gamma\leq \frac{d\alpha}{d+1}+1$.

We now bound $(I)$. For $s\leq \tau,$ we only have that \eqref{linear bound for y} holds. Using in addition \eqref{exponential bound for der y}, we have that 
\begin{align*}
    \int_{0}^{\tau(y,v)}\frac{V^{\gamma}|Y|^{d-1}|\partial_{v}Y|} {(1+|Y|^{d})^{2}}\der z\leq   \int_{0}^{\tau}\frac{Cv^{\gamma}|\partial_{v}Y|}{1+|Y|^{d+1}}\der z&\leq \int_{0}^{\tau}\frac{C v^{\gamma+\alpha-1}(e^{\frac{z}{\epsilon}}-1)}{1+\frac{v^{\alpha(d+1)}}{\epsilon^{d+1}}|z-\tau|^{d+1}}\der z\\
    &\leq \int_{0}^{\tau}\frac{C v^{\gamma-1}(e^{\frac{z}{\epsilon}}-1)}{1+\frac{v^{d\alpha}}{\epsilon^{d+1}}|z-\tau|^{d+1}}\der z.
\end{align*}
We now make the change of variables 
\begin{align*}
    \xi&=\frac{v^{\frac{\alpha d}{d+1}}}{\epsilon}(\tau-z);\\
     \der \xi&=-\frac{v^{\frac{\alpha d}{d+1}}}{\epsilon}\der z;\\
     \frac{z}{\epsilon}&=\frac{\tau-\epsilon v^{-\frac{\alpha d}{d+1}}\xi}{\epsilon}\leq \frac{\epsilon \xi +\tau}{\epsilon}= \xi+\frac{\tau}{\epsilon}\leq\xi+1.
\end{align*}
Thus, using that $e^{x}-1\leq Cx,$ when $x\in[0,1]$, we further obtain that
\begin{align}\label{bound i}
    (I)\leq \int_{0}^{\infty}\frac{C\epsilon v^{\gamma-\frac{d\alpha}{d+1}-1}\frac{z}{\epsilon}}{1+|\xi|^{d+1}}\der \xi\leq \int_{0}^{\infty}\frac{C\epsilon v^{\gamma-\frac{d\alpha}{d+1}-1}(\xi+1)}{1+|\xi|^{d+1}}\der \xi\leq C\epsilon,
\end{align}
since $v\geq 1$ and $\gamma\leq \frac{d\alpha}{d+1}+1$. Combining \eqref{bound i} with \eqref{bound ii}, we obtain that \eqref{linear ode part 2} holds.
We denote by 
\begin{align*}
    B(s):=\int_{0}^{s}\frac{L\gamma V^{\gamma-1}}{1+|Y|^{d}}\der z.
\end{align*}
Notice that by \eqref{linear ode part 1}, it holds that $0\leq B(s)\leq CL$, for all $s\in[0,t_{1}]$.
From \eqref{der v v}, we obtain that
\begin{align*}
    \partial_{s}\big(e^{B(s)}\partial_{v}V(s)\big)\geq e^{B(s)}\frac{LdV^{\gamma}|Y|^{d-2}Y\partial_{v}Y}{(1+|Y|^{d})^{2}}\geq -e^{B(s)}\bigg|\frac{LdV^{\gamma}|Y|^{d-2}Y\partial_{v}Y}{(1+|Y|^{d})^{2}}\bigg|\geq -C(L)\bigg|\frac{V^{\gamma}|Y|^{d-2}Y\partial_{v}Y}{(1+|Y|^{d})^{2}}\bigg|,
\end{align*}
for some constant $C(L)>0$ which depends on $L$. Making use of \eqref{linear ode part 2}, we further obtain that
\begin{align*}
e^{B(s)}\partial_{v}V(s)-1\geq -C(L)\epsilon
\end{align*}
and since by \eqref{linear ode part 1} it holds that $0\leq B(s)\leq CL\epsilon$, for all $s\in[0,t_{1}]$, then there exists $\epsilon_{1}\in(0,1)$, which depends on $L$, such that for all $\epsilon\leq \epsilon_{1}$, the following holds
\begin{align*}
    \partial_{v}V(s)\geq e^{-B(s)}(1-C(L)\epsilon)\geq e^{-CL\epsilon}(1-C(L)\epsilon)\geq \frac{3}{4},
\end{align*}
for all $s\in[0,t_{1}]$. We can thus iterate the argument in order to obtain that for all times $t\geq 0$, we have that $\partial_{v}V(y,v,t)\geq \frac{1}{2} \geq 0$ and $\partial_{v}Y(y,v,t)\leq \frac{\alpha}{2}v^{\alpha-1}(1-e^{\frac{t}{\epsilon}})\leq 0$. This concludes our proof.
 \end{proof}
 \begin{rmk}\label{bounds on the derivatives remark}
    As an immediate consequence, we have that
    \begin{align*}
        \frac{10^{1-\alpha}\alpha}{9^{1-\alpha}} v^{\alpha-1}(1-e^{\frac{t}{\epsilon}})&\leq \partial_{v}Y(y,v,t)\leq \frac{\alpha v^{\alpha-1}}{2}(1-e^{\frac{t}{\epsilon}});\\
        \frac{1}{2}&\leq \partial_{v}V(y,v,t)\leq 1;\\
    \frac{\alpha v^{\alpha-1}}{2}e^{\frac{t}{\epsilon}}&\leq    \alpha V^{\alpha-1}\partial_{v}{V}-\partial_{v}Y\leq  \frac{10^{1-\alpha}\alpha}{9^{1-\alpha}} v^{\alpha-1}e^{\frac{t}{\epsilon}}.
    \end{align*}
 \end{rmk}
 \begin{prop}\label{bounds derivative prop}
Let $L>0$. Let $G_{\epsilon,L}$ be as in \eqref{equation supersolution} and $\epsilon_{1}\in(0,1)$ and $d>1$ be as in Proposition \ref{der v g negative}. It holds that there exists $K_{2}>0$, which is independent of $\epsilon\leq \epsilon_{1}$,$L>0$, but which can depend on $\alpha,\gamma,b,m$, such that for all $t\geq 0$ and $\epsilon\leq\epsilon_{1}$, we have that
 \begin{align*}
    -\partial_{v}G_{\epsilon,L}(y,w,t)\leq -K_{2} \partial_{v}G_{\epsilon,L}(y,v,t), \textup{ for all } w\in\Big[\frac{v}{2},v\Big] \textup{ and all } y\leq \Big(\frac{v}{3}\Big)^{\alpha}.
 \end{align*}
 \end{prop}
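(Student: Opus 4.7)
The plan is to carry out a direct pointwise comparison of $-\partial_{v}G_{\epsilon,L}(y,w,t)$ and $-\partial_{v}G_{\epsilon,L}(y,v,t)$ using the explicit formula
\begin{align*}
-\partial_{v}G_{\epsilon,L}(y,v,t) &= \frac{mAe^{\frac{t}{\epsilon}}(V^{\alpha}-Y)^{m-1}[\alpha V^{\alpha-1}\partial_{v}V-\partial_{v}Y]}{(1+V^{b})(1+(V^{\alpha}-Y)^{m})^{2}} \\
&\quad + \frac{bAe^{\frac{t}{\epsilon}}V^{b-1}\partial_{v}V}{(1+V^{b})^{2}(1+(V^{\alpha}-Y)^{m})},
\end{align*}
obtained at the start of the proof of Proposition \ref{der v g negative}, with $V=V_{\epsilon}(y,\cdot,t)$ and $Y=Y_{\epsilon}(y,\cdot,t)$ the characteristics from \eqref{the original characteristics}. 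By Proposition \ref{der v g negative} both summands are non-negative throughout the region of interest, so no cancellation can spoil the desired bound, and it is enough to show that each of the building blocks of the formula is comparable when evaluated at $u\in\{v,w\}$.

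Concretely, I would extract the following two-sided bounds from Proposition \ref{proposition about inequalities} and Remark \ref{bounds on the derivatives remark}, valid for $u\in[v/2,v]$ and $y\leq (u/3)^{\alpha}$, with all implicit constants depending only on $\alpha,\gamma,b,m$ (and in particular independent of $\epsilon\leq\epsilon_{1}$ and $L$): $V\approx v$ follows from $\frac{9}{10}u\leq V_{\epsilon}(y,u,t)\leq u$; $V^{\alpha}-Y\approx e^{\frac{t}{\epsilon}}(v^{\alpha}-y)$ follows from \eqref{ineqq}--\eqref{ineq 3} together with the fact that $u^{\alpha}-y$ and $v^{\alpha}-y$ are comparable in this range; and $\partial_{v}V\in[1/2,1]$ together with $\alpha V^{\alpha-1}\partial_{v}V-\partial_{v}Y\approx v^{\alpha-1}e^{\frac{t}{\epsilon}}$ are exactly the content of Remark \ref{bounds on the derivatives remark}. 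Substituting these estimates into each summand of the formula above, both the numerator and denominator at $w$ match those at $v$ up to universal multiplicative constants, yielding $K_{2}=K_{2}(\alpha,\gamma,b,m)>0$.

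The hard part is that Proposition \ref{proposition about inequalities} and Remark \ref{bounds on the derivatives remark} are proved under the initial constraint $y\leq(v/3)^{\alpha}$, whereas the analogous requirement for the evaluation at $w$ reads $y\leq(w/3)^{\alpha}$, which is strictly stronger when $w<v$. For $y\leq 0$, or more generally $y\leq(v/6)^{\alpha}$, the latter is automatic for every $w\in[v/2,v]$ and the argument above goes through directly. In the transitional strip $y\in((v/6)^{\alpha},(v/3)^{\alpha}]$ there is a narrow window $w\in[v/2,3y^{1/\alpha})$ in which the hypothesis fails; I expect to handle this by re-running the characteristic analysis of Proposition \ref{proposition about inequalities} from initial data just above $(w/3)^{\alpha}$, using that the transport $-(V^{\alpha}-Y)/\epsilon$ in the $Y$-equation retains a definite sign and forces $Y$ below $(V/3)^{\alpha}$ within a time of order $\epsilon$, after which the same two-sided estimates hold, at worst with slightly enlarged absolute constants. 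This is the place where the proof is most delicate but, since the sliver is quantitatively controlled, it does not alter the structure of the argument.
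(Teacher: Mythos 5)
Your overall structure matches the paper's: you write out the explicit formula for $-\partial_{v}G_{\epsilon,L}$ along characteristics, observe that both summands are non-negative by Proposition~\ref{der v g negative}, and then compare the building blocks $V$, $V^{\alpha}-Y$, $\partial_{v}V$, and $\alpha V^{\alpha-1}\partial_{v}V-\partial_{v}Y$ evaluated at $w$ versus at $v$ using Proposition~\ref{proposition about inequalities} and Remark~\ref{bounds on the derivatives remark}. The observation that the prefactors $u^{\alpha}-y$ for $u\in\{w,v\}$ are comparable in the region $y\leq(v/3)^{\alpha}$ is also the same as the paper's. The point of divergence is your treatment of what you correctly identify as the delicate step: the estimates of Propositions~\ref{proposition about inequalities} and \ref{der v g negative} were proved under $y\leq(u/3)^{\alpha}$, which fails for some $w\in[v/2,v]$ when $y\in((v/6)^{\alpha},(v/3)^{\alpha}]$.

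Your proposed fix — wait an $O(\epsilon)$ time for $Y$ to fall below $(V/3)^{\alpha}$ and then invoke the existing estimates — is substantially more complicated than what is needed, and as sketched it is incomplete: during that initial window you would still have to control $V$ (which is strictly decreasing) and $\partial_{v}V$, $\partial_{v}Y$ uniformly, and you do not indicate how. The paper's resolution is simpler and structural: the hypothesis $y\leq(v/3)^{\alpha}$ was never really about the ratio $1/3$; what the proof of Proposition~\ref{proposition about inequalities} actually uses, at the key step \eqref{better inequality for Y}, is only that the initial value $y$ is bounded by $cw^{\alpha}$ for some constant $c<(9/10)^{\alpha}$, so that $\partial_{s}\overline{Y}\leq\frac{w^{\alpha}}{\epsilon}\bigl[c-(9/10)^{\alpha}\bigr]<0$ and $\overline{Y}$ stays below its initial value. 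Since $w\geq v/2$, the given hypothesis $y\leq(v/3)^{\alpha}$ already implies $y\leq(2/3)^{\alpha}w^{\alpha}$, and $(2/3)^{\alpha}<(9/10)^{\alpha}$ still holds. Thus the entire analysis of Propositions~\ref{proposition about inequalities} and \ref{der v g negative} carries over verbatim for the system \eqref{ode system different initial data}, with $(1/3)^{\alpha}$ replaced by $(2/3)^{\alpha}$ and the corresponding absolute constants adjusted; no separate time window or re-initialization is required. I would recommend replacing your paragraph on the transitional strip with this observation, which turns the "delicate" step into an immediate one.
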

 \begin{proof}
We remember that
     \begin{align*}
           \partial_{v}G_{\epsilon,L}(y,v,t)=-\frac{mA e^{\frac{t}{\epsilon}}(V^{\alpha}-Y)^{m-1}[\alpha V^{\alpha-1}\partial_{v}V-\partial_{v}Y]}{(1+V^{b})(1+(V^{\alpha}-Y)^{m})^{2}}-\frac{b Ae^{\frac{t}{\epsilon}}V^{b-1}\partial_{v}V}{(1+V^{b})^{2}(1+(V^{\alpha}-Y)^{m})},
     \end{align*}
    with $A$ as in \eqref{initial condition decay}. By Proposition \ref{proposition about inequalities} and Remark \ref{remark v alpha y}, we have that
     \begin{align}\label{step 1}
      0\leq \bigg[\Big(\frac{9}{10}\Big)^{\alpha}-\frac{1}{3^{\alpha}}\bigg]e^{\frac{t}{\epsilon}}(v^{\alpha}-y)\leq V^{\alpha}-Y\leq   e^{\frac{t}{\epsilon}}(v^{\alpha}-y).
     \end{align}
     By Remark \ref{bounds on the derivatives remark}, it holds that 
     \begin{align}\label{step 2}
          \frac{\alpha v^{\alpha-1}}{2}e^{\frac{t}{\epsilon}}\leq    \alpha V^{\alpha-1}\partial_{v}{V}-\partial_{v}Y\leq  \frac{10^{1-\alpha}\alpha}{9^{1-\alpha}} v^{\alpha-1}e^{\frac{t}{\epsilon}}.
     \end{align}
     By \eqref{step 1} and \eqref{step 2}, we have that
     \begin{align*}
           \frac{m e^{\frac{t}{\epsilon}}(V^{\alpha}-Y)^{m-1}[\alpha V^{\alpha-1}\partial_{v}V-\partial_{v}Y]}{(1+(V^{\alpha}-Y)^{m})^{2}}\geq C(\alpha,m)\frac{e^{\frac{t(m+1)}{\epsilon}}(v^{\alpha}-y)^{m-1}v^{\alpha-1}}{(1+e^{\frac{tm}{\epsilon}}(v^{\alpha}-y)^{m})^{2}}.
     \end{align*}
     It also holds that
     \begin{align*}
         \frac{b e^{\frac{t}{\epsilon}}V^{b-1}\partial_{v}V}{(1+V^{b})^{2}(1+(V^{\alpha}-Y)^{m})}\geq C(\alpha,b,m)\frac{e^{\frac{t}{\epsilon}} v^{b-1}}{(1+v^{b})^{2}(1+(v^{\alpha}-y)^{m})}.
     \end{align*}
     Thus
     \begin{align*}
        -\partial_{v}G_{\epsilon,L}(y,v,t)\geq C(\alpha,b,m)A\bigg[\frac{e^{\frac{t(m+1)}{\epsilon}}(v^{\alpha}-y)^{m-1}v^{\alpha-1}}{(1+v^{b})(1+e^{\frac{tm}{\epsilon}}(v^{\alpha}-y)^{m})^{2}}+\frac{e^{\frac{t}{\epsilon}} v^{b-1}}{(1+v^{b})^{2}(1+(v^{\alpha}-y)^{m})}\bigg].
     \end{align*}
  We now look at the following system of ODE's.
  \begin{equation}\label{ode system different initial data}
\left\{\begin{aligned}
\partial_{t}{Y}(y,w,t)&=-\frac{1}{\epsilon}({V}^{\alpha}-Y), & \qquad {Y}(y,w,0)&=y\leq (\frac{v}{3})^{\alpha}\,, \\
\partial_{t}{V}(y,w,t)&=-\frac{{L}{V}^{\gamma}}{1+|{Y}|^{d}},& \qquad {V}(y,w,0)&=w\in\Big[\frac{v}{2},v\Big].
   \end{aligned}\right.
   \end{equation}
   We are able to reproduce the computations from Proposition \ref{proposition about inequalities} and Proposition \ref{der v g negative} in order to obtain similar estimates for the solution $(\overline{Y},\overline{V})$ of the system \eqref{ode system different initial data}. This is since $y\leq (\frac{v}{3})^{\alpha}\leq (\frac{2}{3})^{\alpha}w^{\alpha}$. This implies that \eqref{better inequality for Y} becomes
   \begin{align}
      \partial_{s}\overline{Y}\leq \frac{w^{\alpha}}{\epsilon}\bigg[\frac{2^{\alpha}}{3^{\alpha}}-\Big(\frac{9}{10}\Big)^{\alpha}\bigg]\leq -\frac{\tilde{c} w^{\alpha}}{\epsilon}.
  \end{align}
  The rest of the proof follows similarly, but some constants need to be modified. We obtain that
  \begin{align*}
        \frac{9}{10} w &\leq  \overline{V}(y,w,t)\leq w;\\
     \overline{Y}(y,w,t)&\leq \frac{2^{\alpha}}{3^{\alpha}}w^{\alpha};\\
e^{\frac{t}{\epsilon}} (y-w^{\alpha}) & \leq   \overline{Y}-\overline{V}^{\alpha};\\
 \overline{Y}-\overline{V}^{\alpha}&\leq \bigg[\Big(\frac{9}{10}\Big)^{\alpha}-\frac{2^{\alpha}}{3^{\alpha}}\bigg] e^{\frac{t}{\epsilon}} (y-w^{\alpha}).
  \end{align*}
  The bounds for $\partial_{v}\overline{Y}(y,w,t)$ and $\partial_{v}\overline{V}(y,w,t)$ do not change. We thus obtain that
      \begin{align*}
           -\partial_{v}G_{\epsilon,L}(y,w,t)\leq C(\alpha,b,m)A\bigg[\frac{e^{\frac{t(m+1)}{\epsilon}}(w^{\alpha}-y)^{m-1}w^{\alpha-1}}{(1+w^{b})(1+e^{\frac{tm}{\epsilon}}(w^{\alpha}-y)^{m})^{2}}+\frac{e^{\frac{t}{\epsilon}}w^{b-1}}{(1+w^{b})^{2}(1+e^{\frac{tm}{\epsilon}}(w^{\alpha}-y)^{m})}\bigg].
     \end{align*}
     If $y\leq 0$, we have that
 \begin{align*}
  v^{\alpha}-y\geq w^{\alpha}-y\geq     \Big(\frac{v}{2}\Big)^{\alpha}-y\geq \frac{v^{\alpha}-y}{2^{\alpha}}. 
 \end{align*}
 If $y\in[0,(\frac{v}{3})^{\alpha}]$, then
  \begin{align*}
  v^{\alpha}-y\geq w^{\alpha}-y\geq     \Big(\frac{v}{2}\Big)^{\alpha}-y\geq \frac{3^{\alpha}-2^{\alpha}}{6^{\alpha}}v^{\alpha}\geq  \frac{3^{\alpha}-2^{\alpha}}{6^{\alpha}}(v^{\alpha}-y).
 \end{align*}
 Concluding, we have that
 \begin{align*}
            \frac{e^{\frac{t(m+1)}{\epsilon}}(w^{\alpha}-y)^{m-1}w^{\alpha-1}}{(1+e^{\frac{tm}{\epsilon}}(w^{\alpha}-y)^{m})^{2}}\leq C(\alpha,b,m) \frac{e^{\frac{t(m+1)}{\epsilon}}(v^{\alpha}-y)^{m-1}v^{\alpha-1}}{(1+e^{\frac{tm}{\epsilon}}(v^{\alpha}-y)^{m})^{2}}
 \end{align*}
 and further that
 \begin{align*}
     -\partial_{v}G_{\epsilon,L}(y,w,t)\leq  -K_{2}\partial_{v}G_{\epsilon,L}(y,v,t).
 \end{align*}
 \end{proof}
 \subsection{Proof of existence of a supersolution in the region $y\leq (\frac{v}{3})^{\alpha}$}\label{subsection approximation}
 We are now in a position to prove the main statement of this section.

 \begin{proof}[Proof of Proposition \ref{main statement region below}]
Assume  $H_{n}(y,v,t)\leq  T_{1}(y,v,t)+ T_{2}(y,v,t)+ T_{3}(y,v,t)$. Let $C_{\gamma}:=1+\frac{1}{2^{\gamma}}$, $K_{0}$ as in \eqref{upper bound coagulation kernel}, $K_{1}$ as in Proposition \ref{moment estimates prop}, $K_{2}>0$ be as in Proposition \ref{bounds derivative prop}, and $A$ as in \eqref{initial condition decay}. Let 
\begin{align}\label{defk3}
    G_{\epsilon}:=e^{K_{3}(\alpha,b,m)L t}G_{\epsilon,L}, \textup{ where } G_{\epsilon,L} \textup{ solves } \eqref{equation supersolution} 
\end{align}with 
\begin{align}\label{definition l}
L=C_{\gamma}K_{0}K_{1}K_{2}A^{3}.
\end{align}Moreover, $d$ is chosen as in \eqref{we need function to be even} such that $d>1$, even, and sufficiently large such that $\gamma\leq \frac{d\alpha}{d+1}+1$, and $K_{3}(\alpha,b,m)$ is a fixed constant depending on the parameters $\alpha, b,m$ which will be determined later. Notice that $d$ depends on $b$ and  the minimum value of $b$ for which these conditions are satisfied determines the value of $\overline{b}(\gamma,\alpha)$ in \eqref{definition overline b} as noticed in Remark \ref{we need function to be even minimum recovered}.

We prove Proposition \ref{main statement region below} by first proving that $H_{n+1}(y,v,t)\leq G_{\epsilon}(y,v,t),$ for all $t\in[0,T]$ and $y\leq (\frac{v}{3})^{\alpha}$, $v\in(0,\infty),$ and then that  $G_{\epsilon}(y,v,t)\leq T_{3}(y,v,t)$ for all $t\in[0,T]$ and $y\leq (\frac{v}{3})^{\alpha}$, $v\in(0,\infty)$.

Let $n\in\mathbb{N}$. We prove that $G_{\epsilon}$ is a supersolution for \eqref{iteration supersol}. In other words, we prove that
\begin{align}\label{supersol main ineq}
    \partial_{t}G_{\epsilon}(y,v,t)+\frac{1}{\epsilon}\partial_{y}[(v^{\alpha}-y)G_{\epsilon}(y,v,t)]-&\int_{0}^{\frac{v}{2}}K(v-w,w)G_{\epsilon}(y,v-w,t)H_{n}(y,w,t)\der w \nonumber\\
   &+\int_{0}^{\infty}K(v,w)G_{\epsilon}(y,v,t)H_{n}(y,w,t)\der w\geq 0.
   \end{align}
Since $K(v-w,w)\leq K(v,w),$ for all $w\leq v$, it follows that
\begin{align*}
    \partial_{t}G_{\epsilon}(y,v,t)+\frac{1}{\epsilon}\partial_{y}[(v^{\alpha}-y)G_{\epsilon}(y,v,t)]-&\int_{0}^{\frac{v}{2}}K(v-w,w)G_{\epsilon}(y,v-w,t)H_{n}(y,w,t)\der w \\
   &+\int_{0}^{\infty}K(v,w)G_{\epsilon}(y,v,t)H_{n}(y,w,t)\der w \\
   \geq   \partial_{t}G_{\epsilon}(y,v,t)+\frac{1}{\epsilon}\partial_{y}[(v^{\alpha}-y)G_{\epsilon}(y,v,t)]-&\int_{0}^{\frac{v}{2}}K(v-w,w)G_{\epsilon}(y,v-w,t)H_{n}(y,w,t)\der w \\
   &+\int_{0}^{\frac{v}{2}}K(v,w)G_{\epsilon}(y,v,t)H_{n}(y,w,t)\der w\\
  \geq \partial_{t}G_{\epsilon}(y,v,t)+\frac{1}{\epsilon}\partial_{y}[(v^{\alpha}-y)G_{\epsilon}(y,v,t)]-&\int_{0}^{\frac{v}{2}}K(v,w)G_{\epsilon}(y,v-w,t)H_{n}(y,w,t)\der w \\
   &+\int_{0}^{\frac{v}{2}}K(v,w)G_{\epsilon}(y,v,t)H_{n}(y,w,t)\der w
   \\
= \partial_{t}G_{\epsilon}(y,v,t)+\frac{1}{\epsilon}\partial_{y}[(v^{\alpha}-y)G_{\epsilon}(y,v,t)]-&\int_{0}^{\frac{v}{2}}K(v,w)[G_{\epsilon}(y,v-w,t)-G_{\epsilon}(y,v,t)]H_{n}(y,w,t)\der w 
  \\
= \partial_{t}G_{\epsilon}(y,v,t)+\frac{1}{\epsilon}\partial_{y}[(v^{\alpha}-y)G_{\epsilon}(y,v,t)]+&\int_{0}^{\frac{v}{2}}K(v,w)\int_{v-w}^{v}\partial_{v}G_{\epsilon}(y,z,t)\der z H_{n}(y,w,t)\der w =: (\ast).
\end{align*}

From Proposition \ref{proposition about inequalities}, Proposition \ref{der v g negative}, and Proposition \ref{bounds derivative prop},   we have that $G_{\epsilon}$ has the following properties.
\begin{itemize}
    \item $\partial_{v}G_{\epsilon}(y,z,t)\geq     K_{2}\partial_{v}G_{\epsilon}(y,v,t)$, for all $z\in[v-w,v],$ $w\in[0,\frac{v}{2}];$
      \item $\partial_{v}G_{\epsilon}(y,v,t)\leq 0$, for all $y\leq (\frac{v}{3})^{\alpha}$.
\end{itemize} 

Since also $K(v,w)\leq C_{\gamma}K_{0}v^{\gamma}$ by \eqref{upper bound coagulation kernel} when $w\in[0,\frac{v}{2}]$, we obtain that
\begin{align}\label{supersol formal}
    (\ast)\geq\partial_{t}G_{\epsilon}(y,v,t)+\frac{1}{\epsilon}\partial_{y}[(v^{\alpha}-y)G_{\epsilon}(y,v,t)]+&C_{\gamma}K_{0}K_{2}v^{\gamma}\partial_{v}G_{\epsilon}(y,v,t)\int_{0}^{\frac{v}{2}}w H_{n}(y,w,t)\der w.
\end{align}

Making use of Proposition \ref{moment estimates prop}, the non-positivity of $\partial_{v}G_{\epsilon}$ in \eqref{supersol formal}, and since by assumption $H_{n}\leq T_{1}+T_{2}+T_{3}$, we obtain that
\begin{align*}
    (\ast)&\geq\partial_{t}G_{\epsilon}(y,v,t)+\frac{1}{\epsilon}\partial_{y}[(v^{\alpha}-y)G_{\epsilon}(y,v,t)]\\
    &+C_{\gamma}K_{0}K_{2}v^{\gamma}\partial_{v}G_{\epsilon}(y,v,t)\int_{0}^{\frac{v}{2}}w [T_{1}+T_{2}+T_{3}](y,w,t)\der w\\
    &\geq \partial_{t}G_{\epsilon}(y,v,t)+\frac{1}{\epsilon}\partial_{y}[(v^{\alpha}-y)G_{\epsilon}(y,v,t)]+\frac{C_{\gamma}K_{0}K_{1}K_{2}A^{3}v^{\gamma}}{1+|y|^{\frac{b-2}{\alpha}}}\partial_{v}G_{\epsilon}(y,v,t)\\
    &\geq\partial_{t}G_{\epsilon}(y,v,t)+\frac{1}{\epsilon}(v^{\alpha}-y)\partial_{y}G_{\epsilon}(y,v,t)+\frac{Lv^{\gamma}}{1+|y|^{d}}\partial_{v}G_{\epsilon}(y,v,t)-\frac{1}{\epsilon}G_{\epsilon}(y,v,t),
\end{align*}
where in the last line we used that $\frac{b-2}{\alpha}\geq d$. Notice that this is equation \eqref{equation supersolution} when $v\geq 1$ and thus  $G_{\epsilon}$ is a supersolution for \eqref{iteration supersol}. Thus
\begin{align*}
    (\ast)
    &\geq \partial_{t}G_{\epsilon}(y,v,t)+\frac{1}{\epsilon}(v^{\alpha}-y)\partial_{y}G_{\epsilon}(y,v,t)+\frac{L\xi_{R}(v)v^{\gamma}}{1+|y|^{d}}\partial_{v}G_{\epsilon}(y,v,t)-\frac{1}{\epsilon}G_{\epsilon}(y,v,t)\\
    &-\bigg|\frac{L[1-\xi_{R}](v)v^{\gamma}}{1+|y|^{d}}\partial_{v}G_{\epsilon}(y,v,t)\bigg|\\
     &\geq \partial_{t}G_{\epsilon}(y,v,t)+\frac{1}{\epsilon}(v^{\alpha}-y)\partial_{y}G_{\epsilon}(y,v,t)+\frac{L\xi_{R}(v)v^{\gamma}}{1+|y|^{d}}\partial_{v}G_{\epsilon}(y,v,t)-\frac{1}{\epsilon}G_{\epsilon}(y,v,t)\\
    &-\bigg|\frac{L\mathbbm{1}_{[0,1]}(v)v^{\gamma}}{1+|y|^{d}}\partial_{v}G_{\epsilon}(y,v,t)\bigg|.
\end{align*}
Assume that 
\begin{align}\label{bound derivative by function}
    \bigg|\frac{L\mathbbm{1}_{[0,1]}(v)v^{\gamma}}{1+|y|^{d}}\partial_{v}G_{\epsilon}(y,v,t)\bigg|\leq K_{3}(\alpha,b,m)L G_{\epsilon}(y,v,t).
\end{align}
Then 
\begin{align*}
    (\ast) \geq &\partial_{t}G_{\epsilon}(y,v,t)+\frac{1}{\epsilon}(v^{\alpha}-y)\partial_{y}G_{\epsilon}(y,v,t)+\frac{L\xi_{R}(v)v^{\gamma}}{1+|y|^{d}}\partial_{v}G_{\epsilon}(y,v,t)-\frac{1}{\epsilon}G_{\epsilon}(y,v,t)\\
    &-K_{3}(\alpha,b,m)LG_{\epsilon}(y,v,t).
\end{align*}
By \eqref{defk3} , it holds that $\partial_{t}G_{\epsilon}(y,v,t)=e^{K_{3}(\alpha,b,m)Lt}\partial_{t}G_{\epsilon,L}(y,v,t)+K_{3}(\alpha,b,m)LG_{\epsilon}(y,v,t)$. Thus
\begin{align*}
    (\ast) \geq &e^{K_{3}Lt}\bigg[\partial_{t}G_{\epsilon,L}(y,v,t)+\frac{1}{\epsilon}(v^{\alpha}-y)\partial_{y}G_{\epsilon,L}(y,v,t)+\frac{L\xi_{R}(v)v^{\gamma}}{1+|y|^{d}}\partial_{v}G_{\epsilon,L}(y,v,t)\\
    &-\frac{1}{\epsilon}G_{\epsilon,L}(y,v,t)\bigg]= 0.
\end{align*}
This concludes our proof. 

It remains to prove that \eqref{bound derivative by function} holds. We have proved in Proposition \ref{bounds derivative prop} that
  \begin{align*}
           -\partial_{v}G_{\epsilon,L}(y,v,t)\leq C(m,\alpha,b)A\bigg[\frac{e^{\frac{t(m+1)}{\epsilon}}(v^{\alpha}-y)^{m-1}v^{\alpha-1}}{(1+v^{b})(1+e^{\frac{tm}{\epsilon}}(v^{\alpha}-y)^{m})^{2}}+\frac{e^{\frac{t}{\epsilon}}v^{b-1}}{(1+v^{b})^{2}(1+e^{\frac{tm}{\epsilon}}(v^{\alpha}-y)^{m})}\bigg].
     \end{align*}
Thus
\begin{align*}
     \bigg|\frac{L\mathbbm{1}_{[0,1]}(v)v^{\gamma}}{1+|y|^{d}}\partial_{v}G_{\epsilon}(y,v,t)\bigg|\leq C(\alpha,b,m)Lv^{\gamma}\mathbbm{1}_{[0,1]}(v) G_{\epsilon}(y,v,t)\bigg[\frac{e^{\frac{tm}{\epsilon}}(v^{\alpha}-y)^{m-1}v^{\alpha-1}}{1+e^{\frac{tm}{\epsilon}}(v^{\alpha}-y)^{m}}+\frac{v^{b-1}}{1+v^{b}}\bigg].
\end{align*}
Since $y\leq (\frac{v}{3})^{\alpha}$ and thus $v^{\alpha}-y\geq C(\alpha) v^{\alpha}$, we further have that
\begin{align}
     \bigg|\frac{L\mathbbm{1}_{[0,1]}(v)v^{\gamma}}{1+|y|^{d}}\partial_{v}G_{\epsilon}(y,v,t)\bigg|&\leq C(\alpha,b,m)L v^{\gamma-1}\mathbbm{1}_{[0,1]}(v) G_{\epsilon}(y,v,t)\bigg[\frac{e^{\frac{tm}{\epsilon}}(v^{\alpha}-y)^{m}}{1+e^{\frac{tm}{\epsilon}}(v^{\alpha}-y)^{m}}+\frac{v^{b}}{1+v^{b}}\bigg]\nonumber\\
     &\leq  C(\alpha,b,m)L v^{\gamma-1}\mathbbm{1}_{[0,1]}(v) G_{\epsilon}(y,v,t) \leq C(\alpha,b,m)L G_{\epsilon}(y,v,t) ,\label{def m2 supersol}
\end{align}
since $\gamma>1$ and thus \eqref{bound derivative by function} holds if we choose $K_{3}$ as above. Thus, we have that $H_{n+1}\leq G_{\epsilon}$ when $y\leq (\frac{v}{3})^{\alpha}$.

We now prove that  $G_{\epsilon}(y,v,t)\leq T_{3}(y,v,t)$ for all $t\in[0,T]$ and $y\leq (\frac{v}{3})^{\alpha}$, $v\in(0,\infty)$. Let $y\leq (\frac{v}{3})^{\alpha}$. If 
\begin{align}\label{definition time supersol}
    t\leq \frac{1}{K_{3}(\alpha,b,m)L },
\end{align} where $K_{3}(\alpha,b,m)$ is as in \eqref{def m2 supersol} and $L$ as in \eqref{definition l}, then $e^{K_{3}(\alpha,b,m)L t}\leq 3$.  From Proposition \ref{proposition about inequalities}, we have that
\begin{align}
  (1+v^{b})  G_{\epsilon}(y,v,t)&\leq \frac{C(\alpha,b,m)e^{K_{3}(\alpha,b,m)L t}Ae^{\frac{t}{\epsilon}}}{1+e^{\frac{tm}{\epsilon}}|v^{\alpha}-y|^{m}}\leq \frac{3 C(\alpha,b,m)A e^{\frac{t}{\epsilon}}}{1+e^{\frac{tm}{\epsilon}}|v^{\alpha}-y|^{m}}\nonumber\\
  &= \frac{M_{2}Ae^{\frac{t}{\epsilon}}}{1+e^{\frac{tm}{\epsilon}}|v^{\alpha}-y|^{m}}=T_{3}(y,v,t)\label{def m2}
\end{align}
by the definition of $T_{3}$ is \eqref{def t3}. We also remark that the constant $M_{2}$ in the above inequality depends only on the choice of parameters $\alpha,  b,m$ since it appears due to the bounds found in Proposition \ref{proposition about inequalities}  and does not depend on $n$. This concludes our proof.
\end{proof}

\section{Proof of Theorem \ref{proposition existence via fixed point} - Existence}\label{section five}
This section is dedicated to the proof of Theorem 
\ref{proposition existence via fixed point}. We remember the following.

Let   $b\geq \max\{\overline{b}(\gamma,\alpha),\gamma+1\}$ and $m>\max\{\frac{2(\gamma+1)}{\alpha},\frac{b}{\alpha}+1\}$, where  $\overline{b}(\gamma,\alpha)$ is as in \eqref{definition overline b}. By Proposition \ref{supersol prop above region},  there exists $\epsilon_{1}\in(0,1)$ and $t\in[0,1]$ sufficiently small such that for all $\epsilon\leq \epsilon_{1}$, such that $H_{n}(y,v,s)\leq T_{1}(s)+T_{2}(s),$ for all $s\in[0,t]$, $y\geq (\frac{v}{3})^{\alpha}$, and $n\in\mathbb{N}$. By Proposition \ref{main statement region below}, there exist $\epsilon_{1}\in(0,1)$ and $T\leq 1$ sufficiently small such that for all $\epsilon\leq \epsilon_{1}$, $H_{n}(y,v,t)\leq T_{3}(y,v,t),$ for all $t\in[0,T]$, $y\leq (\frac{v}{3})^{\alpha}$, $v\in(0,\infty)$, and for all $n\in\mathbb{N}$. In other words, we have that
\begin{align}
H_{n}(y,v,t)\leq T_{1}(y,v,t)+T_{2}(y,v,t)+T_{3}(y,v,t), \textup{ for all } n\in\mathbb{N},
\end{align}
for all $t\in[0,T]$, with $T\leq 1$, sufficiently small, and all $y\in\mathbb{R}$, $v\in(0,\infty)$. We will prove that

\vspace{0.1cm}

\textbf{\underline{Claim:}} Assume in addition that $b>2\gamma+1$. For all $n\in\mathbb{N}$
\begin{align}\label{sequence is cauchy}
\sup_{t\in[0,T], y\in\mathbb{R}, v\in(0,\infty)}|H_{n+1}(y,v,t)-H_{n}(y,v,t)|\leq (CT)^{n}\rightarrow 0
 \textup{ as } n\rightarrow 0 \textup{ for } T< \frac{1}{2C}.
 \end{align}
 A similar strategy can then be employed in order to prove that for all $n,\tilde{n}\in\mathbb{N}$, $\tilde{n}\geq n+1$, it holds that
 \begin{align*}
\sup_{t\in[0,T], y\in\mathbb{R}, v\in(0,\infty)}|H_{\tilde{n}}(y,v,t)-H_{n}(y,v,t)|\leq (CT)^{n}\rightarrow 0
 \textup{ as } n\rightarrow 0 \textup{ for } T< \frac{1}{2C}.
 \end{align*}
 Assuming that \eqref{sequence is cauchy} is true, we have that for fixed $\epsilon$ there exists a limit $H_{\epsilon}(y,v,t)\in \textup{C}([0,T];L^{\infty}(\mathbb{R}\times(0,\infty))$ such that $H_{n}\rightarrow H_{\epsilon}$ as $n\rightarrow \infty$ and moreover
 \begin{align*}
H_{\epsilon}(y,v,t)\leq T_{1}(y,v,t)+T_{2}(y,v,t)+T_{3}(y,v,t).
 \end{align*}
 Thus, assuming \eqref{sequence is cauchy} holds, what is left to prove is that $H_{\epsilon}$ solves \eqref{mild solution equation}. We have that $H_{n}$ satisfies
\begin{align*}
    &H_{n+1}(y,v,t)-S_{\epsilon}(t)[H_{n+1}(y,v,0) ] D[H_{n}](y,v,0,t)=\\
&\frac{1}{2}\int_{0}^{t}\int_{(0,v)}D[H_{n}](y,v,s,t)
S_{\epsilon}(t-s)\bigg[K(v-v',v')H_{n+1}(y,v-w,s) H_{n}(y,w,s)\bigg]\der s
\end{align*}
and wish to pass to the limit in the equation. Notice that
\begin{align}\label{bound hn new}
     H_{n}(e^{\frac{t-s}{\epsilon}}(y-v^{\alpha})+v^{\alpha},w,s)
      \leq [T_{1}+T_{2}+T_{3}](e^{\frac{t-s}{\epsilon}}(y-v^{\alpha})+v^{\alpha},w,s),
    \end{align}
where $T_{1},T_{2}, T_{3}$ are as in \eqref{def t1}-\eqref{def t3}.    Denote by
    \begin{align*}
        X_{\epsilon}(y,v,t,s):=e^{\frac{t-s}{\epsilon}}(y-v^{\alpha})+v^{\alpha}.
    \end{align*}
    We can then replace $y$ by $X_{\epsilon}$ in the proof of Proposition \ref{moment estimates prop} in order to obtain the following.
\begin{prop}[Moment estimates]\label{moment estimates limit part}

 Let $b>\gamma+1$ and $m\geq \frac{b}{\alpha}+1$ in \eqref{def t1}-\eqref{def t3}. Let $T\leq 1$ and $H_{n}$ be as in \eqref{bound hn new}. It then holds that
\begin{align}
    \int_{(0,\infty)}& w^{k}H_{n}(X_{\epsilon}(y,v,t,s),w,s)\der w\leq \frac{K_{1}A^{3}}{1+|X_{\epsilon}|^{d}}, \textup{ for some } d\in\mathbb{N}
\end{align}
and all $k\in[0,\gamma+1]$, $t\in[0,T], s\in[0,t], y\in\mathbb{R}, v\in(0,\infty).$
\end{prop}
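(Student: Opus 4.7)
The plan is to directly transplant the argument of Proposition \ref{moment estimates prop}, treating $X_{\epsilon}(y,v,t,s) = e^{(t-s)/\epsilon}(y-v^{\alpha})+v^{\alpha}$ as the new spatial variable. Since the hypothesis \eqref{bound hn new} gives
\begin{align*}
(1+w^{b}) H_{n}(X_{\epsilon},w,s)\leq C(\alpha,\gamma,b,m)A^{3}\Bigl[ e^{\frac{s}{\epsilon}}\psi(e^{\frac{s}{\epsilon}}(X_{\epsilon}-w^{\alpha}))\chi_{1}+\tfrac{\epsilon}{|X_{\epsilon}-w^{\alpha}|}\chi_{2}+e^{\frac{s}{\epsilon}}\psi(e^{\frac{s}{\epsilon}}(X_{\epsilon}-w^{\alpha}))\chi_{3}\Bigr],
\end{align*}
where each $\chi_{i}=\chi_{i}(X_{\epsilon},w,s)$, the argument reduces to bounding $\int_{(0,\infty)} w^{k}[T_{1}+T_{2}+T_{3}](X_{\epsilon},w,s)\,dw$ uniformly in the auxiliary variables $v,t$ (which now only enter through $X_{\epsilon}$).

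First I would split the integration domain into the three regions $\{X_{\epsilon}\geq w^{\alpha}-C_{0}^{-1}\epsilon^{-1/(m-1)}e^{-s/\epsilon}\}$, $\{(\tfrac{w}{3})^{\alpha}\leq X_{\epsilon}\leq w^{\alpha}-C_{0}^{-1}\epsilon^{-1/(m-1)}e^{-s/\epsilon}\}$, and $\{X_{\epsilon}\leq (\tfrac{w}{3})^{\alpha}\}$, exactly as in Proposition \ref{moment estimates prop}. In the first region I perform the change of variables $z=e^{s/\epsilon}(X_{\epsilon}-w^{\alpha})$ and split into $|z|\leq \tfrac{1}{2}e^{s/\epsilon}|X_{\epsilon}|$ and its complement; using $m\geq \frac{b}{\alpha}+1$ yields the bound $C|X_{\epsilon}|^{(k+1-\alpha)/\alpha}/(1+|X_{\epsilon}|^{b/\alpha})\leq C/(1+|X_{\epsilon}|^{(b-k-1)/\alpha})$. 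In the intermediate region $w\asymp X_{\epsilon}^{1/\alpha}$, so pulling out the factor $X_{\epsilon}^{k/\alpha}/(1+X_{\epsilon}^{b/\alpha})$ and computing the logarithmic $dw/|X_{\epsilon}-w^{\alpha}|$ integral (bounded by $t/\epsilon+|\ln X_{\epsilon}|$, absorbed by the $\epsilon$ prefactor of $T_{2}$) produces the same decay. In the third region the change of variables $z=e^{s/\epsilon}w^{\alpha}$ gives the bound $C/(1+|X_{\epsilon}|^{m-(k+1-\alpha)/\alpha})$, which is stronger than needed. The case $X_{\epsilon}<0$ is handled by the single change $z=e^{s/\epsilon}(w^{\alpha}-X_{\epsilon})$, giving $C/(1+(e^{s/\epsilon}|X_{\epsilon}|)^{m-1})\leq C/(1+|X_{\epsilon}|^{m-1})$.

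The unified conclusion is an estimate of the form $\int w^{k}H_{n}(X_{\epsilon},w,s)\,dw \leq K_{1}A^{3}/(1+|X_{\epsilon}|^{d})$ with $d=(b-k-1)/\alpha$, which for $k\in[0,\gamma+1]$ is a positive integer-size exponent thanks to the hypothesis $b>2\gamma+1$ (and $b>\gamma+1$ suffices for integrability). I expect no serious obstacle: all estimates are purely pointwise manipulations of the explicit functions $T_{1},T_{2},T_{3}$, and the substitution $y\mapsto X_{\epsilon}$ is transparent because the bounds in Proposition \ref{moment estimates prop} depend on $y$ only through its magnitude. The only minor care needed is to verify that the $t/\epsilon+|\ln X_{\epsilon}|$ factor arising in the $T_{2}$-region, after multiplication by $\epsilon$, remains controlled uniformly for $t\in[0,T]$ with $T\leq 1$; this is the same observation already used in Proposition \ref{moment estimates prop}, and it delivers the stated uniform-in-$\epsilon$ moment bound required for passing to the limit $n\to\infty$ in the mild formulation.
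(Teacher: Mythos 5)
Your proposal is correct and is exactly the paper's argument: the proof of Proposition \ref{moment estimates limit part} consists precisely of the observation that $y$ may be replaced by $X_{\epsilon}(y,v,t,s)$ throughout the proof of Proposition \ref{moment estimates prop}, since all the estimates there depend on the spatial argument only through its magnitude, and you spell out the substitution correctly region by region. Your parenthetical ``$b>\gamma+1$ suffices for integrability'' is slightly off at the endpoint $k=\gamma+1$ (one really needs $b>k+1$, i.e. $b>\gamma+2$) --- a looseness inherited from the proposition's stated hypothesis --- but your main reliance on $b>2\gamma+1$, the condition actually in force throughout Section \ref{section five}, is correct and makes the exponent $(b-k-1)/\alpha$ positive for all $k\in[0,\gamma+1]$.
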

\begin{prop}
$H_{\epsilon}$ solves \eqref{mild solution equation}.
\end{prop}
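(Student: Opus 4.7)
The plan is to pass to the limit $n\to\infty$ in the mild identity
\begin{align*}
H_{n+1}(y,v,t) &= S_{\epsilon}(t)[H_{n+1}(y,v,0)]\,D[H_n](y,v,0,t)\\
&\quad+\frac{1}{2}\int_0^t\!\!\int_{(0,v)}\!\!D[H_n](y,v,s,t)\,S_{\epsilon}(t-s)\!\left[K(v-v',v')H_{n+1}(y,v-v',s)H_n(y,v',s)\right]\!\der s,
\end{align*}
term by term, by combining the uniform convergence $H_n\to H_\epsilon$ obtained from the Cauchy claim \eqref{sequence is cauchy} with the pointwise bound $H_n\leq T_1+T_2+T_3$ and the moment estimates of Proposition \ref{moment estimates limit part}. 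The left-hand side poses no difficulty: $H_{n+1}(y,v,t)\to H_\epsilon(y,v,t)$ uniformly, and since $H_{n+1}(y,v,0)=H_{\epsilon,\mathrm{in}}(y,v)$ does not depend on $n$, the free-streaming contribution $S_{\epsilon}(t)[H_{\epsilon,\mathrm{in}}](y,v)$ is already of the required form.

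The first real step is to show that $a[H_n]\to a[H_\epsilon]$ pointwise (and locally uniformly). Write
\begin{align*}
a[H_n](y,v,\tau)=\int_{(0,\infty)}K(v,v')H_n(y,v',\tau)\der v'\leq K_0\int_{(0,\infty)}(v^\gamma+v'^\gamma)H_n(y,v',\tau)\der v'.
\end{align*}
Since $b>2\gamma+1>\gamma+1$, Proposition \ref{moment estimates limit part} (applied with the exponents $k=0$ and $k=\gamma$) provides a uniform bound on $a[H_n]$ of the form $a[H_n](y,v,\tau)\leq C(1+v^\gamma)/(1+|y|^{d'})$ for some $d'>0$, and uniform convergence of $H_n$ together with dominated convergence (with dominating function $K_0(v^\gamma+v'^\gamma)[T_1+T_2+T_3](y,v',\tau)$) yields $a[H_n]\to a[H_\epsilon]$. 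Integrating in $\tau\in[s,t]$ and using that the exponent appearing in $D[H_n]$ is uniformly bounded, dominated convergence gives $D[H_n](y,v,s,t)\to D[H_\epsilon](y,v,s,t)$. Combined with $S_\epsilon(t)[H_{\epsilon,\mathrm{in}}(y,v)]$ being $n$-independent, this handles the free-streaming term.

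For the coagulation integral, I would apply dominated convergence in the $(s,v')$ variables. The integrand evaluated at $y_1=X_\epsilon(y,v,t,s)=e^{(t-s)/\epsilon}(y-v^\alpha)+v^\alpha$ is bounded by
\begin{align*}
K_0\bigl((v-v')^\gamma+v'^\gamma\bigr)[T_1+T_2+T_3](y_1,v-v',s)\,[T_1+T_2+T_3](y_1,v',s)\,e^{(t-s)/\epsilon},
\end{align*}
and integrating in $v'\in(0,v/2)$ (using $(v-v')^\gamma\leq 2^\gamma v^\gamma$) reduces to controlling the moments $\int v'^k H_n(y_1,v',s)\der v'$ for $k\in\{0,\gamma,\gamma+1\}$; these are all covered by Proposition \ref{moment estimates limit part} since $b>2\gamma+1\geq(\gamma+1)+1$. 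The resulting $s$-integrable majorant (the factor $e^{(t-s)/\epsilon}$ is cancelled by the denominator $(1+|X_\epsilon|^{d})$ after integrating in $v'$, as already exploited in Section \ref{section three}) legitimises dominated convergence. Uniform convergence of $H_{n+1}$ and $H_n$, combined with the convergence of $D[H_n]$ shown above, then yields convergence of the right-hand side to the desired limit and therefore \eqref{mild solution equation} for $H_\epsilon$.

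The main obstacle is the verification that the dominating function for the coagulation integral is integrable in $(s,v')$ uniformly in $(y,v)$. The critical point is that $T_2$ is only logarithmically tame near the line $\{y=v^\alpha\}$, so one must carefully combine the estimates from Proposition \ref{prop estimates t1 t1} and Proposition \ref{Estimates for the combinations} (already established for all $T_i\cdot T_j$ combinations) together with the moment bound from Proposition \ref{moment estimates limit part} evaluated at the shifted variable $X_\epsilon$; this is precisely where the assumption $b>2\gamma+1$ enters, ensuring the relevant moments remain bounded with sufficient decay in $y$ after the semigroup shift.
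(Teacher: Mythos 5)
Your proposal follows the same overall strategy as the paper: pass to the limit $n\to\infty$ in the mild identity term by term, first establishing $D[H_n]\to D[H_\epsilon]$ via the elementary bound $|e^{-x}-e^{-y}|\le|x-y|$ and convergence of $a[H_n]\to a[H_\epsilon]$ (uniform $L^\infty$ convergence of $H_n$ for small masses plus the moment estimate of Proposition~\ref{moment estimates limit part} for the tail), and then handling the coagulation integral. Both steps of your argument are valid and match the structure of the paper's proof.

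The one place your argument is more elaborate than it needs to be is the coagulation integral. You propose a dominated-convergence argument whose ``main obstacle'' is the integrability of the dominating function \emph{uniformly in $(y,v)$}, requiring a careful combination of Propositions~\ref{prop estimates t1 t1} and~\ref{Estimates for the combinations}. But the mild formulation \eqref{mild solution equation} is a pointwise identity in $(y,v,t)$, so no uniformity in $(y,v)$ is needed: it suffices to apply dominated convergence for each fixed $(y,v,t)$. The paper exploits this by simply factoring out a constant $C(\epsilon,v)$ (coming from $K(v-w,w)\le C v^\gamma$ and $e^{(t-s)/\epsilon}\le e^{T/\epsilon}$) and the $L^\infty$-norm $\|H_{n+1}-H_\epsilon\|_{L^\infty}\to 0$, leaving only $\int_0^t\int_{(0,v)} H_n(X_\epsilon(y,v,t,s),w,s)\,\der w\,\der s$ which is bounded directly by the moment estimate. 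Finally, a small misattribution: the hypothesis $b>2\gamma+1$ is used in the proof of the Cauchy property \eqref{sequence is cauchy} (to make $\int (1+w^{2\gamma})/(1+w^b)\,\der w$ finite during the iteration), not in this final passage to the limit, which only needs the moment estimates valid for $b>\gamma+1$.
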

\begin{proof}
We prove first that 
\begin{align*}
D[H_{n}](y,v,s,t)\rightarrow D[H_{\epsilon}](y,v,s,t).
\end{align*}
Using $|e^{-x}-e^{-y}|\leq |x-y|, $ for $x,y\geq 0$, we have that
\begin{align*}
\bigg| & \textup{e}^{-\int_{s}^{t} a[H_{n}](e^{\frac{t-\tau}{\epsilon}}(y-v^{\alpha})+v^{\alpha},v,\tau)\der\tau}-\textup{e}^{-\int_{s}^{t}a[H_{\epsilon}](e^{\frac{t-\tau}{\epsilon}}(y-v^{\alpha})+v^{\alpha},v,\tau)\der\tau}\bigg|\\
&\leq \bigg|\int_{s}^{t}a[H_{n}](e^{\frac{t-\tau}{\epsilon}}(y-v^{\alpha})+v^{\alpha},v,\tau)\der\tau-\int_{s}^{t}a[H_{\epsilon}](e^{\frac{t-\tau}{\epsilon}}(y-v^{\alpha})+v^{\alpha},v,\tau)\der\tau\bigg|,
\end{align*}
where $a[H_{\epsilon}]$ is as in \eqref{general a f def}. However
\begin{align*}
    a[H_{n}](e^{\frac{t-\tau}{\epsilon}}(y-v^{\alpha})+v^{\alpha},v,\tau)= \int_{(0,\infty)}K(v,w)H_{n}(X_{\epsilon}(y,v,t,\tau),w,\tau)\der w.
\end{align*}
Let $M>1$. Since $H_{n}\rightarrow H_{\epsilon}$ as $\epsilon\rightarrow 0,$ we have that
\begin{align*}
    \int_{(0,M)}K(v,w)H_{n}(X_{\epsilon}(y,v,t,\tau),w,\tau)\der w \rightarrow \int_{(0,M)}K(v,w)H_{\epsilon}(X_{\epsilon}(y,v,t,\tau),w,\tau)\der w
\end{align*}
as $n\rightarrow\infty$, while by Proposition \ref{moment estimates limit part}, it follows that

\begin{align*}
    \int_{(M,\infty)}K(v,w)H_{n}(X_{\epsilon}(y,v,t,\tau),w,\tau)\der w &\leq  M^{-1}\int (v^{\gamma}w+w^{\gamma+1})H_{n}(X_{\epsilon}(y,v,t,\tau),w,\tau)\der w \\
    & \leq C M^{-1}(1+v^{\gamma})\rightarrow 0
 \textup{ as } M\rightarrow\infty.\end{align*}
 The rest of the proof follows the same idea. For example, we have that
 \begin{align*}
\bigg|\int_{0}^{t}\int_{(0,v)}
&S_{\epsilon}(t-s)\bigg[K(v-w,w)H_{n+1}(y,v-w,s) H_{n}(y,w,s)\bigg]\der s\\
&-
\int_{0}^{t}\int_{(0,v)}
S_{\epsilon}(t-s)\bigg[K(v-w,w)H_{\epsilon}(y,v-w,s) H_{\epsilon}(y,w,s)\bigg]\der s\bigg|\\
&\leq \int_{0}^{t}\int_{(0,v)}S_{\epsilon}(t-s)\bigg[K(v-w,w)|H_{n+1}-H_{\epsilon}|(y,v-w,s) H_{n}(y,w,s)\bigg]\der s\\
&+\int_{0}^{t}\int_{(0,v)}
S_{\epsilon}(t-s)\bigg[K(v-w,w)H_{\epsilon}(y,v-w,s)|H_{n}- H_{\epsilon}|(y,w,s)\bigg]\der s.
\end{align*}
We can use the convergence of $H_{n}$ to $H_{\epsilon}$ together with the moment estimates from Proposition \ref{moment estimates limit part}  in order to conclude as above. More precisely, by Proposition \ref{moment estimates limit part}, we have
\begin{align*}
& \int_{0}^{t}\int_{(0,v)}S_{\epsilon}(t-s)\bigg[K(v-w,w)|H_{n+1}-H_{\epsilon}|(y,v-w,s) H_{n}(y,w,s)\bigg]\der s\\
   & = \int_{0}^{t}\int_{(0,v)}e^{\frac{t-s}{\epsilon}}K(v-w,w)|H_{n+1}-H_{\epsilon}|(X_{\epsilon}(y,v,t,s),v-w,s) H_{n}(X_{\epsilon}(y,v,t,s),w,s)\der w\der s\\
   & \leq C(\epsilon,v) ||H_{n+1}-H_{\epsilon}||_{L^{\infty}}\int_{0}^{t}\int_{(0,v)} H_{n}(X_{\epsilon}(y,v,t,s),w,s)\der w\der s\\
     & \leq C(\epsilon,v) ||H_{n+1}-H_{\epsilon}||_{L^{\infty}}\rightarrow 0 \textup{ as } n\rightarrow\infty.
\end{align*}

The rest of the steps are done in the same manner using in addition that
\begin{align*}
(1+v^{b})H_{\epsilon}(y,v,t)\leq Ce^{\frac{t}{\epsilon}}.
\end{align*}
 This concludes our proof.
\end{proof}

 The rest of this subsection is dedicated to proving that \eqref{sequence is cauchy} is true.

\begin{proof}[Proof of \eqref{sequence is cauchy}]

{\bf Step 1: (Set-up)}
Let $t\geq 0$, $y\in\mathbb{R}$ and $v> 0$. For $n\in\mathbb{N}$, we denote by 
\begin{align}\label{def rn}
R_n(y,v,t):=H_{n+1}(y,v,t)-H_n(y,v,t).
\end{align}
For two functions $f,g$, we denote by
\begin{align}
\mathcal{C}_{1}[f,g]&:=\int_{0}^{t}\int_{0}^{\frac{v}{2}}e^{\frac{t-s}{\epsilon}}K(v-w,w)f(e^{\frac{t-s}{\epsilon}}(y-v^{\alpha})+v^{\alpha},w,s)g(e^{\frac{t-s}{\epsilon}}(y-v^{\alpha})+v^{\alpha},v-w,s)\der w \der s,\label{k1 term}\\
\mathcal{K}_{1}[f,g]&:=\int_{0}^{\frac{v}{2}}K(v-w,w)f(y,w,t)g(y,v-w,t)\der w,\label{k1 term 2}\\
\mathcal{C}_{2}[f,g]&:=\int_{0}^{t}\int_{0}^{\infty}e^{\frac{t-s}{\epsilon}}K(v,w)f(e^{\frac{t-s}{\epsilon}}(y-v^{\alpha})+v^{\alpha},w,s)g(e^{\frac{t-s}{\epsilon}}(y-v^{\alpha})+v^{\alpha},v,s)\der w\der s,\label{k2 term}\\
\mathcal{K}_{2}[f,g]&:=\int_{0}^{\infty}K(v,w)f(y,w,t)g(y,v,t)\der w,\label{k2 term 2}
\end{align}
and
\begin{align}
\mathcal{C}[f,g]:=\mathcal{C}_{1}[f,g]-\mathcal{C}_{2}[f,g],\label{full k term}\\
\mathcal{K}[f,g]:=\mathcal{K}_{1}[f,g]-\mathcal{K}_{2}[f,g].\label{full k term 2}
\end{align}
Using this notation, it holds that
\begin{align}\label{induction step with distance one}
 R_n(y,v,t)&=\mathcal{C}[H_{n},R_n]+\mathcal{C}[R_{n-1},H_{n}].
\end{align}

We notice that $\mathcal{C}[H_{n},R_n]$ is linear in $H_{n}$, while $\mathcal{C}[R_{n-1},H_{n}]$ does not depend on $H_{n}$. We can thus use Duhamel's principle in order to deduce that in order to find estimates for $R_{n}$ which solves
\begin{equation}\label{duhamel 1}
\left\{\begin{aligned}
\partial_t R_n(y,v,t)+\frac{1}{\epsilon}\partial_y[(v^{\alpha}-y) R_n(y,v,t)]&=\mathcal{K}[H_{n},R_n]+\mathcal{K}[R_{n-1},H_{n}]; \\
R_{n}(y,v,0)&=0
   \end{aligned}\right.
   \end{equation}
   it is sufficient to find estimates for the system
    \begin{equation}\label{duhamel 2}
\left\{\begin{aligned}
\partial_t R^{s}_n(x,v,t)+\frac{1}{\epsilon}\partial_y[(v^{\alpha}-y) R_n(y,v,t)]&=\mathcal{K}[H_{n},R_n], \textup{ for } t>s; \\
R^{s}_{n}(y,v,s)&=\mathcal{K}[R_{n-1},H_{n}].
   \end{aligned}\right.
   \end{equation}

{\bf Step 2: (Bound for inhomogeneous term)}

We wish to bound $\mathcal{C}[R_{n-1},H_{n}]$ by induction. We only  bound from above $\mathcal{C}_{2}[R_{n-1},H_{n}]$ as $\mathcal{C}_{1}[R_{n-1},H_{n}]$ is bounded using similar computations. 
\vspace{0.1cm}

\textit{\underline{Induction basis:}} By Proposition \ref{moment estimates limit part},  \eqref{h0 form}-\eqref{h0 pde}, and since  $(1+v^{b})|H_{n}(y,v,t)|\leq Ce^{\frac{t}{\epsilon}}$, for all $n\in\mathbb{N}$, we have that
\begin{align*}
\mathcal{C}_{2}[H_{0},H_{1}]\leq &(1+v^{\gamma})\int_{0}^{t}e^{\frac{t-s}{\epsilon}}H_{1}(X_{\epsilon}(y,v,t,s),v,s)\int_{0}^{\infty}(1+w^{\gamma}) |H_{0}|(X_{\epsilon}(y,v,t,s),w,s)\der w\\
&\leq  (1+v^{\gamma})\int_{0}^{t} e^{\frac{t-s}{\epsilon}}H_{1}(X_{\epsilon}(y,v,t,s),v,s)\frac{K_{1}A^{3}}{1+|X|^{d}}\leq C t e^{\frac{t}{\epsilon}}\leq C t e^{\frac{1}{\epsilon}},
\end{align*}
where $C$ does not depend on $n$ or $\epsilon$.

\vspace{0.1cm}

\textit{\underline{Induction step:}} Assume by induction that 
\begin{align*}
|R_{n-1}|(y,v,t)\leq (Cte^{\frac{1}{\epsilon}})^{n-1}\frac{1+v^{\gamma}}{1+v^{b}},
\end{align*}
 where $C$ does not depend on $n$ or on $\epsilon$. Then, since $(1+v^{b})|H_{n}(y,v,t)|\leq Ce^{\frac{t}{\epsilon}}$ and since $b>2\gamma+1$, it holds that
\begin{align}\label{same estimates iteration}
\mathcal{C}_{2}[R_{n-1},H_{n}]\leq &(1+v^{\gamma})\int_{0}^{t}e^{\frac{t-s}{\epsilon}}H_{n}(X_{\epsilon}(y,v,t,s),v,s)\int_{0}^{\infty}(1+w^{\gamma}) |R_{n-1}|(X_{\epsilon}(y,v,t,s),w,s)\der w\nonumber\\
&\leq (1+v^{\gamma})\int_{0}^{t}e^{\frac{t-s}{\epsilon}}H_{n}(X_{\epsilon}(y,v,t,s),v,s)\int_{0}^{\infty}\frac{1+w^{2\gamma}}{1+w^{b}} \der w(Cse^{\frac{1}{\epsilon}})^{n-1}\der s\nonumber\\
&\leq  \int_{0}^{t}e^{\frac{t-s}{\epsilon}}(1+v^{\gamma})H_{n}(X_{\epsilon}(y,v,t,s),v,s)(Cse^{\frac{1}{\epsilon}})^{n-1}\der s\nonumber \\
&\leq C^{n} t^{n}e^{\frac{n-1}{\epsilon}} e^{\frac{t}{\epsilon}}\frac{1+v^{\gamma}}{1+v^{b}}\leq (Cte^{\frac{1}{\epsilon}})^{n}\frac{1+v^{\gamma}}{1+v^{b}}.
\end{align}

\vspace{0.2cm}

{\bf Step 3: (Iteration in time in strong formulation)}
By Step $2$, we deduce that for every $n\in\mathbb{N}$, it holds that
\begin{align}\label{upper bound rn}
    R_{n}(y,v,t)\leq (Cte^{\frac{1}{\epsilon}})^{n}.
\end{align}
Thus, for $t< \frac{e^{-\frac{1}{\epsilon}}}{C}$, we have that
\begin{align}\label{sequence rn goes to zero}
      R_{n}(y,v,t)\rightarrow 0 \textup{ as } n \rightarrow \infty.
\end{align}
A similar argument has been used in \cite{cristianinhom}.

We now wish to extend \eqref{sequence rn goes to zero} to hold for times of order $\mathcal{O}(1)$. Let us assume for simplicity of notation that $C=1$ in \eqref{upper bound rn}. The idea is to iterate the argument in Step $2.$ to hold for longer times. For simplicity, we first show the argument using the strong form of 
\eqref{mild form iteration in n}. In order to obtain a rigorous proof, the same idea needs to be applied directly to the mild formulation \eqref{mild form iteration in n}. This will be done in Step $4.$

\vspace{0.2cm}

\textit{\underline{Translation in time:}} We first do a translation in time. For $t\in [0,\frac{e^{-\frac{1}{\epsilon}}}{2}]$, define 
\begin{align*}
 \tilde{R}_{n}(t):=R_{n}\bigg(t+\frac{e^{-\frac{1}{\epsilon}}}{2}\bigg).
    \end{align*}
    Notice that $\tilde{R}_{n}(t):=R_{n}\bigg(t+\frac{e^{-\frac{1}{\epsilon}}}{2}\bigg)=H_{n+1}\bigg(t+\frac{e^{-\frac{1}{\epsilon}}}{2}\bigg)-H_{n}\bigg(t+\frac{e^{-\frac{1}{\epsilon}}}{2}\bigg)$ and thus
    \begin{equation}\label{tilde rn}
\left\{\begin{aligned}
\partial_t \tilde{R}_n(y,v,t)+\frac{1}{\epsilon}\partial_y[(v^{\alpha}-y) \tilde{R}_n(y,v,t)]&=\mathcal{K}\bigg[H_{n}\bigg(t+\frac{e^{-\frac{1}{\epsilon}}}{2}\bigg),\tilde{R}_n\bigg]+\mathcal{K}\bigg[\tilde{R}_{n-1},H_{n}\bigg(t+\frac{e^{-\frac{1}{\epsilon}}}{2}\bigg)\bigg]; \\
\tilde{R}_{n}(y,v,0)&=H_{n+1}\bigg(\frac{e^{-\frac{1}{\epsilon}}}{2}\bigg)-H_{n}\bigg(\frac{e^{-\frac{1}{\epsilon}}}{2}\bigg).
   \end{aligned}\right.
   \end{equation}
    \textit{\underline{Initial condition becomes zero:}}    Let now
   \begin{align*}
       P_{n}(y,v,t)&:=\tilde{R}_{n}(y,v,t)-e^{\frac{t}{\epsilon}}R_{n}\bigg(e^{\frac{t}{\epsilon}}(y-v^{\alpha})+v^{\alpha},v,\frac{e^{-\frac{1}{\epsilon}}}{2}\bigg);  \\
       \tilde{P}_{n}(y,v,t)&:=e^{\frac{t}{\epsilon}}R_{n}\bigg(e^{\frac{t}{\epsilon}}(y-v^{\alpha})+v^{\alpha},v,\frac{e^{-\frac{1}{\epsilon}}}{2}\bigg).
   \end{align*}
   Then $P_{n}$ satisfies
   \begin{equation}\label{equation pn}
\left\{\begin{aligned}
\partial_t P_n(y,v,t)+\frac{1}{\epsilon}\partial_{y}[(v^{\alpha}-y) P_n(y,v,t)]&=\mathcal{K}\bigg[H_{n}\bigg(t+\frac{e^{-\frac{1}{\epsilon}}}{2}\bigg),P_n\bigg]+\mathcal{K}\bigg[P_{n-1},H_{n}\bigg(t+\frac{e^{-\frac{1}{\epsilon}}}{2}\bigg)\bigg] \\
&+\mathcal{K}\bigg[H_{n}\bigg(t+\frac{e^{-\frac{1}{\epsilon}}}{2}\bigg), \tilde{P}_{n}\bigg]+\mathcal{K}\bigg[ \tilde{P}_{n-1},H_{n}\bigg(t+\frac{e^{-\frac{1}{\epsilon}}}{2}\bigg)\bigg]\\
P(y,v,0)&=0.
   \end{aligned}\right.
   \end{equation}

    By Duhamel's principle, this becomes equivalent to solving
    \begin{equation}
\left\{\begin{aligned}
\partial_t P_n(y,v,t)+\frac{1}{\epsilon}\partial_{y}[(v^{\alpha}-y) P_n(y,v,t)]&=\mathcal{K}\bigg[H_{n}\bigg(t+\frac{e^{-\frac{1}{\epsilon}}}{2}\bigg),P_n\bigg]\textup{ for } t>s\\
P_{n}(y,v,s)&=\mathcal{K}\bigg[P_{n-1},H_{n}\bigg(s+\frac{e^{-\frac{1}{\epsilon}}}{2}\bigg)\bigg] +\mathcal{K}\bigg[H_{n}\bigg(s+\frac{e^{-\frac{1}{\epsilon}}}{2}\bigg), \tilde{P}_{n}\bigg]\\
&+\mathcal{K}\bigg[ \tilde{P}_{n-1},H_{n}\bigg(s+\frac{e^{-\frac{1}{\epsilon}}}{2}\bigg)\bigg]=:I_{1}+I_{2}+I_{3}.
   \end{aligned}\right.
   \end{equation}
   We are now able to reproduce the estimates we performed in Step $2$ in order to obtain that there exists a constant $C>0$ such that
\begin{align}\label{bound pn}
P_{n}(t)\leq     (Ct e^{\frac{1}{\epsilon}})^{n}+2^{-n} \textup{ for all } n\in\mathbb{N}.
\end{align}
\textit{\underline{Estimates for the iteration in time:}} The term $I_{1}$ we bound by induction as in Step $2$, \eqref{same estimates iteration}, using in addition that
\begin{align*}
P_{n}(t)=& R_{n}\bigg(y,v,t+\frac{e^{-\frac{1}{\epsilon}}}{2}\bigg)- e^{\frac{t}{\epsilon}}R_{n}\bigg(\frac{e^{-\frac{1}{\epsilon}}}{2}\bigg)\leq R_{n}\bigg(y,v,t+\frac{e^{-\frac{1}{\epsilon}}}{2}\bigg)\\
&\leq H_{n+1}\bigg(y,v,t+\frac{e^{-\frac{1}{\epsilon}}}{2}\bigg)+H_{n}\bigg(y,v,t+\frac{e^{-\frac{1}{\epsilon}}}{2}\bigg).
\end{align*}
$I_{2}$ and $I_{3}$ can then bounded in a similar manner. We thus only show $I_{2}$. More precisely, we have that
\begin{align*}
    \mathcal{C}_{2}& \bigg[H_{n}\bigg(t+\frac{e^{-\frac{1}{\epsilon}}}{2}\bigg),\tilde{P}_{n}\bigg]\\
    &:=\int_{0}^{t}\int_{0}^{\infty}e^{\frac{t-s}{\epsilon}}K(v,w)H_{n}(X_{\epsilon}(y,v,t,s),w,s+\frac{e^{-\frac{1}{\epsilon}}}{2})e^{\frac{s}{\epsilon}}R_{n}\bigg(X_{\epsilon}(y,v,t,0),v,\frac{e^{-\frac{1}{\epsilon}}}{2}\bigg)\der w\der s.
\end{align*}
By the upper bound in Theorem \ref{proposition existence via fixed point}, we have that $(1+v^{b})|H_{n}(y,v,s)|\leq Ce^{\frac{s}{\epsilon}}$, for all $t\leq 1$. Moreover, by Step $2$ it holds that $R_{n}(t)\leq (te^{\frac{1}{\epsilon}})^{n}\frac{1+v^{\gamma}}{1+v^{b}}$. Thus, we have that $R_{n}\big(X_{\epsilon}(y,v,t,0),v,\frac{e^{-\frac{1}{\epsilon}}}{2}\big)\leq \frac{1}{2^{n}}\frac{1+v^{\gamma}}{1+v^{b}}$.This implies that as long as $2t\leq 1$, it holds that
\begin{align*}
    \mathcal{C}_{2}& \bigg[H_{n}\bigg(t+\frac{e^{-\frac{1}{\epsilon}}}{2}\bigg),\tilde{P}_{n}\bigg]\leq\frac{1}{2^{n}}\int_{0}^{t}\int_{0}^{\infty}e^{\frac{t+s+\frac{e^{-\frac{1}{\epsilon}}}{2}}{\epsilon}}\der s\leq e^{\frac{1}{2}}\frac{te^{\frac{1}{\epsilon}}}{2^{n}}\leq \frac{\sqrt{e}}{2^{n+1}}.
\end{align*}
We bound $I_{3}$ similarly and conclude that \eqref{bound pn} holds. Since
\begin{align*}
    R_{n}\bigg(y,v,t+\frac{e^{-\frac{1}{\epsilon}}}{2}\bigg)=P_{n}(t)+e^{\frac{t}{\epsilon}}R_{n}\bigg(X_{\epsilon}(y,v,t,0),v,\frac{e^{-\frac{1}{\epsilon}}}{2}\bigg)
\end{align*}
we then obtain that
\begin{align*}
     R_{n}\bigg(y,v,t+\frac{e^{-\frac{1}{\epsilon}}}{2}\bigg)\leq   e^{\frac{t}{\epsilon}}(Ct)^{n}+2^{-n+1}\rightarrow 0 \textup{ as } n\rightarrow \infty, \textup{ for all } t\leq \frac{e^{-\frac{1}{\epsilon}}}{2}.
\end{align*}
We can then iterate Step $3$ to extend the result to hold for all times as long as $t\leq\min\{2^{-1},T\}$ with $T$ as in Theorem \ref{proposition existence via fixed point}.

\vspace{0.2cm}

\vspace{0.2cm}

{\bf Step 4: (Iteration in time in mild formulation)}
As mentioned in Step $3,$ we now show the translation in time argument directly for the mild formulation \eqref{mild form iteration in n}.
\vspace{0.2cm}

\textit{\underline{Mild formulation. Translation in time}}
Since $R_{n}(y,v,0)=0,$ $R_{n}$ satisfies the following mild formulation 
\begin{align*}
R_n(y,v,t)&=\mathcal{C}[H_{n},R_n]+\mathcal{C}[R_{n-1},H_{n}].
\end{align*}
Define now
\begin{align*}
 \tilde{R}_{n}(t):=R_{n}\bigg(t+\frac{e^{-\frac{1}{\epsilon}}}{2}\bigg).
    \end{align*}
    Notice as before that $\tilde{R}_{n}(t):=R_{n}\bigg(t+\frac{e^{-\frac{1}{\epsilon}}}{2}\bigg)=H_{n+1}\bigg(t+\frac{e^{-\frac{1}{\epsilon}}}{2}\bigg)-H_{n}\bigg(t+\frac{e^{-\frac{1}{\epsilon}}}{2}\bigg)$. We show the argument for  $\mathcal{C}_{2}[H_{n},R_n]$ in \eqref{k2 term 2} as the other terms are obtained in the same manner.
\begin{align*}
&\mathcal{C}_{2}[H_{n},R_n]  \bigg(t+\frac{e^{-\frac{1}{\epsilon}}}{2}\bigg)\\
&=\int_{0}^{t+\frac{e^{-\frac{1}{\epsilon}}}{2}}\int_{0}^{\infty}e^{\frac{t+\frac{e^{-\frac{1}{\epsilon}}}{2}-s}{\epsilon}}K(v,w)H_{n}(e^{\frac{t+\frac{e^{-\frac{1}{\epsilon}}}{2}-s}{\epsilon}}(y-v^{\alpha})+v^{\alpha},w,s)R_{n}(e^{\frac{t+\frac{e^{-\frac{1}{\epsilon}}}{2}-s}{\epsilon}}(y-v^{\alpha})+v^{\alpha},v,s)\der w\der s\\
&=\int_{0}^{\frac{e^{-\frac{1}{\epsilon}}}{2}}\int_{0}^{\infty}e^{\frac{t+\frac{e^{-\frac{1}{\epsilon}}}{2}-s}{\epsilon}}K(v,w)H_{n}(e^{\frac{t+\frac{e^{-\frac{1}{\epsilon}}}{2}-s}{\epsilon}}(y-v^{\alpha})+v^{\alpha},w,s)R_{n}(e^{\frac{t+\frac{e^{-\frac{1}{\epsilon}}}{2}-s}{\epsilon}}(y-v^{\alpha})+v^{\alpha},v,s)\der w\der s\\
&+\int_{\frac{e^{-\frac{1}{\epsilon}}}{2}}^{t+\frac{e^{-\frac{1}{\epsilon}}}{2}}\int_{0}^{\infty}e^{\frac{t+\frac{e^{-\frac{1}{\epsilon}}}{2}-s}{\epsilon}}K(v,w)H_{n}(e^{\frac{t+\frac{e^{-\frac{1}{\epsilon}}}{2}-s}{\epsilon}}(y-v^{\alpha})+v^{\alpha},w,s)R_{n}(e^{\frac{t+\frac{e^{-\frac{1}{\epsilon}}}{2}-s}{\epsilon}}(y-v^{\alpha})+v^{\alpha},v,s)\der w\der s\\
&:=L_{1}+L_{2}.
\end{align*}
For $L_{2}$ we make the following change of variable $\tilde{s}=s-\frac{e^{-\frac{1}{\epsilon}}}{2}$. It thus holds that
\begin{align*}
L_{2}=\int_{0}^{t}\int_{0}^{\infty}e^{\frac{t-\tilde{s}}{\epsilon}}K(v,w)H_{n}\bigg(e^{\frac{t-\tilde{s}}{\epsilon}}(y-v^{\alpha})+v^{\alpha},w,\tilde{s}+\frac{e^{-\frac{1}{\epsilon}}}{2}\bigg)R_{n}\bigg(e^{\frac{t-\tilde{s}}{\epsilon}}(y-v^{\alpha})+v^{\alpha},v,\tilde{s}+\frac{e^{-\frac{1}{\epsilon}}}{2}\bigg)\der w\der \tilde{s}.
\end{align*}
Thus, $\tilde{R}_{n}$ satisfies the following equation.
\begin{align*}
    \tilde{R}_{n}(y,v,t)=e^{\frac{t}{\epsilon}}R_{n}\bigg(e^{\frac{t}{\epsilon}}(y-v^{\alpha})+v^{\alpha},v,\frac{e^{-\frac{1}{\epsilon}}}{2}\bigg)+\mathcal{C}\bigg[H_{n}\bigg(\cdot+\frac{e^{-\frac{1}{\epsilon}}}{2}\bigg),\tilde{R}_n\bigg]+\mathcal{C}\bigg[\tilde{R}_{n-1},H_{n}\bigg(\cdot+\frac{e^{-\frac{1}{\epsilon}}}{2}\bigg)\bigg].
\end{align*}
  \textit{\underline{Mild formulation. Initial condition becomes $0$:}}    Lastly, let 
   \begin{align*}
       P_{n}(y,v,t)&:=\tilde{R}_{n}(y,v,t)-e^{\frac{t}{\epsilon}}R_{n}\bigg(e^{\frac{t}{\epsilon}}(y-v^{\alpha})+v^{\alpha},v,\frac{e^{-\frac{1}{\epsilon}}}{2}\bigg)\\ \tilde{P}_{n}(y,v,t)&:=e^{\frac{t}{\epsilon}}R_{n}\bigg(e^{\frac{t}{\epsilon}}(y-v^{\alpha})+v^{\alpha},v,\frac{e^{-\frac{1}{\epsilon}}}{2}\bigg).
   \end{align*}
   Then $P_{n}$ satisfies
   \begin{align*}
    \tilde{P}_{n}(y,v,t)=\mathcal{C}\bigg[H_{n}\bigg(\cdot+\frac{e^{-\frac{1}{\epsilon}}}{2}\bigg),\tilde{R}_n\bigg]+\mathcal{C}\bigg[\tilde{R}_{n-1},H_{n}\bigg(\cdot+\frac{e^{-\frac{1}{\epsilon}}}{2}\bigg)\bigg].
   \end{align*}
   In other words, $P_{n}$ satisfies
   \begin{equation}
\left\{\begin{aligned}
 P_n(y,v,t)&=\mathcal{C}\bigg[H_{n}\bigg(\cdot+\frac{e^{-\frac{1}{\epsilon}}}{2}\bigg),\tilde{R}_n\bigg]+\mathcal{C}\bigg[\tilde{R}_{n-1},H_{n}\bigg(\cdot+\frac{e^{-\frac{1}{\epsilon}}}{2}\bigg)\bigg] \\
&+\mathcal{C}\bigg[H_{n}\bigg(\cdot+\frac{e^{-\frac{1}{\epsilon}}}{2}\bigg), \tilde{P}_{n}\bigg]+\mathcal{C}\bigg[ \tilde{P}_{n-1},H_{n}\bigg(\cdot+\frac{e^{-\frac{1}{\epsilon}}}{2}\bigg)\bigg]\\
P(y,v,0)&=0.
   \end{aligned}\right.
   \end{equation}
We are now able to reproduce the estimates from Step $3.$  This concludes our proof.\end{proof}
\section{Proof of Theorem \ref{teo convergence} - Convergence to a Dirac measure}
\begin{prop}[Moment estimates - general version]
    Let $b>\gamma+3$ and $m>\gamma+3-\alpha$. Let $T\leq 1$. There exist $r>0$ and $\omega:=\min\{m-\gamma-1+\alpha,b-\gamma-1\}>2$ such that
\begin{align}\label{integral in y}
    \int_{(0,\infty)}(w^{-r}+w^{\gamma})H_{\epsilon}(y,w,t)\der w\leq \frac{C}{1+|y|^{\omega}}, \textup{ for all } t\in[0,T], y\in\mathbb{R}.
\end{align}and thus 
    \begin{align}\label{integral large y}
    \int_{\mathbb{R}}\int_{(0,\infty)}(1+|y|)H_{\epsilon}(y,w,t)\der w\der y\leq C, \textup{ for all } t\in[0,T].
\end{align}
\end{prop}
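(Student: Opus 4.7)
The strategy is to mimic the proof of Proposition \ref{moment estimates prop}, but with the weight $w^k$ replaced by the combined weight $w^{-r}+w^{\gamma}$, using as input the pointwise bound $H_{\epsilon}\leq T_{1}+T_{2}+T_{3}$ from Theorem \ref{proposition existence via fixed point}. The role of $r\in(0,1)$ is to make $w^{-r}$ locally integrable near $w=0$; since $T_{i}(y,w,t)$ is bounded as $w\to 0^{+}$, this causes no singularity, and in fact the $w^{-r}$ weight yields strictly better decay in $|y|$ than the $w^{\gamma}$ weight does. Hence the $w^{\gamma}$-piece dictates the rate $\omega$, while the $w^{-r}$-piece is introduced only so that the combined weight $w^{-r}+w^{\gamma}$ stays uniformly bounded below on $(0,\infty)$, which is needed to deduce the unweighted integral bound \eqref{integral large y}.

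For \eqref{integral in y}, I would split the $w$-integral into the three regions $\{y\geq w^{\alpha}-C_{0}^{-1}\epsilon^{-\frac{1}{m-1}}e^{-\frac{t}{\epsilon}}\}$ (where $T_{1}$ is active), $\{(w/3)^{\alpha}\leq y\leq w^{\alpha}-C_{0}^{-1}\epsilon^{-\frac{1}{m-1}}e^{-\frac{t}{\epsilon}}\}$ (where $T_{2}$ is active), and $\{y\leq (w/3)^{\alpha}\}$ (where $T_{3}$ is active), handled separately for $y\geq 0$ and $y<0$. In each region I would perform the change of variables used in Proposition \ref{moment estimates prop} (e.g.\ $z=e^{\frac{t}{\epsilon}}(y-w^{\alpha})$ on the $T_{1}$-region, $z=y-w^{\alpha}$ on the $T_{2}$-region, $z=e^{\frac{t}{\epsilon}}w^{\alpha}$ on the $T_{3}$-region), bound the resulting factors by $|y|$-dependent quantities, and track the exponent produced by the weight and by the decay $1/(1+w^{b})$ and $1/(1+|y-w^{\alpha}|^{m})$. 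A direct count gives decay of order $|y|^{-(b-\gamma-1)}$ from the $T_{2}$-region and the large-$w$ tail of the $T_{3}$-region, and decay of order $|y|^{-(m-\gamma-1+\alpha)}$ (at worst) from the $T_{1}$/$T_{3}$-regions once the Dirac-like profile is integrated; the minimum of these two exponents is precisely $\omega=\min\{m-\gamma-1+\alpha,\, b-\gamma-1\}$, which is $>2$ under the assumptions $b>\gamma+3$ and $m>\gamma+3-\alpha$. The $w^{-r}$ contribution is estimated by the same changes of variables and produces a strictly better power of $|y|$ for any sufficiently small $r\in(0,1)$, so it does not degrade $\omega$.

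For \eqref{integral large y}, I would observe that the function $w\mapsto w^{-r}+w^{\gamma}$ attains a positive minimum $c_{0}>0$ on $(0,\infty)$ (indeed at $w^{\ast}=(r/\gamma)^{1/(r+\gamma)}$), so $1\leq c_{0}^{-1}(w^{-r}+w^{\gamma})$ uniformly in $w>0$. Combined with \eqref{integral in y} this yields
\begin{align*}
\int_{(0,\infty)}H_{\epsilon}(y,w,t)\,\mathrm{d}w \leq c_{0}^{-1}\int_{(0,\infty)}(w^{-r}+w^{\gamma})H_{\epsilon}(y,w,t)\,\mathrm{d}w \leq \frac{C}{1+|y|^{\omega}}.
\end{align*}
Integrating in $y$ and using $\omega>2$ gives $\int_{\mathbb{R}}(1+|y|)/(1+|y|^{\omega})\,\mathrm{d}y<\infty$, which is \eqref{integral large y}.

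The main obstacle is the careful bookkeeping of exponents across the three regions and for both positive and negative $y$: near the boundary $y=(w/3)^{\alpha}$, or near $|y-w^{\alpha}|\sim e^{-t/\epsilon}$, the naive estimates produce logarithmic factors (as already appeared in \eqref{log growth} and in the $T_{2}$-estimates of Proposition \ref{Estimates for the combinations}), and these factors must be absorbed either by the extra polynomial decay coming from $1/(1+w^{b})$ for large $w$, or by the $T\leq 1$ assumption for small times. Verifying that the worst-case exponent in $|y|$ is exactly $\omega$ (rather than something smaller) is the only nontrivial part; all other steps follow the template of Proposition \ref{moment estimates prop}.
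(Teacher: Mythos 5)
Your proposal follows essentially the same strategy as the paper: decompose the $w$-integral according to which of $T_1,T_2,T_3$ is active, apply the changes of variables from Proposition \ref{moment estimates prop}, and then deduce \eqref{integral large y} from \eqref{integral in y} via the uniform lower bound on $w\mapsto w^{-r}+w^{\gamma}$. The exponent $\omega=\min\{m-\gamma-1+\alpha,\,b-\gamma-1\}$ and the final integration step are as in the paper.

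One point is incompletely described, however: you attribute the constraint on $r$ solely to local integrability of $w^{-r}$ near $w=0$ (which requires only $r<1$) and then claim the $w^{-r}$ contribution is uniformly harmless for any small $r$. In fact the paper imposes the sharper condition $\alpha+2r\leq 1$, and this is used in the $T_2$ region for small $y$: after the change of variables $z=y-w^{\alpha}$, the integral produces a factor $\epsilon\,y^{(1-\alpha-r)/\alpha}\big[\tfrac{t}{\epsilon}+|\ln y|\big]$, and to control $|\ln y|$ for $y\leq 1$ one writes $y^{(1-\alpha-r)/\alpha}=y^{(1-\alpha-2r)/\alpha}\cdot y^{r/\alpha}$ and uses $y^{r/\alpha}|\ln y|\leq C$ together with $1-\alpha-2r\geq 0$ to keep the remaining power of $y$ nonnegative. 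Your remark that logarithms are absorbed only by the decay in $w$ or by $T\leq 1$ misses this small-$y$ case, which is precisely what pins down the permissible range of $r$. The rest of the argument (including the region $y\geq w^{\alpha}-C_0^{-1}\epsilon^{-1/(m-1)}e^{-t/\epsilon}$, which needs $1-\alpha-r\geq 0$ so that $(y-e^{-t/\epsilon}z)^{(1-\alpha-r)/\alpha}/(1+(y-e^{-t/\epsilon}z)^{b/\alpha})$ stays bounded) is as you describe.
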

\begin{proof}
Since $\alpha<1$, there exists $r\in(0,1)$ such that $\alpha+2r\leq 1$.

\eqref{integral in y} is a more general version of the proof of Proposition \ref{moment estimates prop}. We only have to prove that $\int_{(0,\infty)}w^{-r}H_{\epsilon}(y,w,t)\der w\leq \frac{C}{1+|y|^{\omega}}$. Let $C_{0}$ be as in Proposition \ref{supersol prop above region}. Let $y\geq 0$. We distinguish between different regions. \begin{itemize}
\item Assume first that $\{y\geq w^{\alpha}-C_{0}^{-1}\epsilon^{-\frac{1}{m-1}} e^{-\frac{t}{\epsilon}}\}$. Then
\end{itemize}
\begin{align*}
    \int_{\{y\geq w^{\alpha}-C_{0}^{-1}\epsilon^{-\frac{1}{m-1}} e^{-\frac{t}{\epsilon}}\}}w^{-r}P(y,w,t)\der w\leq \int_{(0,\infty)} \frac{Cw^{-r}e^{\frac{t}{\epsilon}}}{(1+w^{b})(1+e^{\frac{tm}{\epsilon}}|y-w^{\alpha}|^{m})}\der w.
\end{align*}
 We make the change of variables
 $z=e^{\frac{t}{\epsilon}}(y-w^{\alpha})$ and we obtain that
 \begin{align*}
        \int_{\{y\geq w^{\alpha}-C_{0}^{-1}\epsilon^{-\frac{1}{m-1}} e^{-\frac{t}{\epsilon}}\}}w^{-r}P(y,w,t)\der w&\leq \int_{(-\infty,e^{\frac{t}{\epsilon}}y)} \frac{C(y-e^{-\frac{t}{\epsilon}}z)^{\frac{1-\alpha-r}{\alpha}}}{(1+(y-e^{-\frac{t}{\epsilon}}z)^{\frac{b}{\alpha}})(1+|z|^{m})}\der z.
        \end{align*}
      On one side, if $|z|\leq \frac{e^{\frac{t}{\epsilon}}y}{2}$, it holds that $\frac{y}{2}\leq y-e^{-\frac{t}{\epsilon}}z\leq \frac{3 y}{2}$. Thus
        \begin{align}\label{step 1 moments general} \int_{|z|\leq \frac{e^{\frac{t}{\epsilon}}y}{2}} \frac{C(y-e^{-\frac{t}{\epsilon}}z)^{\frac{1-\alpha-r}{\alpha}}}{(1+(y-e^{-\frac{t}{\epsilon}}z)^{\frac{b}{\alpha}})(1+|z|^{m})}\der z&\leq \frac{C  y^{\frac{1-\alpha-r}{\alpha}}}{1+|y|^{\frac{b}{\alpha}}}\int_{(-\infty,\infty)} \frac{C}{1+|z|^{m}}\leq\frac{C}{1+|y|^{\frac{b+r-1+\alpha}{\alpha}}}.
 \end{align}
On the other hand, since  $1-\alpha-r\geq 0$, it holds that $\frac{(y-e^{-\frac{t}{\epsilon}}z)^{\frac{1-\alpha-r}{\alpha}}}{(1+(y-e^{-\frac{t}{\epsilon}}z)^{\frac{b}{\alpha}})}\leq C$. Thus
\begin{align}\label{step 2 moments general}
   \int_{ |z|\geq \frac{e^{\frac{t}{\epsilon}}y}{2}} \frac{C(y-e^{-\frac{t}{\epsilon}}z)^{\frac{1-\alpha-r}{\alpha}}}{(1+(y-e^{-\frac{t}{\epsilon}}z)^{\frac{b}{\alpha}})(1+|z|^{m})}\der z&\leq   \int_{ |z|\geq \frac{e^{\frac{t}{\epsilon}}y}{2}} \frac{C}{1+|z|^{m}}\der z\leq \frac{C}{1+(e^{\frac{t}{\epsilon}}y)^{m-1}}\leq\frac{C}{1+y^{m-1}}.
\end{align}
Combining \eqref{step 1 moments general} and \eqref{step 2 moments general}, we obtain that \eqref{integral in y} holds in the region $\{y\geq w^{\alpha}-C_{0}^{-1}\epsilon^{-\frac{1}{m-1}} e^{-\frac{t}{\epsilon}}\}$.
\begin{itemize}
  \item  We now analyze the region $(\frac{w}{3})^{\alpha}\leq y\leq w^{\alpha}-C_{0}^{-1}\epsilon^{-\frac{1}{m-1}}  e^{-\frac{t}{\epsilon}}$. Notice that in this region it holds that $y^{\frac{1}{\alpha}}\leq (y+C_{0}^{-1}\epsilon^{-\frac{1}{m-1}}  e^{-\frac{t}{\epsilon}})^{\frac{1}{\alpha}}\leq w\leq 3y^{\frac{1}{\alpha}}$. Thus
\end{itemize} 
 \begin{align*}
      \int_{\{(\frac{w}{3})^{\alpha}\leq y\leq w^{\alpha}-C_{0}^{-1}\epsilon^{-\frac{1}{m-1}}  e^{-\frac{t}{\epsilon}}\}}w^{-r}P(y,w,t)\der w&\leq \int^{3y^{\frac{1}{\alpha}}}_{(y+C_{0}^{-1}\epsilon^{-\frac{1}{m-1}}  e^{-\frac{t}{\epsilon}})^{\frac{1}{\alpha}}} \frac{Cw^{-r}\epsilon}{(1+w^{b})|y-w^{\alpha}|}\der w\\
      &\leq \frac{C\epsilon y^{\frac{-r}{\alpha}}}{1+y^{\frac{b}{\alpha}}}\int^{3y^{\frac{1}{\alpha}}}_{(y+C_{0}^{-1}\epsilon^{-\frac{1}{m-1}}  e^{-\frac{t}{\epsilon}})^{\frac{1}{\alpha}}} \frac{1}{|y-w^{\alpha}|}\der w.
 \end{align*}
We make the change of variables $z=y-w^{\alpha}$ and obtain that 
\begin{align*}
  \frac{C\epsilon  y^{\frac{-r}{\alpha}}}{1+y^{\frac{b}{\alpha}}}  \int^{3y^{\frac{1}{\alpha}}}_{(y+C_{0}^{-1}\epsilon^{-\frac{1}{m-1}}  e^{-\frac{t}{\epsilon}})^{\frac{1}{\alpha}}} \frac{1}{|y-w^{\alpha}|}\der w&\leq \frac{C\epsilon y^{\frac{1-\alpha-r}{\alpha}}}{1+y^{\frac{b}{\alpha}}}\int_{-(3^{\alpha}-1)y}^{-C_{0}^{-1}\epsilon^{-\frac{1}{m-1}}e^{-\frac{t}{\epsilon}}}\frac{\der z}{|z|}\\
  & \leq \frac{C\epsilon  y^{\frac{1-\alpha-r}{\alpha}}}{1+y^{\frac{b}{\alpha}}}\big[\frac{t}{\epsilon}+|\ln{y}|\big].
\end{align*}
If $y\geq 1$, then
\begin{align*}
  \frac{C\epsilon  y^{\frac{1-\alpha-r}{\alpha}}}{1+y^{\frac{b}{\alpha}}}\big[\frac{t}{\epsilon}+\ln{y}\big]  \leq\frac{Ct  y^{\frac{1-\alpha-r}{\alpha}}}{1+y^{\frac{b}{\alpha}}}+\frac{C\epsilon  y^{\frac{1-r}{\alpha}}}{1+y^{\frac{b}{\alpha}}}y^{-1}\ln{y}\leq \frac{C}{1+y^{\frac{b+r-1}{\alpha}}},
\end{align*}
where in the last inequality we used the fact that $t\leq 1$.
If $y\leq 1$, we use the fact that $1-2r-\alpha\geq 0$, together with the fact that $y^{r}\ln \frac{1}{y}\leq C$, in order to conclude in the same manner that
\begin{align*}
     \frac{C\epsilon y^{\frac{1-\alpha-r}{\alpha}}}{1+y^{\frac{b}{\alpha}}}\big[\frac{t}{\epsilon}-\ln{y}\big] \leq  \frac{Ct y^{\frac{1-\alpha-r}{\alpha}}}{1+y^{\frac{b}{\alpha}}}+\frac{C\epsilon y^{\frac{1-\alpha-2r}{\alpha}}}{1+y^{\frac{b}{\alpha}}}y^{r}\ln{\frac{1}{y}}  \leq \frac{C}{1+y^{\frac{b+2r-1}{\alpha}}}.
\end{align*}

The region  $\{ y \leq (\frac{w}{3})^{\alpha}\}$ follows as in the proof of Proposition \ref{moment estimates prop} using in addition the fact that $1-\alpha-r\geq 0$ as in \eqref{step 2 moments general}. This concludes our proof.

\end{proof}
We notice that $H_{\epsilon}$ found in Theorem \ref{proposition existence via fixed point} satisfies the weak version of \eqref{mild solution equation}.

\vspace{0.2cm}

\noindent\underline{\textbf{\textit{Weak formulation:}}} From \eqref{mild solution equation}, it immediately follows that for every $t\in[0,T]$ and for every $\varphi\in\textup{C}^{1}_{c}(\mathbb{R}\times(0,\infty)) $, it holds that
\begin{align}
    \partial_{t}\int_{\mathbb{R}}\int_{(0,\infty)} &  H_{\epsilon}(y,v,t)\varphi(y,v)\der v \der y= \frac{1}{\epsilon}\int_{\mathbb{R}}\int_{(0,\infty)}H_{\epsilon}(y,v,t)(v^{\alpha}-y)\partial_{y}\varphi(y,v)\der v \der y\nonumber\\
    &+\frac{1}{2}\int_{\mathbb{R}}\int_{(0,\infty)} \int_{(0,\infty)} K(v,w)H_{\epsilon}(y,v,t)H_{\epsilon}(y,w,t)\chi_{\varphi}(y,v,w)\der w\der v \der y, \label{weak formulation}
\end{align}
where we denoted
\begin{align*}
    \chi_{\varphi}(y,v,w):=\varphi(y,v+w)-\varphi(y,v)-\varphi(y,w).
\end{align*}

We are now in a position to prove Theorem \ref{teo convergence}.
\begin{proof}[Proof of Theorem \ref{teo convergence}] 

We first prove that for every fixed $t>0$, $H_{\epsilon}$ converges to a Dirac measure.

\vspace{0.2cm}

\noindent\underline{\textbf{\textit{Convergence to a Dirac measure for fixed time:}}} If $|y-v^{\alpha}|\geq \delta$ and $t>0$, then by \eqref{def t1}-\eqref{def t3}, it holds that
\begin{align*}
    T_{1}(y,v,t)&=  \frac{2A}{1+v^{b}}e^{\frac{t}{\epsilon}}\psi(e^{\frac{t}{\epsilon}}(y-v^{\alpha}))\mathbbm{1}_{\{y\geq v^{\alpha}-C_{0}^{-1}\epsilon^{-\frac{1}{m-1}}  e^{-\frac{t}{\epsilon}}\}}\leq \frac{ Ce^{\frac{t}{\epsilon}}}{1+e^{\frac{tm}{\epsilon}}|y-v^{\alpha}|^{m}}\leq C(\delta)e^{-\frac{t(m-1)}{\epsilon}};\\
   T_{2}(y,v,t)&=\frac{2C_{0}^{m-1}\max\{1,A^{2}\}\epsilon}{(1+v^{b})|y-v^{\alpha}|}\mathbbm{1}_{\{(\frac{v}{3})^{\alpha}\leq y\leq v^{\alpha}-C_{0}^{-1}\epsilon^{-\frac{1}{m-1}}  e^{-\frac{t}{\epsilon}}\}}\leq C(\delta)\epsilon;\\
     T_{3}(y,v,t)&=  \frac{M_{2}A}{1+v^{b}}e^{\frac{t}{\epsilon}}\psi(e^{\frac{t}{\epsilon}}(y-v^{\alpha}))\mathbbm{1}_{\{y\leq (\frac{v}{3})^{\alpha}\}}\leq C(\delta)e^{-\frac{t(m-1)}{\epsilon}}.
\end{align*}
Thus, if $t>0$, $\supp \varphi\subseteq $$\{|y-v^{\alpha}|\geq \delta\}$, and $\varphi$ is compactly supported, it holds that
\begin{align}\label{convergence to dirac for fixed time}
    \int_{\mathbb{R}}\int_{(0,\infty)}H_{\epsilon}(y,v,t)\varphi(y,v)\der v \der y\leq C(\delta)[\epsilon+e^{-\frac{t(m-1)}{\epsilon}}]\rightarrow 0 \textup{ as } \epsilon\rightarrow 0.
\end{align}

We continue by proving equicontinuity in time. 

\vspace{0.2cm}

\underline{\textbf{\textit{Equicontinuity:}}} Let $\varphi\in\textup{C}_{0}(\mathbb{R}\times(0,\infty))$ and $\sigma>0$. We want to prove that for any $\delta\in(0,1)$, there exists $\delta_{1}$ such that if $t,s\in[\sigma,T]$ with $|t-s|\leq \delta_{1}$, then 
\begin{align}\label{equicontinuity}
    \bigg|\int_{\mathbb{R}}\int_{(0,\infty)} & [H_{\epsilon}(y,v,t)-H_{\epsilon}(y,v,s)]\varphi(y,v)\der v \der y\bigg|\leq \delta.
    \end{align}
\underline{\textit{Estimates for the transport term:}} We first prove some estimates for the transport term in \eqref{weak formulation}. Let $\varphi\in\textup{C}^{1}_{c}(\mathbb{R}\times(0,\infty)) $ and $t\geq \sigma$. It holds that
\begin{align}
    \int_{\mathbb{R}}\int_{(0,\infty)}H_{\epsilon}(y,v,t)|y-v^{\alpha}|\partial_{y}\varphi(y,v)\der v \der y\leq  C\int_{\mathbb{R}}\int_{(0,\infty)}[T_{1}+T_{2}+T_{3}](y,v,t)|y-v^{\alpha}|\der v \der y.
\end{align}
We estimate first $T_{1}+T_{3}$. We have that
\begin{align}
    \int_{\mathbb{R}}  \int_{(0,\infty)}[T_{1}+T_{3}](y,v,t)|y-v^{\alpha}|\der v \der y &\leq   \int_{\mathbb{R}}\int_{(0,\infty)}\frac{C|y-v^{\alpha}|}{1+v^{b}}e^{\frac{t}{\epsilon}}\psi(e^{\frac{t}{\epsilon}}(y-v^{\alpha}))\der v \der y\nonumber\\
    &\leq \int_{\mathbb{R}}\int_{(0,\infty)}\frac{Ce^{\frac{t}{\epsilon}}|y-v^{\alpha}|}{1+v^{b}}\frac{1}{1+e^{\frac{tm}{\epsilon}}|y-v^{\alpha}|^{m}}\der v \der y\nonumber\\
     &\leq \int_{\mathbb{R}}\int_{(0,\infty)}\frac{C}{1+v^{b}}\frac{1}{1+e^{\frac{t(m-1)}{\epsilon}}|y-v^{\alpha}|^{m-1}}\der v \der y.\label{transport term vanishes in y}
\end{align}
We make the change of variables $\xi=e^{\frac{t}{\epsilon}}(y-v^{\alpha})$. \eqref{transport term vanishes in y} then becomes for all $t\geq \sigma$
\begin{align*}
    \int_{\mathbb{R}}  \int_{(0,\infty)}[T_{1}+T_{3}](y,v,t)|y-v^{\alpha}|\der v \der y&\leq \int_{\mathbb{R}}\int_{(0,\infty)}\frac{C}{1+v^{b}}\frac{e^{-\frac{t}{\epsilon}}}{1+|\xi|^{m-1}}\der v \der \xi\\
    &\leq e^{-\frac{\sigma}{\epsilon}}\int_{\mathbb{R}}\int_{(0,\infty)}\frac{C}{1+v^{b}}\frac{1}{1+|\xi|^{m-1}}\der v \der \xi\leq C e^{-\frac{\sigma}{\epsilon}}.
\end{align*}
We continue by estimating $T_{2}$. In this case, it follows that
\begin{align*}
    \int_{\mathbb{R}}\int_{(0,\infty)}T_{2}(y,v,t)|y-v^{\alpha}|\der v \der y &\leq  \int_{\mathbb{R}}\int_{(0,\infty)}\frac{C\epsilon|y-v^{\alpha}|}{(1+v^{b})|y-v^{\alpha}|}\mathbbm{1}_{\{(\frac{v}{3})^{\alpha}\leq y\leq v^{\alpha}\}}\der v\der y\\
    &\leq \int_{(0,\infty)}\frac{C \epsilon}{1+v^{b-\alpha}}\der v\leq C\epsilon.
\end{align*}
Combining all the above estimates, we have that
\begin{align}\label{estimates for the transport term}
     \int_{\mathbb{R}}\int_{(0,\infty)}H_{\epsilon}(y,v,t)|y-v^{\alpha}|\partial_{y}\varphi(y,v)\der v \der y\leq C e^{-\frac{\sigma}{\epsilon}}+C\epsilon.
\end{align}

\textit{\underline{Equicontinuity, test functions in $\textup{C}^{1}_{c}(\mathbb{R}\times(0,\infty))$:}} Let $M>1$. We first prove \eqref{equicontinuity} for a fixed function $\varphi\in\textup{C}^{1}_{c}(\mathbb{R}\times(0,\infty)) $ such that $\supp\varphi\in\{[-M,M]\times [\frac{1}{M},M]\}$, $||\varphi||_{L^{\infty}}\leq 1$. Let $s,t\in [\sigma,T]$. By \eqref{weak formulation}, it holds that
\begin{align*}
\bigg|\int_{\mathbb{R}}\int_{(0,\infty)} &[H_{\epsilon}(y,v,t)-H_{\epsilon}(y,v,s)]\varphi(y,v)\der v \der y\bigg|\\
\leq &\frac{1}{\epsilon}\int_{s}^{t}\int_{\mathbb{R}}\int_{(0,\infty)} H_{\epsilon}(y,v,z)|v^{\alpha}-y||\partial_{y}\varphi(y,v)|\der v \der y\der z\\
&+\frac{1}{2}\int_{s}^{t}\int_{\mathbb{R}}\int_{(0,\infty)}\int_{(0,\infty)}K(v,w)H_{\epsilon}(y,v,z)H_{\epsilon}(y,w,z)|\chi_{\varphi}(y,v,w)|\der w\der v \der y\der z\\
\leq &\frac{1}{\epsilon}\int_{s}^{t}\int_{\mathbb{R}}\int_{(0,\infty)} H_{\epsilon}(y,v,z)|v^{\alpha}-y||\partial_{y}\varphi(y,v)|\der v \der y\der z\\
&+C\int_{s}^{t}\int_{\mathbb{R}}\int_{(0,\infty)}(1+v^{\gamma})H_{\epsilon}(y,v,z)\der v\int_{(0,\infty)}(1+w^{\gamma})H_{\epsilon}(y,w,z)\der w \der y\der z\\
&=: J_{1}+J_{2}.
\end{align*}
By \eqref{estimates for the transport term} and since $t,s\geq \sigma$, it follows that
\begin{align}\label{j1 j1}
    J_{1}\leq \frac{C}{\epsilon}[e^{-\frac{\sigma}{\epsilon}}+\epsilon]|t-s|\leq C|t-s|,
\end{align}
for all $\epsilon\leq \epsilon_{\sigma}$, where $\epsilon_{\sigma}$ only depends on $\sigma$. 
By \eqref{integral in y} it follows that
\begin{align}\label{j2 j2}
J_{2}\leq \int_{s}^{t}\int_{\mathbb{R}} \frac{C}{1+|y|^{2\omega}}\der y\der z\leq C|t-s|.
\end{align}
By \eqref{j1 j1} and \eqref{j2 j2}, we obtain that
\begin{align}\label{equicontinuity c1}
    \bigg|\int_{\mathbb{R}}\int_{(0,\infty)} &[H_{\epsilon}(y,v,t)-H_{\epsilon}(y,v,s)]\varphi(y,v)\der v \der y\bigg|\leq C|t-s|,
\end{align}
when $\varphi\in\textup{C}^{1}_{c}(\mathbb{R}\times(0,\infty)) $.

\textit{\underline{From $\textup{C}^{1}_{c}(\mathbb{R}\times(0,\infty))$ to  $\textup{C}_{c}(\mathbb{R}\times(0,\infty)):$}}  Let $\varphi\in\textup{C}_{c}(\mathbb{R}\times(0,\infty))$ such that $\supp\varphi\in[-M,M]\times [\frac{1}{M},M]$. There exists $\varphi_{n}\in\textup{C}^{1}_{c}(\mathbb{R}\times(0,\infty))$ such that $||\varphi-\varphi_{n}||_{\infty}\leq \frac{1}{n}$. It then holds that
\begin{align*}
    \bigg|\int_{\mathbb{R}}\int_{(0,\infty)} & [H_{\epsilon}(y,v,t)-H_{\epsilon}(y,v,s)]\varphi(y,v)\der v \der y\bigg|\leq \bigg|\int_{\mathbb{R}}\int_{(0,\infty)} [H_{\epsilon}(y,v,t)-H_{\epsilon}(y,v,s)]\varphi_{n}(y,v)\der v \der y\bigg|\\
    &+\int_{\mathbb{R}}\int_{(0,\infty)} [H_{\epsilon}(y,v,t)+H_{\epsilon}(y,v,s)]|\varphi(y,v)-\varphi_{n}(y,v)|\der v \der y.
\end{align*}
By \eqref{equicontinuity c1}, it follows that 
\begin{align}\label{equi compact supp}
    \bigg|\int_{\mathbb{R}}\int_{(0,\infty)} & [H_{\epsilon}(y,v,t)-H_{\epsilon}(y,v,s)]\varphi(y,v)\der v \der y\bigg|\nonumber\\
    &\leq C|t-s|+\frac{1}{n}\int_{\mathbb{R}}\int_{(0,\infty)} [H_{\epsilon}(y,v,t)+H_{\epsilon}(y,v,s)]\der v \der y\leq C|t-s|+\frac{C}{n}.
\end{align}
We can then conclude by choosing $n\in\mathbb{N}$ sufficiently large.

\textit{\underline{From $\textup{C}_{c}(\mathbb{R}\times(0,\infty))$ to  $\textup{C}_{0}(\mathbb{R}\times(0,\infty)):$}} 
Let $\varphi\in\textup{C}_{0}(\mathbb{R}\times(0,\infty))$. Let $M>1$, then 
\begin{align*}
    \bigg|\int_{\mathbb{R}}\int_{(0,\infty)} & [H_{\epsilon}(y,v,t)-H_{\epsilon}(y,v,s)]\varphi(y,v)\der v \der y\bigg|\\
    &\leq \bigg|\int_{[-M,M]}\int_{[\frac{1}{M},M]} [H_{\epsilon}(y,v,t)-H_{\epsilon}(y,v,s)]\varphi(y,v)\der v \der y\bigg|\\
    &+\int_{\mathbb{R}}\int_{\{v\geq M\}\cup \{v\leq\frac{1}{M}\}} [H_{\epsilon}(y,v,t)+H_{\epsilon}(y,v,s)]|\varphi(y,v)-\varphi_{n}(y,v)|\der v \der y\\
     &+\int_{|y|\geq M}\int_{(M,\infty)} [H_{\epsilon}(y,v,t)+H_{\epsilon}(y,v,s)]|\varphi(y,v)-\varphi_{n}(y,v)|\der v \der y=: L_{1}+L_{2}+L_{3}.
\end{align*}
We have that the following inequalities hold by choosing $M$ sufficiently large.
\begin{align*}
    L_{1}&\leq \frac{\delta}{3} \qquad  \textup{ by } \eqref{equi compact supp};\\
    L_{2}&\leq M^{-1}\int_{\mathbb{R}}\int_{\{v\geq M\}} v [H_{\epsilon}(y,v,t)+H_{\epsilon}(y,v,s)]\der v \der y\\
    &+M^{-r}\int_{\mathbb{R}}\int_{\{v\leq\frac{1}{M}\}}v^{-r} [H_{\epsilon}(y,v,t)+H_{\epsilon}(y,v,s)]\der v \der y\leq \frac{\delta}{3} \qquad \textup{ by } \eqref{integral in y};\\
    L_{3}&\leq M^{-1}\int_{|y|\geq M}\int_{(M,\infty)} |y|[H_{\epsilon}(y,v,t)+H_{\epsilon}(y,v,s)]\der v \der y\leq \frac{\delta}{3}  \qquad \textup{ by } \eqref{integral large y}.
\end{align*}
This concludes the proof of equicontinuity. Similar computations for the passage from $\varphi\in\textup{C}^{1}_{c}(\mathbb{R}\times(0,\infty))$ to  $\varphi\in\textup{C}_{0}(\mathbb{R}\times(0,\infty))$ have also been used in \cite{cristianspherical}.

\vspace{0.3cm}

\noindent\underline{\textbf{\textit{Convergence to a Dirac measure:}}} 
 By  \eqref{integral large y}, we have that for $t\in[0,T]$, there exists a convergent subsequence of $H_{\epsilon}(t)$ in the weak-$^{\ast}$ topology as $\epsilon\rightarrow 0$.  Let $\sigma>0$. Together with \eqref{equicontinuity}, we deduce from Arzelà–Ascoli theorem that there exists a subsequence of $\{H_{\epsilon}\}$, which we do not relabel, and an $H\in\textup{C}([\sigma,T];\mathscr{M}_{+}(\mathbb{R}\times(0,\infty))),$ such that $H_{\epsilon}(t)$ converge to $H(t)$ in the weak-$^{\ast}$ topology as $\epsilon\rightarrow 0$, for every $t\in[\sigma,T].$

By \eqref{convergence to dirac for fixed time}, we have that for every $t>0$ and $\varphi$ such that $\supp \varphi\subseteq $$\{|y-v^{\alpha}|\geq \delta\}$, $\varphi$ is compactly supported, it holds that
\begin{align}
    \int_{\mathbb{R}}\int_{(0,\infty)}H_{\epsilon}(y,v,t)\varphi(y,v)\der v \der y\rightarrow 0 \textup{ as } \epsilon\rightarrow 0.
\end{align}The extension to functions $\varphi\in\textup{C}_{0}(\mathbb{R}\times(0,\infty))$ is as before. In other words, for any $\delta>0$ and for every  $\varphi\in\textup{C}_{0}(\mathbb{R}\times(0,\infty))$ such that $\supp \varphi\subseteq \{|y-v^{\alpha}|\geq \delta\}$, we have that
\begin{align*}
 0=  \lim_{\epsilon\rightarrow 0} \int_{\mathbb{R}}\int_{(0,\infty)}H_{\epsilon}(y,v,t)\varphi(y,v)\der v \der y= \int_{\mathbb{R}}\int_{(0,\infty)}H(y,v,t)\varphi(y,v)\der v \der y. 
\end{align*}
This concludes our proof.
\end{proof}
\appendix
\section{Rescaled version of the results}\label{appendix rescaled version of the results}
Existence of mass-conserving solutions has been proved in \cite{cristianinhom} for the rain model \eqref{original equation} for times of order one.  Theorem \ref{proposition existence via fixed point} extends the existence of mass-conserving solutions in \cite{cristianinhom} to hold for times of order $\mathcal{O}(e^{\frac{1}{\epsilon}})$ in the case of coagulation kernels with negligible contribution. More precisely, making use of \eqref{rain model to our model}, \eqref{our model to rain model}, Table \ref{tab:my_label}, and Theorem \ref{proposition existence via fixed point}, we obtain that
\begin{prop}
 Let $H_{\epsilon,\textup{in}}$ be as in Assumption \ref{assumption initial condition} and $K$ as in Assumption \ref{assumption kernel}.  Let   $b
\geq \max\{\overline{b}(\gamma,\alpha),2\gamma+1\}$ and $m>\max\{\frac{2(\gamma+1)}{\alpha},\frac{b}{\alpha}+1\}$, where  $\overline{b}(\gamma,\alpha)$ is as in \eqref{definition overline b}.  There exist $M_{1}(\alpha,\gamma,b,m), M_{2}(\alpha,\gamma,b,m),$ depending only on $\alpha,\gamma,b,m$, such that for any $A\geq 1$, there exist $\overline{\epsilon}\in(0,1)$, sufficiently small, and  $T>0$, sufficiently small, which is independent of $\epsilon\in(0,1),$ such that  for all $\epsilon\in(0,\overline{\epsilon}]$ there exists a mass-conserving mild solution $f_{\epsilon}\in\textup{C}([0,e^{\frac{T}{\epsilon}}-1];L^{\infty}(\mathbb{R}\times(0,\infty)))$ to (\ref{original equation}). Moreover, we have that $f_{\epsilon}$ satisfies the following estimates for $C_{0}$ as in \eqref{defc0}.
   
If $v^{\alpha}(\tau+1)\big[1-(\frac{1}{3})^{\alpha}\big]\geq C_{0}^{-1}\epsilon^{-\frac{1}{m-1}}  $, then
    \begin{align}\label{converted bounds}
   0\leq (1+v^{b}) f_{\epsilon}(y,v,\tau)\leq &\frac{2A}{1+|x-(\tau+1)v^{\alpha}|^{m}}\mathbbm{1}_{ A_{\epsilon,y,v,\tau}}+\frac{2M_{1}A^{3}\epsilon}{|x-(\tau+1)v^{\alpha}|}\mathbbm{1}_{ \big(A_{\epsilon,y,v,\tau}\cup B_{\epsilon,y,v,\tau}\big)^{c}}\\
   &+\frac{M_{2}A}{1+|x-(\tau+1)v^{\alpha}|^{m}}\mathbbm{1}_{ B_{\epsilon,y,v,\tau}},\nonumber
    \end{align}
where
   \begin{align*}
       A_{\epsilon,y,v,\tau}&:=\{x-(\tau+1)v^{\alpha}\geq -C_{0}^{-1}\epsilon^{-\frac{1}{m-1}} \};\\
         B_{\epsilon,y,v,\tau}&:=\bigg\{\frac{x}{\tau+1}\leq \Big(\frac{v}{3}\Big)^{\alpha}\bigg\}.
   \end{align*}  Otherwise, if  $0\leq v^{\alpha}(\tau+1)\big[1-(\frac{1}{3})^{\alpha}\big]\leq C_{0}^{-1}\epsilon^{-\frac{1}{m-1}}  $, then
\begin{align}
   0\leq (1+v^{b}) f_{\epsilon}(x,v,\tau)\leq \frac{2A}{1+|x-(\tau+1)v^{\alpha}|^{m}}.
    \end{align}
\end{prop}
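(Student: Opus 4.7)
The statement is essentially a corollary of Theorem \ref{proposition existence via fixed point} obtained by undoing the change of variables \eqref{rain model to our model}--\eqref{our model to rain model}. The plan is to construct $f_\epsilon$ by setting
\begin{align*}
f_\epsilon(x,v,\tau):=\frac{1}{\tau+1}\,H_\epsilon\!\left(\frac{x}{\tau+1},\,v,\,\epsilon\ln(\tau+1)\right),
\end{align*}
where $H_\epsilon$ is the solution of \eqref{diagonal equation} provided by Theorem \ref{proposition existence via fixed point} on the time interval $[0,T]$. Since the map $\tau\mapsto t=\epsilon\ln(\tau+1)$ is a bijection between $[0,e^{T/\epsilon}-1]$ and $[0,T]$, the function $f_\epsilon$ is well-defined on $[0,e^{T/\epsilon}-1]$, and its $L^\infty$-regularity in $(x,v)$ follows directly from that of $H_\epsilon$.

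First, I would verify by the chain rule that $f_\epsilon$ satisfies \eqref{original equation} whenever $H_\epsilon$ satisfies \eqref{diagonal equation}: with $y=x/(\tau+1)$ and $t=\epsilon\ln(\tau+1)$, one has $\partial_\tau y=-y/(\tau+1)$, $\partial_\tau t=\epsilon/(\tau+1)$, and $\partial_x y=1/(\tau+1)$, so that the transport term on the left-hand side of \eqref{diagonal equation} converts exactly into $v^\alpha\partial_x f_\epsilon$, while the $\frac{1}{\tau+1}$ prefactor in $f_\epsilon$ is absorbed by the corresponding Jacobian factor on the coagulation operator, reproducing \eqref{original equation}. This is the same computation used at the beginning of Section \ref{section two} but read in reverse. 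The initial condition is automatically preserved, since at $\tau=0$ we have $y=x$, $t=0$, and the prefactor is $1$.

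Next, I would translate the pointwise bounds \eqref{existence space fixed point} using the key identities $e^{t/\epsilon}=\tau+1$ and $e^{t/\epsilon}(y-v^\alpha)=x-(\tau+1)v^\alpha$. Each of $T_1,T_2,T_3$ carries a factor of either $e^{t/\epsilon}$ (in the case of $T_1,T_3$) or no such factor (in the case of $T_2$), and the prefactor $\frac{1}{\tau+1}$ in the definition of $f_\epsilon$ cancels the $e^{t/\epsilon}=(\tau+1)$ factor in $T_1$ and $T_3$, producing
\begin{align*}
\frac{2A}{1+|x-(\tau+1)v^\alpha|^m}\,\mathbbm{1}_{A_{\epsilon,x,v,\tau}}\quad\text{and}\quad \frac{M_2 A}{1+|x-(\tau+1)v^\alpha|^m}\,\mathbbm{1}_{B_{\epsilon,x,v,\tau}},
\end{align*}
respectively, while for $T_2$ the prefactor $\frac{1}{\tau+1}$ together with $|y-v^\alpha|=\frac{1}{\tau+1}|x-(\tau+1)v^\alpha|$ yields $\frac{2M_1 A^3\epsilon}{|x-(\tau+1)v^\alpha|}\mathbbm{1}_{(A_{\epsilon,x,v,\tau}\cup B_{\epsilon,x,v,\tau})^c}$. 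The cutoffs $\chi_1,\chi_2,\chi_3$ become $\mathbbm{1}_{A_{\epsilon,x,v,\tau}}$, $\mathbbm{1}_{(A_{\epsilon,x,v,\tau}\cup B_{\epsilon,x,v,\tau})^c}$, and $\mathbbm{1}_{B_{\epsilon,x,v,\tau}}$ respectively by the same substitution, and the smallness condition $e^{t/\epsilon}v^\alpha[1-3^{-\alpha}]\geq C_0^{-1}\epsilon^{-1/(m-1)}$ translates directly into $(\tau+1)v^\alpha[1-3^{-\alpha}]\geq C_0^{-1}\epsilon^{-1/(m-1)}$. Finally, mass conservation transfers from $H_\epsilon$ to $f_\epsilon$ by the change of variable $x\mapsto y=x/(\tau+1)$, whose Jacobian $(\tau+1)$ exactly compensates the $\frac{1}{\tau+1}$ in the definition of $f_\epsilon$.

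There is no real conceptual obstacle here; the only matter of care is bookkeeping with the cutoff regions, to make sure that the three sets $A_{\epsilon,x,v,\tau}$, $B_{\epsilon,x,v,\tau}$, and $(A_{\epsilon,x,v,\tau}\cup B_{\epsilon,x,v,\tau})^c$ obtained after rescaling match the supports of $\chi_1,\chi_3,\chi_2$ respectively, and that the degenerate case $(\tau+1)v^\alpha[1-3^{-\alpha}]\leq C_0^{-1}\epsilon^{-1/(m-1)}$ is handled by the simpler global bound stated in Theorem \ref{proposition existence via fixed point}. Once these translations are verified, the estimates \eqref{converted bounds} follow, and the Proposition is proved.
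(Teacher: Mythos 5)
Your proposal is correct and is exactly the route the paper takes: the paper simply states in Appendix A that the proposition follows from \eqref{rain model to our model}--\eqref{our model to rain model}, Table \ref{tab:my_label}, and Theorem \ref{proposition existence via fixed point}, leaving the bookkeeping with the change of variables (the identities $e^{t/\epsilon}=\tau+1$, $e^{t/\epsilon}(y-v^\alpha)=x-(\tau+1)v^\alpha$, the cancellation of Jacobian factors against the $\frac{1}{\tau+1}$ prefactor, and the translation of the cutoffs $\chi_1,\chi_2,\chi_3$ into the sets $A_{\epsilon,x,v,\tau}$, $(A\cup B)^c$, $B_{\epsilon,x,v,\tau}$) implicit, and you have spelled it out correctly.
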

With the form of the bounds in \eqref{converted bounds}, the reason behind the choice of \eqref{existence space fixed point} becomes clear. More precisely,
\begin{rmk}
Denote by $r(x,v,\tau):=\frac{1}{1+|x-\tau v^{\alpha}|^{m}+v^{p}}$ and by $p=\alpha m$. Then
\begin{align*}
\partial_{v}r(x,v,\tau)=-\frac{pv^{p-1}-p(x-v^{\alpha}\tau)^{m-1}v^{\alpha-1}\tau}{(1+|x-\tau v^{\alpha}|^{m}+v^{p})^{2}}.
\end{align*}
In other words, $\partial_{v}r(x,v,\tau)=0$ if and only if $x \tau^{\frac{1}{m-1}}=v^{\alpha}(1+\tau^{\frac{m}{m-1}})$. Thus, for fixed $x$ and $\tau$, the point where the function $r$ changes its sign can be approximated for large times by $v^{\alpha}\approx \frac{x}{\tau}$ and for small times by $v^{\alpha}\approx x \tau^{\frac{1}{m-1}}$. The coagulation operator can be approximated at a formal level by a term of the form $v^{\gamma}\partial_{v}r(x,v,\tau)\int_{0}^{\infty}r(x,v',\tau)\der v'$. The change of sign of $r$ was dealt with in the proof made in \cite{cristianinhom} by taking a cut-off at $v^{\alpha}_{\textup{max}}\approx x\tau ^{\frac{1}{m-1}}.$ While at a first glance there is no similarity between the methods to prove existence in \cite{cristianinhom} with the methods in the current paper, the bounds in \eqref{existence space fixed point} and \eqref{converted bounds}, respectively, can be understood at an intuitive level as taking a cut-off at $v^{\alpha}\approx \frac{x}{\tau}$. Thus, our existence result can be understood as the natural continuation of the proof in \cite{cristianinhom} for large times.
\end{rmk}
\section{Construction of solutions with compact support}\label{appendix a}
We now prove that $H_{n+1}$ defined in \eqref{iteration supersol} has compact support assuming that it has compact support at initial time. This is in order to find a time $\tilde{\delta}$ as in Proposition \ref{continuity argument prop} that is independent of $y$ and $v$.

\begin{proof}[Proof of Proposition \ref{prop compact support functions}]
We first analyze the following equation.
\begin{align}
    \partial_{t}f_{n+1}(y,v,t)+\frac{1}{\epsilon}v^{\alpha}\partial_{x}f(x,v,t)=&\mathbb{K}[f_{n+1},f_{n}](y,v,t),
\end{align}
where $\mathbb{K}_{N}[f_{n+1},f_{n}]$ is as in \eqref{equation appendix}. Existence of $f_{n}\in C^{1}([0,T]\times\mathbb{R}\times (0,\infty))$ for small times independent of $\epsilon$ follows via Banach's fixed point theorem.

We then define
\begin{align*}
H_{n}(y,v,\tau):=e^{\tau}f_{n}(x,v,t), \quad x:=e^{\tau}y, \quad t:=e^{\tau}-1, \textup{ for all } n\in\mathbb{N}.
\end{align*}
Thus, $H_{n}\in C^{1}([0,T]\times\mathbb{R}\times (0,\infty))$ and solves \eqref{equation appendix}. Compact support follows directly from \eqref{equation appendix} and the choice of $K_{N}$ in \eqref{truncation kernel in m}.

\end{proof}

\subsection*{Acknowledgements}
 The authors sincerely thank B. Niethammer  for reviewing a draft of the manuscript and contributing to the improvement of its presentation. The authors would like to thank  N. Fournier and  B. Perthame for fruitful discussions about gelation theory and for explaining the arguments in \cite{gelfournier2025}.

I.C. was supported by the European Research Council (ERC) under the European Union's Horizon 2020 research and innovation program Grant No. 637653, project BLOC ``Mathematical Study of Boundary Layers in Oceanic Motion'' and by the project BOURGEONS ``Boundaries, Congestion and Vorticity in Fluids: A connection with environmental issues '', grant ANR-23-CE40-0014-01 of the French National Research Agency (ANR).

J.J.L.V. gratefully acknowledges the financial support of the collaborative
research centre The mathematics of emerging effects (CRC 1060, Project-ID 211504053) and of the Hausdorff Center for Mathematics (EXC 2047/1, Project-ID 390685813) funded through the Deutsche Forschungsgemeinschaft (DFG, German Research Foundation).

\subsubsection*{Statements and Declarations}

\textbf{Conflict of interest} The authors declare that they have no conflict of interest.

\textbf{Data availability} Data sharing not applicable to this article as no datasets were generated or analyzed during the current study.

\addcontentsline{toc}{section}{References}
\printbibliography

\end{document}